\theoremstyle{plain}
\newtheorem*{theorem*}{Theorem}
\newtheorem{definition}{Definition}[section]
\theoremstyle{assumption}
\theoremstyle{notation}
\theoremstyle{claim}
\theoremstyle{conj}
\newtheorem{remark}{\textbf{Remark}}[section]
\theoremstyle{theorem}
\newtheorem{theorem}{Theorem}[section]
\newtheorem{conjecture}{Conjecture}[section]
\theoremstyle{proposition}
\newtheorem{proposition}{Proposition}[section]
\theoremstyle{corollary}
\newtheorem{corollary}{Corollary}[section]
\newtheorem{lemma}{Lemma}[section]
\newcommand{\Mod}[1]{\ (\mathrm{mod}\ #1)}
\newtheorem{example}[theorem]{Example}
\title{Kneading the Lorenz Attractor}
\author{Łukasz Cholewa$^*$, Eran Igra$^{**}$}
\address{*Krakow University of Economics}
\email{cholewal@uek.krakow.pl}
\address{**Shanghai Institute for Mathematics and Interdisciplinary Sciences}
\email{eranigra@simis.cn}
\begin{document}

\begin{abstract}
A Lorenz map $f:[0,1]\to[0,1]$ is a piecewise continuous map, modeled after an idealized version of the Lorenz attractor. In this paper we settle the following question - how much of the dynamics of the Lorenz attractor can be modeled by such one-dimensional model? In this paper we will prove there exist open regions in the parameter space of the Lorenz system where one can canonically reduce the dynamics of the Lorenz attractor into those of a symmetric Lorenz map $F_\beta$ with a constant slope $\beta\in(1,2]$. As we will show, not only the map $F_\beta$ encodes many of the essential features of the Lorenz attractor, it also governs many of its bifurcations. As such, our results correlate closely with the results of numerical studies, and possibly explain the bifurcation phenomena observed in the Lorenz attractor.
\end{abstract}

\maketitle
\keywords{\textbf{Keywords} - The Lorenz Attractor, Lorenz maps, $\beta$-transformations, Kneading Theory, Heteroclinic bifurcations, Renormalization Theory, Template Theory, Topological Dynamics}
\section{Introduction}

Given three positive parameters  $\sigma,\rho,\mu\in\mathbf{R}$, the Lorenz system is defined as the flow generated by the following three-dimensional system of differential equations:
\begin{equation*} 
\begin{cases}
\dot{x} = \sigma(y-x) \\
 \dot{y} = x(\rho-z)-y\\
 \dot{z}=xy-\mu z
\end{cases}
\end{equation*}
This system, one of the hallmarks of chaotic dynamics, was first discovered in 1963 by E.N. Lorenz (see \cite{Lo}). In particular, Lorenz observed that at the parameter values $(\sigma,\rho,\mu)=(10,28,\frac{8}{3})$ the flow generates a chaotic butterfly attractor, as in Fig.\ref{untitled} (for a survey on the origins and development of the Lorenz model through the years, see \cite{bowen}).

\begin{figure}[h]
\centering
\begin{overpic}[width=0.45\textwidth]{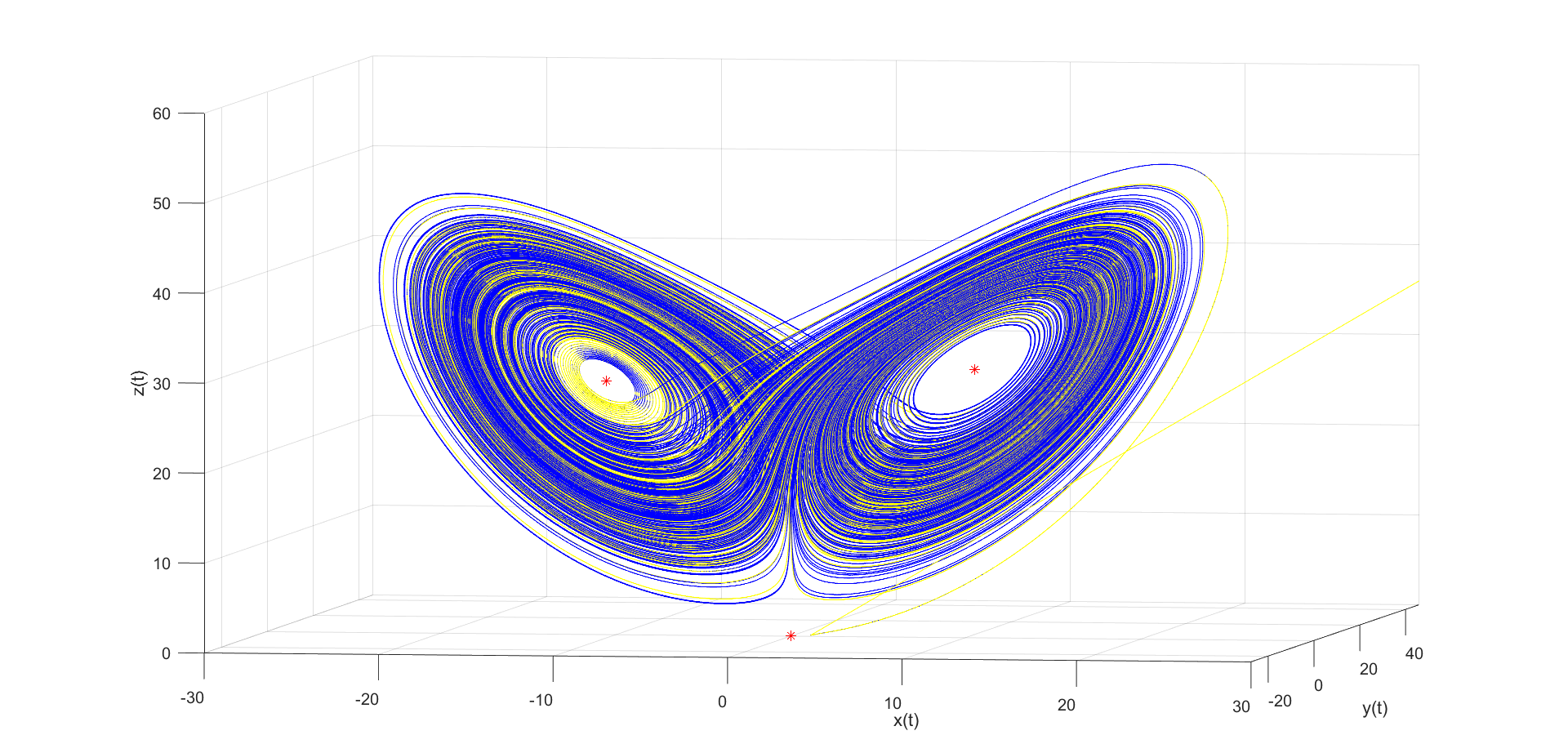}
\end{overpic}
\caption{\textit{The Lorenz attractor at $(\sigma,\rho,\beta)=(10,28,\frac{8}{3})$.}}
\label{untitled}
\end{figure}

Since its introduction, the mechanisms behind the complex dynamics of the Lorenz attractor were the focus of many studies - both analytic and numerical. Among the analytic methods one famous example is that of the geometric Lorenz model, originally introduced in \cite{SAB} and \cite{guck}. In brief, the geometric Lorenz model is a toy model for chaos defined geometrically, inspired by the numerically observed features of the Lorenz attractor. Its topological dynamics are well-understood - for example, they can be completely described in terms of the knots realized as periodic orbits on the attractor (see, for example, \cite{BW} and \cite{Hol}). Similarly, the statistical properties of the said attractor are also well understood - see, for example, \cite{HM}, among many others. One particular feature of this geometric model is that its dynamics can be encoded in terms of lower-dimensional, discrete-time models. In detail, in \cite{Wil} it was proven the first-return map for the geometric Lorenz model can be reduced to a piecewise discontinuous interval map, given by a concrete formula.

In addition to the analytic approaches, the dynamics of the Lorenz attractor were also studied extensively using numerical tools. For example, the domains of existence of the attractor and its bifurcations were thoroughly investigated in \cite{SNS}, \cite{KOC}, \cite{KO1}, \cite{YY}, \cite{KY}, \cite{BS} and \cite{BSS}, among many others (for a comprehensive survey, see \cite{Sparbook}). Moreover, the existence of complex dynamics in the Lorenz attractor was first proven in \cite{MM}, using rigorous numerical methods. Following that, in \cite{TUC} it was proven (again with the aid of rigorous numerical methods) that the dynamics of the Lorenz attractor at $(\sigma,\rho,\mu)=(10,28,\frac{8}{3})$ are conjugate to those of the geometric Lorenz model - thus solving Smale's 14th Problem. Recently, inspired by the numerical studies, similar results to those of \cite{TUC} and \cite{MM} were obtained in \cite{Pi}. However, unlike the results of \cite{MM} and \cite{TUC}, the results of \cite{Pi} were proven using purely analytic methods.

As mentioned above, one approach to study the geometric Lorenz model is by reducing its first-return map to a one-dimensional dynamical system, called a "Lorenz map". Briefly speaking and avoiding the technicalities (for now), Lorenz maps are piecewise monotone interval maps with a single discontinuity (see Fig.\ref{fig:Lorenz_map}). As proven in \cite{Wil}, their dynamics are semi-conjugate to those of the geometric Lorenz model. This connection inspired the development of a rich theory describing the dynamics of Lorenz maps (see, for example, \cite{Glen}, \cite{HubSpar}, \cite{GlenHall}, \cite{GS}, \cite{Alseda}, \cite{Pierre} and \cite{Winckler} - among others). Moreover, as far as the actual Lorenz attractor is concerned, it was observed numerically in both \cite{BS} and \cite{MK} that the dynamics and bifurcations of the Lorenz attractor correlate with those of the Lorenz maps. That being said, while the connection of Lorenz maps to the geometric Lorenz model is clear, their connection with the dynamics of the original Lorenz attractor is far from understood.

It is precisely this gap we address in this paper. Inspired by the ideas of \cite{MK} and \cite{BS} and building on the results of \cite{Pi}, in this paper we rigorously prove the dynamics and bifurcations of the original Lorenz attractor are essentially those of Lorenz maps - thus rigorously establishing their role as idealized, toy models for the evolution of chaos on the Lorenz attractor. Our first major result is the following (see Th.\ref{reduct} and Cor.\ref{reduct2} in Sect.\ref{reductionsect}):
\begin{theorem}
    \label{first} There is an open set of parameters $P$ s.t. for every $(\sigma,\rho,\mu)\in P$ there exists a cross-section $S$, and a continuous first-return map $\psi:S\to S$ for the Lorenz attractor corresponding to $(\sigma,\rho,\mu)$ whose dynamics are essentially one-dimensional. In detail, the following holds:
    
    \begin{itemize}
        \item There exists a Lorenz map $F_\beta$ depending on a parameter $\beta\in(1,2]$ satisfying the following:
        \begin{enumerate}
            \item The map $F_\beta$ has constant slope $\beta$, where $\beta$ depends only on the parameters $(\sigma,\rho,\mu)$.
            \item $F_\beta$ has a discontinuity point at $\frac{1}{2}$, independently of $\beta$ (and of $(\sigma,\rho,\mu)$.
        \end{enumerate}
        
         Let $I_\beta$ denote the maximal invariant set of $F_\beta$ in $[0,1]\setminus\{\frac{1}{2}\}$. Then, there exists an invariant set $I\subseteq S$ and a continuous, surjective $\pi:I\to I_\beta$ s.t. $\pi\circ \psi=F_\beta\circ\pi$
        \item If $x\in I_\beta$ is periodic of minimal period $n$, then $\pi^{-1}(x)$ includes at least one periodic orbit for $\psi$ of minimal period $n$.
    \end{itemize}
    
\end{theorem}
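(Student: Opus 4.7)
\medskip

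\noindent\textbf{Proof sketch (plan).} The plan is to combine three ingredients: a cross-section/first-return construction for the Lorenz flow (provided, for the open parameter region $P$, by the analytic machinery of \cite{Pi}); a reduction of that two-dimensional return map to a one-dimensional map via quotienting along strong stable leaves; and a constant-slope renormalization coming from kneading theory. First I would fix $(\sigma,\rho,\mu)\in P$ and invoke the construction of \cite{Pi} to produce a cross-section $S$ transverse to the attractor on which a continuous first-return map $\psi:S\to S$ is defined, with its discontinuity set lying along $W^{s}(0)\cap S$. The $\mathbb{Z}_2$-symmetry $(x,y,z)\mapsto(-x,-y,z)$ of the vector field transfers to an involution on $S$ that commutes with $\psi$ and that swaps the two halves of $S$ across $W^{s}(0)\cap S$; this symmetry will be what pins the discontinuity point to $\tfrac12$ later.

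Next I would construct a $\psi$-invariant foliation $\mathcal{F}^{ss}$ of $S$ by strong stable leaves. The key estimate is a domination/contraction inequality: along $\mathcal{F}^{ss}$, $D\psi$ contracts at a definite rate while the transverse direction is uniformly expanded. On the open region $P$ such a dominated splitting can be read off from the eigenvalue data at the origin together with the analytic cone estimates of \cite{Pi}. Once $\mathcal{F}^{ss}$ is established, let $\pi:S\to[0,1]$ be the quotient map collapsing each leaf to a point, normalized so that the symmetry involution becomes $x\mapsto 1-x$ and so that $\pi(W^{s}(0)\cap S)=\tfrac12$. The quotient $f:=\pi\psi\pi^{-1}$ is then a well-defined, piecewise monotone, expanding map on $[0,1]\setminus\{\tfrac12\}$ satisfying $f(1-x)=1-f(x)$, i.e.\ a symmetric Lorenz map.

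To upgrade $f$ to a constant-slope model, I would invoke the Parry-type theorem for Lorenz maps (see \cite{Glen}, \cite{GlenHall}, \cite{Alseda}): any topologically transitive piecewise-expanding Lorenz map is semi-conjugate to a unique constant-slope Lorenz map whose slope $\beta$ equals $e^{h_{\text{top}}(f)}$. Because $f$ inherits symmetry and topological transitivity on its maximal invariant set (again from the dynamics on the attractor), the resulting model is the symmetric $F_\beta$ with discontinuity at $\tfrac12$; $\beta\in(1,2]$ since the two monotone branches cover $[0,1]$ at most twice. Composing the semi-conjugacy with $\pi$ yields the continuous surjection $\pi:I\to I_\beta$ intertwining $\psi$ and $F_\beta$, and all parameters entering $\beta$ reduce to $(\sigma,\rho,\mu)$ via the entropy of $\psi$.

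Finally, for the periodic-orbit lifting claim, suppose $x\in I_\beta$ has minimal period $n$. Pulling back through the constant-slope semi-conjugacy and through $\pi$, the set $\pi^{-1}(x)\subset I$ is a nonempty, compact, $\psi^{n}$-invariant union of strong stable leaves on which $\psi^{n}$ is continuous. Applying a Brouwer/arc fixed-point argument to $\psi^{n}$ restricted to one such leaf (a topological arc) produces a fixed point $p$ of $\psi^{n}$; by minimality of the period of $x$ downstairs, the minimal period of $p$ upstairs must be exactly $n$ (any smaller period would descend under $\pi$ to a period dividing $n$ and hitting $x$). I expect the main obstacle to be the middle paragraph, that is, rigorously producing the strong stable foliation and verifying that the quotient is genuinely one-dimensional and expanding on the open set $P$; everything else is a careful packaging of classical Lorenz-map results together with the fixed-point lifting.
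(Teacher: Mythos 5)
There is a genuine gap, and it sits exactly where you predicted: the middle paragraph. You assert that on the open set $P$ a dominated splitting and a $\psi$-invariant strong stable foliation ``can be read off from the eigenvalue data at the origin together with the analytic cone estimates of \cite{Pi}.'' No such cone estimates exist: the results of \cite{Pi} that the paper relies on (reproduced as Th.\ref{tali}) are purely topological --- existence of a topological rectangle cross-section, continuity of the return map off the arc $W=W^s(0)\cap R$, and a factor map onto \emph{some} subshift. They give no uniform hyperbolicity, no invariant cone field, and no contracting foliation for the genuine Lorenz equations on $P$. Establishing such a foliation is essentially what Tucker's computer-assisted proof does at a single parameter, and for the region $P$ it is precisely the paper's open Conjecture \ref{hyperbolicityconj} (the authors explicitly expect any proof to require a computational component). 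Your route would in fact prove a strictly stronger statement than the theorem --- a genuine one-dimensional quotient of the whole return map --- which is why it cannot be carried out with the available ingredients. The paper instead proceeds by a sequence of \emph{isotopic deformations} of the flow: it straightens the preimages of $W$, discards the ``Category $C$'' components, opens $W$ and the fixed points into rectangles to get a model map $h_v$ that is linear by construction, and then collapses the horizontal direction. The price of avoiding hyperbolicity is that the semiconjugacy $\pi$ is only defined on an invariant \emph{subset} $I\subseteq S$ (the deformations may lose part of the invariant set, which is why the theorem is stated with $I\subseteq S$ rather than as a quotient of $S$), and the periodic-orbit lifting must be done by a fixed-point-index argument that tracks the index through the homotopy from $h_v$ back to the actual return map $\psi_v$, rather than by Brouwer on a stable leaf.

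Two secondary issues. First, your appeal to the Parry-type constant-slope theorem needs transitivity of the quotient map $f$, but the paper shows the symmetric maps $F_\beta$ are \emph{not} transitive for $\beta<\sqrt{2}$, and these parameters do occur; the paper obtains constant slope directly because the straightened map $h_v$ is made affine on fibers by a further isotopy, and $\beta$ is then the slope of the resulting piecewise-linear map (determined by where the separatrices first meet $R$), not $e^{h_{\mathrm{top}}(f)}$. Second, your lifting argument at the end is fine in spirit, but minimality of the period upstairs does not follow from minimality downstairs alone in the paper's setting; the paper secures it by choosing the disc $D$ so that $h_v^j(D)\cap\overline{D}=\emptyset$ for $1\leq j<n$ and verifying this property is preserved under the homotopy back to $\psi_v$.
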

The idea of the proof is relatively simple. We begin by considering the parameter space $\{\rho\geq\max\{1,\frac{(\sigma+1)^2}{4\sigma}\}\}$ originally considered in \cite{Pi}. As proven at Proposition $1$ in \cite{Pi}, for every parameter in the said range the first-return map is defined and continuous. By carefully homotoping the said first return map we collapse its dynamics to $F_\beta$, whose properties listed above will be evident from the proof. Put simply, our argument proves there is a certain "dynamical core" of the attractor where the dynamics are essentially one-dimensional. 

Capitalizing on this idea, following the proof Th.\ref{first}, we proceed by studying the dynamics of the family $\{F_\beta\}_{\beta\in(1,2]}$, and pull them back to the Lorenz attractor. In particular, we will study the topological dynamics, kneading, and renormalization properties of the maps $F_\beta$ - see subsections \ref{subsec:symbolic_dynamics}, \ref{subsec:topmeasure}, and \ref{renorsec}. As we shall see, the family $\{F_\beta\}_{\beta\in(1,2]}$ forms a special class of Lorenz maps that we call the \textbf{symmetric $\beta$--transformations} (see, e.g., \cite{ParryL}). One immediate result that arises from our study of such maps is the following:
\begin{theorem}
    \label{second} Let $P$ be the parameter space from Th.\ref{first}. Then, there exists an open set of parameters in $O\subseteq P$ s.t. the following holds:
    
    \begin{itemize}
        \item For every $v\in O$ the corresponding Lorenz attractor includes infinitely many periodic orbits. 
        \item The said periodic orbits can only be destroyed via a homoclinic bifurcation as we vary $v$ in $O$. 
        \item Finally, given a parameter $p\in P$ where the Lorenz system generates a structurally unstable heteroclinic trefoil knot as in Fig.\ref{trefoil}, then $p$ is the accumulation point of homoclinic bifurcation sets in $P$.
    \end{itemize}

\end{theorem}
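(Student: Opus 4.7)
The overall strategy is to pull dynamics from the one-dimensional map $F_\beta$ back to the Lorenz flow using the semi-conjugacy $\pi$ from Theorem \ref{first}. Since $F_\beta$ has constant slope $\beta\in(1,2]$, its topological entropy is $\log\beta>0$, so $I_\beta$ always carries infinitely many periodic orbits. One may therefore take $O=P$ (or any open subset on which the constructions of Theorem \ref{first} deform smoothly); the final bullet of Theorem \ref{first} then says that each periodic orbit of $F_{\beta(v)}$ of minimal period $n$ lifts through $\pi^{-1}$ to at least one periodic orbit of $\psi$ of the same minimal period, which in turn corresponds to a closed orbit on the Lorenz attractor. This settles the first claim.

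For the second claim, the plan is to exploit the meaning of the discontinuity of $F_\beta$. Under the construction of Theorem \ref{first}, the discontinuity at $\tfrac{1}{2}$ is precisely the image of the one-dimensional stable manifold $W^s(0)$ of the origin in the cross-section $S$, so any orbit of $\psi$ meeting a preimage of $\tfrac{1}{2}$ corresponds to an orbit of the flow asymptotic to the origin, i.e.\ a homoclinic connection. As $v$ varies in $O$, a periodic orbit of $F_{\beta(v)}$ is determined by a finite word $w$ which remains admissible so long as its shifts do not cross the kneading invariants of $F_{\beta(v)}$ in the standard Parry ordering. Admissibility can only fail when a kneading sequence collides with a shift of $w$, meaning that the critical point of the interval map lands on the periodic orbit under iteration. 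Pushed through $\pi$, this says that the lifted periodic orbit on the attractor collides with $W^s(0)$, producing a homoclinic bifurcation. Hence no other mechanism can destroy a lifted periodic orbit within $O$.

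For the third claim I would use the symmetry of both the Lorenz flow and of $F_\beta$. The structurally unstable heteroclinic trefoil corresponds to a parameter $p\in P$ at which both branches of the heteroclinic connection close up simultaneously; by construction of $\pi$, this forces both kneading invariants of $F_{\beta(p)}$ to attain extremal values among all admissible symmetric kneadings. The plan is then to show that every kneading-space neighborhood of this extremal pair contains admissible pairs of the form \emph{finite block ending in the discontinuity symbol}, which by definition correspond to homoclinic orbits of the flow. Monotonicity of kneading invariants in $\beta$, together with density of preperiodic kneadings in the admissible kneading space of symmetric $\beta$-transformations (a fact to be developed in \S\ref{renorsec}), yields the required accumulation at the one-dimensional level; transversality of $v\mapsto\beta(v)$ at $p$ then promotes this to accumulation of genuine homoclinic bifurcation sets in $P$.

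The hardest step, I expect, will be this last promotion. Identifying accumulation in kneading space is essentially symbolic bookkeeping once the symmetric $\beta$-transformation framework is in place, but passing from \emph{parameters whose kneading is of homoclinic type} to \emph{parameters at which the Lorenz flow itself undergoes a homoclinic bifurcation} requires controlling how the cross-section $S$, the first-return map $\psi$, and $W^s(0)$ deform jointly with $(\sigma,\rho,\mu)$. One needs $v\mapsto\beta(v)$ to be not only continuous but locally surjective near $p$, and one needs the homoclinic-type kneading to genuinely witness an intersection of $W^s(0)$ with the periodic orbit rather than some degenerate coincidence. Both should follow from careful bookkeeping of the homotopy underlying Theorem \ref{first}, but making this precise is where I expect the real work to lie.
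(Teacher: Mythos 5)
Your overall architecture---pull periodic orbits of $F_\beta$ back through the semi-conjugacy, identify the discontinuity $\tfrac{1}{2}$ with $W^s(0)$, and read off homoclinic bifurcations from collisions with the critical point---is the same as the paper's, which proves this statement via Cor.\ref{mixingattractor} and Th.\ref{expl}. But the two routes diverge in how each bullet is actually closed, and in each case the paper's choice is doing work that your version leaves open. First, you take $O=P$; the reduction of Th.\ref{first} (i.e.\ Th.\ref{reduct}) is only established for $v$ sufficiently close to the trefoil set $T$, and the paper's $O$ is such a neighborhood, on which Lemma \ref{betato2} forces $\beta(v)>\tfrac{1+\sqrt5}{2}$ so that period $3$, hence all periods, exist by the Sharkovsky-type result (Cor.\ref{cor:all_periods}). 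Your substitute---``positive entropy $\log\beta$ implies infinitely many periodic orbits''---is true for these maps but not for that reason: a subshift of positive entropy need not have any periodic points, so you would still need the transitivity/renormalization structure of $\Sigma_{F_\beta}$ to conclude.

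Second, and more substantively: your argument for ``destroyed only by homoclinic bifurcation'' is purely symbolic (admissibility of the word $w$ fails only when a kneading sequence crosses a shift of $w$) and you then ``push through $\pi$.'' The paper does not rely on admissibility at the flow level; it encloses each periodic orbit in a disc $D$ whose boundary lies in $\bigcup_n\psi_v^{-n}(W)$ and shows the Fixed Point Index of $\psi_v^n$ on $D$ is $-1$ and is homotopy-invariant, so the orbit can only die by colliding with $\partial D$, i.e.\ with preimages of $W^s(0)$ (this is the content of Prop.\ref{persistence} and Th.\ref{expl}). This index machinery is precisely the ``promotion'' step you correctly identify as the hard part; symbolic admissibility alone does not rule out, say, a saddle-node collision of two flow orbits sharing an itinerary, whereas the index argument does control what survives in $D$. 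Third, for the accumulation of homoclinic bifurcations at trefoil parameters, the paper does not need local surjectivity or transversality of $v\mapsto\beta(v)$: it uses that $\beta(v)<2$ off $T$ while $\beta(v)\to2$ as $v\to T$ (Lemma \ref{betato2} and Cor.\ref{mixingattractor}), so $\beta(v)$ must cross infinitely many of the explicit thresholds $\varepsilon_i\to2$ at which the new itinerary $(10^i)^\infty$ is born (Lemma \ref{lem:eps_knead}); each such birth is a collision with the critical point and hence, by Th.\ref{expl}, a homoclinic bifurcation of the flow. Replacing your density-of-preperiodic-kneadings-plus-transversality step with this concrete $\varepsilon_i$ mechanism is what makes the third bullet go through without the delicate parameter-dependence analysis you anticipate.
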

For a proof, see both Th.\ref{expl} and Cor.\ref{mixingattractor}. At this point we remark Th.\ref{second} correlates nicely with the numerical studies of the Lorenz system. To explain why, recall the notion of \textbf{$T$--Points} for the Lorenz system. Without going into the exact details and avoiding the precise definition, a $T$--point is a parameter $p\in P$ where the Lorenz system generates a structurally unstable heteroclinic trefoil knot (the existence of parameters in $P$ where the Lorenz system generates heteroclinic trefoil knots was proven in Th.1 in \cite{Pi}). As observed numerically, such parameters are often the accumulation point of homoclinic bifurcation sets (see, for example, \cite{BSS}). In light of the above, the third assertion of Th.\ref{second} possibly serves as an analytic explanation behind the complex bifurcation phenomena observed around some $T$--points.

Following that, inspired by the ideas of \cite{GS} we show how our one-dimensional reduction can be used to analyze the topology of the Lorenz attractor. To state that theorem, we first note the definition of renormalization in the special case of maps $F_\beta\colon[0,1]\to[0,1]$, $\beta\in(1,2]$ boils down to the following one: a map $F_{\beta}$ is \textbf{renormalizable} if there is a proper sub-interval $[u, v]$ of $[0,1]$, centered at $\frac{1}{2}$, and an integer $k > 1$ such that the map $F_\beta^k$ restricted to the interval $[u,v]$ is conjugate to some other $F_{\beta'}$, $\beta'\in(1,2]$. The interval $[u, v]$ is then called the \textbf{renormalization interval} and is uniquely determined by the number $k$ (for the general definition and an intuitive example, see Def.\ref{defn:renormalization} and Fig.\ref{fig:renormalization}).

Similarly, recall a \textbf{Template} for a chaotic attractor is a branched surface which encodes all the knot types realized as periodic orbits on the attractor (for the precise definition, see Def.\ref{deftemp} and the discussion immediately following it). As under certain conditions Templates can be embedded inside three-dimensional chaotic attractors (i.e., the Birman-Williams Theorem - see \cite{BW}), one could think of Templates as the "skeleton" of such complex invariant sets - both topologically and dynamically. Combining Template Theory with Renormalization Theory, we prove the following result in Sect.\ref{renorsec}:
\begin{theorem}
    \label{third} Assume there exists a parameter $v\in P$ s.t. its corresponding Lorenz map $F_\beta$ w.r.t. Th.\ref{first} has a renormalization $G=F_\beta^k|_{[u,v]}$ that is conjugate to the doubling map $2x\Mod{1}$, $x\in[0,1]$. Then, $\beta=\sqrt[2^i]{2}$ for some $i\in\mathbb{N}$, and moreover, there exists a Template $\tau$ s.t. every knot type encoded by $\tau$ is realized as a periodic orbit on the Lorenz attractor corresponding to $v$. 
\end{theorem}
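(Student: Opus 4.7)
The plan is to split the proof into two logically independent parts: first, identifying the value of $\beta$ from the constant-slope structure of $F_\beta$, and second, transferring the knot-theoretic information from the doubling map up through the semi-conjugacy of Th.\ref{first} to the Lorenz attractor.

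For the first part, I would begin by observing that $F_\beta$ has constant slope $\beta$ on each of its two branches, so $F_\beta^k$ has constant slope $\beta^k$ on each of its monotone branches, and the renormalization $G=F_\beta^k|_{[u,v]}$ is, after affine rescaling, a symmetric Lorenz map of constant slope $\beta^k$. For any such map the topological entropy equals the logarithm of the slope (see, e.g., \cite{ParryL}), so by conjugacy-invariance of topological entropy $\beta^k=2$. To pin down $k$, I would exploit the renormalization combinatorics of the symmetric family $\{F_{\beta'}\}_{\beta'\in(1,2]}$ developed in Sect.\ref{renorsec}: since a renormalization within this family produces again a symmetric $\beta$-transformation, and since the doubling map sits at the root of the kneading order as the unique full symmetric Lorenz map, the only combinatorial pattern for which a single $k$-renormalization can land on the doubling map is the $i$-fold period-doubling renormalization, giving $k=2^i$ and hence $\beta=\sqrt[2^i]{2}$.

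For the template part, the periodic orbits of the doubling map are in bijective correspondence with the periodic orbits on the standard Lorenz template $\tau_L$ through the Birman-Williams construction (see \cite{BW}), and moreover every knot type carried by $\tau_L$ is so realised. Through the conjugacy $G \sim 2x\Mod{1}$ the same bijection holds for the periodic orbits of $F_\beta^k$ lying in $[u,v]$. Using the semi-conjugacy $\pi:I\to I_\beta$ together with the periodic-orbit lifting property from Th.\ref{first}, every periodic orbit of $G$ of minimal period $n$ pulls back to a periodic orbit of $\psi^k$ of minimal period $n$, and hence to a periodic orbit of the Lorenz flow at parameter $v$. Thickening these lifted orbits along the stable foliation of the flow then produces a branched surface $\tau$ ambient isotopic to $\tau_L$, and by construction every knot type carried by $\tau$ is realised as such a periodic orbit in the Lorenz attractor.

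The main obstacles I expect are in the renormalization step and in the template embedding. For the former, one must precisely identify which kneading-admissible sequences for a symmetric $\beta$-transformation can be renormalized in a single step to the doubling map: this requires the detailed combinatorics of kneading sequences for the symmetric family, along with monotonicity of the kneading invariant in $\beta$, in order to exclude any non-period-doubling combinatorial patterns. For the latter, the delicate issue is to show that the branched surface built from the lifted periodic orbits is genuinely ambient isotopic to $\tau_L$ as an embedded object in the ambient space of the Lorenz flow, rather than merely combinatorially equivalent; this will force one to use the detailed cross-section topology constructed in \cite{Pi} together with the geometry of the first-return map $\psi$.
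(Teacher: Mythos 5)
Your first part starts the same way the paper does: the slope (equivalently, entropy) of the rescaled renormalization must be $2$, forcing $\beta^k=2$. But the heart of that half of the statement is precisely the step you defer to ``renormalization combinatorics'', namely that $k$ must be a power of $2$. Asserting that ``the only combinatorial pattern \dots is the $i$-fold period-doubling renormalization'' is restating the conclusion, not proving it. The paper's Lemma \ref{lem:doubling_conjugate} does the actual work: since a renormalizable $F_\beta$ has a primary $2(1)$-cycle, $k$ must be even; if $\beta=\sqrt[k]{2}\in(\beta_{i+1},\beta_i)$ strictly, then $2^i<k<2^{i+1}$, the left endpoint $u$ of the renormalization interval is periodic for $F_\beta$ of some period $s\leq k$, and Prop.\ref{prop:beta_seq} forces $s=2^j$ with $j\leq i-1$; comparing $u$ with the endpoint $\widetilde{u}$ of the canonical renormalization $G_{\beta,i}=(F_\beta^{2^i},F_\beta^{2^i})$ then yields a contradiction. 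Without some argument of this kind your proof of $\beta=\sqrt[2^i]{2}$ is incomplete. You also omit the second conclusion of Lemma \ref{lem:doubling_conjugate} (continuity of $F_\beta^j$ on $[u,\tfrac12)$ and $(\tfrac12,v]$ for all intermediate $j$), which is not decoration: it is what guarantees, back in the flow, that the intermediate iterates of the sub-rectangles $R_0',R_1'$ never hit $W$, so that $\psi_v^k$ restricted to them is a continuous topological horseshoe.

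The template half has a more serious gap. You invoke Birman--Williams for the flow at $v$, but that theorem requires hyperbolicity on the invariant set, which is exactly what is not known for the actual Lorenz attractor. The paper's proof of Th.\ref{templateth} circumvents this by isotoping the flow (splitting the origin by a pitchfork bifurcation, opening $W$, and straightening the return map) into a genuinely hyperbolic suspended Smale horseshoe $H$, applying Birman--Williams \emph{there}, and then transporting every periodic orbit back to the original flow without changing its knot type via the uncollapsibility/unremovability results of \cite{Han}. Your alternative --- lift periodic orbits of $G$ through the semi-conjugacy $\pi$ of Th.\ref{first} and thicken them along ``the stable foliation of the flow'' --- does not work as stated: the semi-conjugacy controls itineraries and minimal periods only, not embeddings, so it gives you no control over the knot type of the lifted orbit; and no stable foliation for the genuine flow is available to thicken along. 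You correctly flag the ambient-isotopy question as delicate, but the paper in fact does not prove $\tau$ is isotopic to the standard Lorenz template --- it only shows $\tau$ is some $L(k,k)$ template all of whose knots are also carried by $L(0,0)$, using the symmetry of the system and persistence of orbits as $v\to T$. Claiming ambient isotopy to $\tau_L$ is stronger than what is established and is likely false in general, since the strips may wind around the wing centers, producing the half-twists that make $k\neq0$.
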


The proof of Th.\ref{third} is given in Th.\ref{templateth} and Lemma \ref{lem:doubling_conjugate}. It is achieved by direct analysis of the flow at such parameters, combined with the application of two-dimensional tools from \cite{Han}. In fact, we prove more than that - as our analysis is mostly topological, as a bonus it implies the Template $\tau$ actually belongs to a relatively famous class of relatively well-understood Templates: Lorenz Templates. Consequently, it follows all the knots $\tau$ encodes are prime knots (see Cor.\ref{prime}). For more details on Lorenz Templates and the topology of the knots they encode, see the surveys in both \cite{Pi2} and \cite{Deh}.

This paper is organized as follows. We begin with Section \ref{reductionsect}, where we survey several basic facts on the Lorenz system, followed by a rigorous reduction of the Lorenz attractor to the $\beta$--transformations as stated in Th.\ref{first}. Following that, in Sections \ref{subsec:symbolic_dynamics}, \ref{subsec:topmeasure} and \ref{renorsec} we survey and study the dynamics of the $\beta$--transformations, and apply them to study the Lorenz attractor. It is in these sections that we prove Th.\ref{second} and Th.\ref{third}. We conclude this paper by discussing the wider context of our results, and how they can possibly be extended to more general $C^k$ perturbations of the Lorenz attractor - as well as to a possibly wider class of three-dimensional flows.

Before we begin, we remark that even though it may not be clear from the text below, many of our ideas were inspired by the Chaotic Hypothesis, originally introduced in \cite{gal}. Briefly speaking, the Chaotic Hypothesis conjectures that for practical purposes, every chaotic attractor is hyperbolic. As will be clear from the proof of Th.\ref{first}, one could interpret our results as saying that for parameter values $(\sigma,\rho,\mu)\in P$, the dynamics on the corresponding Lorenz attractor are complex at least like a hyperbolic dynamical system that can be easily constructed from the map $F_\beta$ - for the precise details, see the map $h_v$ proof of Th.\ref{reduct}. As such, this paper - and in particular Th.\ref{first} and Th.\ref{third} - can be interpreted as a step towards the proof of the Chaotic Hypothesis for three-dimensional flows.

\section{A one-dimensional reduction for the Lorenz Attractor:}\label{reductionsect}
From now on, given three positive parameters  $\sigma,\rho,\mu\in\mathbf{R}$, by the Lorenz system we will always mean the flow generated by the following system of differential equations:
\begin{equation} \label{Vect}
\begin{cases}
\dot{x} = \sigma(y-x) \\
 \dot{y} = x(\rho-z)-y\\
 \dot{z}=xy-\mu z
\end{cases}
\end{equation}

\begin{figure}[h]
\centering
\begin{overpic}[width=0.45\textwidth]{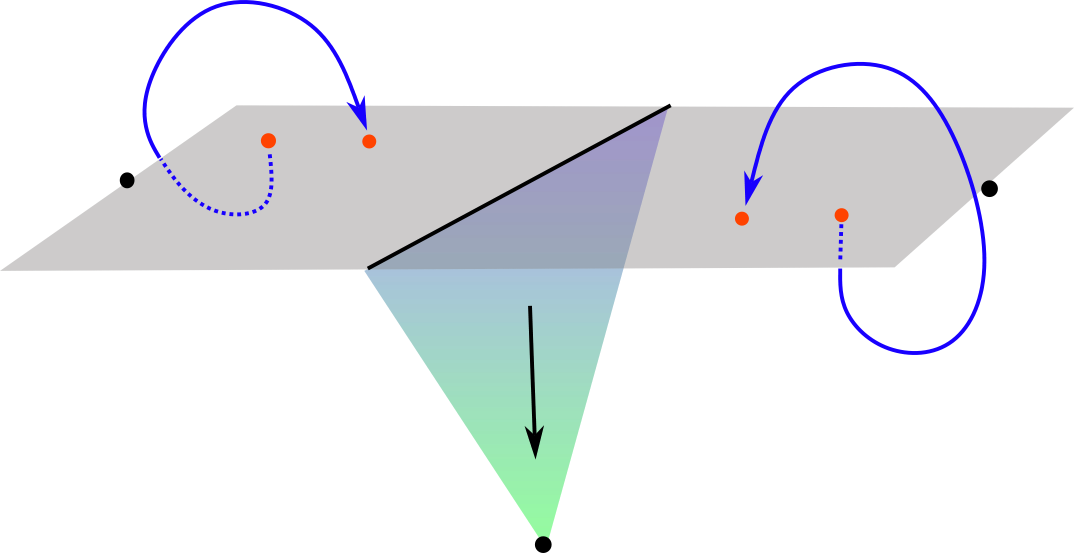}
\put(70,200){$R_0$}
\put(900,450){$R_1$}
\put(390,330){ $W$}
\put(450,0){ $0$}
\put(30,340){$p_0$}
\put(950,330){$p_1$}
\end{overpic}
\caption{\textit{The cross-section $R$ at parameters $p\in P$, along with the action of the flow. The sub-rectangles $R_{0}$ and $R_{1}$ correspond to the components of $R\setminus W$.}}
\label{cross}
\end{figure}

We will often denote the vector field corresponding to the parameters $\sigma,\rho, \mu$ by $L_{\sigma,\rho,\mu}$ - and when $(\sigma,\rho\mu)$ are implicit, we denote the said parameter by $v$, and the corresponding vector field by $L_v$. By direct computation, when $\rho>1,\mu>0$ the vector field $L_{\sigma,\rho,\mu}$ satisfies the following properties (see \cite{Lo}):
\begin{itemize}
    \item The Lorenz system is symmetric - i.e., $L_{\sigma,\rho,\mu}(x,y,z)=-L_{\sigma,\rho,\mu}(-x,-y,z)$.
    \item The origin, $0$, is a real saddle with a two-dimensional stable manifold, and a one-dimensional unstable manifold. We will often denote this two dimensional invariant manifold by $W^s(0)$.
    \item  There also exist $p_1,p_0$, two fixed points given by the formulas $(\pm\sqrt{\mu(\rho-1)},\pm\sqrt{\mu(\rho-1)},\rho-1)$. 
    \item There exists an ellipsoid $V$ s.t. the trajectory of every initial condition $s\in\mathbb{R}^3$ eventually enters $V$, and never escapes it (for the details, see either \cite{Lo} or Appendix $C$ in \cite{Sparbook}).
\end{itemize}

We now recall the following collection of results on the global dynamics of the Lorenz system, proven in Prop.2.1, Lemma 2.2 and Th.1.1 in \cite{Pi}:
\begin{theorem}
    \label{tali} There exists an open, three-dimensional set of positive parameters $(\sigma,\rho,\beta)$ defined by the set $P=\{\rho>max\{1,\frac{(\sigma+1)^2}{4\sigma}\}\}$ s.t. for all $v=(\sigma,\rho,\beta)\in P$ the following holds:
    \begin{enumerate}
        \item The vector field $L_v$ has a cross-section $R$ such that:
        \begin{itemize}
            \item $R$ is a topological rectangle, and the vector field $L_v$ is transverse to $R$ at its interior. Moreover, the fixed points $p_1,p_0$ are on its boundary, while $0\not\in R$ (see the illustration in Fig.\ref{cross}).
            \item The invariant manifold $W^s(0)$ intersects $R$ in an arc $W$ homeomorphic to a curve. As such, $W$ partitions $R$ into two sub-rectangles, $R_1$ and $R_0$ which include $p_1$ and $p_0$ on their boundary (respectively) - see the illustration in Fig.\ref{cross}. 
            \item For all $v\in P$, the first-return map of $L_v$, $\psi_v:R_0\cup R_1\to R$, is well defined and continuous. Moreover, $W$ forms a discontinuity curve for the first-return map  (see the illustration in Fig.\ref{splitting11}).
            \item Any periodic orbit for $L_v$ intersects $R_0\cup R_1$ transversely at least once, and given $s\in\partial R$ s.t. $\psi_v(s)=s$, then $s$ is a fixed point. Moreover, the dynamics of $\psi_v$ on its invariant set in $R_0\cup R_1$ can be factored to some subshift on two symbols.
        \end{itemize}
        \item There exists a collection of parameters $T\subseteq P$, s.t. for all $p\in T$, $L_p$ generates a pair of heteroclinic trajectories connecting $0$ and $p_0,p_1$, as illustrated in Fig.\ref{trefoil}. At such parameters the dynamics of $\psi_p$ on its invariant set include infinitely many periodic orbits, and the first-return map is semi-conjugate on its invariant set in $R_0\cup R_1$ to the double-sided shift $\sigma:\{0,1\}^\mathbb{Z}\to\{0,1\}^\mathbb{Z}$. We refer to such parameters as \textbf{trefoil parameters}.
    \end{enumerate}
\end{theorem}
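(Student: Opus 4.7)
The plan is to split the result into two parts: the cross-section construction and first-return map of item~(1), and the existence of trefoil parameters of item~(2). The whole argument rests on three pillars: the linearizations of $L_v$ at its three equilibria, the existence of the trapping ellipsoid $V$, and the $\mathbb{Z}_2$ symmetry $L_v(x,y,z) = -L_v(-x,-y,z)$.

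First I would carry out the local analysis. Diagonalizing the Jacobian at $0$ gives three real eigenvalues $\lambda_1 < \lambda_2 < 0 < \lambda_3$, so $W^s(0)$ is two-dimensional and $W^u(0)$ is one-dimensional. The condition $\rho > (\sigma+1)^2/(4\sigma)$ cutting out $P$ is precisely what forces the Jacobians at $p_0, p_1$ to have a pair of complex conjugate eigenvalues together with a real one, so locally these fixed points spiral inside an invariant plane. Guided by this, I would choose $R$ to be a topological rectangle transverse to $L_v$ with $p_0, p_1 \in \partial R$ and $0 \notin R$; a natural concrete candidate is a piece of the plane $z = \rho - 1$, for which transversality reduces to a sign check on (\ref{Vect}). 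Using the trapping ellipsoid $V$ to confine orbits, combined with the observation that the only obstruction to an orbit of $s \in R$ returning to $R$ is $s$ lying on a stable manifold of one of the equilibria, I would show that $\psi_v$ is defined and continuous on $R \setminus W$, with $W := W^s(0) \cap R$ a single arc splitting $R$ into $R_0 \ni p_0$ and $R_1 \ni p_1$ by an elementary transversality argument. The factorization to a subshift on $\{0, 1\}$ is then the usual itinerary coding with respect to the partition $\{R_0, R_1\}$, and the claim about fixed points of $\psi_v$ on $\partial R$ follows from ruling out tangential returns, which forces any such fixed point to coincide with one of $p_0, p_1$.

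The main obstacle will be item~(2), the existence of trefoil parameters. Here I would use a two-parameter continuation argument driven by symmetry. Let $\gamma^+, \gamma^-$ be the two branches of $W^u(0)$, exchanged by $(x,y,z)\mapsto(-x,-y,z)$, and define $\Pi(v)$ to be the first intersection of $\gamma^+$ with $R$. By smooth dependence of the flow on $v$, the map $\Pi\colon P \to R$ is continuous on the open set where this first intersection is transverse. The trefoil condition amounts to $\Pi(v) \in W^s(p_1) \cap R$, since the symmetry automatically yields the mirror connection from $\gamma^-$ to $p_0$. To produce such $v$, I would restrict to a suitable two-dimensional slice of $P$ and track $\Pi$: by moving from a regime where $\Pi(v)$ lies on one side of the codimension-$1$ curve $W^s(p_1) \cap R$ (most easily controlled in an asymptotic corner of $P$, e.g.~large $\rho$) to a regime where it lies on the other side, an intermediate-value argument along a continuous path in parameter space forces the two to cross, giving the desired locus $T$. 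Finally, the semi-conjugacy to the full two-sided $2$-shift at $p \in T$ would follow from a shadowing/horseshoe argument: the heteroclinic trefoil together with the transverse return map $\psi_p$ on $R_0 \cup R_1$ furnish the building blocks from which every prescribed symbol sequence can be realized as the itinerary of some orbit, yielding infinitely many periodic orbits as an immediate corollary.
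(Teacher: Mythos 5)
First, a point of bookkeeping: this paper does not prove Theorem \ref{tali} at all --- it is imported verbatim from \cite{Pi} (Prop.~2.1, Lemma~2.2 and Th.~1.1 there), so there is no in-paper argument to match your proposal against; what follows compares your sketch with what is actually needed (and with what \cite{Pi} does). Your treatment of item (1) is broadly the right outline --- the eigenvalue condition $\rho>\frac{(\sigma+1)^2}{4\sigma}$ does exist to force spiralling at $p_0,p_1$, and the cross-section, transversality and itinerary coding go roughly as you describe --- though the assertion that ``the only obstruction to an orbit returning to $R$ is lying on a stable manifold'' is itself the content of the well-definedness claim, not a reduction of it: the trapping ellipsoid confines orbits but does not by itself force recurrence to $R$, and the return of orbits near $p_0,p_1$ is exactly what the spiralling must be used to establish.

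The genuine gaps are in item (2). Your intermediate-value continuation of $\Pi(v)$ across $W^s(p_1)\cap R$ would, even if the analytic issues were resolved (where $\Pi$ is continuous, why $W^s(p_1)\cap R$ is a single codimension-one curve, and in which ``asymptotic corner'' of $P$ the two sides are actually controlled), at best produce \emph{some} symmetric heteroclinic cycle. It says nothing about the knot type of that cycle, and the trefoil configuration of Fig.\ref{trefoil} is not decorative: the semi-conjugacy of $\psi_p$ to the full two-sided $2$-shift is derived in \cite{Pi} precisely from the trefoil's topology, by blowing up the knotted heteroclinic cycle and isotoping the return map to a hyperbolic ``fake horseshoe'' whose periodic orbits are then pulled back via fixed-point-index arguments (the same mechanism this paper reuses in Prop.\ref{persistence}). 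A heteroclinic cycle of a different knot type --- e.g.\ the figure-eight configuration of Fig.\ref{fig8} --- yields different symbolic dynamics on more than two symbols, as the Discussion section itself points out. Relatedly, your closing appeal to ``a shadowing/horseshoe argument'' is precisely the step that cannot be carried out in the standard way: no hyperbolicity of $\psi_p$ on its invariant set is available for the actual Lorenz flow, which is why \cite{Pi} substitutes the topological isotopy-plus-index argument for a hyperbolic shadowing one. As written, your proposal would establish a heteroclinic connection of unknown topology and then assert, rather than prove, the symbolic-dynamics conclusion.
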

\begin{figure}[h]
\centering
\begin{overpic}[width=0.5\textwidth]{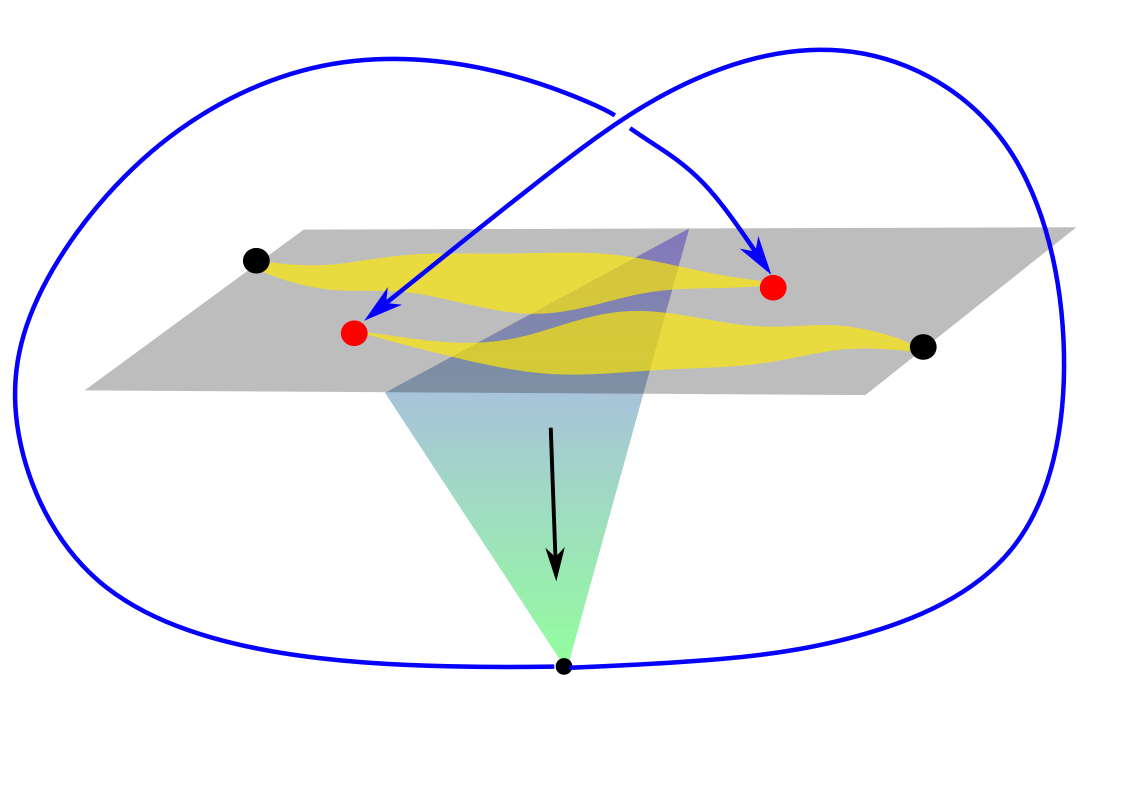}
\put(170,400){$R_0$}
\put(830,450){$R_1$}
\put(435,280){ $W$}
\put(450,0){ $0$}
\put(170,460){$p_0$}
\put(850,390){$p_1$}
\end{overpic}
\caption{\textit{The first return map.}}
\label{splitting11}
\end{figure}

Before moving on, we remark that as proven in \cite{Pi}, the set $T\subseteq P$ of trefoil parameters includes an open set (in detail see Lemma 2.2). That being said, based on the numerical evidence one should also expect the set $T$ to include components which are singletons. In detail, as was observed numerically, for $v_0=(\beta,\sigma,\rho)=(\frac{8}{3},10.2,30.38)$ the Lorenz system appears to create a structurally unstable heteroclinic trefoil knot (see Fig.5 in \cite{BSS}). The parameter $v_0$ is sometimes referred to as the \textbf{first $T$ point} - for more details, see \cite{BSS}, \cite{KOC} and the references therein.

\begin{figure}[h]
\centering
\begin{overpic}[width=0.4\textwidth]{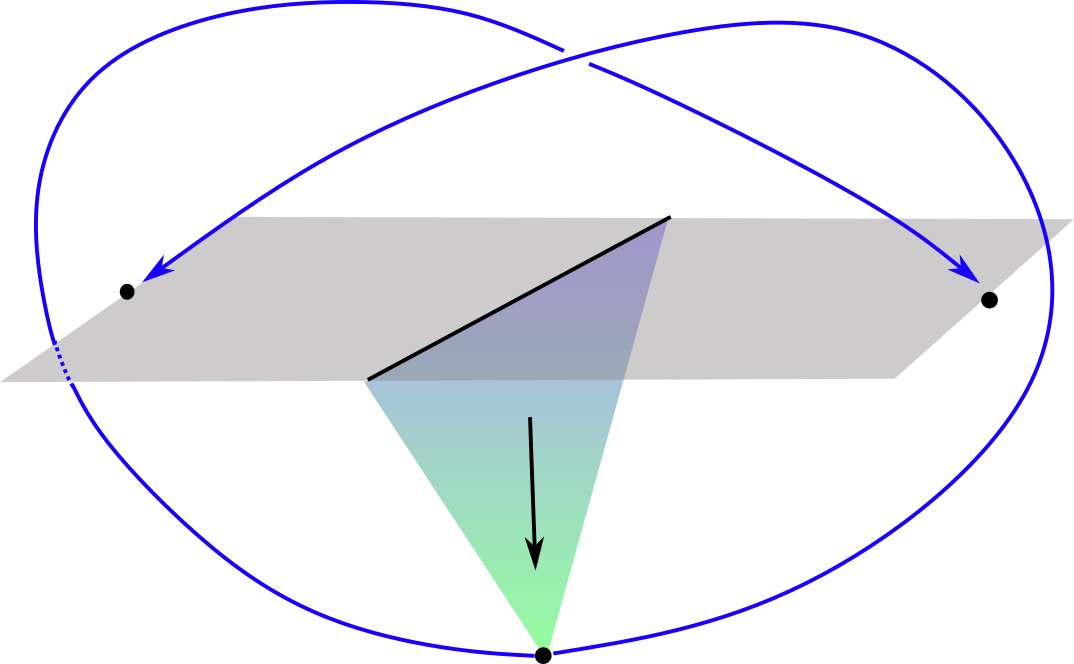}

\end{overpic}
\caption{\textit{The cross-section $R$ and the heteroclinic trajectories connecting the origin to the fixed points $p^\pm$ at trefoil parameters $p\in P$.}}
\label{trefoil}
\end{figure}

Having reviewed the necessary prerequisites on the Lorenz system, we are now ready to begin. To this end, given any parameter $v\in P$ recall we denote by $R_{0}$ and $R_{1}$ the components of $R\setminus W$, and further recall we denote the first-return map associated with the vector field $L_v$ by $\psi_v:R_{0}\cup R_{1}\to R$ (see the illustration in Fig.\ref{cross} and Fig.\ref{RECT}). We first prove the following technical fact, which proves that as we perturb parameters $p\in T$ to some other $v\in P$, the dynamical complexity of the attractor persists in some form:
\begin{proposition}
    \label{persistence}
    Let $p\in T$ be a trefoil parameter for the Lorenz system, and let $k,n_1,...,n_k>0$ be some natural numbers. Then, if $v\in P$ is sufficiently close to $p$ the first return map $\psi_v$ has $k$ periodic orbits of minimal periods $n_1,...,n_k$ in $R$.
\end{proposition}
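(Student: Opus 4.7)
The plan is to combine the symbolic dynamics at $p$ provided by Theorem \ref{tali} with continuous dependence of the first-return map on the parameter, using a topological covering-relation argument. First I would use the semi-conjugacy of $\psi_p$ with the full two-sided shift on two symbols to select $k$ periodic orbits $\gamma_1,\dots,\gamma_k \subseteq R_{0}\cup R_{1}$ of $\psi_p$ with minimal periods $n_1,\dots,n_k$; each such orbit consists of finitely many points lying in the interior of $R_0\cup R_1$, in particular at strictly positive distance from the discontinuity arc $W$.

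Next, for each $\gamma_i$ I would build a cyclic chain of pairwise disjoint closed topological disks $D_{i,0},\dots,D_{i,n_i-1}\subseteq R_0\cup R_1$, realized as the cylinder sets associated with the symbol word of $\gamma_i$ in the Markov-like partition $\{R_0,R_1\}$. The semi-conjugacy with the full shift forces $\psi_p$ to map $D_{i,j}$ across $D_{i,j+1\bmod n_i}$ in the sense of a topological covering relation of Zgliczy\'nski type, so that a Brouwer / topological-degree argument yields a $\psi_p^{n_i}$-fixed point inside $D_{i,0}$. The pairwise disjointness of the disks around the cycle then forces the minimal period of this fixed point to equal $n_i$, not a proper divisor. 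The crucial feature here is that these covering relations are \emph{open} conditions on the map in the uniform topology on a compact neighborhood of each $\gamma_i$.

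Because the vector field $L_v$ depends smoothly on $v$, and because the cross-section $R$ of Theorem \ref{tali} can be taken uniform over a small neighborhood of $p$ in $P$, the first-return maps $\psi_v$ converge to $\psi_p$ uniformly on any compact subset of $R_0\cup R_1$ kept bounded away from the (slightly moving) discontinuity arc $W_v$. For $v$ sufficiently close to $p$ this uniform convergence preserves all $k$ of the covering chains constructed above, and the same topological fixed-point argument yields $k$ periodic orbits of $\psi_v$ with the prescribed minimal periods $n_1,\dots,n_k$.

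The main obstacle I expect is not the persistence step itself but the construction in the second paragraph: one must verify that the symbolic cylinder sets associated to the $\gamma_i$ can in fact be chosen as honest topological disks whose $\psi_p$-images correctly cross one another, rather than as merely abstract or measure-theoretic objects, and one must control the minimal period through the depth of the symbolic refinement. This should be done by realizing the partition geometrically as the connected components of $R_0\cup R_1$ cut by iterated preimages of $W$ under $\psi_p$, and exploiting the one-dimensional-like topological expansion of $\psi_p$ transverse to $W$ — both of which are already present in the geometric construction of the semi-conjugacy in \cite{Pi}.
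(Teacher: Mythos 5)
Your overall strategy coincides with the paper's: localize each periodic orbit of $\psi_p$ in a cycle of pairwise disjoint topological disks bounded by iterated preimages of $W$, extract a fixed point of the $n_i$-th return by a degree argument, force the minimal period by the disjointness of the iterates, and then persist the whole configuration to nearby $v$ by stability of the degree under perturbations that keep the fixed-point set away from the boundary. Your last step is essentially the paper's homotopy $g_t$ from $\psi_p$ to $\psi_v$ combined with the homotopy invariance of the Fixed Point Index, and that part of your plan is sound.

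The genuine gap is in your second paragraph: the claim that the semi-conjugacy of $\psi_p$ with the full shift \emph{forces} Zgliczy\'nski-type covering relations between the cylinder disks. This implication runs in the wrong direction. A continuous surjection from the invariant set onto the shift that intertwines the dynamics constrains itineraries but says nothing about how $\psi_p$ carries a two-dimensional disk across another one in the degree-theoretic sense; covering relations are the standard \emph{hypothesis} from which one derives such a semi-conjugacy, not a consequence of it. (A factor map onto the full shift does not even guarantee that a periodic sequence has a periodic preimage of the same minimal period: the preimage of a periodic sequence is merely a compact set invariant under the corresponding power of $\psi_p$.) At a trefoil parameter the images $\psi_p(R_0)$ and $\psi_p(R_1)$ are pinched at the fixed points $p_0$, $p_1$ (see Fig.\ref{RECT}), and no expansion of $\psi_p$ transverse to $W$ is available a priori, so the crossing geometry you need cannot simply be read off. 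The paper closes exactly this gap by a different device: it imports from \cite{Pi} the isotopy that blows up the heteroclinic trefoil and deforms $\psi_p$ to the hyperbolic fake horseshoe map $f$, chooses the disk $D$ so that its defining properties are preserved along that isotopy, and then \emph{computes} the Fixed Point Index of $\psi_p^n$ on $D$ as that of $f^n$, using the eigenvalue structure of the Jacobian of the hyperbolic model (all eigenvalues positive, one in $(0,1)$ and one in $(1,\infty)$, so the index is strictly negative). If you replace your appeal to the semi-conjugacy by this deformation-to-the-hyperbolic-model computation, your argument becomes the paper's proof.
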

\begin{proof}
The proof of Prop.\ref{persistence} will be completely two-dimensional. To begin, we first recall the Fixed Point Index, following Ch.VII.5 in \cite{Dold}. Consider a disc $V$ and a continuous map $f:V\to\mathbb{R}^2$ with no fixed points in $\partial V$ - then, the Fixed Point Index of $f$ in $V$ is the degree of $f(x)-x$ in $V$. As proven in Prop.VII.5.5 in \cite{Dold}, when $f$ has no fixed points in $V$ the Fixed Point Index is $0$. The reason we are interested in the Fixed Point Index is due to its homotopy invariance property. Specifically, given a homotopy of continuous maps $f_t:V\to\mathbb{R}^2$, $t\in[0,1]$ set $Fix=\{(x,t)|f_t(x)=x\}$ - then, if $Fix$ is compact in $V\times[0,1]$ the Fixed Point Index of $f_1$ in $V$ is the same as that of $f_0$.

\begin{figure}[h]
\centering
\begin{overpic}[width=0.4\textwidth]{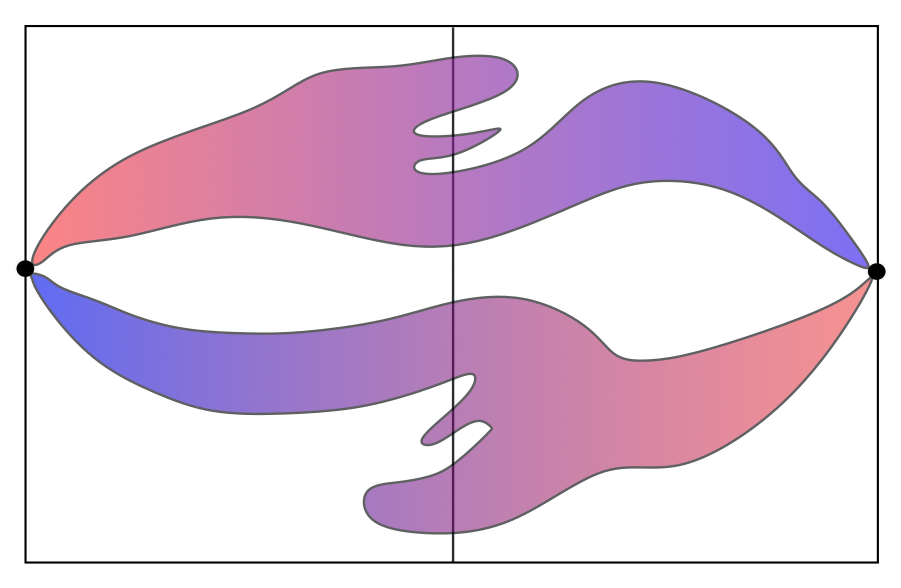}
\put(260,70){$R_0$}
\put(800,570){$R_1$}
\put(425,330){ $W$}
\put(-40,350){$p_0$}
\put(1000,350){$p_1$}
\put(200,460){$\psi_p(R_0)$}
\put(700,200){$\psi_p(R_1)$}
\end{overpic}
\caption{\textit{The cross-section $R$ at trefoil parameters $p\in P$. Again, the sub-rectangles $R_{0}$ and $R_{1}$ correspond to the components of $R\setminus W$.}}
\label{RECT}
\end{figure}

Now, let $p\in T$ be a trefoil parameter - we now recall the proof of part $(2)$ in Th.\ref{tali} (see Th.1.1 d Th.1.2 in \cite{Pi}). Recall that result was proven by "blowing up" the heteroclinic trefoil knot, which allowed for a smooth deformation of the Lorenz system at the parameter $p$ to a vector field $H$, hyperbolic on its invariant set. In detail, this deformation was carried out by expanding the two fixed points $p_0,p_1$ by Hopf bifurcations into arcs on $R_0,R_1$ (see the illustration in Fig.\ref{isot}). As proven in \cite{Pi}, this deformation of the flow induced an isotopy of the first return map $\psi_p:R_0\cup R_1\to R$ to the first-return map $f:R_0\cup R_1\to\mathbb{R}^2$ of $H$, which is conjugate to the Fake Horseshoe map on its invariant set (see the illustration in Fig.\ref{isot}). Following that, using the theory of two-dimensional dynamics it was proven all the periodic orbits for $f$ in $R$ persisted, without changing their minimal period or colliding with one another, as the flow was deformed back to $L_p$ (for the complete details, see the proofs of Th.1.1 and Th.1.2 in \cite{Pi}). Put simply, as $f$ is isotopically deformed back to $\psi_p$, the periodic orbits of $f_p$ are continuously deformed into periodic orbits for $\psi_p$ without changing their minimal period, or colliding with one another - i.e., the dynamics of $\psi_p$ are complex at least like those of $f$.

\begin{figure}[h]
\centering
\begin{overpic}[width=0.5\textwidth]{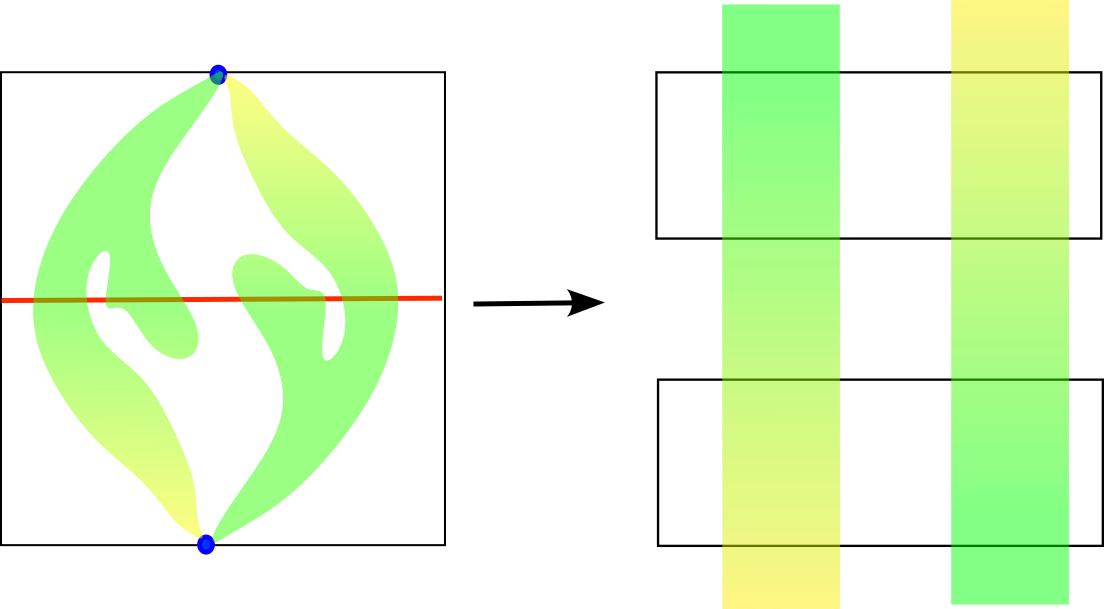}
\put(300,100){$R_0$}
\put(300,420){$R_1$}
\put(-70,270){ $W$}
\put(180,20){$p_0$}
\put(180,520){$p_1$}
\put(560,40){$A$}
\put(1000,40){$B$}
\put(560,490){$C$}
\put(1000,490){$D$}
\end{overpic}
\caption{\textit{The deformation changing $\psi_p$ to the hyperbolic rectangle map $f$ on the right. The fixed points $p_0$ and $p_1$ are opened to the arcs $AB$ and $CD$ (respectively). The red arc $W$ is opened to the region separating $R_0$ and $R_1$.}}
\label{isot}
\end{figure}

We will prove Prop.\ref{persistence} by analyzing the deformation described above in more detail. To this end, let $I$ denote the invariant set of $\psi_p$ in $R_0\cup R_1$ and let $x$ be a periodic point of minimal period $n$ in $I$ s.t. when we isotope $\psi_p$ to $f$, $x$ is deformed to some $x'$, a periodic point for $f$ of minimal period $n$ (by the above, $x$ and $x'$ exists). We will prove Prop.\ref{persistence} by showing the Fixed Point Index of $\psi^n_p$ on some neighborhood of $x$ is non-zero. To do so, note it is easy to see we can encase $x$ in a topological disc $D$ s.t. the following is satisfied:
\begin{itemize}
    \item $\partial D$ is composed from a finite collection of arcs in $\cup_{k> n}\psi_p^{-k}(W)$.
    \item For all $1\leq j<n$, $\psi^j_p(D)\cap \overline D=\emptyset$.
\end{itemize}

We now denote by $f_t:R\setminus W^n\to \mathbb{R}^2$ the isotopy deforming $\psi^n_p:R\setminus W^n\to R$ to $f^n:R\setminus W^n\to \mathbb{R}^2$ s.t. $f_0=\psi^n_p$, $f_1=f^n$ and $W^n$ is the collection of all pre-images of $W$ up to order $n$ (note $W^n$ changes along the isotopy). Recalling the proofs of Th.1.1 and Th.1.2 in \cite{Pi}, we know $f$ is obtained from $\psi_p$ by moving the pre-images of $W$ in $R$ - or in other words, the isotopy "straightens" the pre-images of $W$ as it deforms $\psi_p$ to the fake horseshoe map $f$. This proves we can choose the set $D$ above s.t. its properties are preserved by the isotopy, i.e.:
\begin{itemize}
    \item For all $t\in[0,1]$, $D$ is a topological disc.
    \item For all $t\in[0,1]$, $\partial D$ is composed from a finite collection of arcs in $\cup_{k>n}f_t^{-k}(W)$.
    \item For all $t\in[0,1]$ and for all $1\leq j<n$, we have $f^j_t(D)\cap \overline D=\emptyset$.
\end{itemize}

Since for all $t\in[0,1]$ we have $\partial D\cap (\cup_{j=1}^nf^{-n}_t(W))=\emptyset$, it is easy to see there can be no periodic orbits of minimal period $n$ in $\partial D$, hence the set $\{(s,t)|f^n_t(s)=s\}$ is compact in $D\times[0,1]$. By the discussion above, it follows the Fixed Point Index is defined and constant along the isotopy, or explicitly, the fixed point index of $\psi^n_p$ in $D$ is the same as that of $f^n$. Therefore, to compute the Fixed Point Index of $\psi^n_p$ at $D$, we now compute the Fixed Point Index  $f^n$. Since $f:R_0\cup R_1\to R$ is smooth, we know the degree of $f^n(s)-s$, $s\in D$ is determined by $\sum_{s\in D,f^n(s)=s} sign(det(J_{f^n}(s)-Id))$, where $J_{f^n}$ denotes the Jacobian matrix and $Id$ denotes the identity matrix. Because $f:R\setminus W\to\mathbb{R}^2$ acts as a fake horseshoe on $R_1\cup R_0$ all the eigenvalues of the Jacobian $J_f(s)$ are positive, hence the same is true for $J_{f^n}(s)$, for all $n$. Moreover, by the hyperbolicity of $f$ we know that for all such $s$, $J_{f^n}(s)$, $s\in D$ has one eigenvalue in $(0,1)$ and another in $(1,\infty)$ - which implies $sign(det(J_{f^n}(s)-Id))=-1$. Or, in other words, we have proven the degree of $f^n(s)-s$ in $D$ is strictly negative - and consequentially, the Fixed Point Index of $\psi^n_p$ in $D$ is also negative.

Having proven the Fixed Point Index of $\psi^n_p$ is negative, we are now in a position to conclude the proof of Prop.\ref{persistence}. We first note that if $v\in P$ is a parameter sufficiently close to $p$, the following holds:
\begin{itemize}
    \item There exists a homotopy $g_t:D\to{R}$ s.t. $g_0=\psi_p$, and $g_1=\psi_v$ - hence the maps $\psi^n_p:D\to R$ and $\psi_v^n:D\to R$ are smoothly homotopic.
    \item For all $1\leq j<n$, $g^j_t(D)\cap \overline{D}=\emptyset$.
\end{itemize}

Now, let $Fix=\{(s,t)|g^n_t(s)=s\}\subseteq D\times[0,1]$ - we note that if $Fix$ has a limit point in $\partial D\times\{0\}$ then $g^n_0=\psi^n_p$ must have a fixed point on $\partial D$. Since by our choice of $D$ we already know this is not the case, it follows that provided $v$ is sufficiently close to $p$ the set $Fix$ lies away from $\partial D\times[0,1]$. As such, the Fixed Point Index of $g^n_1=\psi^n_v$ in $D$ is the same as that of $\psi^n_p$, that is, strictly negative - which proves $\psi^n_v:D\to R$ has a fixed point $x''$ - i.e., $x''$ is a periodic point for $\psi_v$ in $R_0\cup R_1$. And since $\psi^j_p(D)\cap\overline{D}=\emptyset$ for all $1\leq j<n$, it follows the minimal period of $x''$ is $n$. The proof of Prop.\ref{persistence} is now complete.
\end{proof}
\begin{remark}
\label{gen1}    It is easy to see Prop.\ref{persistence} easily generalizes to sufficiently small $C^k$ perturbations of the Lorenz system at trefoil parameter, where $k\geq1$. 
\end{remark}
We now recall that for all $v\in P$ the first return map $\psi_v:R_0\cup R_1\to R$ defines symbolic dynamics on its invariant set. In detail, recall that by Th.\ref{tali} there exists some $\Sigma_v\subseteq\{0,1\}^\mathbb{N}$, invariant under the one-sided shift $\sigma:\{0,1\}^\mathbb{N}\to\{0,1\}^\mathbb{N}$ s.t. if $I_v$ is the invariant set of $\psi_v$ in $R_0\cup R_1$ there exists a continuous, surjective $\pi_v:I_v\to\Sigma_v$ satisfying $\pi_v\circ \psi_v=\sigma\circ\pi_v$ (by Th.\ref{tali}, when $v\in T$ then $\Sigma_v=\{0,1\}^\mathbb{N}$). With these notations in mind, we now prove the following immediate corollary of Prop.\ref{persistence}:
\begin{corollary}
    \label{pers1} Let $p\in T$ be a trefoil parameter, and let $s\in\{0,1\}^\mathbb{N}$ be periodic. Then, for any $v$ sufficiently close to $p$, $s\in\Sigma_v$. In addition, if the minimal period of $s$ w.r.t. the one-sided shift is $n$, $\pi^{-1}_v(s)$ includes at least one periodic point of minimal period $n$.
\end{corollary}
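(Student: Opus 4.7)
The plan is to combine Prop.\ref{persistence} with a careful choice of the local disc $D$ so that the prescribed symbolic itinerary is preserved along the perturbation. Since $p\in T$ is a trefoil parameter, Th.\ref{tali} gives $\Sigma_p=\{0,1\}^\mathbb{N}$ and a continuous surjective semi-conjugacy $\pi_p:I_p\to\Sigma_p$. Given a periodic sequence $s=s_0 s_1\ldots\in\{0,1\}^\mathbb{N}$ of minimal period $n$, surjectivity provides $x\in I_p$ with $\pi_p(x)=s$, and since $\psi_p$ factors onto $\sigma$, the minimal period of $x$ under $\psi_p$ is exactly $n$.

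The first step is to refine the construction of the disc $D\ni x$ used in the proof of Prop.\ref{persistence}. Because $x\in I_v$ lies in the interior of $R_{s_0}$ and $\psi_p^{j}(x)\in\mathrm{int}(R_{s_j})$ for every $0\le j<n$, one can shrink $D$ so that, in addition to the two properties already required in that proof (its boundary consists of arcs in pre-images of $W$ of order strictly greater than $n$, and $\psi_p^{j}(D)\cap\overline{D}=\emptyset$ for $1\le j<n$), one further has $\psi_p^{j}(\overline{D})\subseteq\mathrm{int}(R_{s_j})$ for every $0\le j<n$. This uses only the fact that $\{x,\psi_p(x),\ldots,\psi_p^{n-1}(x)\}$ is a finite set avoiding the discontinuity curve $W$, so each orbit point has an open neighborhood mapped into the correct side.

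Next, I would invoke continuity of the first-return map in both the base point and the parameter. For $v\in P$ sufficiently close to $p$, the closed iterates $\psi_v^{j}(\overline{D})$ remain inside $\mathrm{int}(R_{s_j})$ for every $0\le j<n$, and the disjointness condition $\psi_v^{j}(D)\cap\overline{D}=\emptyset$ for $1\le j<n$ still holds. Applying Prop.\ref{persistence} with $k=1$ and period $n$, but with this refined disc $D$, produces a periodic point $x''\in D$ of $\psi_v$ of minimal period exactly $n$. By the itinerary-preserving choice of $D$, the point $x''$ satisfies $\psi_v^{j}(x'')\in R_{s_j}$ for every $0\le j<n$, hence its itinerary under $\psi_v$ is the periodic repetition of $s_0\ldots s_{n-1}$, i.e.\ the sequence $s$ itself. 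This shows $s\in\Sigma_v$ and $x''\in\pi_v^{-1}(s)$ is a periodic point of minimal period $n$, as required.

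The main obstacle, as anticipated, is to ensure that the itinerary is robust under the perturbation: one needs to avoid the situation where $\psi_v^{j}(x'')$ crosses the discontinuity curve $W$ for some $0\le j<n$ and so acquires a different symbolic address than $x$. The separation built into the refined $D$ (its iterates staying in the \emph{open} components $R_{s_j}$, together with its boundary sitting on pre-images of $W$ of order $>n$) resolves precisely this issue, and after that continuity of $\psi_v$ in $v$ together with Prop.\ref{persistence} finishes the argument.
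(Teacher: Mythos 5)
Your proof is correct and follows essentially the same route as the paper: both arguments reuse the disc $D$ from the proof of Prop.\ref{persistence}, arrange that its first $n$ iterates stay clear of $W$ so the symbolic address is locked in, and then invoke continuity in the parameter so that the periodic point produced by the Fixed Point Index argument inherits the itinerary $s$ (the paper phrases this in terms of the flow lines joining $D$ to $\psi_p^n(D)$ not meeting $W^s(0)$, which is equivalent to your condition $\psi_v^j(\overline{D})\subseteq\mathrm{int}(R_{s_j})$). The only small imprecision is your opening claim that surjectivity of $\pi_p$ alone yields a \emph{periodic} point $x$ of minimal period $n$ with $\pi_p(x)=s$ --- a fiber of a semi-conjugacy need not contain a periodic point a priori --- but this is harmless here since Th.\ref{tali} (via the hyperbolic deformation at trefoil parameters) supplies exactly such an $x$, which is what the paper cites.
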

\begin{proof}
    Let $p\in T$ be a trefoil parameter, and let $x\in R$ be a periodic point of minimal period $n$ given by Th.\ref{tali} - in other words, as recounted earlier (and with previous notation), as we isotope $\psi_p:R_1\cup R_0\to R$ to $f:R_1\cup R_0\to\mathbb{R}^2$, $x$ is deformed to $x'$, a periodic orbit for $f$ of minimal period $n$. In detail, by Th.\ref{tali}, for every periodic $s\in\{0,1\}^\mathbb{N}$ there exists such a periodic point $x$, with the same minimal period, s.t. the itinerary of $x$ w.r.t. the first return map $\psi_p:R_0\cup R_1\to R$ is precisely $s$. By Prop.\ref{persistence} we already know that as we perturb the vector field $L_p$ to $L_v$, the point $x$ is perturbed to a periodic point $x''$ for $\psi_v:R_0\cup R_1\to R$ with the same minimal period. Therefore, to complete the proof it remains to show the itineraries of $x$ and $x''$ in $R_0\cup R_1$ are the same - by definition this will immediately imply $s\in\Sigma_v$, and conclude the proof.
    
    To do so, note the set $D$ introduced in the proof of Prop.\ref{persistence} varies continuously as $L_p$ is perturbed to $L_v$ - and in particular, so do the flow lines connecting $D$ to $\psi^n_p(D)$, which are smoothly deformed to the flow lines connecting $D$ and $\psi^n_v(D)$. Therefore, as we constructed $D$ in the proof of Prop.\ref{persistence} s.t. the flow lines connecting $D$ and $\psi^n_p(D)$ do not intersect $W$, the same is true for the flow lines connecting $D$ and $\psi^n_v(D)$ (at least when $v$ is sufficiently close to $p$). This implies the itineraries of $x$ and $x'$ in $R_0\cup R_1$ w.r.t. $\psi_p$ and $\psi_v$ (respectively) are the same, hence $x'\in\pi^{-1}_v(s)$. All in all, the proof of Cor.\ref{pers1} is now complete.
\end{proof}
Prop.\ref{persistence} and Cor.\ref{pers1} together prove the dynamics of Lorenz systems generated by parameters sufficiently close to trefoil parameters must also have "complex" dynamics. This leads us to ask the following - can we somehow describe this complexity? We answer this question by proving Th.\ref{reduct} below. In order to state Th.\ref{reduct} we first need to recall several definitions. To this end, recall an interval map $F\colon [0,1]\to [0,1]$ is called a \textbf{Lorenz-like map} if it satisfies the following conditions:
\begin{enumerate}
\item there is a \textbf{critical point} $c\in (0,1)$ such that $F$ is continuous and strictly increasing on $[0,c)$ and $(c,1]$;
\item $F(c_+)=\lim_{x\to c^{+}}F(x)<\lim_{x\to c^{-}}F(x)=F(c_-)$.
\end{enumerate}
In the case where $F(c_+)=0$ and $F(c_-)=1$, we call it a \textbf{Lorenz map}. If in addition  $F$ is differentiable for all points not belonging to a finite set $E\subseteq [0,1]$ and $\inf_{x\not\in E} F'(x)>1$,

Then, $F$ is an \textbf{expanding Lorenz map}. The above definition does not specify what the image of the critical point $c$ is. We will assume that $F(c)=0$; however, this choice is arbitrary and was made solely to avoid ambiguity. An expanding Lorenz map with a constant slope, that is, a map of the form
$$
	F_{\beta,\alpha}(x)=\beta x+\alpha \Mod{1}=
	\begin{cases}
		\beta x+\alpha, &\text{for}\ x\in\left[0,c\right)  \\
		\beta x+\alpha-1, &\text{for}\ x\in \left[c,1\right] 
	\end{cases},
	$$
where $1<\beta\leq2$, $0\leq\alpha\leq2-\beta$ and $c=\frac{1-\alpha}{\beta}$, will be called a \textbf{$\beta$-transformation} (see Fig.\ref{fig:Lorenz_map}).

\begin{figure}[!h]
	\centering
	\begin{tikzpicture}[scale=5]
	\draw[thick] (0,0) -- (1,0);
	\draw[thick] (0,0) -- (0,1);
	\draw (0,1) node[left]{1};
	\draw (1,0) node[below]{1};
	\draw (-0.02,0) node[below]{0}; 
	\draw[thick,domain=0:1,variable=\y] plot ({1},{\y});
	\draw[thick,domain=0:1,variable=\x] plot ({\x},{1});

	\draw[domain=0:1,smooth,variable=\x,red,very thick] plot ({\x},{\x});

	\draw[domain=0:0.5,smooth,variable=\x,blue,very thick] plot ({\x},{(\x+0.5)^2});

	\draw[domain=0.5:1,smooth,variable=\x,blue,very thick] plot ({\x},{-(\x-1.5)^2+1});
	
	\filldraw [red] (0.5,0) circle (0.2pt) node[anchor=north] {$c$};
	\end{tikzpicture}
        \qquad
        \begin{tikzpicture}[scale=5]
	\draw[thick] (0,0) -- (1,0);
	\draw[thick] (0,0) -- (0,1);
	\draw (0,1) node[left]{1};
	\draw (1,0) node[below]{1};
	\draw (-0.02,0) node[below]{0}; 
	\draw[thick,domain=0:1,variable=\y] plot ({1},{\y});
	\draw[thick,domain=0:1,variable=\x] plot ({\x},{1});
	
	\draw[domain=0:1,smooth,variable=\x,red,very thick] plot ({\x},{\x});

	\draw[domain=0:0.53846,smooth,variable=\x,blue,very thick] plot ({\x},{1.3*\x+0.3});

	\draw[domain=0.53846:1,smooth,variable=\x,blue,very thick] plot ({\x},{1.3*\x+0.3-1});

	\filldraw [red] (0.53846,0) circle (0.2pt) node[anchor=north] {$c$};
	\end{tikzpicture}
	\caption{\textit{Expanding Lorenz maps with non-constant and constant slope.}}
	\label{fig:Lorenz_map}
\end{figure}
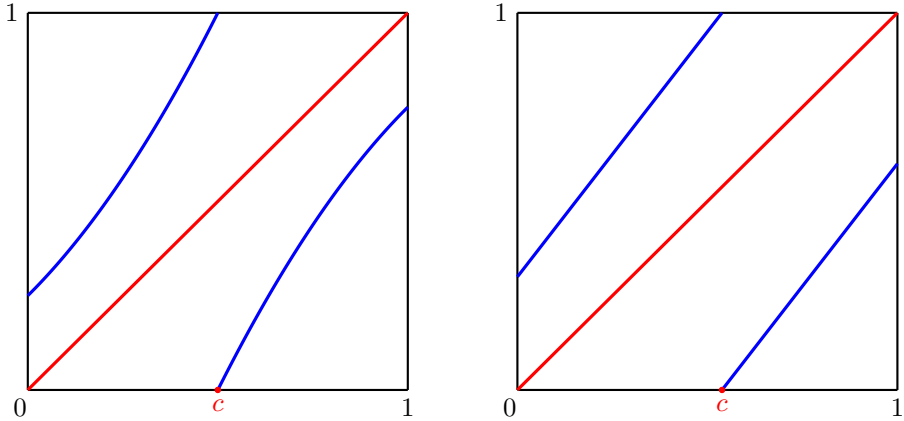

Next, let us recall some facts from the kneading theory. Consider a Lorenz-like map $F\colon[0,1]\to[0,1]$ with the critical point $c$. Set $C_F:=\bigcup_{i=0}^{\infty}F^{-i}(\{c\})$ and $I_F:=[0,1]\setminus C_F$. For $x\in I_F$ we define the \textbf{kneading sequence} (or \textbf{itinerary}) $\eta(x)\in\{0,1\}^{\mathbb{N}_0}$ by
$$
\eta(x)_n=
\begin{cases}
	1;&\text{if}\ F^n(x)>c\\
	0;&\text{if}\ F^n(x)<c
\end{cases},
\quad\text{for}\quad n\in\mathbb{N}_0.
$$
For $x\in C_F$ we define the \textbf{upper and lower kneading sequences} (\textbf{itineraries}) of $x$ as:
		\begin{equation}\label{eq:def_knead}
		\eta_+(x):=\lim\limits_{y\to x_+}\eta(y)\quad\text{and}\quad \eta_-(x):=\lim\limits_{y\to x_-}\eta(y),
		\end{equation}
where the limits are calculated through points $y$ which are not preimages of $c$. The pair: 
$$
k_F=(\eta_+,\eta_-):=(\eta_+(c),\eta_-(c))
$$
is called the \textbf{kneading invariant} of $F$. Note that $I_F=[0,1]\setminus C_F$ is the maximal invariant set of $F$ contained in $[0,1]\setminus\{c\}$. Furthermore, the map $\eta\colon I_F\to\{0,1\}^{\mathbb{N}_0}$ is continuous and satisfies $(\eta\circ F)(x)=(\sigma\circ\eta)(x)$ for any $x\in I_F$, where $\sigma\colon\{0,1\}^{\mathbb{N}_0}\to\{0,1\}^{\mathbb{N}_0}$ is the standard (one-sided) shift map.

We now introduce a similar notion for the Lorenz system. Recall that given a parameter $v\in P$, we can partition the cross section $R$ into $R_0$, $R_1$, the two components of $R\setminus W$ (see the illustration in Fig.\ref{cross}) - as proven in Th.\ref{tali}, we can define symbolic dynamics for the invariant set of the first-return map $\psi_v:R_0\cup R_1\to R$ in ${R}\setminus W$. Moreover, recall the fixed point at the origin is a saddle with a one-dimensional unstable manifold, and let $\Gamma_0,\Gamma_1$ denote the components of this invariant manifold. From now on, the \textbf{kneading invariant of the Lorenz attractor at $v\in P$}, denoted by \textbf{$k_v$}, will denote the pair $(\omega_0,\omega_1)$ where $\omega_0,\omega_1$ are the two symbolic sequences in $\{0,1\}^\mathbb{N}$ describing the itineraries of $\Gamma_0$ and $\Gamma_1$. It is easy to see $\omega_0$ and $\omega_1$ are periodic precisely when $\Gamma_0$ and $\Gamma_1$ are homoclinic trajectories to $0$. With these ideas in mind, we prove:
\begin{theorem}
    \label{reduct} Let $T\subseteq P$ denote the collection of all trefoil parameters for the Lorenz system. Then, for all $v\in P$ sufficiently close to $T$, the dynamics on the attractor can be reduced to the one-dimensional map:  
    \begin{equation}\label{eq:one_dim}
f_{r}(x)=\begin{cases}
			rx, & \text{$x\in[0,\frac{1}{2})$}\\
            1-\frac{r}{2}+r(x-\frac{1}{2}), & \text{$x\in(\frac{1}{2},1]$}
		 \end{cases},
\end{equation}
where $r\in(1,2]$ depends on $v$. In detail, let $I_r$ denote the maximal invariant set for $f_r$ in $[0,1]\setminus\{\frac{1}{2}\}$. Then, there exists an invariant set $I_v$ for  $\psi_v:R\setminus W\to R$ s.t. the following holds:
\begin{itemize}
    \item  There exists a continuous, surjective $\pi:I_v\to I_r$ s.t. $\pi\circ\psi_v=f_r\circ \pi$. 
    \item If $x\in I_r$ is periodic of minimal period $n$, $\pi^{-1}(x)$ includes at least one periodic orbit for $\psi_v$ of minimal period $n$.
    \item If the kneading invariant of the Lorenz-like map $f_r$ consists of \textbf{non-periodic} sequences $\eta_+$ and $\eta_-$, it coincides with the kneading invariant for the Lorenz system at $v$.
\end{itemize}
\end{theorem}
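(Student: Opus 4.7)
My plan is to obtain the slope $r=r(v)$ and the semi-conjugacy $\pi$ by passing through the symbolic representation afforded by Th.\ref{tali}. Since $v$ lies close to a trefoil parameter but inside $P$, the two branches $\Gamma_0,\Gamma_1$ of the one-dimensional unstable manifold of $0$ possess well-defined itineraries $\omega_0,\omega_1\in\{0,1\}^\mathbb{N}$ under $\psi_v$, yielding a kneading pair $k_v=(\omega_0,\omega_1)$. I would choose $r\in(1,2]$ so that the kneading pair of $f_r$ equals $k_v$; this is possible because the family $\{f_r\}_{r\in(1,2]}$ is symmetric, its kneading pair depends monotonically on $r$, and at $r=2$ one recovers the full shift realized at trefoil parameters. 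This choice makes the third bullet essentially by construction, but it is the most economical way to link the two systems, and the remaining assertions of the theorem have content beyond tautology.

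With $r$ in hand, let $\pi_v\colon I_v\to\Sigma_v\subseteq\{0,1\}^\mathbb{N}$ denote the continuous semi-conjugacy of Th.\ref{tali}, and let $\Sigma_r\subseteq\{0,1\}^\mathbb{N}$ denote the set of itineraries of $I_r$ under $f_r$. Standard kneading theory for symmetric expanding Lorenz maps supplies a continuous surjective address map $\alpha_r\colon\Sigma_r\to I_r$ semi-conjugating the shift to $f_r$. I would then set $\pi:=\alpha_r\circ\pi_v$, which is continuous and intertwines $\psi_v$ with $f_r$ by construction.

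For the surjectivity in the first bullet and the second bullet, I would first establish $\Sigma_v=\Sigma_r$: both subshifts are characterized by the admissibility condition that every shift of a sequence be sandwiched between $\omega_1$ and $\omega_0$ in the Milnor--Thurston order, and the equality $k_{f_r}=k_v$ forces equality of the two subshifts. Consequently $\pi$ is surjective onto $\alpha_r(\Sigma_r)=I_r$. For the periodic-orbit statement, if $x\in I_r$ has minimal period $n$ and address $s\in\Sigma_r=\Sigma_v$, then Cor.\ref{pers1} supplies a periodic point of $\psi_v$ in $\pi_v^{-1}(s)$ of minimal period $n$, whose $\pi$-image must be $x$. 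The third bullet follows directly from the choice of $r$; the aperiodicity hypothesis is needed because only then does the sandwich characterization above become strict, so that the admissibility orders for $\Sigma_r$ and $\Sigma_v$ coincide \emph{without} the boundary ambiguity that a periodic kneading sequence would introduce.

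The principal obstacle I expect is the admissibility identification $\Sigma_v=\Sigma_r$ near the critical fibres of $\psi_v$, since itineraries that approach $W$ must be controlled combinatorially. I plan to handle this by tracking, along the homotopy from $\psi_p$ to the fake horseshoe used in Prop.\ref{persistence}, the partition of $R$ by preimages of $W$ and comparing it at each stage with the partition of $[0,1]$ by preimages of $\frac{1}{2}$ under $f_r$. Once this combinatorial identification is in place, the kneading pair of $\psi_v$ dictates which one-sided words survive in $\Sigma_v$, and the reduction to $f_r$ follows essentially formally from the kneading theory for symmetric Lorenz maps.
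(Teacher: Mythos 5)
Your route is genuinely different from the paper's: you work entirely on the symbolic side, choosing $r$ to match the flow's kneading pair $k_v$ and then transporting the address map of $f_r$ back through the semi-conjugacy $\pi_v$ of Th.\ref{tali}. The paper instead works geometrically, isotoping the flow near $\partial R$ to straighten the preimages of $W$, collapsing the ``Category $C$'' arcs, passing to a rectangle map $h_v$ with an invariant horizontal foliation, and only then quotienting to $f_r$; the periodic-orbit claim is then obtained by a Fixed Point Index computation rather than by citing Cor.\ref{pers1}. Unfortunately your version has two genuine gaps. First, the opening step --- ``choose $r$ so that $k_{f_r}=k_v$'' --- presupposes that the separatrix itineraries of a three-dimensional flow satisfy the one-dimensional admissibility inequalities \eqref{eq:kneading_cond} and are realized within the one-parameter symmetric family $\{f_r\}$. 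Nothing forces this: for a genuinely two-dimensional return map the itinerary of $\psi_v(W)$ carries no lexicographic extremality, and in fact the paper's own proof shows the best one can arrange is that $k_{f_r}$ equals $k_v$ \emph{or} a periodic truncation of it (this is exactly why the third bullet carries the non-periodicity hypothesis --- it certifies that the collapsing isotopy did not force $\Gamma_0,\Gamma_1$ onto $W$). Your proposal quietly assumes the stronger exact-matching statement as its starting point.

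Second, the identification $\Sigma_v=\Sigma_r$ is not available and is where the real work lives. The sandwich characterization \eqref{eq:symbol_space} of the admissible itineraries is a theorem about one-dimensional expanding Lorenz maps; the subshift $\Sigma_v$ attached to the two-dimensional map $\psi_v$ by Th.\ref{tali} is not known to be determined by the two separatrix itineraries in this way, and generically it is strictly larger than any such one-dimensionally generated subshift (the paper only ever proves one-sided containments $\Phi_v\subseteq\Xi_v\subseteq\Sigma_v$, and correspondingly the theorem asserts the existence of \emph{an} invariant set $I_v$, not the maximal one). Without $\Sigma_r\subseteq\pi_v(I_v)$ your map $\pi=\alpha_r\circ\pi_v$ need not be surjective onto $I_r$, and without $\Sigma_v\subseteq\Sigma_r$ it need not even be well defined as a factor map onto $I_r$. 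Establishing which symbolic sequences actually survive in the flow is precisely what the paper's isotopy-plus-index argument accomplishes, so as written your reduction is circular at this point: it assumes a symbolic description of $\psi_v$ that is essentially equivalent to the conclusion of Th.\ref{reduct}. The periodic-orbit bullet via Cor.\ref{pers1} would be fine once these identifications were in place, but they are the substance of the theorem.
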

Before proving Th.\ref{reduct}, we remark that despite its technical formulation, Th.\ref{reduct} has a clear geometric meaning. Namely, it proves the dynamics on Lorenz attractors corresponding to parameters which are sufficiently close to trefoil parameters are essentially one-dimensional. In particular, it implies the dynamics of the first-return map $\psi_v$ on its invariant set in $R\setminus W$ are complex at least like those of $f_r$ on its invariant set in $[0,1]\setminus\{\frac{1}{2}\}$.
\begin{proof}

The idea of the proof is as follows - similarly to how the proof of Th.\ref{tali} is based on deforming the flow away from the heteroclinic trefoil knot, we will deform the flow away from the separatrices $\Gamma_0$ and $\Gamma_1$ to derive a "straightened" first return map. In fact, the main difficulty in the proof would be to show we can do so without destroying a certain dynamical core. Therefore, inspired by the proof of Th.\ref{tali}, we will do so by carefully deforming the dynamics on the Lorenz attractor at parameter value $v$, s.t. certain "core" dynamics of the first-return maps are preserved, while the "noise" is removed.

To begin, we first make the following observation - recall that per Th.\ref{tali} given any $v\in P$, every point in $\psi_v(\partial R\setminus\{p_0,p_1\})$ is interior to $R$ - in other words, for every $s\in\partial R\setminus\{p_1,p_0\}$, there is a positive distance between $\psi_p(s)$ and $\partial R$. This implies the maximal invariant set of $\psi_v$ in $R_0\cup R_1$ cannot accumulate on $\partial R\setminus\{p_0,p_1\}$ - and in particular, the periodic orbits for the flow do not accumulate there. Therefore, for any $v\in P$, there exists some open neighborhood $N$ of $\partial R\setminus\{p_1,p_0\}$ in $R$, dependent on $v$, where $\psi_v$ has no periodic orbits. It is easy to see we can choose $N$ s.t. $\psi_v^n(N)\cap\psi_v^k(N)=\emptyset$ for all $n\ne k$ (whenever that iteration is defined). The set $N$ will be useful in the proof due to these properties - namely, we can deform the first-return map isotopically on it without adding (or destroying) any periodic orbits for the flow. More informally, by moving flow lines beginning in $N$ we can "straighten" the dynamics on the attractor with minimal loss of data, as by doing so we just "squeeze and straighten" the dynamics around the attractor. 

\begin{figure}[h]
\centering
\begin{overpic}[width=0.4\textwidth]{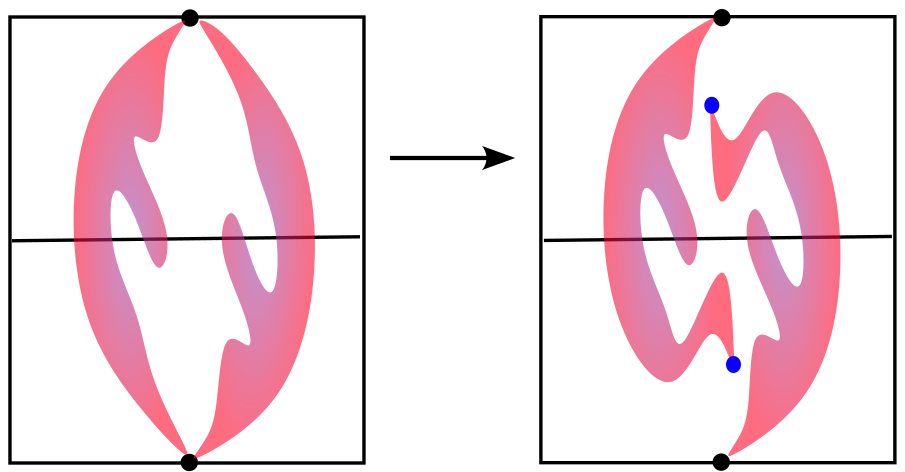}

\end{overpic}
\caption{\textit{The perturbation of the first-return map of a trefoil parameter (on the left) to a non-trefoil parameter (on the right).}}
\label{firstrett}
\end{figure}

We now do just that - namely, deform isotopically the first return map $\psi_v$ by smoothly deforming the flow lines connecting $N$ to its iterates. To this end, recall the arc $W$ partitions the rectangle $R$ into the rectangles $R_0$ and $R_1$ (see Fig.\ref{cross}). As the dynamics of $L_v$ can be smoothly deformed to those of a trefoil parameter as we vary $v$ towards the set $T$, it follows that whenever $v\in P$ is sufficiently close to $T$ the set $\psi_v(R_0\cup R_1)\cap R_i$, $i=0,1$ includes at least two components (see Fig.\ref{firstrett}). Now, consider the pre-image $\psi^{-1}_v(W)$ in the sub-rectangles $R_0,R_1$. From now on, we adopt the convention $p_0\in\partial R_0$ and $p_1\in\partial R_1$ (see the illustration in Fig.\ref{cross}).

\begin{figure}[h]
\centering
\begin{overpic}[width=0.4\textwidth]{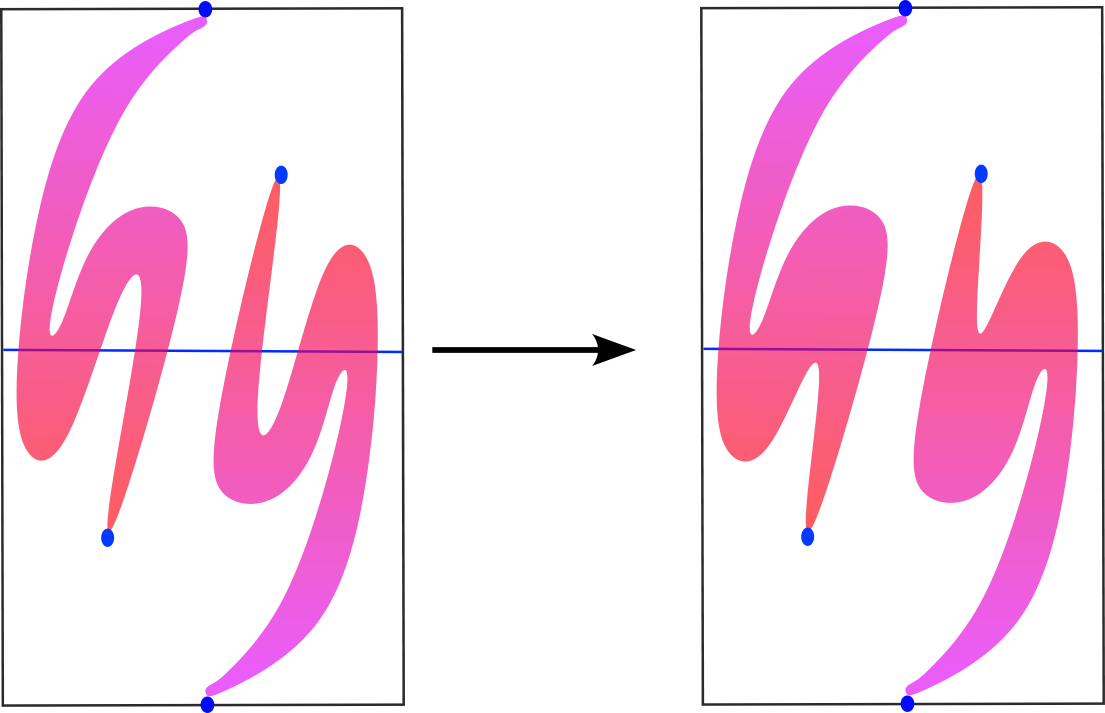}

\end{overpic}
\caption{\textit{Straightening the first return map by expanding $N$ and its iterates.}}
\label{deformation11}
\end{figure}

To continue, consider the sets $\psi_v(R_0)=R_0^1$ and $\psi_v(R_1)=R_1^1$. By expanding the set $N$ and moving initial conditions in it, we can ensure $R^1_0$ and $R^1_1$ each intersect $W$ in precisely one curve that connects the two arcs of $\psi_v(\partial R_j\setminus(W\cup\{pj\}))$, $j=0,1$, as illustrated in Fig.\ref{deformation11}. Moreover, it is easy to see this deformation does not change the kneading of the flow. We now do something similar, simultaneously, for all the iterates of $\psi_v$. In detail, for every $k\geq0$ set $\mathbb{W}_k=\cup_{k\geq t\geq0}\psi^{-t}_v(W)$, and let $R^k_j$ be some component of $\psi^k_v(R_j\setminus\mathbb{W}_k)$. By moving flow lines in $N$ we ensure that $R^k_j\cap W$ has \textbf{at most} one curve connecting the $k$-th iterates of the two arcs $\partial R_j\setminus(W\cup\{pj\})$ on $\partial R^k_j$, $j=0,1$ (see the illustration in Fig.\ref{deformation11}). Again, this proccess does not change the kneading properties of the flow - moreover, we perform this deformation s.t. the symmetry of the Lorenz system w.r.t. the $z$-axis is preserved.

To continue, let $L'_v$ denote the vector field generated by simultaneously deforming all the iterates of $\psi^k_v$ on $N$ as described above, and denote by $\varphi_v:R_0\cup R_1\to{R}$ the corresponding first return map . We now recall that per Th.\ref{tali} there exists some $\Sigma_v\subseteq\{0,1\}^\mathbb{N}$, invariant under the one-sided shift $\sigma:\{0,1\}^\mathbb{N}\to \{0,1\}^\mathbb{N}$, s.t. the dynamics of $\sigma$ on $\Sigma_v$ are a factor map of those of $\psi_v$ on its invariant set in $R_0\cup R_1$. Similarly, let  $\Xi_v\subseteq \{0,1\}^\mathbb{N}$ denote the shift-invariant subset of $\{0,1\}^\mathbb{N}$ which describes the itineraries of the invariant set of $\varphi_v$ in $R_1\cup R_0$ - as the isotopic deformation of $\psi_v$ to $\varphi_v$ on $R_0\cup R_1$ at most removes components from the invariant set of $\psi_v$, it follows $\Xi_v\subseteq \Sigma_v$. That being said, as we perform the deformation only on $N$, all the periodic orbits of $\psi_v$ survive - or in other words, $\Xi_v$ includes all the periodic sequences in $\Sigma_v$ 

We now further isolate a dynamical core inside the invariant set of $\varphi_v$ in $R_0\cup R_1$. To this end, set $\mathbb{W}=\cup_{t\geq0}\varphi^{-t}_v(W)$, and let $A_1,A_2,A_3$ and $A_4$ denote the arcs of $\partial R\setminus(W\cup\{p_0,p_1\})$, as illustrated in Fig.\ref{deformation13}. Due to the construction of $L'_v$ from $L_v$, we know that with respect for the vector field $L'_v$, every component of $\mathbb{W}$ falls into precisely one of the following categories:

\begin{itemize}
    \item Category $A$ - a straight line connecting $A_1$ and $A_2$.
    \item Category $B$ - a straight line connecting $A_3$ and $A_4$.
    \item Category $C$ - a curve in $R_0$ or $R_1$ with precisely two points on $\partial R$, $q_1$ and $q_2$, which lie on the same $A_j$, $j=1,2,3,4$ (see the illustration in Fig.\ref{deformation13}).
\end{itemize}

\begin{figure}[h]
\centering
\begin{overpic}[width=0.5\textwidth]{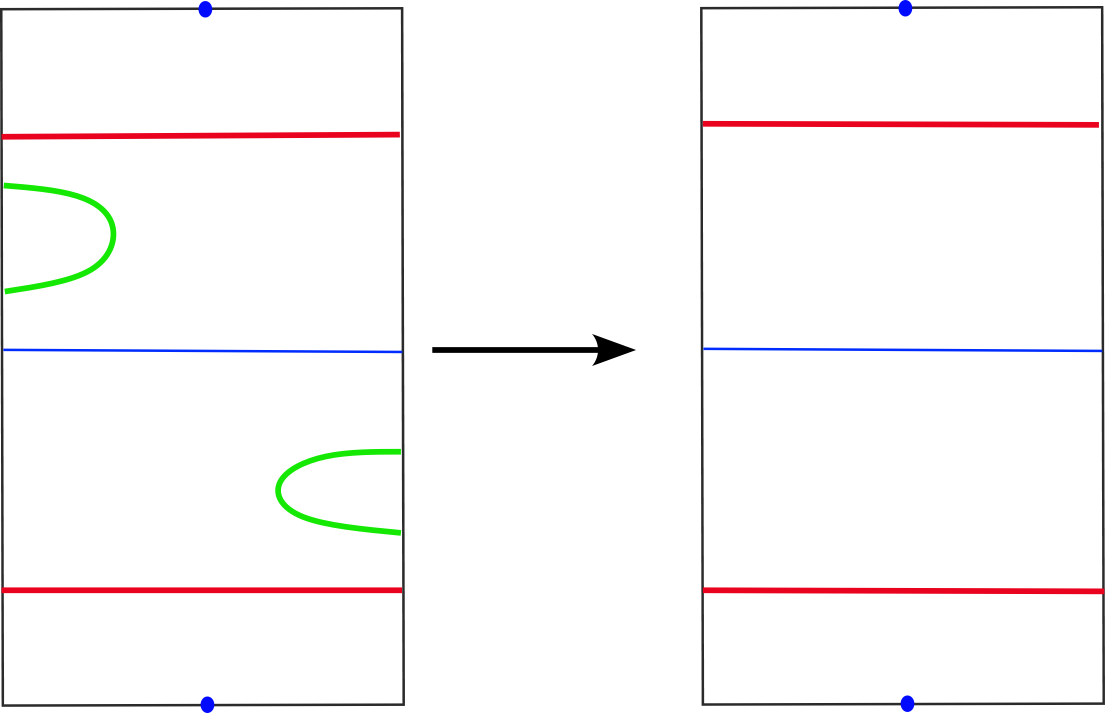}
\put(370,200){$A_2$}
\put(370,420){$A_4$}
\put(-50,200){$A_1$}
\put(-50,420){$A_3$}
\put(-70,310){ $W$}
\put(570,310){ $W$}
\put(170,-30){$p_0$}
\put(800,660){$p_1$}
\put(800,-30){$p_0$}
\put(170,660){$p_1$}
\put(580,200){$A_1$}
\put(1000,200){$A_2$}
\put(580,420){$A_3$}
\put(1000,420){$A_4$}
\end{overpic}
\caption{\textit{The red arcs denote Catgory $A$ and $B$ arcs, while the green denote Category $C$. We collapse Category $C$ arcs to the boundary.}}
\label{deformation13}
\end{figure}

In other words, an arc in $\mathbb{W}$ is Category $C$ if there exists some subarc $J$ in $\partial R\setminus\{p_0,p_1\}$, some arc $J'\subseteq W$ and some $k>0$ s.t. $\varphi^k_v(J)\cup J'$ is a Jordan domain in $R_0$ or $R_1$ whose interior lies wholly inside the $k$-th iterate of some component of $R\setminus\mathbb{W}_k$. We now proceed to isotopically remove all Category $C$ components in $\mathbb{W}$. We do so by isotoping $\varphi_v:R_0\cup R_1\to R$ and all its iterates simultaneously to $\phi_v:R_0\cup R_1 W\to R$ by symmetrically collapsing all Category $C$ components in $\mathbb{W}$ to the boundary while preserving the symmetry of $\varphi_v$, as illustrated in Fig.\ref{deformation13}. With previous notations, if $\Phi_v\subseteq\{0,1\}^\mathbb{N}$ denotes the collection of itineraries of for the invariant set of $\phi_v$ in $R\setminus W$, then again $\Phi_v\subseteq\Sigma_v$ - in other words, $\phi_v$ is a factor map of the first return map of the original flow on its invariant set (even if $\phi_v$ itself is not necessarily the first return map of any flow).

Let $L''_v$ denote the resulting new flow - before we continue we study its kneading pattern, when compared to the original Lorenz attractor, defined by the vector field $L_v$. Note that unlike the isotopy of $\psi_v$ to $\varphi_v$, this isotopy possibly does change the kneading - in the sense that as we simultaneously remove Category $C$ arcs, we possibly collide $\Gamma_0$ and $\Gamma_1$ with $W$. When this happens, $\Gamma_0$ and $\Gamma_1$ collapse to homoclinic trajectories to the origin $0$, which implies $L''_v$ defines a periodic kneading sequence, i.e., if $(\omega_0,\omega_1)$ and $(\omega''_0,\omega''_1)$ are the pair of sequence describing the motion of $\Gamma_0$ and $\Gamma_1$ w.r.t. $L_v$ and $L''_v$, then $\omega_i$ differs from $\omega''_i$, $i=0,1$ at most by $\omega''_i$ being periodic. In detail, our argument shows that if $\omega_i=\{\omega^j_i\}_{j\geq0}$, $\omega''_i=\{\omega''^j_i\}_{j\geq0}$, then whenever $\omega_i\ne \omega''_i$ there exists some $n>0$ s.t. the following occurs:
\begin{itemize}
    \item For all $j\leq n$, $\omega^j_i=\omega''^j_i$.
    \item $\omega''_i$ is periodic of minimal period $n$.
\end{itemize}
In other words, the kneading sequences of $L''_v$ are at most a periodic "cutoff" of the kneading sequences of $L_v$, i.e., of the original Lorenz attractor. Of course, our arguments also say something analogues - when neither $\omega''_0,\omega''_1$ are periodic, then $(\omega_0,\omega_1)=(\omega''_0,\omega''_1)$.  
\begin{figure}[h]
\centering
\begin{overpic}[width=0.5\textwidth]{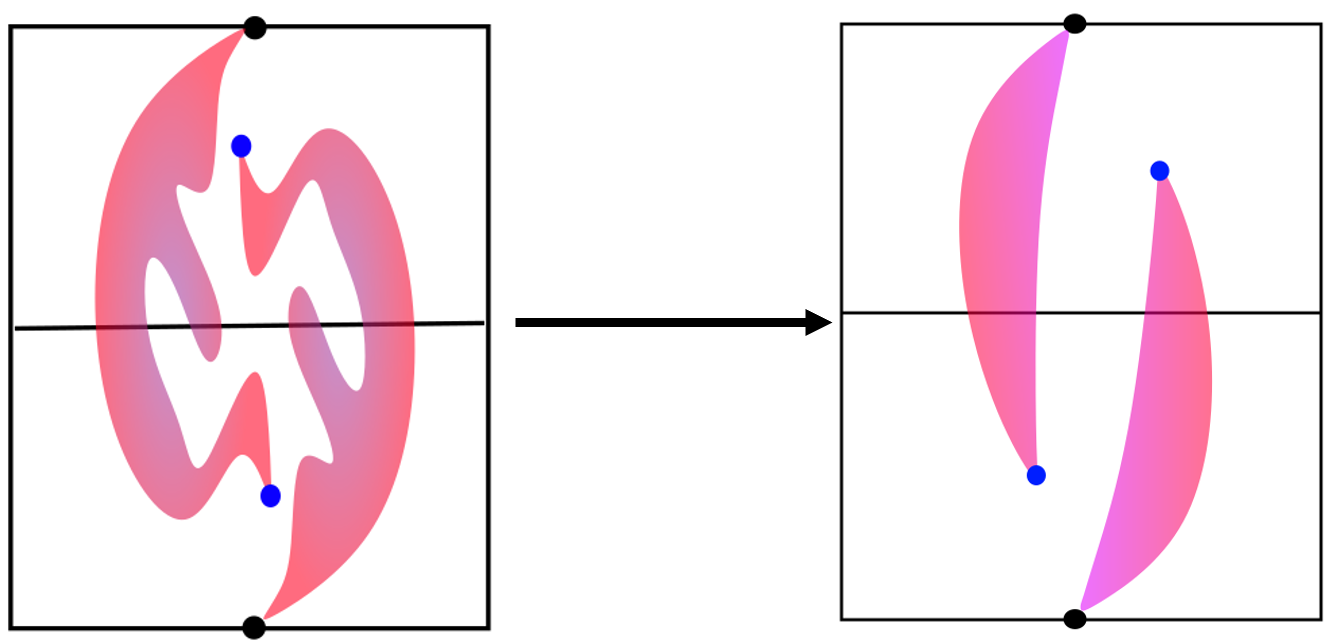}

\end{overpic}
\caption{\textit{Illustrating the deformation of $\psi_v$ to $\phi_v$ - $\phi_v$ is a "straightening" of $\psi_v$.  }}
\label{deformation1}
\end{figure}

Having smoothly deformed $L_v$ to $L''_v$ (thus isotoping $\psi_v$ to $\phi_v$) we are now ready to reduce $\phi_v$ to a one-dimensional interval map. To do so, note that by construction every component of $\mathbb{W}$ w.r.t. $\phi_v$ belongs to either Category $A$ or $B$ (see the illustration in Fig.\ref{deformation1}). In other words, the map $\phi_v$ is a "straightened" isotopic version of the first-return map $\psi_v$, in the sense that it removes all the "extra" dynamical information. In particular, we constructed $\phi_v$ s.t. its invariant set can be described easily, as every component of $\mathbb{W}$ is either Category $A$ or $C$.

We now continue by deforming isotopically $\phi_v:R_o\cup R_1\to R$ to a rectangle map. Begin by opening isotopically $p_0,p_1$ to arcs $AB$ and $CD$ on $\partial R$, and similarly, open $W$ and all its pre-images simultaneously to rectangles connecting the $AC$ and $BD$ sides. In particular, $W$ is opened to a rectangle $R_2$, separating $R_0$ and $R_1$ (see the illustration in Fig.\ref{deformation3}). This transforms $R$ to a rectangle $ABCD$, and allows us we can replace $\phi_v$ with a map $h_v:ABCD\to\mathbb{R}^2$ which is conjugate to $\phi_v$ on its invariant set in $ABCD\setminus(R_2\cup AB\cup CD)$ (see the illustration in Fig.\ref{deformation3}). Since $\phi_v$ is symmetric so is $h_v$ - and since every component of $\mathbb{W}$ is either category $A$ or $B$, it follows every component of $\cup_{n\geq0}h^{-n}_v(R_2)$ in $R_0\cup R_1$ is a rectangle connecting the $AC$ and $BD$ sides inside $ABCD$.

\begin{figure}[h]
\centering
\begin{overpic}[width=0.5\textwidth]{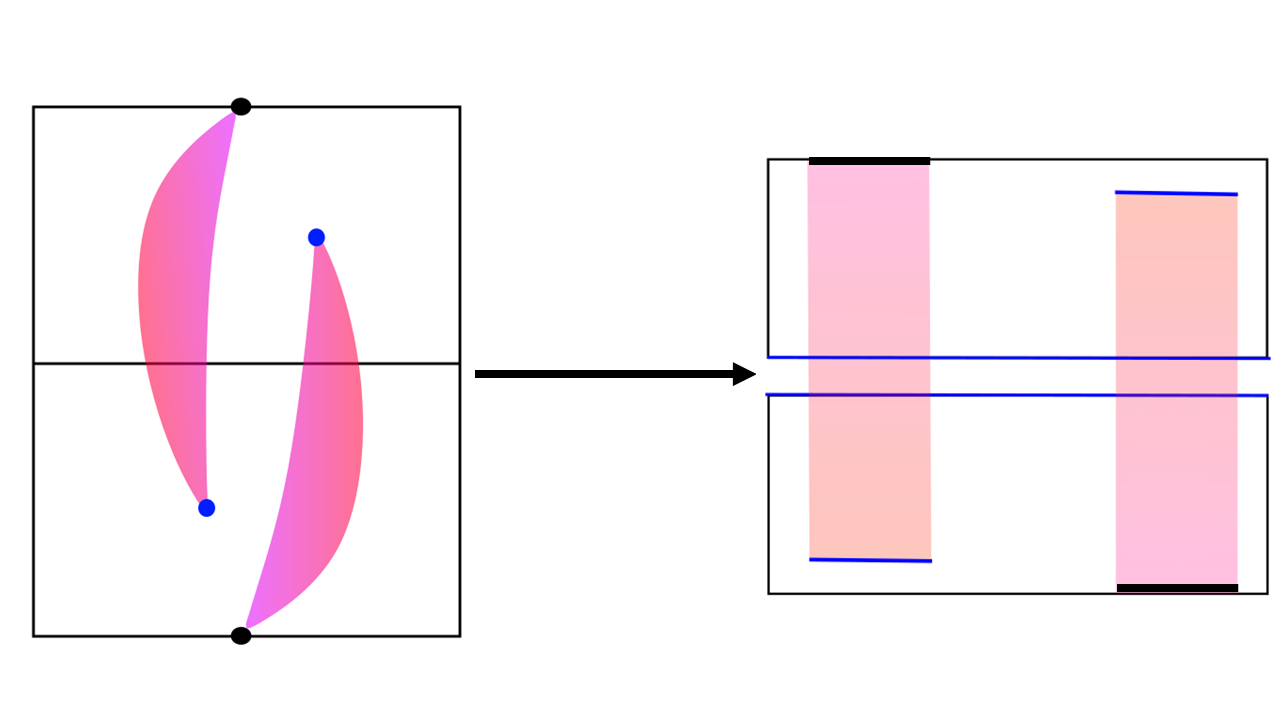}
\put(50,130){$R_0$}
\put(50,420){$R_1$}
\put(-40,270){ $W$}
\put(170,20){$p_0$}
\put(170,500){$p_1$}
\put(550,80){$A$}
\put(1000,80){$B$}
\put(550,420){$C$}
\put(1000,420){$D$}
\end{overpic}
\caption{\textit{Deforming $\phi_v$ to $h_v$. $R_2$ is the rectangle separating the two rectangles on the right.}}
\label{deformation3}
\end{figure}

It follows every component $c$ in the forward-invariant set of $h_v$ in $ABCD\setminus R_2$, can be written as $c=\cap_{i\in\mathbb{N}}D_i$, where $\{D_i\}_i$ is a nested sequence of rectangles, s.t. each $D_i$ connects the $AC$ and $BD$ sides. This proves $c$ is either a rectangle or an interval. Therefore, if necessary we further isotope $h_v:ABCD\to\mathbb{R}^2$ s.t. the following is satisfied:
\begin{itemize}
    \item $h_v$ maps horizontal lines to horizontal lines. In particular, it contracts the horizontal lines in $R_0$ and $R_1$, and expands on the vertical lines.
    \item $h_v$ is orientation preserving on both $R_0$ and $R_1$ and satisfies $h_v(AB)\subseteq AB$, $h_v(CD)\subseteq CD$.
\end{itemize}

In other words, this isotopy "straightens" the dynamics inside the component $c$, s.t. the vertical and horizontal (transverse) foliations are preserved under $h_v$. We now collapse $h_v:R_0\cup R_1\to\mathbb{R}^2$ to the $BD$ side along the horizontal stable foliation the map $h_v$ induces in $R_0\cup R_1$ (that is, the horizontal lines). By identifying the $BD$ side with the interval $[0,1]$ and collapsing the rectangle $R_2$ to the point $\frac{1}{2}$ and all its pre-images to singletons, it follows $h_v$ is collapsed to the following piecewise continuous map (see the illustration in Fig.\ref{model}):

 \begin{equation}
f_{r}(x)=\begin{cases}
			rx, & \text{$x\in[0,\frac{1}{2})$}\\
            1-\frac{r}{2}+r(x-\frac{1}{2}), & \text{$x\in(\frac{1}{2},1]$}
		 \end{cases}
\end{equation}
\begin{figure}[h]
\centering
\begin{overpic}[width=0.2\textwidth]{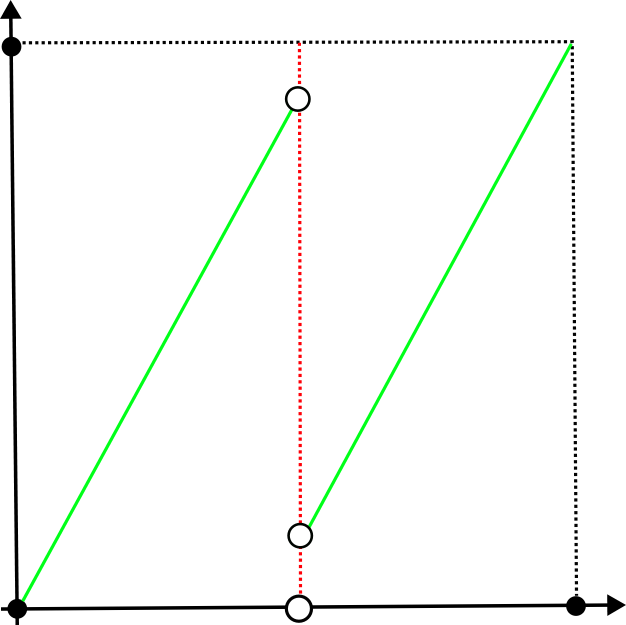}

\end{overpic}
\caption{\textit{The one-dimensional map generated by collapsing $h_v$.}}
\label{model}
\end{figure}

Where the parameter $r$ depends on $v$ - by construction, we must have $r\in(1,2]$. Now, let $I_r$ denote the maximal invariant set of $f_r$ in $[0,1]\setminus\{\frac{1}{2}\}$. Recalling the inclusion $\Phi_v\subseteq\Sigma_v$ and noting that it trivially follows $\Phi_v$ is the collection of itineraries of $h_v$ on its invariant set in $ABCD\setminus R_2$ (and hence also for $f_r$ on its invariant set in $[0,1]\setminus\{\frac{1}{2}\}$), we conclude there exists an invariant set $I_v$ for $\psi_v$ in $R\setminus W$ and a continuous, surjective $\pi_v:I_v\to I_r$ s.t. $\pi_v\circ\psi_v=f_r\circ\pi_v$. Moreover, by our construction it is also easy to see that $f_r$ and $\phi_v$ have the same kneading sequences. Therefore, with previous notations, denoting the kneading invariants of $f_r$ by $(\eta_+,\eta_-)$, precisely one of the following occurs:
\begin{itemize}
    \item $(\omega_0,\omega_1)=(\eta_+,\eta_-)$.
    \item When $(\omega_0,\omega_1)\ne(\eta_+,\eta_-)$, both $\eta_\pm$ are periodic - and writing $\eta_d=\{\eta^d_i\}_{i\geq0}$, $d\in\{+,-\}$ there exists some $n>0$ s.t. with previous notations:
    \begin{enumerate}
    \item For all $j\leq n$, $\eta^d_i=\omega^j_i$, where $j=0$ when $d=-$, and $j=1$ when $d=+$.
    \item Both $\eta_+$ and $\eta_-$ are periodic of minimal period $n$.
    \end{enumerate}
\end{itemize}

Therefore, all in all, to conclude the proof of Th.\ref{reduct} we need to prove that if $x\in I_r$ is periodic of minimal period $n$, then $\pi^{-1}_v(x)$ also includes a periodic point for $\psi_v$ of minimal period $n$. That, however, is immediate - it is easy to see that as we homotope $f_r$ back to $h_v$, the point $x$ is stretched into an interval, $I$, connecting the $AC$ and $BD$ sides, and satisfying the following:

\begin{itemize}
    \item $h^n_v(I)$ is a strict subset of $I$.
    \item For all $1\leq j<n$ we have $h_v^j(I)\cap I=\emptyset$.
\end{itemize}

Consequentially, by the Brouwer Fixed Point Theorem there exists a point $y\in I$ which is periodic of minimal period $n$. Moreover, $y$ does not lie on $I\cap(AC\cup BD)$, which proves there exists a sub-rectangle $D$ satisfying:
\begin{itemize}
    \item $\partial D$ is composed of arcs on $\cup_{n\geq0}h^{-n}_v(R_2)$ - hence $h^n_v$ has no fixed points in $\partial D$.
    \item For all $1\leq j<n$, $h^j_v(D)\cap\overline{D}=\emptyset$.
\end{itemize}

Using the fact $h_v$ contracts the horizontal foliation and expands the vertical foliation on both $R_0$ and $R_1$, similarly to the proof of Prop.\ref{persistence} we conclude the Fixed Point Index of $h^n_v$ on $D$ is non-zero. We now note that as we isotope $h^n_v$ back to $\psi^n_v$ on $D$, no fixed points are added through the boundary - this is true because all the pre-images of $R_2$ are collapsed by the said isotopy to the pre-images of $W$, which has no fixed points of minimal period $n$ for $\psi_v$ in its vicinity. This implies the Fixed Point Index of $\psi^n_v:D\to {R}$ is also non-zero. Consequently, similar arguments to those used in the proof of Prop.\ref{persistence} now prove $D$ includes a periodic point of minimal period $n$ for $\psi_v$. The proof of Th.\ref{reduct} is now complete.
\end{proof}
\begin{remark}
\label{gen2}   The reason we insist on deforming the vector field $L_v$ and the first return map symmetrically is in order to respect the symmetry properties of the Lorenz system. In the spirit of Remark \ref{gen1} we note that if instead we were to consider some arbitrary $C^1$ perturbation of the Lorenz system at trefoil parameter, it would be pointless to deform the isotopies of the first-return map symmetrically. In that case, a similar argument to the one above would result in the one-dimensional factor map:

 \begin{equation}\label{eq:one_dim_pert}
f_{s,r}(x)=\begin{cases}
			rx, & \text{$x\in[0,\frac{1}{2})$}\\
            1-\frac{s}{2}+s(x-\frac{1}{2}), & \text{$x\in(\frac{1}{2},1]$}
		 \end{cases},
\end{equation}
where $s,r\in(1,2]$ - see the illustration in Fig.\ref{nosym}.
\end{remark}
\begin{figure}[h]
\centering
\begin{overpic}[width=0.2\textwidth]{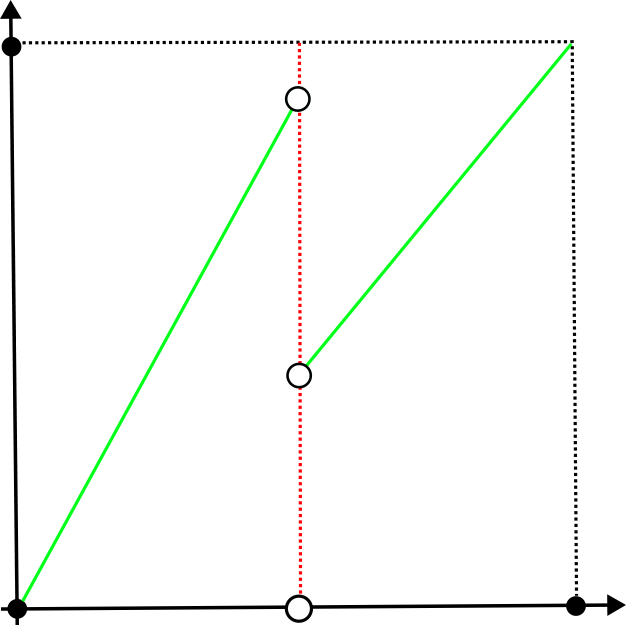}

\end{overpic}
\caption{\textit{Graph of a map $f_{s,r}$ of the form \eqref{eq:one_dim_pert}.}}
\label{nosym}
\end{figure}

Having established proven the dynamics on the Lorenz attractor can be rigorously reduced into a one-dimensional map, we now proceed to study the dynamics of the said model. To this end, we are now going to further reduce the maps $f_r$, $r\in(1,2]$ defined above to a simpler class of Lorenz maps, called the \textbf{$\beta$--transformations}. The reason we do so is because there already exists a rich theory describing the dynamics and bifurcations of the $\beta$--transformations, which we will "pull back" and use it to describe the dynamics and bifurcations of the Lorenz system.

That being said, for reasons which will be apparent later in this paper, we will do so for a more general class of maps. To begin, fix $s,r\in(1,2]$ and consider the map $f_{s,r}$ given by \eqref{eq:one_dim_pert} - as shown above, these maps correspond to the general $C^1$ perturbations of the Lorenz attractor at trefoil parameters. Let $[a,b]$ be an interval defined by the one-sided limits of $f_{s,r}$ at the discontinuity $\frac{1}{2}$, i.e.:
$$
[a,b]:=\left[f_{s,r}\left(\frac{1}{2}_+\right),f_{s,r}\left(\frac{1}{2}_-\right)\right]=\left[1-\frac{s}{2},\frac{r}{2}\right]\ni\frac{1}{2}.
$$
Note that $f_{s,r}([a,b])\subset[a,b]$ and that for every point $x\in(0,1)$ we have $\text{Orb}_{f_{s,r}}(x)\cap[a,b]\neq\emptyset$, where $\text{Orb}_{f_{s,r}}(x)$ denotes the orbit of $x$ under $f_{s,r}$. Therefore, the trajectory of every point from the interval $[0,1]$ (save only for the fixed points $0$ and $1$) will eventually intersect $[a,b]$ and remain there. In particular, the set $(0,1)\setminus[a,b]$ does not contain any periodic points. Since we are mainly interested in the periodic dynamics of the map $f_{s,r}$, we can restrict our study to the interval $[a,b]$. Simple calculations show that the restriction $f_{s,r}|_{[a,b]}\colon[a,b]\to[a,b]$, after rescaling again to $[0,1]$, is an expanding Lorenz map with the critical point $c=\frac{s-1}{r+s-2}$. More precisely, define the rescaling map:
\begin{equation}
\label{eq:rescaling}
h_{u,v}\colon[u,v]\to[0,1],\quad h_{u,v}(x)=\frac{x-u}{v-u}.
\end{equation}
Then:
$$
h_{u,v}^{-1}\colon[0,1]\to[u,v],\quad h_{u,v}^{-1}(x)=(v-u)x+u
$$ 
And for $[u,v]:=[a,b]$, we get:
\begin{equation}
H_{s,r}(x):=\left(h_{a,b}\circ f_{s,r}|_{[a,b]}\circ h_{a,b}^{-1}\right)(x)=
\begin{cases}
			rx+\frac{(2-s)(r-1)}{r+s-2}, & \text{for}\ x\in[0,\frac{s-1}{r+s-2})\\
            sx-\frac{s(s-1)}{r+s-2}, & \text{for}\ x\in(\frac{s-1}{r+s-2},1]
		 \end{cases}.
\end{equation}
\begin{remark}\label{rem:one-to-one}
There exists a one-to-one correspondence between periodic points of the map $H_{s,r}$ and the initial map $f_{s,r}$ given by \eqref{eq:one_dim_pert}, except for fixed points. Furthermore, the kneading invariants of maps $H_{s,r}$ and $f_{s,r}$ coincide.
\end{remark}
Applying this logic for the symmetric case $r=s$, the map $f_r$ given by \eqref{eq:one_dim} and Th.\ref{reduct} is reduced to the (symmetric) $\beta$-transformation: 
$$
F_{\beta}(x):=H_{r,r}(x)=
\begin{cases}
			r x-\frac{r}{2}+1, & \text{for}\ x\in[0,\frac{1}{2})\\
            r x-\frac{r}{2}, & \text{for}\ x\in(\frac{1}{2},1]
		 \end{cases}=rx-\frac{r}{2}+1\Mod{1}=\beta x+\alpha\Mod{1},
$$
Where $\beta=r\in(1,2]$, $\alpha=1-\frac{r}{2}\in[0,\frac{1}{2})$ and the critical point being set at $c=\frac{1}{2}$. As an immediate consequence of Th.\ref{reduct} we obtain the following fact, with which we conclude this section:
\begin{corollary}
    \label{reduct2}
    The Lorenz-like map $f_r$ in the conclusion of Th.\ref{reduct} can be replaced with a $\beta$--transformation $F_\beta$, for the parameter $\beta=r$ as defined above. In particular, $F_\beta$ forms a factor map for the Lorenz attractor, and when its kneading invariant is non-periodic it coincides with that of the corresponding flow.
\end{corollary}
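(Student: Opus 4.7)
The plan is to combine Theorem \ref{reduct} with the explicit rescaling already carried out in the paragraphs immediately preceding the corollary. First I would observe that $f_r$ is precisely the symmetric case $s=r$ of the map $f_{s,r}$ from \eqref{eq:one_dim_pert}, so the general analysis applies verbatim. The interval $[a,b]=[1-\frac{r}{2},\frac{r}{2}]$ is forward-invariant under $f_r$, contains the critical point $\frac{1}{2}$, and absorbs the orbit of every non-fixed initial condition; hence the maximal invariant set $I_r$ of Theorem \ref{reduct} is contained in $[a,b]$, since the only orbits missing $[a,b]$ are trapped at the fixed points $0$ and $1$, which are excluded from $I_r$ by construction.

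Next I would apply the affine rescaling $h_{a,b}$ defined in \eqref{eq:rescaling} to conjugate $f_r|_{[a,b]}$ to $H_{r,r}$. Substituting $s=r$ into the formula for $H_{s,r}$ and simplifying yields $H_{r,r}(x)=rx-\frac{r}{2}+1\Mod{1}=\beta x+\alpha\Mod{1}$ with $\beta=r$, $\alpha=1-\frac{r}{2}$ and critical point $c=\frac{1}{2}$, which is exactly the symmetric $\beta$-transformation $F_\beta$. Thus $h_{a,b}\colon[a,b]\to[0,1]$ is a topological conjugacy between $f_r|_{[a,b]}$ and $F_\beta$; in particular it restricts to a bijection $I_r\to I_\beta$ preserving the minimal period of periodic points.

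To conclude, let $\pi\colon I_v\to I_r$ be the semi-conjugacy produced by Theorem \ref{reduct}. Then $\pi_\beta:=h_{a,b}\circ\pi\colon I_v\to I_\beta$ is continuous and surjective, and satisfies $\pi_\beta\circ\psi_v=F_\beta\circ\pi_\beta$ because $\pi$ intertwines $\psi_v$ with $f_r$ and $h_{a,b}$ intertwines $f_r|_{[a,b]}$ with $F_\beta$. The minimal-period statement transfers directly, since $h_{a,b}$ is a homeomorphism and the periodic points of $f_r$ inside $I_r$ are exactly the periodic points of $F_\beta$ up to rescaling. As noted in Remark \ref{rem:one-to-one}, the kneading invariants of $f_r$ and $F_\beta$ coincide, so the non-periodic clause of Theorem \ref{reduct} immediately yields the analogous statement: when the kneading invariant $(\eta_+,\eta_-)$ of $F_\beta$ is non-periodic, it agrees with the kneading invariant $(\omega_0,\omega_1)$ of the Lorenz flow at $v$. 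The only bookkeeping one needs to check carefully is that the fixed points $0$ and $1$ of $f_r$ (which lie outside $[a,b]$) do not contribute periodic orbits to $I_v$ through $\pi$; but this is immediate from Theorem \ref{reduct} and the construction of $I_v$, so there is no genuine obstacle beyond translating notation.
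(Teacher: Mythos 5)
Your proposal is correct and follows essentially the same route as the paper, which treats the corollary as an immediate consequence of the rescaling discussion preceding it: restrict $f_r$ to the absorbing interval $[a,b]=[1-\frac{r}{2},\frac{r}{2}]$, conjugate by $h_{a,b}$ to obtain $F_\beta$ with $\beta=r$, and compose with the semi-conjugacy from Th.\ref{reduct}, invoking Remark \ref{rem:one-to-one} for the periodic-orbit correspondence (modulo fixed points) and the coincidence of kneading invariants. The only slight imprecision is your claim that the fixed points $0$ and $1$ are "excluded from $I_r$ by construction" --- they do lie in the maximal invariant set of $f_r$ in $[0,1]\setminus\{\frac{1}{2}\}$ --- but this is exactly the exception the paper itself flags in Remark \ref{rem:one-to-one}, so it does not affect the argument.
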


\section{On the dynamics of the $\beta$--transformations}
\label{onesect}

The $\beta$--transformations $F_{\beta,\alpha}(x)=\beta x+\alpha \Mod{1}$, where
$$
(\beta,\alpha)\in\Delta:=\left\lbrace (b,a)\in\mathbb{R}^2\:|\: 1<b\leq2\quad\text{and}\quad 0\leq a\leq 2-b\right\rbrace,
$$
can be considered as the simplest class of Lorenz maps, which at the same time captures a large part of their dynamical aspects. In fact, many Lorenz maps, including maps with slope greater than $\sqrt{2}$ everywhere except the critical point, can be reduced to $\beta$--transformations (see \cite{Parry,Glen,DiCui}). Although our goal is to study the Lorenz attractor, it should be noted that the dynamics of the Lorenz maps is of independent interest: such maps appear, e.g., in economics \cite{Eco,Eco2}, neuroscience \cite{BLS,BNPSR}, and fractal geometry \cite{BarHV}. A comprehensive literature on the applications of maps with discontinuities is presented in the survey article \cite{SIAMrev}. In this section, we are mainly interested in the properties of the symmetric $\beta$-transformations
\begin{equation}
\label{eq:family_maps}
F_{\beta}(x)=\beta x+1-\frac{\beta}{2}\Mod{1},    
\end{equation}
obtained in the previous section, since they can be "pulled back" to the Lorenz attractor via Th.\ref{reduct} (cf. Cor.\ref{reduct2}). However, we will state some of the results in a more general setting than necessary for this purpose, as we hope they will also be useful to scientists working in other fields. Before we start, recall that a Lorenz map $F\colon[0,1]\to[0,1]$ is said to be expanding if its slope satisfies $\inf_{x\not\in E} F'(x)>1$ for a finite set $E\subset[0,1]$. An important consequence of this condition is that the set $C_F:=\bigcup_{i=0}^{\infty}F^{-i}(\{c\})$ is dense in $[0,1]$. The property $\overline{C_F}=[0,1]$ is called the \textbf{topological expanding condition} and will be used repeatedly in our proofs.

The family of maps \eqref{eq:family_maps} depends on a parameter $\beta\in(1,2]$. To describe bifurcations occurring in this family, we will introduce two sequences: $\{\varepsilon_i\}$ and $\{\beta_i\}$. The first one determines a partition of $[\sqrt{2},2]$, and the second one - of the whole interval $(1,2]$ (see Fig.\ref{fig:parameter_space}). For parameters in $[\sqrt{2},2]$, we observe complex topological and measurable dynamics. On the other hand, the maps defined by parameters in $(1,\sqrt{2}]$ globally exhibit a simple periodic structure, but locally, in the vicinity of the critical point, their dynamics is a "copy" of those with slope $\beta\geq\sqrt{2}$. As we will see, not only many of the properties of the maps $F_{\beta}$ survive as we go back to the attractor, but in many ways the bifurcations and the topology of the attractor are determined by them. 

This section is organized as follows: in subsection \ref{subsec:symbolic_dynamics}, we study the maps $F_{\beta}$ from a point of view of the kneading theory, focusing on changes in their symbolic dynamics when the slope exceeds the values $\varepsilon_i$ (see Lemma \ref{lem:eps_knead}, Cor.\ref{cor:kneading_iff} and Prop.\ref{prop:itineraries_rot}). In subsection \ref{subsec:topmeasure}, we collect some results on topological and measurable dynamics of Lorenz maps from the literature and apply them to the maps $F_{\beta}$ (see Cor.\ref{cor:all_periods}, Cor.\ref{cor:density} and Th.\ref{measurprop}). We then use the obtained results, together with knowledge of the symbolic dynamics of the maps $F_{\beta}$, to describe the complexity of the Lorenz attractor around the set of trefoil parameters (see Cor.\ref{mixingattractor}, Cor.\ref{symbolicper} and Cor.\ref{measurableattractor}). We conclude this subsection by comparing the bifurcations in the family of maps $F_{\beta}$ with those of the Lorenz attractors (see Th.\ref{expl}). Following that, in subsection \ref{renorsec} we study renormalization in Lorenz maps (see Lemma \ref{lem:renorm}, Prop.\ref{prop:beta_seq} and Lemma \ref{lem:doubling_conjugate}) and discuss how this phenomenon manifests itself in the Lorenz attractor (see Th.\ref{renormth}). Furthermore, we use the renormalization theory to assign Templates with certain Lorenz attractors (see Th.\ref{templateth}). 

\subsection{Symbolic dynamics and kneading theory}\label{subsec:symbolic_dynamics}
By the classical result of Hubbard and Sparrow \cite{HubSpar}, the kneading invariants (see Def.\eqref{eq:def_knead}) characterize expanding Lorenz maps: any two expanding Lorenz maps with the same kneading invariant are conjugated by a homeomorphism of $[0,1]$. Furthermore, if the pair of sequences $(\eta_+,\eta_-)$ forms the kneading invariant of some expanding Lorenz map $F\colon[0,1]\to[0,1]$, then it satisfies the following condition:
\begin{equation}
    \label{eq:kneading_cond}  \sigma(\eta_+)\preccurlyeq\sigma^n(\eta_+)\prec\sigma(\eta_-)\quad\text{and}\quad\sigma(\eta_+)\prec\sigma^n(\eta_-)\preccurlyeq\sigma(\eta_-)
\end{equation}
for all $n\in\mathbb{N}$, where $\prec$ denotes the lexicographic order on $\{0,1\}^{\mathbb{N}_0}$. The set of the kneading sequences of all points $x\in[0,1]$ for the map $F$ is completely described by its kneading invariant $k_F=(\eta_+,\eta_-)$, that is
\begin{equation}\label{eq:symbol_space}
    \begin{aligned}
\Sigma_F:&=\left\{\eta(x)\:|\:x\in[0,1]\setminus C_F\right\}\cup\left\{\eta_+(x),\eta_-(x)\:|\:x\in C_F\right\}\\
&=\left\{\xi\in\{0,1\}^{\mathbb{N}_0}\:|\: \sigma(\eta_+)\preccurlyeq\sigma^n(\xi)\preccurlyeq\sigma(\eta_-)\ \text{for every}\ n\in\mathbb{N}_0\right\}.
\end{aligned}
\end{equation}
Clearly, for any two kneading invariants $k_F=(\eta_+,\eta_-)$, $k_G=(\xi_+,\xi_-)$ with $\eta_+\preccurlyeq\xi_+$ and $\xi_-\preccurlyeq\eta_-$, we have $\Sigma_G\subset\Sigma_F$. It is also well known that for any expanding Lorenz map $F$, the pair $(\Sigma_F,\sigma_F)$, where $\sigma_F$ is the one-sided shift restricted to $\Sigma_F$, forms a subshift (for an extensive introduction to symbolic dynamics, we refer the reader to \cite{Kurka}).

Our first goal is to discuss the symmetry properties of the maps $F_{\beta}$ and their consequences for the kneading invariants.

\begin{lemma}\label{lem:symmetry}
Let $\beta\in(1,2]$. The following conditions hold:
\begin{enumerate}
    \item\label{lem:symmetry1} $F_{\beta}(x)=1-F_{\beta}(1-x)$ for every $x\in[0,1]$, except the critical point $c=\frac{1}{2}$;
    \item\label{lem:symmetry2} $F_{\beta}^i(0_+)=1-F_{\beta}^i(1_-)$ for every $i\in\mathbb{N}_0$;
    \item\label{lem:symmetry3} the kneading invariant  of the map $F_{\beta}$, i.e.,
    $$
    k_{F_\beta}=(\eta_+,\eta_-)=(\eta^+_0\eta^+_1\ldots,\eta^-_0\eta^-_1\ldots)
    $$ 
    satisfies $\eta_i^+=1-\eta_i^-$ for every $i\in\mathbb{N}_0$.
\end{enumerate}
\end{lemma}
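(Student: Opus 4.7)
The plan is to prove the three parts in order, with each relying on the previous. For (1), I would verify the identity $F_\beta(x) = 1 - F_\beta(1-x)$ directly by computing on each of the two affine branches of $F_\beta$. If $x \in [0, 1/2)$ then $1 - x \in (1/2, 1]$, so $F_\beta(1 - x) = \beta(1-x) - \beta/2 = \beta/2 - \beta x$, and one immediately checks that $1 - F_\beta(1-x) = 1 - \beta/2 + \beta x$ agrees with the formula for $F_\beta(x)$ on the left branch. The case $x \in (1/2, 1]$ is handled symmetrically. The only thing to note is that the hypothesis $x \neq 1/2$ is exactly what guarantees $1 - x \neq 1/2$, so that both $F_\beta(x)$ and $F_\beta(1-x)$ are unambiguously defined by the piecewise formula.

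For (2), I would proceed by induction on $i$, with trivial base case $F_\beta^0(0_+) = 0 = 1 - 1 = 1 - F_\beta^0(1_-)$. For the inductive step, assuming the identity holds at step $i$, I would apply (1) to $y := F_\beta^i(0_+)$ and use the inductive hypothesis to substitute $1 - y = F_\beta^i(1_-)$, obtaining
$$
F_\beta^{i+1}(0_+) = F_\beta(y) = 1 - F_\beta(1 - y) = 1 - F_\beta(F_\beta^i(1_-)) = 1 - F_\beta^{i+1}(1_-).
$$
The main technical obstacle is the degenerate case $F_\beta^i(0_+) = 1/2$, where (1) does not directly apply. In that situation the inductive hypothesis forces $F_\beta^i(1_-) = 1/2$ as well, so the one-sided limits $F_\beta^{i+1}(0_+)$ and $F_\beta^{i+1}(1_-)$ are determined by continuing along the opposite one-sided branches of $F_\beta$ at the critical point, and the complementary values $F_\beta(c_+) = 0$ and $F_\beta(c_-) = 1$ again yield the relation $F_\beta^{i+1}(0_+) = 1 - F_\beta^{i+1}(1_-)$.

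Finally, for (3), I would unravel the definition of the upper and lower kneading sequences at $c = 1/2$. The zeroth coordinates are immediate: for $y \to c^+$ we have $y > c$, so $\eta_0^+ = 1$, while for $y \to c^-$ we have $y < c$, so $\eta_0^- = 0$. For $n \geq 1$, note that $F_\beta(y) \to F_\beta(c_+) = 0$ as $y \to c^+$, and $F_\beta(y) \to F_\beta(c_-) = 1$ as $y \to c^-$; iterating $n-1$ more times through points that avoid preimages of $c$, we see that $\eta_n^+$ is determined by the sign of $F_\beta^{n-1}(0_+) - c$ and $\eta_n^-$ by the sign of $F_\beta^{n-1}(1_-) - c$. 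By part (2), these two quantities are negatives of each other, so the corresponding symbols are complementary, yielding $\eta_i^+ = 1 - \eta_i^-$ for every $i \in \mathbb{N}_0$.
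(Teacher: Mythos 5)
Your proposal is correct and follows essentially the same route as the paper: a direct branch-by-branch computation for (1), induction with the degenerate case $F_\beta^i(0_+)=c$ handled separately for (2), and the sign complementarity from (2) for (3). Your treatment of part (3) is in fact slightly more careful than the paper's about the index shift between $\eta^\pm_n$ and $F_\beta^{n-1}(0_+)$, $F_\beta^{n-1}(1_-)$.
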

\begin{proof}
    \eqref{lem:symmetry1}: Note that $x<c$ if and only if $1-x>c$. Thus, we have 
    $$
    1-F_{\beta}(1-x)=1-\left(\beta (1-x)-\frac{\beta}{2}\right)=\beta x+1-\frac{\beta}{2}=F_{\beta}(x),\quad\text{for}\quad x\in[0,c),
    $$
    $$
    1-F_{\beta}(1-x)=1-\left(\beta (1-x)+1-\frac{\beta}{2}\right)=\beta x-\frac{\beta}{2}=F_{\beta}(x),\quad\text{for}\quad x\in(c,1].
    $$

    \eqref{lem:symmetry2}: Induction on $i\in\mathbb{N}_0$. The case $i=0$ is trivial. Assume that the equality $F_{\beta}^i(0_+)=1-F_{\beta}^i(1_-)$ is satisfied for some $i\in\mathbb{N}_0$ and consider two cases. If $F_{\beta}^i(0_+)\neq c$, then also $F_{\beta}^i(1_-)\neq c$ and using the condition~\eqref{lem:symmetry1} we get
    $$
    F_{\beta}^{i+1}(0_+)=F_{\beta}(F_{\beta}^{i}(0_+))=F_{\beta}(1-F_{\beta}^i(1_-))=1-F_{\beta}(F_{\beta}^i(1_-))=1-F_{\beta}^{i+1}(1_-).
    $$
    If $F_{\beta}^i(0_+)= c$, then $F_{\beta}^i(1_-)= c$ as well and
    $$
    F_{\beta}^{i+1}(0_+)=0=1-F_{\beta}^{i+1}(1_-).
    $$

    \eqref{lem:symmetry3}: By the condition~\eqref{lem:symmetry2} the inequality $F_{\beta}^i(0_+)<c$ is equivalent to $F_{\beta}^i(1_-)>c$. Moreover, $F_{\beta}^i(0_+)=c$ if and only if $F_{\beta}^i(1_-)=c$, hence $\eta_i^+=1-\eta_i^-$ for every $i\in\mathbb{N}_0$.
\end{proof}
For any sequence $\eta=\eta_0\eta_1\ldots\in\{0,1\}^{\mathbb{N}_0}$ we define its \textbf{mirror image} (or \textbf{conjugate}) as the sequence $\overline{\eta}$ generated by replacing all the $0$--s with $1$'--s and $1$'--s with $0$'--s in $\eta$, that is, $\overline{\eta}=\overline{\eta}_0\overline{\eta}_1\ldots\in\{0,1\}^{\mathbb{N}_0}$, where $\overline{\eta}_i=1-\eta_i$ for every $i\in\mathbb{N}_0$. It follows from Lemma~\ref{lem:symmetry}\eqref{lem:symmetry3} that for any $\beta\in(1,2]$ the kneading invariant of $F_{\beta}$ has the form $k_{F_\beta}=(\eta,\overline{\eta})$ for some $\eta\in\{0,1\}^{\mathbb{N}_0}$, and thus
$$
\Sigma_{F_{\beta}}=\left\{\xi\in\{0,1\}^{\mathbb{N}_0}\:|\: \sigma(\eta)\preccurlyeq\sigma^n(\xi)\preccurlyeq\sigma(\overline{\eta})\ \text{for every}\ n\in\mathbb{N}_0\right\}.
$$
Note that the sequences $\eta$, $\overline{\eta}$ are periodic if and only if $F_{\beta}^n(c_+)=F_{\beta}^n(c_-)=\frac{1}{2}$ for some $n\in\mathbb{N}$. 

Now, we are going to describe how the kneading invariant of $F_{\beta}$ changes when the parameter $\beta$ tends to $2$. For this purpose, we will introduce a sequence of parameters $\{\varepsilon_i\}_{i\in\mathbb{N}_0}\subset(1,2]$ related to the family of polynomials
\begin{equation}\label{eq:polynomials}
    Q_i(\beta)=\beta^{i+1}-2\beta^i+1=\beta^i(\beta-2)+1.
\end{equation}

\begin{lemma}\label{lem:polynomials}
    For each $i\geq2$ the polynomial $Q_i$ has in the interval $(1,2)$ a unique root $\varepsilon_i$.
\end{lemma}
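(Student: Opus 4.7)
The plan is a shape analysis of the polynomial $Q_i$ on $[1,2]$ via first-derivative tests. First I would evaluate $Q_i$ at the endpoints: a direct computation gives $Q_i(1) = 1 - 2 + 1 = 0$ and $Q_i(2) = 2^{i+1} - 2 \cdot 2^{i} + 1 = 1$. So $Q_i$ vanishes at the left endpoint and is strictly positive at the right endpoint, which by continuity is promising but does not yet produce a root in the open interval $(1,2)$.

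Next I would differentiate. Factoring, $Q_i'(\beta) = (i+1)\beta^{i} - 2i\beta^{i-1} = \beta^{i-1}\bigl[(i+1)\beta - 2i\bigr]$, so the only critical point of $Q_i$ in $(0,\infty)$ other than $0$ is $\beta^\ast := \frac{2i}{i+1}$. The hypothesis $i \geq 2$ is used precisely here: one checks $\beta^\ast > 1$ iff $i > 1$, while $\beta^\ast < 2$ holds for every $i$, so $\beta^\ast \in (1,2)$. Reading off the sign of $Q_i'$ from the factorization, $Q_i$ is strictly decreasing on $(1, \beta^\ast)$ and strictly increasing on $(\beta^\ast, 2)$.

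Combining these two observations yields both existence and uniqueness. Since $Q_i(1) = 0$ and $Q_i$ is strictly decreasing on $(1, \beta^\ast]$, we have $Q_i(\beta) < 0$ on the whole interval $(1, \beta^\ast]$, so no root can occur there. On $[\beta^\ast, 2]$ the map $Q_i$ is strictly increasing from the negative value $Q_i(\beta^\ast)$ to the positive value $Q_i(2) = 1$, so the intermediate value theorem supplies exactly one root $\varepsilon_i$, necessarily lying in the open interval $(\beta^\ast, 2) \subset (1,2)$.

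There is really no main obstacle here beyond bookkeeping. The only subtlety worth flagging is that the assumption $i \geq 2$ is sharp for the statement: for $i = 1$ one gets $\beta^\ast = 1$ and $Q_1(\beta) = (\beta - 1)^2 > 0$ on $(1,2)$, so no root exists in $(1,2)$, confirming that the role of the hypothesis is exactly to push the critical point to the right of $1$.
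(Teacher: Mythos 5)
Your proof is correct, and it takes a genuinely different route from the paper's. You argue by single-variable calculus: from the factorization $Q_i'(\beta)=\beta^{i-1}\left[(i+1)\beta-2i\right]$ you locate the unique critical point $\beta^\ast=\frac{2i}{i+1}\in(1,2)$ (this is exactly where $i\geq2$ enters), deduce that $Q_i$ is strictly decreasing then strictly increasing on $(1,2)$, and combine this with $Q_i(1)=0$, $Q_i(2)=1$ to get existence and uniqueness in one stroke, together with the sharper localization $\varepsilon_i\in\left(\frac{2i}{i+1},2\right)$. The paper instead proves existence by induction on $i$ (starting from $\varepsilon_2=\frac{1+\sqrt{5}}{2}$ and using $Q_{i+1}(\varepsilon_i)=1-\varepsilon_i<0$ together with the Intermediate Value Theorem), and proves uniqueness via the algebraic reformulation $Q_i(\beta)=0\iff\sum_{j=1}^i\beta^{-j}=1$ and the strict monotonicity of $\beta\mapsto\sum_{j=1}^i\beta^{-j}$. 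Your argument is more self-contained and arguably cleaner for the lemma as stated; the paper's choices are not accidental, though, since the inductive step delivers $\varepsilon_{i+1}\in(\varepsilon_i,2)$, i.e.\ that the sequence $\{\varepsilon_i\}$ is strictly increasing, and the identity $\sum_{j=1}^i\varepsilon_i^{-j}=1$ is reused immediately afterwards to show $\varepsilon_i\to2$ and again in the kneading computations of Lemma \ref{lem:eps_knead}. So if you adopted your proof in place of the paper's, those two facts would need separate (easy) arguments. Your remark that $i\geq2$ is sharp, via $Q_1(\beta)=(\beta-1)^2$, is a nice sanity check consistent with the paper setting $\varepsilon_1=\sqrt{2}$ by fiat rather than as a root.
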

\begin{proof}
We will prove the existence of the root of $Q_i$ by induction on $i\geq2$. For $i=2$, the point $\varepsilon_2:=\frac{1+\sqrt{5}}{2}\in(1,2)$ satisfies $Q_2(\varepsilon_2)=0$. Next, assume that for some $i\geq2$ the polynomial $Q_i$ has a root $\varepsilon_i\in(1,2)$. Note that the equality $Q_i(\varepsilon_i)=0$ implies $\varepsilon_i^i(\varepsilon_i-2)=-1$ and thus
$$
Q_{i+1}(\varepsilon_i)=\varepsilon_i^{i+1}(\varepsilon_i-2)+1=1-\varepsilon_i<0.
$$
Since $Q_{i+1}(2)=1$, by the Intermediate Value Theorem, the polynomial $Q_{i+1}$ has a root $\varepsilon_{i+1}\in(\varepsilon_i,2)\subset(1,2)$.

Let $i\geq2$ and $\beta\in(1,2)$. Observe that the following equivalences hold:
\begin{equation}\label{eq:root_equiv}
Q_i(\beta)=0\iff\beta^{i+1}-2\beta^i+1=0\iff\frac{1-\frac{1}{\beta^i}}{\beta-1}=1\iff\sum_{j=1}^i\frac{1}{\beta^j}=1.
\end{equation}
Moreover,
$$
\beta<\varepsilon_i\implies\sum_{j=1}^i\frac{1}{\beta^j}>\sum_{j=1}^i\frac{1}{\varepsilon_i^j}=1
$$
and
$$
\beta>\varepsilon_i\implies\sum_{j=1}^i\frac{1}{\beta^j}<\sum_{j=1}^i\frac{1}{\varepsilon_i^j}=1.
$$
Hence, we conclude that $\varepsilon_i$ is the unique root of $Q_i$ in $(1,2)$.
\end{proof}

Let $\varepsilon_1:=\sqrt{2}$ and for each $i\geq2$ we define $\varepsilon_i$ to be the unique root of the polynomial $Q_i$ contained in the interval $(1,2)$. By Lemma~\ref{lem:polynomials} the sequence $\{\varepsilon_i\}_{i\in\mathbb{N}}$ is well defined. Furthermore, using the equivalence~\eqref{eq:root_equiv} we conclude that $\{\varepsilon_i\}_{i\in\mathbb{N}}$ is strictly increasing and converges to $2$. The approximate values of its first few elements are
$$
\varepsilon_1=\sqrt{2}\approx 1.41421,\quad\varepsilon_2=\frac{1+\sqrt{5}}{2}\approx1.61803,\quad\varepsilon_3\approx1.83929,\quad\varepsilon_4\approx1.92756,\quad\varepsilon_5\approx1.96595,\quad\varepsilon_6\approx1.98358.
$$
We marked some of the corresponding points in the parameter space
\begin{equation}
    \label{eq:parameter_points_eps}
    (\varepsilon_i,\xi_i)=\left(\varepsilon_i,1-\frac{\varepsilon_i}{2}\right)\in\Delta,\quad i\in\mathbb{N},
\end{equation}
as gray dots in the Figure~\ref{fig:parameter_space}. Note that the sequence $\{\varepsilon_i\}_{i\in\mathbb{N}}$ determines a partition of the parameter range $\beta\in[\sqrt{2},2]$. We will show that the kneading invariants of the corresponding maps $F_{\varepsilon_i}$ have a simple periodic form, which consequently gives us useful upper and lower bounds on the symbolic dynamics of any map $F_{\beta}$ defined by $\beta\in(\varepsilon_i,\varepsilon_{i+1})$. Before that, let us recall that for $n\in\mathbb{N}$ any finite sequence $\omega\in\{0,1\}^n$ is called a \textbf{word of length $n$}. Moreover, for any words $\omega=\omega_0\omega_1\ldots\omega_{n-1}\in\{0,1\}^n$ and $\nu=\nu_0\nu_1\ldots\nu_{m-1}\in\{0,1\}^m$ we define their \textbf{concatenation} as
$$
\omega\nu:=\omega_0\omega_1\ldots\omega_{n-1}\nu_0\nu_1\ldots\nu_{m-1}\in\{0,1\}^{n+m},
$$
and for $p\in\mathbb{N}$ we denote
$$
(\omega)^p:=\underbrace{\omega\omega\ldots\omega}_\text{$p$ times}\in\{0,1\}^{np}\quad\text{and}\quad(\omega)^\infty:=\omega\omega\ldots\in\{0,1\}^{\mathbb{N}_0}.
$$

\begin{lemma}\label{lem:eps_knead}
    Let $\beta\in(1,2]$ and $i\geq2$. Then:
    \begin{enumerate}
    \item\label{lem:eps_knead1} the kneading invariant of the map $F_{\beta}$ has the form
    $
    k_{F_{\beta}}=((10^i)^\infty,(01^i)^\infty)
    $
    if and only if $\beta=\varepsilon_i$;
    \item\label{lem:eps_knead2} $\beta\in(\varepsilon_i,\varepsilon_{i+1})$ if and only if the kneading invariant $k_{F_{\beta}}=(\eta_+,\eta_-)$ of the map $F_{\beta}$ satisfies $$
    (10^{i+1})^\infty\prec\eta_+\prec(10^i)^\infty\quad\text{and}\quad(01^{i})^\infty\prec\eta_-\prec(01^{i+1})^\infty.
    $$
    \end{enumerate}
\end{lemma}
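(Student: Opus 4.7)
The plan is to reduce the lemma to a direct analysis of the orbit of $c_+$ under $F_\beta$, followed by lexicographic comparisons that ultimately rest on the admissibility condition \eqref{eq:kneading_cond}. Setting $u_k := F_\beta^{k+1}(c_+)$, so that $u_0 = 0$ and $u_{k+1} = F_\beta(u_k)$, I would first prove by induction on $k$ that, as long as $u_0, u_1, \ldots, u_{k-1} < c = \tfrac12$, the closed form
\[
u_k \;=\; \left(1 - \tfrac{\beta}{2}\right)\cdot\frac{\beta^k - 1}{\beta - 1}
\]
holds (this is just the iterated left-branch formula $u_j = \beta u_{j-1} + 1 - \beta/2$). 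A short algebraic manipulation then shows that $u_k < c$ is equivalent to $Q_k(\beta) > 0$, which by the sign analysis of $Q_k$ on $(1,2]$ (cf.\ Lemma~\ref{lem:polynomials}) is equivalent to $\beta > \varepsilon_k$ for $k\geq 2$ (and automatic for $k\in\{0,1\}$).

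For part \eqref{lem:eps_knead1}, at $\beta = \varepsilon_i$ the formula gives $u_i = c$, so the itinerary of $c_+$ is periodic with period $i+1$ and equals $(10^i)^\infty$; conversely, $\eta_+ = (10^i)^\infty$ forces $u_0, \ldots, u_{i-1} < c$ together with $u_i = c$, hence $Q_i(\beta) = 0$, hence $\beta = \varepsilon_i$ by uniqueness of the root in $(1,2)$. For the forward direction of part \eqref{lem:eps_knead2}, the range $\beta \in (\varepsilon_i, \varepsilon_{i+1})$ yields $u_k < c$ for $k \leq i$ and $u_{i+1} > c$, so the initial segment of $\eta_+$ is exactly $1\,0^{i+1}\,1\,\ldots$; comparing with $(10^i)^\infty$ at position $i+1$ is then immediate and gives $\eta_+ \prec (10^i)^\infty$.

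The main technical subtlety is the opposite bound $\eta_+ \succ (10^{i+1})^\infty$, since these two sequences already agree on the first $i+3$ symbols. I would argue by contradiction: if $\eta_+ \prec (10^{i+1})^\infty$, then, since the positions at which $(10^{i+1})^\infty$ carries a $1$ are precisely the multiples of $i+2$, the first disagreement must occur at a position $m \geq 2(i+2)$, forcing $\eta_+$ to have at least $i+2$ consecutive zeros starting at position $i+3$. Consequently $\sigma^{i+3}(\eta_+)$ begins with at least $i+2$ zeros, whereas $\sigma(\eta_+)$ begins with exactly $i+1$ zeros followed by a $1$, yielding $\sigma^{i+3}(\eta_+) \prec \sigma(\eta_+)$, in direct violation of the admissibility condition \eqref{eq:kneading_cond}. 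Combined with part \eqref{lem:eps_knead1}, which rules out equality $\eta_+ = (10^{i+1})^\infty$ under the hypothesis $\beta < \varepsilon_{i+1}$, this gives the strict inequality.

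The reverse direction of part \eqref{lem:eps_knead2} is then obtained by exhausting the remaining alternatives: $\beta = \varepsilon_j$ is excluded by part \eqref{lem:eps_knead1}; $\beta = 2$ gives $\eta_+ = 10^\infty \prec (10^{i+1})^\infty$; $\beta \in (\varepsilon_j, \varepsilon_{j+1})$ with $j\geq 2$ and $j \neq i$ lands $\eta_+$ in a lex-interval disjoint from $((10^{i+1})^\infty, (10^i)^\infty)$ by the forward direction already established; and the remaining range $\beta \in (1,\varepsilon_2)$ is handled by an entirely analogous admissibility argument, showing that $u_2 > c$ forces $\eta_+ \succ (10^2)^\infty \succcurlyeq (10^i)^\infty$. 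Finally, the $\eta_-$ half of the statement is obtained for free from Lemma~\ref{lem:symmetry}\eqref{lem:symmetry3} via $\eta_- = \overline{\eta_+}$, since conjugation $\eta \mapsto \overline{\eta}$ reverses the lexicographic order.
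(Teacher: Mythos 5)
Your proposal is correct and follows essentially the same route as the paper: both rest on the closed-form expression for $F_\beta^k(0)$ (equivalently, the sign of $Q_k(\beta)$) to control the initial block of $\eta_+$, and on the same admissibility contradiction --- a shift of $\eta_+$ beginning with $i+2$ zeros versus $\sigma(\eta_+)=0^{i+1}1\ldots$ --- to rule out $\eta_+\prec(10^{i+1})^\infty$. The only organizational difference is that you prove the reverse implication of part (2) by exhausting the partition $(1,\varepsilon_2)\cup\bigcup_j[\varepsilon_j,\varepsilon_{j+1})$ using the forward direction, whereas the paper derives $\varepsilon_i<\beta<\varepsilon_{i+1}$ directly from a monotonicity-in-$\beta$ comparison of the orbits of $0$ under $F_\beta$ and $F_{\varepsilon_i}$; both are valid, and your minor imprecision (the block of $i+2$ zeros starts at position $(k-1)(i+2)+1$, which equals $i+3$ only when the first disagreement is at $2(i+2)$) does not affect the argument.
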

\begin{proof}
It is sufficient to consider the upper kneading sequence $\eta_+=\eta_+(c)$ of the critical point $c=\frac{1}{2}$, because statements about the lower one $\eta_-=\eta_-(c)$ follow from the Lemma~\ref{lem:symmetry}\eqref{lem:symmetry3}.

\eqref{lem:eps_knead1}: Observe that $\eta_+=(10^i)^\infty$ if and only if 
$$
0<F_{\beta}(0)<F_{\beta}^2(0)<\ldots<F_{\beta}^{i-1}(0)<F_{\beta}^i(0)=c,
$$
which in turn is equivalent to
\begin{equation}\label{eq:F_iterations}
0<1-\frac{\beta}{2}<\left(1-\frac{\beta}{2}\right)\left(\beta+1\right)<\ldots<\left(1-\frac{\beta}{2}\right)\left(\beta^{i-2}+\ldots+\beta+1\right)<\left(1-\frac{\beta}{2}\right)\left(\beta^{i-1}+\ldots+\beta+1\right)=\frac{1}{2}.
\end{equation}

Using the equivalence~\eqref{eq:root_equiv} we obtain
$$
\left(1-\frac{\beta}{2}\right)\left(\beta^{i-1}+\ldots+\beta+1\right)=\frac{1}{2}\iff2+\beta+\ldots+\beta^{i-1}-\beta^{i}=1\iff\sum_{j=1}^i\frac{1}{\beta^j}=1\iff Q_i(\beta)=0.
$$
Therefore, by Lemma~\ref{lem:polynomials} we conclude that $\eta_+=(10^i)^\infty$ implies $\beta=\varepsilon_i$.

On the other hand, if $\beta=\varepsilon_i$, then 
$$
\left(1-\frac{\beta}{2}\right)\left(\beta^{i-1}+\ldots+\beta+1\right)=\frac{1}{2},
$$
which implies condition~\eqref{eq:F_iterations}. Thus, $\eta_+=(10^i)^\infty$.

\eqref{lem:eps_knead2}: Let us start with the observation that for any parameters $\beta$, $\beta'\in(1,2]$ and point $x\in[0,c)$ the inequality $\beta<\beta'$ implies $F_{\beta'}(x)<F_{\beta}(x)$. Let $\beta\in(\varepsilon_i,\varepsilon_{i+1})$. It follows from \eqref{lem:eps_knead1} that $F^s_{\varepsilon_i}(0)<c$ for $s=1,\ldots,i-1$ and $F^i_{\varepsilon_i}(0)=c$. Since $\beta>\varepsilon_i$, we obtain by induction that
$$
F^s_{\beta}(0)=F_{\beta}(F^{s-1}_{\beta}(0))<F_{\beta}(F^{s-1}_{\varepsilon_i}(0))<F_{\varepsilon_i}(F^{s-1}_{\varepsilon_i}(0))=F^s_{\varepsilon_i}(0)<c
$$
for $s=1,\ldots,i-1$ and $F^i_{\beta}(0)<F^i_{\varepsilon_i}(0)=c$. Therefore, the sequence $\eta_+$ has form
$$
\eta_+=10^{i+1}\eta^+_{i+2}\eta^+_{i+3}\ldots\prec(10^i)^\infty.
$$
Using again the condition~\eqref{lem:eps_knead1} we conclude that $\eta_+\neq(10^{i+1})^\infty$. Suppose that $\eta_+\prec(10^{i+1})^\infty$. So we must have
$$
\eta_+=(10^{i+1})^m0\ldots\prec(10^{i+1})^\infty
$$
for some $m\in\mathbb{N}$. Similarly as before, based on the inequality $\beta<\varepsilon_{i+1}$ we get $F^s_{\varepsilon_{i+1}}(0)<F^s_{\beta}(0)<c$ for $s=1,\ldots,i$ and $c=F^{i+1}_{\varepsilon_{i+1}}(0)<F^{i+1}_{\beta}(0)$. In particular, it implies $m>1$ and consequently
$$
\sigma(\eta_+)=(0^{i+1}1)^{m-1}0^{i+2}\ldots\succ0^{i+2}\ldots=\sigma^{(i+2)(m-1)}(\sigma(\eta_+)).
$$
However, this contradicts the condition~\eqref{eq:kneading_cond}. We have therefore proven that $(10^{i+1})^\infty\prec\eta_+\prec(10^i)^\infty$.

To prove the second implication, assume that the kneading invariant $k_{F_{\beta}}=(\eta_+,\eta_-)$ satisfies the condition
$$   (10^{i+1})^\infty\prec\eta_+\prec(10^i)^\infty\quad\text{and}\quad(01^{i})^\infty\prec\eta_-\prec(01^{i+1})^\infty.
$$
Since $\eta_+\prec(10^i)^\infty$, the word $0^{i+1}$ must occur in the sequence $\eta_+$. Thus, the condition~\eqref{eq:kneading_cond} implies $\sigma(\eta_+)=0^{i+1}\ldots$, that is, $\sigma(\eta_+)$ starts with the word $0^{i+1}$. Hence, we have $F^s_{\beta}(0)<c$ for $s=0,\ldots,i$. Suppose that $\beta<\varepsilon_i$. But then an argument analogous to the one above gives $F^s_{\varepsilon_i}(0)<F^s_{\beta}(0)<c$ for $s=1,\ldots,i-1$ and $c=F^i_{\varepsilon_i}(0)<F^i_{\beta}(0)$, which leads to a contradiction. So $\beta>\varepsilon_i$.

Similarly, the inequality $\beta>\varepsilon_{i+1}$ implies $F^s_{\beta}(0)<F^s_{\varepsilon_{i+1}}(0)<c$ for $s=1,\ldots,i$ and $F^{i+1}_{\beta}(0)<F^{i+1}_{\varepsilon_{i+1}}(0)=c$, which is impossible due to the condition $(10^{i+1})^\infty\prec\eta_+$. Thus, we proved that $\beta\in(\varepsilon_i,\varepsilon_{i+1})$.
\end{proof}

Let us consider the subshifts 
$$
\Sigma_{F_{\varepsilon_i}}=\left\{\xi\in\{0,1\}^{\mathbb{N}_0}\:|\: (0^i1)^\infty\preccurlyeq\sigma^n(\xi)\preccurlyeq(1^i0)^\infty\ \text{for every}\ n\in\mathbb{N}_0\right\}
$$
corresponding to the family of maps $\{F_{\varepsilon_i}\}_{i\geq2}$. In \cite{BiSS} it was proven that subshifts of this form (i.e., induced by $\beta$-transformations satisfying $F^i_{\beta,\alpha}(0)=c=F^j_{\beta,\alpha}(0)$ for some $i$, $j\in\mathbb{N}$) are of finite type, which leads to many interesting properties. Each such subshift can be defined by a finite set of forbidden words and represented by a finite oriented graph, or a binary adjacency matrix. Moreover, subshifts of finite type are exactly those subshifts that have the shadowing property (see more details in \cite{Kurka}). By the result of \cite{BiSSS}, the set of parameters $(\beta,\alpha)$ for which the corresponding subshift $\Sigma_{F_{\beta,\alpha}}$ is of finite type (or, equivalently, $F^i_{\beta,\alpha}(0)=c=F^j_{\beta,\alpha}(0)$ for some $i$, $j\in\mathbb{N}$) is dense in the parameter space $\Delta$.

Observe that every time the parameter $\beta$ exceeds $\varepsilon_i$, for $i\geq2$, the periodic itinerary $(10^i)^\infty$ is added to the symbolic dynamics of $F_{\beta}$. As an immediate consequence of Lemma~\ref{lem:eps_knead}, we get the following result.

\begin{corollary}
\label{cor:kneading_iff}
    Let $i\geq2$. Then $\beta\in(\varepsilon_i,\varepsilon_{i+1})$ if and only if $\Sigma_{F_{\varepsilon_i}}\subset\Sigma_{F_{\beta}}\subset\Sigma_{F_{\varepsilon_{i+1}}}$. 
\end{corollary}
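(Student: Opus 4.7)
The plan is direct: combine Lemma~\ref{lem:eps_knead} with the order-reversing correspondence between kneading invariants and subshifts recorded in equation \eqref{eq:symbol_space} (tighter kneading bounds yield smaller subshifts $\Sigma_F$). By Lemma~\ref{lem:eps_knead}\eqref{lem:eps_knead1}, the kneading invariants of the endpoint maps are known explicitly, namely $k_{F_{\varepsilon_i}}=((10^i)^\infty,(01^i)^\infty)$ and $k_{F_{\varepsilon_{i+1}}}=((10^{i+1})^\infty,(01^{i+1})^\infty)$, and by Lemma~\ref{lem:symmetry}\eqref{lem:symmetry3} any $k_{F_\beta}$ has the mirror form $(\eta_+,\overline{\eta_+})$.

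For the forward implication, assume $\beta\in(\varepsilon_i,\varepsilon_{i+1})$. Lemma~\ref{lem:eps_knead}\eqref{lem:eps_knead2} provides the strict kneading inequalities
$$
(10^{i+1})^\infty\prec\eta_+\prec(10^i)^\infty\quad\text{and}\quad(01^i)^\infty\prec\eta_-\prec(01^{i+1})^\infty.
$$
Applying the monotonicity principle from \eqref{eq:symbol_space} twice — once with $(F,G)=(F_\beta,F_{\varepsilon_{i+1}})$ and once with $(F,G)=(F_{\varepsilon_i},F_\beta)$ — yields the chain $\Sigma_{F_{\varepsilon_i}}\subset\Sigma_{F_\beta}\subset\Sigma_{F_{\varepsilon_{i+1}}}$. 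The inclusions are strict because the kneading invariants of $F_{\varepsilon_i}$, $F_\beta$, $F_{\varepsilon_{i+1}}$ are pairwise distinct (Lemma~\ref{lem:eps_knead}\eqref{lem:eps_knead1} again).

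For the converse, suppose the chain of inclusions holds. The kneading sequences $(10^i)^\infty,(01^i)^\infty\in\Sigma_{F_{\varepsilon_i}}$ (being the shifted orbits of the critical point), so both must also lie in $\Sigma_{F_\beta}$; combined with the defining condition $\sigma(\eta_+)\preccurlyeq\sigma^n(\xi)\preccurlyeq\sigma(\eta_-)$ of \eqref{eq:symbol_space} applied to $F_\beta$, this forces $\eta_+\preccurlyeq(10^i)^\infty$ and $(01^i)^\infty\preccurlyeq\eta_-$. A symmetric argument using $\Sigma_{F_\beta}\subset\Sigma_{F_{\varepsilon_{i+1}}}$ and the membership $(10^{i+1})^\infty,(01^{i+1})^\infty\in\Sigma_{F_{\varepsilon_{i+1}}}$ produces the complementary bounds $(10^{i+1})^\infty\preccurlyeq\eta_+$ and $\eta_-\preccurlyeq(01^{i+1})^\infty$. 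Strictness of the ambient inclusions rules out equality in all four bounds, so Lemma~\ref{lem:eps_knead}\eqref{lem:eps_knead2} applies and gives $\beta\in(\varepsilon_i,\varepsilon_{i+1})$. The only delicate verification, and the one I expect to be the main (minor) obstacle, is checking that the monotonicity of \eqref{eq:symbol_space} is genuinely an equivalence in this setting; the symmetry constraint $\eta_-=\overline{\eta_+}$ from Lemma~\ref{lem:symmetry}\eqref{lem:symmetry3} collapses this to a one-variable comparison in $\eta_+$, which removes any ambiguity.
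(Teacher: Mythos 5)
Your argument is correct and is exactly the route the paper intends: the paper states the corollary as an immediate consequence of Lemma~\ref{lem:eps_knead} combined with the monotonicity of $\Sigma_F$ in the kneading invariant recorded after \eqref{eq:symbol_space}, which is precisely what you spell out. One tiny imprecision in the converse: the bounds $(10^{i+1})^\infty\preccurlyeq\eta_+$ and $\eta_-\preccurlyeq(01^{i+1})^\infty$ come from the membership $\eta_\pm\in\Sigma_{F_\beta}\subset\Sigma_{F_{\varepsilon_{i+1}}}$ and the defining inequalities of $\Sigma_{F_{\varepsilon_{i+1}}}$ (together with $\eta_+=1\sigma(\eta_+)$, $\eta_-=0\sigma(\eta_-)$), not from the membership of $(10^{i+1})^\infty$ itself in that set, but this does not affect the validity of the argument.
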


By Cor.\ref{reduct2}, if the kneading invariant for the Lorenz system at $v$ (sufficiently close to $T$) consists of non-periodic sequences $\omega_0$ and $\omega_1$, then it coincides with the kneading invariant $k_{F_{\beta}}$ of the corresponding factor map $F_{\beta}$. Note that there are methods that allow us to determine the parameter $\beta$ that defines the map $F_{\beta}$ with the given kneading invariant $k_{F_{\beta}}=(\omega_0,\omega_1)$ (see \cite{BarSV,DinSun}). However, in practice, applying them to obtain the exact value of $\beta$ may be difficult or even impossible, especially when the sequences $\omega_0$ and $\omega_1$ are not (eventually) periodic. Using Cor.\ref{cor:kneading_iff} is one of the possible ways to estimate the parameter $\beta$ for a given kneading invariant $k_{F_{\beta}}$, which only requires computing the roots of the appropriate polynomials \eqref{eq:polynomials}. On the other hand, it provides us with bounds on the symbolic dynamics of the map $F_{\beta}$, defined by a given parameter $\beta$. 

As we mentioned above, $\beta>\varepsilon_i$ implies $(10^i)^\infty\in\Sigma_{F_{\beta}}$. Our next aim is to describe the other periodic itineraries that appear when $\beta$ exceeds the values $\varepsilon_i$. To this end, let us recall the basics of rotation theory. The \textbf{rotation number} of $x\in[0,1]$ under a Lorenz map $F\colon[0,1]\to[0,1]$ with the critical point $c$ is defined as
$$
\rho_F(x):=\lim\limits_{n\to\infty}\frac{R_n(x)}{n},
$$
provided that this limit exists, where
$$
R_n(x):=\left| \left\{i\in\{0,1,\dots,n-1\}:F^i(x)\geq c\right\}\right|.
$$
Note that if $x$ is a $q$-periodic point of $F$, then the rotation number of $x$ exists and $\rho_F(x)=\frac{R_q(x)}{q}$. The set
$$
\text{Rot}(F):=\left\{\rho\in\mathbb{R}:\rho_F(x)=\rho\text{ for some }x\in[0,1]\right\}
$$
is called the \textbf{rotation interval} of $F$. It is known (see, e.g. \cite{Alseda,MiGel}) that if $F$ is an overlapping Lorenz map (i.e., $F(0)< F(1)$), then for some numbers $0\leq a\leq b\leq1$ we have $\text{Rot}(F)=[a,b]$. For every $s\in\mathbb{N}$ we denote $A_s:=0^s1$.

\begin{proposition}\label{prop:itineraries_rot}
    If $\beta>\varepsilon_i$ for some $i\geq2$, then the map $F_{\beta}$ has the following properties:
        \begin{enumerate}
     \item the rotation interval of $F_{\beta}$ contains $[\frac{1}{i+1},\frac{i}{i+1}]$;
     \item for $s\in\{2,\ldots,i\}$ and any finite concatenation $C$ of the words $A_s=0^{s}1$ and $A_{s-1}=0^{s-1}1$ there are periodic points $x$ and $x'$ of $F_{\beta}$ with the itineraries $\eta(x)=C^\infty$ and $\eta(x')=\overline{C^\infty}$.
    \end{enumerate}
\end{proposition}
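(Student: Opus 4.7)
The strategy is to use Corollary~\ref{cor:kneading_iff} to reduce both parts of the proposition to identifying admissible periodic itineraries inside $\Sigma_{F_{\varepsilon_i}}\subset\Sigma_{F_\beta}$, and then to realize these itineraries as genuine periodic orbits via the kneading-theoretic description of expanding Lorenz maps recalled at the beginning of Subsection~\ref{subsec:symbolic_dynamics}.

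For part~(2), I would first verify that $C^\infty$ satisfies the defining inequalities of $\Sigma_{F_{\varepsilon_i}}$, namely $(0^i1)^\infty\preccurlyeq\sigma^n(C^\infty)\preccurlyeq(1^i0)^\infty$ for every $n\in\mathbb{N}_0$. The key observation is that since $C$ is a finite concatenation of $A_s=0^s1$ and $A_{s-1}=0^{s-1}1$ with $2\leq s\leq i$, the longest run of zeros in $C^\infty$ has length at most $s\leq i$, while the longest run of ones has length exactly $1$ (every maximal zero-run is terminated by a single $1$ immediately followed by another zero-run opening the next block). Consequently, for every $n$ the shifted sequence $\sigma^n(C^\infty)$ begins with at most $s$ zeros before hitting a $1$, and a short case analysis---splitting on whether the leading block of $\sigma^n(C^\infty)$ is $A_s$ or $A_{s-1}$, and on whether $s<i$ or $s=i$---shows that $\sigma^n(C^\infty)\succcurlyeq(0^i1)^\infty$. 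The upper bound $\sigma^n(C^\infty)\preccurlyeq(1^i0)^\infty$ is immediate from $i\geq2$, since $\sigma^n(C^\infty)$ contains no two consecutive ones. Hence $C^\infty\in\Sigma_{F_{\varepsilon_i}}\subset\Sigma_{F_\beta}$, and by Lemma~\ref{lem:symmetry}\eqref{lem:symmetry3} the conjugate sequence $\overline{C^\infty}$ also lies in $\Sigma_{F_\beta}$.

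Next, I would invoke the standard realization result for expanding Lorenz maps: every periodic sequence in $\Sigma_{F_\beta}$ is the itinerary of at least one periodic point of $F_\beta$ of the same minimal period. This follows from the Hubbard--Sparrow framework: the cylinder of points whose itinerary starts with a prescribed admissible word is a non-empty subinterval of $[0,1]$, expansiveness forces the cylinders associated to longer and longer prefixes to contract, and for a periodic admissible sequence the limiting point must be periodic. This produces the required $x$ and $x'$ and completes part~(2). For part~(1), I would specialize part~(2) to $s=i$ and $C=A_i$ to obtain a periodic point with itinerary $(0^i1)^\infty$ and hence rotation number $\tfrac{1}{i+1}$, and symmetrically the conjugate sequence $(1^i0)^\infty$ supplies a periodic point with rotation number $\tfrac{i}{i+1}$. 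Since $F_\beta(0)=1-\tfrac{\beta}{2}<\tfrac{\beta}{2}=F_\beta(1)$ for every $\beta>1$, the map $F_\beta$ is overlapping, so by the cited results of~\cite{Alseda,MiGel} its rotation interval is a closed interval; it therefore contains the convex hull $[\tfrac{1}{i+1},\tfrac{i}{i+1}]$.

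The step I expect to be most delicate is the lexicographic comparison in the borderline case $s=i$, where $\sigma^n(C^\infty)$ can begin with exactly $i$ zeros and the inequality $\sigma^n(C^\infty)\succcurlyeq(0^i1)^\infty$ only becomes visible after scanning into the second block of the concatenation; here one must check that the next block---whether $A_i$ or $A_{i-1}$---places a $1$ no later than where $(0^i1)^\infty$ places its next $1$. Every other ingredient---the realization of admissible periodic itineraries as genuine periodic orbits, and the convexity of the rotation interval for overlapping Lorenz maps---is by now a standard tool in the kneading and rotation theory of one-dimensional maps and can simply be cited.
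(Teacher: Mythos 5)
Your proof is correct, but it reverses the paper's logical order and replaces its main tool, so it is worth recording the difference. The paper proves part (1) first: Lemma \ref{lem:eps_knead} gives $\eta_+\prec(10^i)^\infty$ for $\beta>\varepsilon_i$, so $(10^i)^\infty$ and $(01^i)^\infty$ lie in $\Sigma_{F_\beta}$ and are realized by periodic points with rotation numbers $\tfrac{1}{i+1}$ and $\tfrac{i}{i+1}$; convexity of the rotation interval for overlapping Lorenz maps then yields the inclusion. Part (2) is then \emph{deduced from} part (1) via rotation theory: since $\tfrac{s-1}{s}<\tfrac{s}{s+1}$ are Farey neighbours contained in $\text{Rot}(F_\beta)$, the twist-orbit results of \cite{MiGel} produce a periodic point for every finite concatenation of the associated basic words. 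You instead prove part (2) first, by checking directly that $C^\infty$ satisfies the lexicographic inequalities defining $\Sigma_{F_{\varepsilon_i}}\subset\Sigma_{F_\beta}$ (your case analysis is sound, including the borderline case $s=i$, where a block $A_{i-1}$ places a $1$ strictly earlier than $(0^i1)^\infty$ does and a block $A_i$ forces the comparison to recurse), realizing the admissible periodic sequence via the Hubbard--Sparrow correspondence, and then obtaining part (1) as the special case $C=A_i$. Your route is more elementary and self-contained, requiring no input from \cite{MiGel}, at the price of the hands-on symbolic verification; the paper's route is shorter given the cited machinery and extends naturally to concatenations attached to other Farey pairs inside the rotation interval. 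One point worth making explicit in your realization step: for $\beta>\varepsilon_i$ one has $\sigma(\eta_+)\prec(0^i1)^\infty$ strictly, so $C^\infty$ lies in the interior of the subshift in the sense that $\sigma(\eta_+)\prec\sigma^n(C^\infty)\prec\sigma(\eta_-)$ for all $n$; this guarantees that the realizing point is a genuine periodic point of $F_\beta$ avoiding the critical point, rather than a one-sided itinerary of a preimage of $c$.
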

\begin{proof}
Fix a natural number $i\geq2$. Let $\beta>\varepsilon_i$ and $k_{F_{\beta}}=(\eta_+,\eta_-)$ be the kneading invariant of the map $F_{\beta}$. Then, by Lemma~\ref{lem:eps_knead} we have $\eta_+\prec(10^i)^\infty$, so $(10^i)^\infty\in\Sigma_{F_{\beta}}$. Hence, there is a point $y\in[0,1]$ whose itinerary under the map $F_{\beta}$ is $\eta(y)=(10^i)^\infty$. Note that $y$ is a periodic point with a period $i+1$ and $R_{i+1}(y)=1$. Thus, $\rho_{F_{\beta}}(y)=\frac{1}{i+1}$. Similarly, there is a point $y'\in[0,1]$ with $\eta(y')=(01^i)^\infty$ and $\rho_{F_{\beta}}(y')=\frac{i}{i+1}$. Since $\frac{1}{i+1}$, $\frac{i}{i+1}\in\text{Rot}(F_{\beta})$, we conclude that $[\frac{1}{i+1},\frac{i}{i+1}]\subset\text{Rot}(F_{\beta})$.

Next, we will use the tools developed in \cite{MiGel} (see also \cite{BNPSR}). Let $s\in\{2,\ldots,i\}$ and observe that
$$
\left[\frac{s-1}{s},\frac{s}{s+1}\right]\subset\left[\frac{1}{i+1},\frac{i}{i+1}\right]\subset\text{Rot}(F_{\beta}).
$$
Thus the map $F_{\beta}$ has the so-called twist periodic orbits $P$ and $Q$ consisted of points
$$
p_1<p_2<\ldots<p_s\quad\text{and}\quad q_1<q_2<\ldots<q_{s+1}
$$
with rotation numbers $\rho_{F_{\beta}}(p_j)=\frac{s-1}{s}$ and $\rho_{F_{\beta}}(q_j)=\frac{s}{s+1}$. Observe that 
$$
\eta(p_1)=(01^{s-1})^\infty=(\overline{A_{s-1}})^\infty\quad\text{and}\quad\eta(q_1)=(01^{s})^\infty=(\overline{A_{s}})^\infty.
$$
Note also that the numbers $\frac{s-1}{s}<\frac{s}{s+1}$ are Farey neighbors because $s^2-(s-1)(s+1)=1$. According to the results of \cite{MiGel}, for every finite concatenation $\overline{C}$ of words $\overline{A_{s-1}}$ and $\overline{A_{s}}$ there is a periodic point $x'$ with the itinerary $\eta(x')=(\overline{C})^\infty=\overline{C^\infty}$. By the symmetry of $F_{\beta}$, the same is true for the words $A_{s-1}$ and $A_s$.
\end{proof}

\subsection{Topological and measurable dynamics}
\label{subsec:topmeasure}

By a topological dynamical system $(X,S)$ we mean a continuous map $S\colon X\to X$ acting on a compact metric space $X$. We say that $S$ is \textbf{transitive} if for every two nonempty open sets $U,V\subset X$ there is an integer $n>0$ such that $S^n(U)\cap V\neq\emptyset$. In the case where $X$ has no isolated points, it is equivalent to the existence of a point $x\in X$ with dense orbit. The map $S$ is \textbf{(topologically) mixing} if for every two nonempty open sets $U, V\subset X$ there is an $N>0$ such that for every $n>N$ we have $S^n(U)\cap V\neq\emptyset$. Moreover, the map $S$ is called \textbf{strongly transitive} if for every nonempty open set $U\subset X$ there is $n>0$ such that $\bigcup_{i=0}^n S^i(U)=X$. It is easy to see that any strongly transitive or mixing map is transitive, but the converse implications are generally not true.

Since an expanding Lorenz map $F\colon[0,1]\to[0,1]$ has a discontinuity $c$, formally it does not form a topological dynamical system on $[0,1]$. Thus, using the notations and tools from the topological dynamics becomes more delicate. One of the possible ways to deal with this obstacle is to use the standard doubling points construction that extends the map $F$ to a continuous map $\hat{F}$ acting on the Cantor set $\mathbb{X}$ (see e.g. \cite{Raith} for details). Briefly speaking, all elements in the set $(\bigcup_{i=0}^\infty F^{-i}(c))\setminus\{0,1\}$ are doubled (or "blown up") similarly, as it is done in the standard Denjoy extension of rotation on the circle (see \cite[Example~14.9]{Dev}). One can prove that the resulting topological dynamical system $(\mathbb{X},\hat{F})$ is topologically conjugate to the subshift $(\Sigma_F,\sigma_F)$. In particular, each point $x\in\mathbb{X}$ corresponds to exactly one kneading sequence in $\Sigma_F$. We call the expanding Lorenz map $F\colon[0,1]\to[0,1]$ transitive, mixing, or strongly transitive if its extension $\hat{F}\colon\mathbb{X}\to\mathbb{X}$ (or, equivalently, the subshift $\sigma_F\colon\Sigma_F\to\Sigma_F$) is transitive, mixing, or strongly transitive, respectively (cf. \cite{OPR,ChO}). Going back from the map $\hat{F}$ to an initial expanding Lorenz map $F$, these definitions transfer as follows:
\begin{itemize}
    \item $F$ is transitive if and only if for every two nonempty open sets $U,V\subset [0,1]$ there is an integer $n>0$ such that $F^n(U)\cap V\neq\emptyset$;
    \item $F$ is (topologically) mixing if for every two nonempty open sets $U, V\subset [0,1]$ there is an $N>0$ such that for every $n>N$ we have $F^n(U)\cap V\neq\emptyset$;
    \item $F$ is strongly transitive if for every nonempty open set $U\subset [0,1]$ there is $n>0$ such that the inclusion $(0,1)\subset\bigcup_{i=0}^n F^i(U)$ holds.
\end{itemize}
However, it should be emphasized that the properties that are inherent to continuous (strongly) transitive or mixing maps do not automatically carry over to the case of expanding Lorenz maps (some of them may be lost when passing from $\hat{F}$ to $F$).

Often in one-dimensional dynamics, the existence of a cycle (i.e., periodic orbit) of a given type (for instance, with a specific period or itinerary) forces the existence of a cycle of another type, or even determines the key properties of the system. A prime example of such a phenomenon is the famous Sharkovsky's Theorem. A similar characterization of periods for expanding Lorenz maps that can be derived from symmetric unimodal maps was obtained in \cite{ABC}. Since each map $F_{\beta}(x)=\beta x+1-\frac{\beta}{2}\Mod{1}$ can be constructed by "flipping" the right branch of the graph of symmetric unimodal map $U_{\beta}(x)=\beta\min\{x,1-x\}+1-\frac{\beta}{2}$ vertically around $c=\frac{1}{2}$, the result of \cite{ABC} applies to this case. Namely, the maps $F_{\beta}$ satisfy Sharkovsky's Theorem, except for the fixed points.

\begin{proposition}\label{prop:sharkovsky}
Let $\beta\in(1,2]$. If the map $F_{\beta}$ has a periodic point of prime period $n$, it also has a periodic point of prime period $m$ for every $1\prec_s m\prec_s n$ in the Sharkovsky order. 
\end{proposition}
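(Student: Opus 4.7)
The plan is to reduce the statement directly to the main theorem of \cite{ABC}, which establishes a Sharkovsky-type ordering of periods for expanding Lorenz maps obtained by ``vertically flipping'' one branch of a symmetric unimodal map. To set up the reduction, the first step will be to identify $F_\beta$ explicitly as a map of this form: a direct comparison of the formulas shows that on $[0,\frac{1}{2})$ one has $F_\beta(x)=\beta x+1-\frac{\beta}{2}=U_\beta(x)$, while on $(\frac{1}{2},1]$ one has $F_\beta(x)=\beta x-\frac{\beta}{2}=1-U_\beta(x)$. Hence $F_\beta$ is obtained from $U_\beta$ by reflecting the graph of its right branch across the horizontal line $y=\frac{1}{2}$, which is precisely the setting required by \cite{ABC}.

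Once this identification is in place, I would simply invoke the theorem of \cite{ABC}: for any $\beta\in(1,2]$ and any natural numbers $m\prec_s n$ with $m>1$, the existence of a periodic orbit of $F_\beta$ of minimal period $n$ forces one of minimal period $m$. Under the hood, the forcing is combinatorial: any itinerary in $\Sigma_{F_\beta}$ produced by a period-$n$ orbit can be translated, through the flip described above, into an admissible itinerary for the symmetric unimodal $U_\beta$ (by complementing the symbols at positions whose iterates lie in $(\frac{1}{2},1]$). The classical Sharkovsky ordering for the unimodal family then supplies the shorter periods, and these translate back into admissible $F_\beta$-itineraries under the inverse flip, which is then realized by a genuine periodic orbit of $F_\beta$ thanks to the topological expanding property and the characterization \eqref{eq:symbol_space}.

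One point that will require justification is the exception for period $1$. Solving $F_\beta(x)=x$ separately on the two branches yields the candidates $x=\frac{\beta-2}{2(\beta-1)}$ and $x=\frac{\beta}{2(\beta-1)}$; the former lies in $[0,\frac{1}{2})$ only when $\beta=2$ (where it equals $0$), while the latter lies in $(\frac{1}{2},1]$ only when $\beta=2$ (where it equals $1$). Thus for every $\beta\in(1,2)$ the map $F_\beta$ has no fixed point in $[0,1]$, which is exactly why the statement has to exclude $m=1$ from the forcing conclusion.

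The genuine technical content is the itinerary translation outlined in the second paragraph, and this has already been carried out in \cite{ABC}. Consequently, the main obstacle I foresee is not the verification of their hypotheses (which is an almost trivial algebraic check), but the careful formulation of the symbol-level correspondence between $F_\beta$ and $U_\beta$ so that the assumptions of \cite{ABC} are visibly met; once this correspondence is spelled out and the fixed-point exception is noted, the proposition follows as an immediate corollary of their main result.
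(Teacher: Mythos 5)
Your proposal is correct and follows essentially the same route as the paper: the paper likewise observes that $F_\beta$ is obtained by vertically flipping the right branch of the symmetric unimodal map $U_\beta(x)=\beta\min\{x,1-x\}+1-\frac{\beta}{2}$ about $c=\frac{1}{2}$ and then cites \cite{ABC} to conclude, with the fixed points excluded. Your explicit verification of the two branch identities and of the absence of fixed points for $\beta\in(1,2)$ is a correct (and slightly more detailed) account of what the paper leaves implicit.
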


It follows from Lemma~\ref{lem:eps_knead} that for any $\beta\geq\varepsilon_2$ the set $\Sigma_{F_{\beta}}$ contains the sequences $(001)^\infty$ and $(110)^\infty$. Therefore, the corresponding map $F_{\beta}$ has a periodic orbit of prime period $3$ (in the case $\beta=\varepsilon_2$, this orbit passes through the critical point $c=\frac{1}{2}$). By Prop.\ref{prop:sharkovsky} we get the following result.

\begin{corollary}\label{cor:all_periods}
    For any parameter $\beta>\frac{1+\sqrt{5}}{2}$, the map $F_{\beta}$ has periodic orbits of all periods (except for fixed points, which only appear for $\beta=2$).
\end{corollary}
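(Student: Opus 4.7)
The proof follows almost immediately from the two tools already built in this subsection: the kneading analysis of Lemma \ref{lem:eps_knead} and the Sharkovsky-type statement of Proposition \ref{prop:sharkovsky}. The plan is to first produce a prime period-$3$ orbit for every $\beta>\varepsilon_2=\frac{1+\sqrt{5}}{2}$, and then let Proposition \ref{prop:sharkovsky} do the rest.

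To produce the period-$3$ orbit, I would argue as follows. By Lemma \ref{lem:eps_knead}(\ref{lem:eps_knead2}), for every $\beta\in(\varepsilon_2,\varepsilon_3)$ the kneading invariant $k_{F_\beta}=(\eta_+,\eta_-)$ satisfies $\eta_+\prec(10^2)^\infty$ and $(01^2)^\infty\prec\eta_-$, and for every $\beta\geq\varepsilon_3$ the same inequalities hold by monotonicity of the bounds along the partition $\{\varepsilon_i\}$. Hence, using the characterization of $\Sigma_{F_\beta}$ displayed in \eqref{eq:symbol_space}, every shift of $(001)^\infty$ lies strictly between $\sigma(\eta_+)$ and $\sigma(\eta_-)$, so $(001)^\infty\in\Sigma_{F_\beta}$; the mirror sequence $(110)^\infty$ is handled symmetrically via Lemma \ref{lem:symmetry}. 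Through the doubling-points identification of $\Sigma_{F_\beta}$ with the invariant part of $F_\beta$, the sequence $(001)^\infty$ corresponds to a point $x\in[0,1]$ whose $F_\beta$-itinerary has minimal shift period $3$, so $x$ is genuinely of prime period $3$. The only way this could fail is if the orbit of $x$ meets the critical point $c=\tfrac12$, but by Lemma \ref{lem:eps_knead}(\ref{lem:eps_knead1}) this occurs only for $\beta=\varepsilon_2$, which is excluded by the strict inequality $\beta>\varepsilon_2$.

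With a prime period-$3$ point secured, Proposition \ref{prop:sharkovsky} applied with $n=3$ immediately yields periodic points of all prime periods $m$ with $1\prec_s m\prec_s 3$ in the Sharkovsky order, that is, every integer $m\geq 2$. Together with the period-$3$ orbit itself, this accounts for all prime periods except possibly $1$, which is precisely the statement of the corollary.

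For the parenthetical claim, that fixed points appear only when $\beta=2$, I would just solve $F_\beta(x)=x$ branch by branch. On $[0,\tfrac12)$ one gets $x=\frac{\beta-2}{2(\beta-1)}$, which is non-negative only when $\beta\geq 2$; on $(\tfrac12,1]$ one gets $x=\frac{\beta}{2(\beta-1)}$, which is at most $1$ only when $\beta\geq 2$. Hence both candidates lie in the respective branch's domain exactly when $\beta=2$, giving the endpoint fixed points $0$ and $1$. The only real thing to be careful about in the whole argument is the verification that $(001)^\infty$ gives rise to a true period-$3$ orbit rather than being shadowed by the critical orbit; once that is ruled out by the strict inequality, everything else is a direct appeal to the preceding lemmas.
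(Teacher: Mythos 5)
Your proposal is correct and follows essentially the same route as the paper: Lemma \ref{lem:eps_knead} puts $(001)^\infty$ and $(110)^\infty$ into $\Sigma_{F_\beta}$ for $\beta>\varepsilon_2=\frac{1+\sqrt{5}}{2}$, giving a prime period-$3$ orbit, and Proposition \ref{prop:sharkovsky} then yields all remaining periods. The extra care you take --- checking that the period-$3$ orbit avoids the critical point for $\beta>\varepsilon_2$ and verifying the fixed-point claim by solving $F_\beta(x)=x$ on each branch --- is sound and only makes explicit what the paper leaves as a parenthetical remark.
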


In the study of periodic structure in Lorenz maps, certain special periodic orbits, the so-called \textbf{primary $n(k)$-cycles}, play a crucial role.
\begin{definition}\label{defn:nk_cycle}
Let $F\colon[0,1]\to[0,1]$ be an expanding Lorenz map, and $n$, $k$ be coprime integers with $n>k\geq1$. A periodic orbit $O=\{z_j : j\in\{0,\ldots, n-1\}\}$ of the map $F$ forms an $n(k)$-cycle if its points satisfy:
\begin{enumerate}
\item $z_0 < z_1 <\dots < z_{n-k-1} < c < z_{n-k}<\dots <z_{n-1}$;
	\item $F(z_j)=z_{j+k \Mod{n}}$ for all $j=0,1,\ldots,n-1$;

\end{enumerate}
If additionally
\begin{enumerate}\setcounter{enumi}{2}
	\item $z_{k-1}\leq F(0)$ and $F(1)\leq z_k$,
\end{enumerate}
then the $n(k)$-cycle $O$ is said to be primary.
\end{definition}
The notion of primary $n(k)$-cycles was first introduced by Palmer in \cite{Palmer} to study the weak Bernoulli property in Lorenz maps. In general, determining whether an expanding Lorenz map has such a cycle can be a difficult task. However, in the case of $\beta$-transformations, there are formulas that allow us to find regions in the parameter space $\Delta$ where the map $\beta x+\alpha\Mod{1}$ has a primary $n(k)$-cycle (\cite{OPR}, see also \cite{Palmer,Glen}). Some of these regions are marked in blue in Figure~\ref{fig:parameter_space}. The central, largest of blue regions corresponds to the parameters $(\beta,\alpha)\in\Delta$ for which $\beta x+\alpha\Mod{1}$ has a primary $2(1)$-cycle. Note that the parameters defining the family of maps $\{F_{\beta}\}_{\beta\in(1,2]}$ lie on the red dashed curve given by $\alpha=1-\frac{\beta}{2}$. Hence, the map $F_{\beta}$ has a primary $2(1)$-cycle if and only if $\beta\in(1,\sqrt{2}]$.
\begin{figure}[h]
\centering
\begin{overpic}[width=0.8\textwidth]{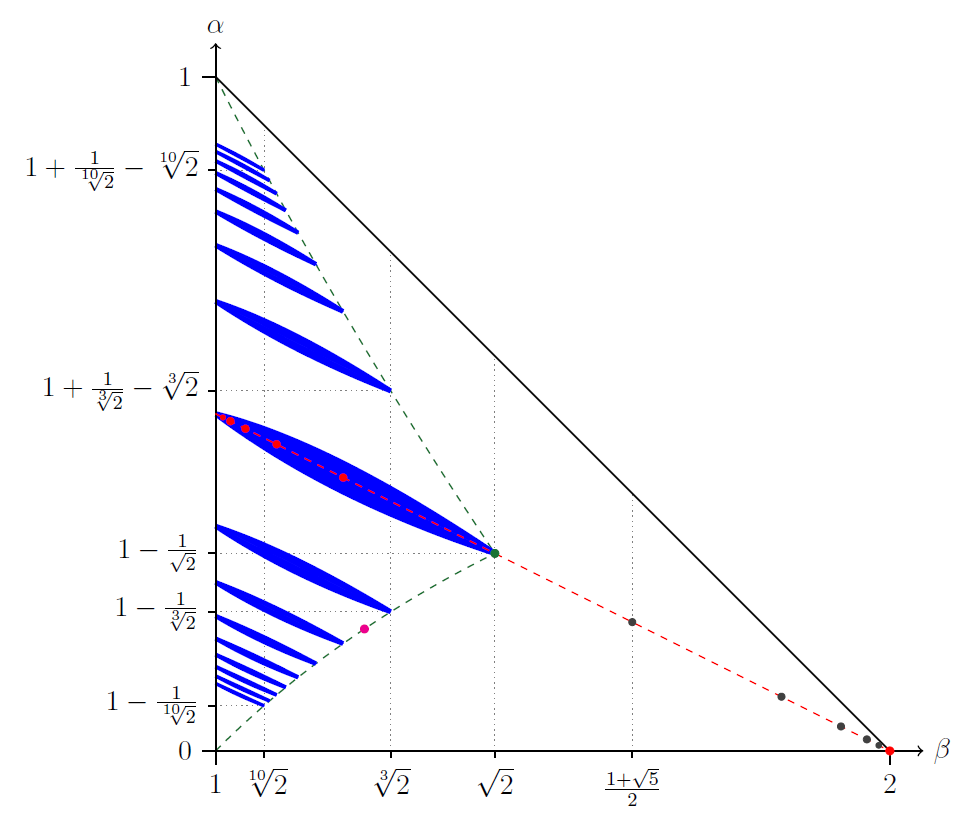}
\end{overpic}
\caption{\textit{The parameter space $\Delta$, where the blue regions correspond to the parameters $(\beta,\alpha)$ for which the map $\beta x+\alpha\Mod{1}$ has a primary $n(1)$- or $n(n-1)$-cycle for some $n\in\{2,3,\ldots,10\}$. The upper and lower green dashed curves are given by $\alpha=1+1/\beta-\beta$ and $\alpha=1-1/\beta$, respectively. The red dashed curve shows the graph of $\alpha=1-\beta/2$. The gray points belong to the sequence $(\varepsilon_i,\xi_i)$ defined by \eqref{eq:parameter_points_eps}, the red points to the sequence $(\beta_i,\alpha_i)$ defined by \eqref{eq:parameter_points}, and the green point belongs to both sequences. The pink dot corresponds to $\beta$--transformation $F$ from Appendix~\ref{ex:map_nonsymmetric1}.}}
\label{fig:parameter_space}
\end{figure}

The existence of a primary $n(k)$-cycle forces the occurrence of a certain periodic structure of the map, which is presented schematically in Fig.\ref{fig:21_cycle} and \ref{fig:nk_cycle} (for a $2(1)$-cycle and general $n(k)$-cycle, respectively). Such cycles were also used by Oprocha, Potorski, and Raith in \cite{OPR} to characterize (topologically) mixing expanding Lorenz maps. The proposition below is a consequence of applying their results to the family~\eqref{eq:family_maps}.
\begin{proposition}\label{prop:mixing_trans}
        The following conditions hold:
    \begin{enumerate}
        \item if $\beta\in(\sqrt{2},2]$, then the map $F_{\beta}$ is (topologically) mixing;
        \item if $\beta=\sqrt{2}$, then the map $F_{\beta}$ is transitive, but not mixing;
        \item if $\beta\in(1,\sqrt{2})$, then the map $F_{\beta}$ is not transitive.
    \end{enumerate}
\end{proposition}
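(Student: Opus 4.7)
The plan is to invoke the classification of mixing and transitive $\beta$-transformations from \cite{OPR}: $F_{\beta,\alpha}$ is topologically mixing iff the parameter $(\beta,\alpha)$ avoids the closure of every primary $n(k)$-cycle region in $\Delta$, and $F_{\beta,\alpha}$ is not transitive precisely when $(\beta,\alpha)$ lies in the interior of some such region (on the boundary the map is transitive but not mixing). Hence the task reduces to locating the symmetric curve $\alpha=1-\beta/2$ with respect to the primary cycle regions depicted in Fig.\ref{fig:parameter_space}.

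For parts (1) and (3), the excerpt has already established that $F_\beta$ admits a primary $2(1)$-cycle iff $\beta\in(1,\sqrt{2}]$, so the symmetric curve enters the central $2(1)$-region precisely on this segment. To complete (1), I would also verify that for $\beta\in(\sqrt{2},2]$ the point $(\beta,1-\beta/2)$ meets no other primary $n(k)$-region; this follows from the explicit region boundaries worked out in \cite{OPR} (the $n(1)$- and $n(n-1)$-tongues visible in Fig.\ref{fig:parameter_space}, together with the remaining $n(k)$-tongues for $1<k<n-1$, each lie strictly away from the symmetric curve for $\beta>\sqrt{2}$, a fact that may also be read off the symmetry $(\beta,\alpha)\mapsto(\beta,2-\beta-\alpha)$ pairing $n(k)$- with $n(n-k)$-regions and fixing the symmetric curve). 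Hence $F_\beta$ is mixing on $(\sqrt{2},2]$, proving (1), while for $\beta\in(1,\sqrt{2})$ the parameter lies in the interior of the $2(1)$-region, so \cite{OPR} yields non-transitivity, giving (3).

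Case (2), $\beta=\sqrt{2}$, is the delicate boundary case and is the main technical obstacle. Here the cycle condition reduces to the equalities $F_{\sqrt{2}}(0_+)=z_0$ and $F_{\sqrt{2}}(1_-)=z_1$ on the $2(1)$-cycle $\{z_0,z_1\}$, so $F_{\sqrt{2}}$ exchanges the sub-intervals $A=[0,z_0]\cup[z_1,1]$ and $B=[z_0,z_1]$. A direct computation shows that $F_{\sqrt{2}}^2\big|_B$ is piecewise linear of slope $2$ on each side of the critical point $c=1/2$, and that the affine rescaling $h_{z_0,z_1}$ of \eqref{eq:rescaling} conjugates $F_{\sqrt{2}}^2\big|_B$ exactly to the doubling map $y\mapsto 2y\bmod 1$ on $[0,1]$; the identity $\beta^2=2$ enters crucially here. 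Since the doubling map is topologically mixing, so is $F_{\sqrt{2}}^2\big|_B$. Combined with the fact that $F_{\sqrt{2}}$ maps $A$ essentially onto $B$ and $B$ onto $A$, a dense $F_{\sqrt{2}}^2$-orbit in $B$ spreads under alternating iterates of $F_{\sqrt{2}}$ to a dense orbit in $A\cup B=[0,1]$, yielding transitivity. Non-mixing is then immediate: for small open $U,V\subset B^{\text{int}}$, the intersection $F_{\sqrt{2}}^n(U)\cap V$ is empty for every odd $n$, contradicting mixing.
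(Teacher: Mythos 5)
Your proposal follows essentially the same route as the paper, which justifies this proposition purely by invoking the primary $n(k)$-cycle characterization of mixing and transitivity from \cite{OPR} together with the fact, established immediately before the statement, that $F_{\beta}$ has a primary $2(1)$-cycle if and only if $\beta\in(1,\sqrt{2}]$; your explicit treatment of $\beta=\sqrt{2}$ via the slope-$2$ renormalization is precisely the content of the paper's Example~\ref{ex:renorm}. The one caveat is that your blanket paraphrase of \cite{OPR} --- that a parameter on the boundary of a primary-cycle region always yields a transitive, non-mixing map --- is too strong in general (the renormalization at such a boundary point need not itself be mixing), but this is harmless here because you verify the $\beta=\sqrt{2}$ case directly rather than relying on that claim.
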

By the result \cite[Theorem~4.12]{ChO}, every transitive and expanding Lorenz map is strongly transitive and thus, by \cite[Theorem~3.1.1]{Lectures}, has a dense set of periodic points.
\begin{corollary}
\label{cor:density}  If $\beta\in[\sqrt{2},2]$, then the map $F_{\beta}$ is strongly transitive and the set of its periodic points is dense in $[0,1]$.
\end{corollary}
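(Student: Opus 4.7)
The plan is to obtain Corollary \ref{cor:density} as a short combination of the preceding Proposition \ref{prop:mixing_trans} with the two results already cited in the paragraph immediately before the statement. First I would establish that $F_\beta$ is transitive throughout the closed interval $[\sqrt{2},2]$. This splits into two sub-cases: for $\beta\in(\sqrt{2},2]$, Proposition \ref{prop:mixing_trans}(1) gives that $F_\beta$ is topologically mixing, and mixing trivially implies transitive; for $\beta=\sqrt{2}$, Proposition \ref{prop:mixing_trans}(2) gives transitivity directly. Thus the whole interval $[\sqrt{2},2]$ lies in the transitive regime.

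Next I would invoke the external results. Since $F_\beta$ has constant slope $\beta>1$ (with $\beta\geq \sqrt{2}>1$), it is expanding in the sense required by \cite{ChO}, so by their Theorem 4.12 every transitive expanding Lorenz map is strongly transitive. This proves the first assertion. For the second assertion, strong transitivity is combined with Theorem 3.1.1 of \cite{Lectures}, which guarantees that strongly transitive interval-type systems have a dense set of periodic points; applied to $F_\beta$ this yields density of $\mathrm{Per}(F_\beta)$ in $[0,1]$.

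The only nontrivial point, which I would address briefly rather than compute in detail, is the transfer of "transitive / strongly transitive / dense periodic points" between the discontinuous map $F_\beta\colon[0,1]\to[0,1]$ and its Denjoy-type extension $\hat F_\beta\colon\mathbb X\to\mathbb X$, since the topological notions strictly speaking live on the compact system $(\mathbb X,\hat F_\beta)$ (equivalently on the subshift $(\Sigma_{F_\beta},\sigma_{F_\beta})$). However, this transfer is already built into the translated definitions recalled at the start of Subsection \ref{subsec:topmeasure}, and the set of doubled points $\bigcup_{i\geq 0}F_\beta^{-i}(c)$ is countable and disjoint from any periodic orbit of $F_\beta$ that does not hit $c$, so density of periodic points for $\hat F_\beta$ in $\mathbb X$ projects to density in $[0,1]$. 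I do not anticipate a genuine obstacle here—this is essentially a bookkeeping step—so the main content of the proof is the reduction to transitivity in the first paragraph.
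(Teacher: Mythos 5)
Your proposal is correct and follows exactly the paper's route: transitivity on $[\sqrt{2},2]$ from Proposition \ref{prop:mixing_trans}, then \cite[Theorem~4.12]{ChO} for strong transitivity and \cite[Theorem~3.1.1]{Lectures} for density of periodic points. The extra remark on transferring these notions from the Denjoy-type extension back to $[0,1]$ is a reasonable bookkeeping addition that the paper leaves implicit.
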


In \cite{ChO}, it is shown that the rotation interval of an expanding Lorenz map with a primary $n(k)$-cycle degenerates to a singleton. Thus, for symmetric $\beta$--transformations with slopes $\beta\leq\sqrt{2}$ we cannot apply the tools from rotation theory, as in Prop.\ref{prop:itineraries_rot}, to describe "a lower bound" of their periodic dynamics. In this case, however, the existence of a primary $2(1)$-cycle imposes restrictions on the form of realized periodic itineraries, giving "an upper bound".

\begin{proposition}\label{prop:itineraries}
    If $\beta\in(1,\sqrt{2}]$, then the map $F_{\beta}$ has the following properties:
            \begin{enumerate}
                \item the rotation interval of $F_{\beta}$ degenerates to singleton $\{\frac{1}{2}\}$;
                \item for every periodic point $x$ of $F_{\beta}$ there is a finite concatenation $C$ of the words $A_1=01$ and $\overline{A_1}=10$ such that $\eta(x)\in\{C^\infty,0C^\infty,1C^\infty\}$.
            \end{enumerate}
        \end{proposition}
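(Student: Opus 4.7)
The proof naturally splits into the two claims. For (1), I invoke the result from \cite{ChO} quoted just before Cor.\ref{cor:all_periods}: the rotation interval of any expanding Lorenz map possessing a primary $n(k)$-cycle collapses to the singleton $\{k/n\}$. Since the paragraph preceding Prop.\ref{prop:itineraries} establishes that $F_\beta$ admits a primary $2(1)$-cycle exactly for $\beta\in(1,\sqrt{2}]$, we immediately obtain $\operatorname{Rot}(F_\beta)=\{1/2\}$ on this range.

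For (2), let $\{z_0,z_1\}$ with $z_0<c<z_1$ denote the primary $2(1)$-cycle. Combining $F_\beta(z_0)=z_1$ and $F_\beta(z_1)=z_0$ with the primary conditions $F_\beta(0)=1-\tfrac{\beta}{2}\geq z_0$ and $F_\beta(1)=\tfrac{\beta}{2}\leq z_1$, strict monotonicity on each branch yields the image relations
\[
F_\beta\bigl((0,z_0)\bigr)\subseteq(z_0,z_1),\quad F_\beta\bigl((z_0,c)\bigr)=(z_1,1),\quad F_\beta\bigl((c,z_1)\bigr)=(0,z_0),\quad F_\beta\bigl((z_1,1)\bigr)\subseteq(z_0,z_1).
\]
Hence $F_\beta^2$ preserves $(z_0,z_1)\setminus\{c\}$, and any periodic orbit distinct from $\{z_0,z_1\}$ must strictly alternate between $(z_0,z_1)\setminus\{c\}$ and $(0,z_0)\cup(z_1,1)$ (a direct check of $F_\beta^n(0)$ and $F_\beta^n(1)$ rules out $0$ and $1$ as periodic points for $\beta<2$).

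Now fix a periodic point $x$ of period $q$; by (1) we have $\rho_{F_\beta}(x)=1/2$, so $q=2m$ is even. If $x$ lies on the $2$-cycle then $\eta(x)\in\{(01)^\infty,(10)^\infty\}$, which matches the conclusion with $C\in\{A_1,\overline{A_1}\}$. Otherwise, let $y$ be the orbit representative with $F^0(y)\in(z_0,z_1)\setminus\{c\}$; the strict alternation forces each pair $(\eta_{2i}(y),\eta_{2i+1}(y))$ to equal $A_1=01$ when $F^{2i}(y)<c$ and $\overline{A_1}=10$ when $F^{2i}(y)>c$, so the first $m$ pairs concatenate into a word $C$ of length $2m$ producing $\eta(y)=C^\infty$. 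Any remaining orbit point has the form $x=F^j(y)$, and $\eta(x)=\sigma^j(C^\infty)$: for even $j$ this is $\widetilde{C}^\infty$ with $\widetilde{C}$ a cyclic shift of $C$ preserving the $\{A_1,\overline{A_1}\}$ pair decomposition, while for odd $j$ it rewrites as $b\cdot\widetilde{C}^\infty$, where $b\in\{0,1\}$ is the leading symbol and $\widetilde{C}$ is obtained by shifting $C$ by two positions (again a concatenation of $A_1$ and $\overline{A_1}$). The main delicate point is the careful bookkeeping of the image inclusions at the boundary points $\{z_0,c,z_1\}$; once the strict alternation is in place, the forced $01/10$ pair structure is what produces the three possible forms $C^\infty$, $0C^\infty$, $1C^\infty$ asserted in the proposition.
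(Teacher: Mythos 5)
Your proof is correct and follows essentially the same route as the paper: part (1) is the same appeal to the rotation-interval result of \cite{ChO} via the primary $2(1)$-cycle, and part (2) rests on the same image relations for the intervals cut out by $\{z_0,c,z_1\}$, which force periodic itineraries to decompose into $01/10$ blocks with at most one extra leading symbol according to whether the point lies in $[z_0,z_1]$, $[0,z_0)$, or $(z_1,1]$. The only (inessential) quibble is the parenthetical claim that $0$ and $1$ can never be periodic for $\beta<2$: under the convention $F_\beta(c)=0$ this can fail at isolated parameters (e.g.\ $\beta=\sqrt{\varepsilon_2}$, where the orbit of $0$ passes through $c$), but such points are still covered by the $0C^\infty$ and $1C^\infty$ cases, so nothing in the argument is affected.
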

\begin{proof}
Let $\beta\in(1,\sqrt{2}]$. In this case, the map $F_{\beta}$ has a primary $2(1)$-cycle $O=\{z_0,z_1\}$. Thus, it follows from Theorem~5.1 in \cite{ChO} that $\text{Rot}(F_{\beta})=\{\frac{1}{2}\}$.
\begin{figure}[h]
	\centering
	\begin{tikzpicture}[scale=1.2]
	\draw[black, thick] (-5,0) -- (5,0);
	\draw[orange, very thick] (10*0.31-5,0) -- (0,0);
	\draw[blue, very thick] (0,0) -- (10*0.69-5,0);
	\draw[->] (5*0.31-2.5,0.1) to [out=30,in=150,looseness=1] (5*0.69,0.1);
	\draw[->] (5*0.69-2.5,0.1) to [out=150,in=30,looseness=1] (5*0.31-5,0.1);
	\draw[blue, very thick] (-5,0) -- (10*0.31-5,0);
	\draw[orange, very thick] (10*0.69-5,0) -- (5,0);
	\draw[blue, very thick] (10*0.4-5,-1) -- (10*0.69-5,-1);
	\draw[orange, very thick] (10*0.31-5,-0.5) -- (10*0.6-5,-0.5);
	\filldraw [black] (10*0.4-5,-1) circle (1.5pt) node[anchor=north] {\small$F_{\beta}(0)$};
	\filldraw [black] (10*0.69-5,-1) circle (1.5pt) node[anchor=north] {$z_1$};
	\filldraw [black] (10*0.6-5,-0.5) circle (1.5pt) node[anchor=north] {\small$F_{\beta}(1)$};
	\filldraw [black] (10*0.31-5,-0.5) circle (1.5pt) node[anchor=north] {$z_0$};
	\filldraw [red] (0,0) circle (1.5pt) node[anchor=north] {$c$};
	\filldraw [black] (5,0) circle (1.5pt) node[anchor=north] {$1$};
	\filldraw [black] (-5,0) circle (1.5pt) node[anchor=north] {$0$};
	\filldraw [black] (10*0.31-5,0) circle (1.5pt) node[anchor=north] {$z_0$};
	\filldraw [black] (10*0.69-5,0) circle (1.5pt) node[anchor=north] {$z_1$};
	\draw[->] (5*0.69,-0.1) to [out=210,in=0,looseness=1] (10*0.6-4.8,-0.5);
	\draw[->] (5*0.31-5,-0.1) to [out=330,in=180,looseness=1] (10*0.4-5.2,-1);
	\end{tikzpicture}
	\caption{\textit{Primary $2(1)$-cycle.}}
	\label{fig:21_cycle}
	\end{figure}
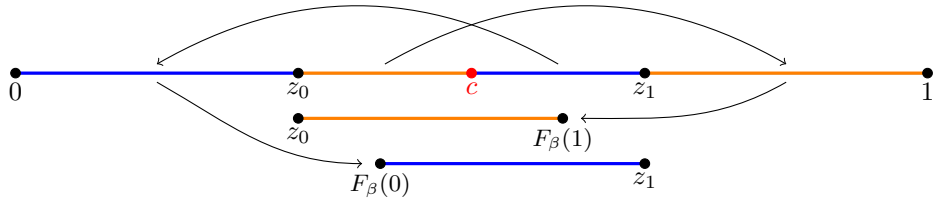
    
Observe that the itinerary of any point from the interval $[z_0,c)$ starts with $A_1$ and the itinerary of any point from $(c,z_1]$ starts with $\overline{A_1}$ (see Figure~\ref{fig:21_cycle}). Moreover, by Definition~\ref{defn:nk_cycle} we have 
$$
F_{\beta}([0,z_0])\subset[z_0,z_1],\quad F_{\beta}([z_1,1])\subset[z_0,z_1]\quad\text{and}\quad F^2_{\beta}([z_0,z_1])=[z_0,z_1].
$$
Let $x$ be a periodic point of $F_{\beta}$. From the above observations, we conclude that the itinerary of $x$ has the form
\begin{itemize}
    \item $\eta(x)=C^\infty$, if $x\in[z_0,z_1]$;
    \item $\eta(x)=0C^\infty$, if $x\in[0,z_0)$;
    \item $\eta(x)=1C^\infty$, if $x\in(z_1,1]$,
\end{itemize}
where $C$ is a finite concatenation of the words $A_1$ and $\overline{A_1}$.
\end{proof}

The results presented above indicate the complicated topological dynamics of the maps $F_{\beta}$ for $\beta$ close to $2$. We now use them to study the dynamics of the Lorenz equations - in detail, we are now going to "pull back" the obtained properties of the $\beta$--transformations to describe the Lorenz attractor. Recall that by Th.\ref{reduct} and Cor.\ref{reduct2}, for $v\in P$ sufficiently close to trefoil parameters, the dynamics on the attractor can be reduced to the symmetric $\beta$--transformation $F_{\beta}$ with $\beta=\beta(v)$ depending on $v$. We begin with the following Lemma:
\begin{lemma}
    \label{betato2} Let $T\subseteq P$ denote the closure of the set of all trefoil parameters in $P$, and let $v\in P$ be a parameter with a corresponding $\beta$--transformation $F_{\beta}$, $\beta=\beta(v)$. Then, as $v\to T$ we have $\beta\to2$.
\end{lemma}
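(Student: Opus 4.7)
My plan is to argue by contradiction using Cor.~\ref{pers1} and the kneading bound of Lemma~\ref{lem:eps_knead}. Suppose $\beta(v)\not\to 2$ as $v\to T$, so there exist $\beta_0<2$ and $v_k\to v_\infty\in T$ with $\beta(v_k)\leq\beta_0$ for all $k$. Since the sequence $\{\varepsilon_i\}$ is strictly increasing to $2$, I fix $i\geq 2$ with $\varepsilon_i>\beta_0$, and consider the periodic sequence $s=(10^i)^\infty$ of minimal period $i+1$.

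By Cor.~\ref{pers1} applied at the trefoil parameter $v_\infty$, for all sufficiently large $k$ we have $s\in\Sigma_{v_k}$, and the index-theoretic argument of Prop.~\ref{persistence} provides a persistent periodic orbit $O_k$ of $\psi_{v_k}$ of minimal period $i+1$ and itinerary $s$. The key step is to verify that $O_k$ lies in the invariant set $I_{v_k}$ built in Th.~\ref{reduct}; once this is known, the semi-conjugacy $\pi_{v_k}\colon I_{v_k}\to I_{\beta(v_k)}$ sends $O_k$ to a periodic orbit of $F_{\beta(v_k)}$ with the same itinerary $s$, so $s\in\Sigma_{F_{\beta(v_k)}}$. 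A direct computation using the lexicographic description \eqref{eq:symbol_space} of $\Sigma_{F_\beta}$, combined with Lemma~\ref{lem:eps_knead}, shows that $(10^i)^\infty\in\Sigma_{F_\beta}$ if and only if $\beta\geq\varepsilon_i$ (indeed, the minimum of the shifts of $(10^i)^\infty$ is $(0^i1)^\infty=\sigma((10^i)^\infty)$, and the condition $\sigma(\eta_+)\preccurlyeq(0^i1)^\infty$ is equivalent to $\eta_+\preccurlyeq(10^i)^\infty$, which by Lemma~\ref{lem:eps_knead}(1)--(2) happens exactly when $\beta\geq\varepsilon_i$). Therefore $\beta(v_k)\geq\varepsilon_i>\beta_0$, contradicting our choice of $\beta_0$.

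To justify $O_k\in I_{v_k}$, I would replay the Fixed Point Index argument at the end of the proof of Th.~\ref{reduct} along the chain of isotopies $\psi_{v_k}\to\varphi_{v_k}\to\phi_{v_k}\to h_{v_k}$ constructed there. The persistence disc $D_k$ around $O_k$ (bounded by pre-images of $W$ and disjoint from the discontinuities of $\psi_{v_k}^{i+1}$) can be chosen inside the hyperbolic core inherited from the full horseshoe at $v_\infty$; the Th.~\ref{reduct} deformations are supported on a neighborhood of $\partial R$ and on a prescribed collection of Category~$C$ arcs, both of which can be arranged disjoint from $\overline{D_k}$ for $k$ large. Hence no fixed point of the $(i+1)$-th iterate crosses $\partial D_k$ during the homotopy, the Fixed Point Index remains nonzero throughout, and $O_k$ deforms into a periodic orbit of $h_{v_k}$ whose projection under the horizontal-foliation collapse realizes the required $F_{\beta(v_k)}$-orbit with itinerary $s$. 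This verification---that $D_k$ stays inside the Th.~\ref{reduct} dynamical core through all four stages of the deformation---is the principal technical obstacle; the remaining kneading bound then closes the argument.
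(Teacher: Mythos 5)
Your endgame coincides with the paper's: both arguments reduce the lemma to showing that, for $v$ close enough to $T$, the itinerary $(10^i)^\infty$ belongs to $\Sigma_{F_{\beta(v)}}$, and then invoke Lemma~\ref{lem:eps_knead} to get $\beta(v)\geq\varepsilon_i$ and the fact that $\varepsilon_i\nearrow2$ to conclude. The difference lies entirely in how that middle claim is established, and it is exactly where the gap you flag sits. You route the argument through Cor.~\ref{pers1}: persistence gives a $\psi_{v_k}$-orbit $O_k$ with itinerary $(10^i)^\infty$, and you then need to transport $O_k$ forward through the chain of deformations of Th.~\ref{reduct} into the dynamical core $I_{v_k}$, so that the semi-conjugacy $\pi_{v_k}$ can read off its itinerary in $\Sigma_{F_{\beta(v_k)}}$. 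This is the \emph{converse} direction of what Th.~\ref{reduct} actually provides: the theorem pulls periodic orbits back from $F_\beta$ to $\psi_v$, not the other way around, and the paper's own construction only guarantees $\Phi_v\subseteq\Sigma_v$ after the Category~$C$ collapse --- it does not assert that every periodic itinerary of $\psi_v$ survives into $\Phi_v$. Your proposed fix (arrange the deformations disjoint from $\overline{D_k}$) is not available as stated: the Category~$C$ arcs are pre-images of $W$ determined by the dynamics, the collapse moves everything inside the Jordan domains they bound, and whether those domains meet $D_k$ is a dynamical fact rather than a choice one is free to make. So as written the step ``$O_k\in I_{v_k}$'' is a genuine gap, not merely a technicality.

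The paper avoids this direction-of-transport problem altogether. It works directly with the straightened map $\phi_v$, whose symbolic dynamics coincide with those of $F_\beta$ by construction, and argues geometrically that as $v\to p\in T$ the first hitting points of the separatrices $\Gamma_0,\Gamma_1$ approach the fixed points $p_1,p_0$, so that $\phi_v(R_i)$ covers more and more of $R_j$; these covering relations alone force the orbit with itinerary $(10^i)^\infty$ to exist for $\phi_v$ (hence for $F_\beta$) once $v$ is close enough to $T$, with no appeal to persistence of $\psi_v$-orbits. If you want to salvage your route, the cleanest repair is to replace the transport of $O_k$ by this covering argument on $\phi_v$ itself --- i.e., show the kneading invariant of $F_{\beta(v)}$ tends to $(10^\infty,01^\infty)$ as $v\to T$ (cf.\ Remark~\ref{convergence}) --- after which your lexicographic computation and the contradiction with $\beta_0<\varepsilon_i$ go through verbatim.
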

\begin{proof}
Given a parameter $v\in P$, consider the map $\phi_v$ introduced in the proof of Th.\ref{reduct}, and assume $v\to p$, for some $p\in T$. By definition, $\phi_v:R_0\cup R_1\to R$ is a "straightened" version of the first-return map $\psi_v:R_0\cup R_1\to R$, in the sense that its symbolic dynamics and kneading are the "minimal possible" - or, put simply, they are defined precisely by the covering relations of $R_i$ or $R_j$, $i,j\in\{0,1\}$. As we push $v$ towards $p$ it is easy to see $\phi_v$ tends to a map as in the rightmost side of Fig.\ref{stretch} - i.e., a map which makes $R_i$ stretch all over $R_j$, s.t. $\phi_v(R_i)$ connects both $p_1,p_0$ (see the illustration in Fig.\ref{stretch}). This implies that once a periodic orbit is added for $\phi_v$, it cannot be removed.\\

\begin{figure}[h]
\centering
\begin{overpic}[width=0.5\textwidth]{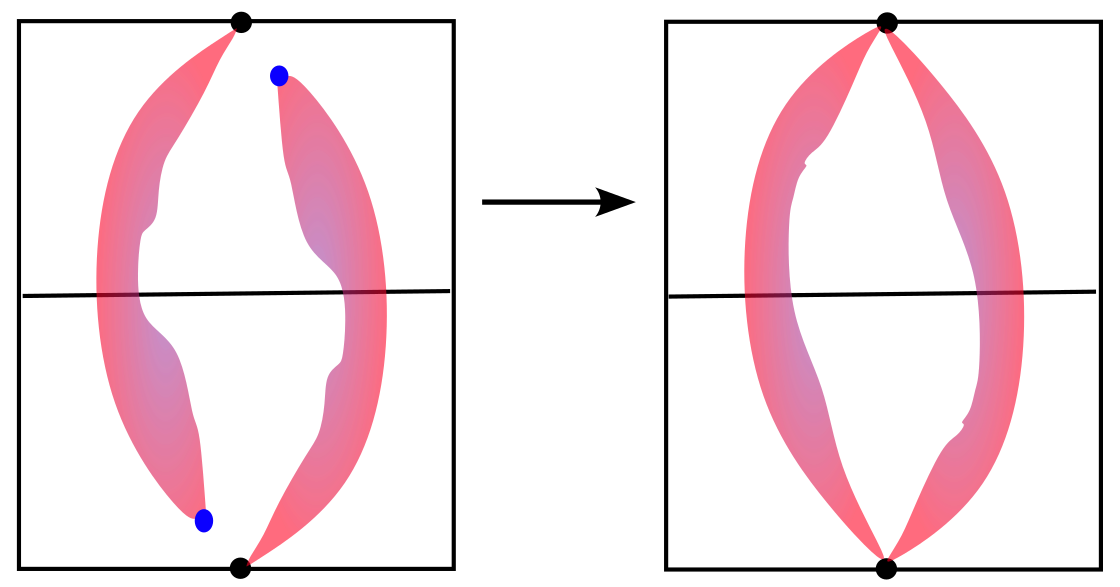}
\put(50,130){$R_0$}
\put(50,420){$R_1$}
\put(-60,250){ $W$}
\put(500,250){ $W$}
\put(190,-20){$p_0$}
\put(190,540){$p_1$}
\put(610,130){$R_0$}
\put(770,-20){$p_0$}
\put(610,420){$R_1$}
\put(770,540){$p_1$}
\end{overpic}
\caption{\textit{Isotoping $\phi_v$ to $\phi_p$, for some $p\in T$. As $v\to p$, the first hitting points of the separatrices (the blue dots) are pushed upwards towards the fixed points.}}
\label{stretch}
\end{figure}
By the proof of Th.\ref{reduct}, Cor.\ref{reduct2} we know $\beta$ depends on $v$. Denote by $I_v\subseteq R_0\cup R_1$ is the maximal invariant set for $\phi_v$ which is then deformed to $I_\beta$, the maximal invariant set of $F_\beta$ in $[0,1]\setminus\{\frac{1}{2}\}$ (this set exists by Cor.\ref{reduct2}). By Th.\ref{reduct} and Cor.\ref{reduct2} we know there exists a continuous, surjective map $\pi:I_v\to I_\beta$ s.t. for any $x\in I_\beta$, $\pi^{-1}(x)$ is connected. In other words, $\phi_v$ and $F_\beta$ have exactly the same symbolic dynamics. Therefore, letting $v\to p$ is like saying that the first hit points of the separatrices $\Gamma_0$ and $\Gamma_1$ hit $R_1$ and $R_0$ (respectively) closer and closer to $p_1$ and $p_0$. This implies that as $v\to p$, $R_i$ covers "more" of $R_j$, thus forcing the existence of more periodic orbits for $\psi_v$, the original first-return map which survive the transition to $\phi_v$. In particular, for any $i\in\mathbb{N}$ and $v$ sufficiently close to $T$, the corresponding map $F_{\beta}$ has a periodic orbit with itinerary $(10^i)^\infty$. Thus, by Lemma~\ref{lem:eps_knead} we have $\beta\geq\varepsilon_i$. Since the sequence $\{\varepsilon_i\}$ tends to $2$, by this discussion we conclude that as $v\to p$ we have $\beta\to2$, and the assertion follows.
\end{proof}

\begin{remark}
\label{convergence} Consider the kneading invariant $k_{F_\beta}=(\eta_+,\eta_-)$ of a map $F_\beta$. The argument used to prove Lemma~\ref{betato2} also implies as $v\to T$, we have $k_{F_\beta}\to(10^\infty,01^\infty)$, in the sense that each symbol in $\eta_+$ and $\eta_-$ is eventually constant and equal to the corresponding symbol in $10^\infty$ and $01^\infty$, respectively. In other words, the closer $v$ is to the collection of trefoil parameters $T$, the larger is the set of kneading sequences $\Sigma_{F_\beta}$ for the map $F_\beta$ corresponding to the Lorenz attractor at parameter values $v$.\end{remark}

As a corollary of the properties of symmetric $\beta$--transformations, we conclude the following:
\begin{corollary}
    \label{mixingattractor} Let $T\subseteq P$ denote the closure of the collection of trefoil parameters. Then, for any $v\in P$ sufficiently close to $T$ the corresponding Lorenz system satisfies the following:
    \begin{itemize}
        \item There exist infinitely many periodic orbits inside the attractor corresponding to $v$. Moreover, the first-return map for the attractor has periodic orbits of all periods.
        \item The dynamics of the first-return map for the attractor can be semiconjugated to a $\beta$--transformation that is both strongly transitive and mixing.
        \item  The dynamics at trefoil parameter are "essentially" dynamically maximal - if $v\not\in T$, there are infinitely many periodic sequences in $\{0,1\}^{\mathbb{N}_0}$ that \textbf{are not} realized as itineraries of periodic points of the corresponding $\beta$--transformation..

    \end{itemize}
\end{corollary}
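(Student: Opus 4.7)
The plan is to transport properties of the symmetric $\beta$-transformation $F_\beta$ back to the Lorenz first-return map $\psi_v$ through the semiconjugacy $\pi\colon I_v\to I_\beta$ from Cor.\ref{reduct2}, using Lemma \ref{betato2} to control $\beta=\beta(v)$. For the first bullet I would shrink the neighborhood of $T$ enough that $\beta>\varepsilon_2=\tfrac{1+\sqrt{5}}{2}$; Cor.\ref{cor:all_periods} then produces periodic orbits of $F_\beta$ of every minimal period $n\in\mathbb{N}$, each of which lifts by the second bullet of Th.\ref{reduct} to a $\psi_v$-periodic orbit of the same minimal period, giving infinitely many periodic orbits of all periods on the attractor. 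For the second bullet I would further shrink the neighborhood so that $\beta>\sqrt{2}$; Prop.\ref{prop:mixing_trans} together with Cor.\ref{cor:density} then show $F_\beta$ is topologically mixing and strongly transitive, while $\pi$ is precisely the required semiconjugacy of $\psi_v$ onto $F_\beta$.

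The third bullet is the most delicate. I would first show that $v\notin T$ forces $\beta<2$: if instead $\beta=2$ then $F_\beta$ is the doubling map with $\Sigma_{F_\beta}=\{0,1\}^{\mathbb{N}_0}$, so by Th.\ref{reduct} every periodic itinerary lifts to a $\psi_v$-periodic orbit. Combining this maximality with Remark \ref{convergence} (which identifies $(10^\infty,01^\infty)$ as the limit of the kneading invariant precisely as $v\to T$) and the characterization of trefoil parameters in Th.\ref{tali}(2), $v$ must belong to the closure $T$---a contradiction. Having $\beta<2$, I pick any $i\in\mathbb{N}$ with $\beta<\varepsilon_{i+1}$. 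Cor.\ref{cor:kneading_iff} then yields $\Sigma_{F_\beta}\subseteq\Sigma_{F_{\varepsilon_{i+1}}}$, and a direct reading of the kneading $k_{F_{\varepsilon_{i+1}}}=((10^{i+1})^\infty,(01^{i+1})^\infty)$ via \eqref{eq:symbol_space} shows that $\Sigma_{F_{\varepsilon_{i+1}}}$ forbids any block of $i+2$ consecutive $0$'s or of $i+2$ consecutive $1$'s. Consequently the periodic sequence $(10^m)^\infty$ lies outside $\Sigma_{F_\beta}$ for every $m\geq i+2$, producing infinitely many periodic itineraries not realized by periodic points of $F_\beta$.

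The main obstacle will be the implication $v\notin T\Rightarrow\beta<2$ in the third bullet. The parameter $\beta=\beta(v)$ is produced through the isotopy in the proof of Th.\ref{reduct}, which does not come with an immediate continuity statement in $v$; I expect to circumvent this by running the symbolic-dynamical argument sketched above, thereby pushing the issue onto the characterization of $T$ via the full shift and the heteroclinic structure at trefoil parameters provided by Th.\ref{tali}(2) and Lemma \ref{betato2}.
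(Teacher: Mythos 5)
Your treatment of the first two bullets and of the ``infinitely many missing itineraries'' part of the third bullet is exactly the paper's: Lemma \ref{betato2} to push $\beta$ above $\varepsilon_2$ (resp.\ $\sqrt{2}$), then Cor.\ref{cor:all_periods}, Prop.\ref{prop:mixing_trans} and Cor.\ref{cor:density} pulled back through Th.\ref{reduct}/Cor.\ref{reduct2}, and the $\varepsilon_i$--partition of $(\varepsilon_2,2)$ via Lemma \ref{lem:eps_knead}/Cor.\ref{cor:kneading_iff} to exhibit the forbidden periodic itineraries $(10^m)^\infty$ once $\beta<2$.

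The gap is in the step you yourself flag: $v\notin T\Rightarrow\beta<2$. Your argument runs ``if $\beta=2$ then every periodic itinerary is realized by $\psi_v$, hence by Th.\ref{tali}(2) and Remark \ref{convergence} we get $v\in T$.'' But Th.\ref{tali}(2) is a one-directional statement (trefoil parameters admit a semiconjugacy to the full shift); it does not characterize $T$, and the paper explicitly contemplates the converse failing --- see the remark immediately after Cor.\ref{mixingattractor}, which discusses parameters $v\notin T$ whose first-return map \emph{is} semiconjugate to the full shift. So realizing all periodic itineraries cannot be used to force $v\in T$. The paper closes this step differently and more directly: $F_\beta$ is the doubling map if and only if its kneading invariant is $(10^\infty,01^\infty)$; since these sequences are non-periodic, the third bullet of Th.\ref{reduct} forces the kneading invariant of the flow to coincide with it, which says precisely that the separatrices $\Gamma_0,\Gamma_1$ never return to cross $W$ after their first passage, i.e., they form the heteroclinic connections to $p_0,p_1$ and $v$ is a trefoil parameter. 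Replacing your symbolic contradiction with this kneading/separatrix argument repairs the proof; everything else you wrote matches the paper.
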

\begin{proof}
    By Lemma \ref{betato2} we know that for $p$ sufficiently close to $T$ we have $\beta=\beta(v)\in(\frac{1+\sqrt{5}}{2},2]$. The first two assertion are immediate consequences of Th.\ref{reduct} (cf. Cor.\ref{reduct2}), Cor.\ref{cor:all_periods}, Prop.\ref{prop:mixing_trans} and Cor.\ref{cor:density}. Therefore, we need only prove the dynamics of the map $F_\beta$ corresponding to the Lorenz system at $v\notin T$ include less periodic orbits compared to the maps $F_{\beta'}$ corresponding to parameters $p\in T$. That, however, is immediate - by Lemma \ref{betato2} we know that for any parameter $p\in T$, its corresponding $F_{\beta'}$ per Th.\ref{reduct} is simply the doubling map $F_2(x)=2x\Mod{1}$, that is, $\beta'=2$. At this point we observe that the doubling map is dynamically maximal, i.e., it satisfies the following:
    \begin{itemize}
        \item We have $\Sigma_{F_2}=\{0,1\}^{\mathbb{N}_0}$ (see Def.\eqref{eq:symbol_space}). In particular, the collection of periodic orbits for the doubling map can be bijected to the collection of all the periodic sequences in $\{0,1\}^{\mathbb{N}_0}$.  
        \item For all $\beta<2$, the set $\Sigma_{F_{\beta}}$ is a proper subset of $\{0,1\}^{\mathbb{N}_0}$. Moreover, since $\frac{1+\sqrt{5}}{2}<\beta<2$ implies $\beta\in[\varepsilon_i,\varepsilon_{i+1})$ for some $i\geq2$, it follows from Lemma \ref{lem:eps_knead} that for any $j>i$ the map $F_\beta(x)=\beta x+1-\frac{\beta}{2}\Mod{1}  $ does not have periodic orbits with itineraries $(10^{j})^\infty$ or $(01^{j})^\infty$. Thus, the collection of periodic orbits for the $\beta$--transformation $F_\beta$ can only be embedded in the collection of periodic sequences in $\{0,1\}^{\mathbb{N}_0}$; it cannot be bijected with it - in other words, $F_\beta$ has less periodic orbits in $[0,1]$ compared to $F_2$.
    \end{itemize}
    
It is easy to see that given a parameter $v\not\in{T}$, as the separatrices $\Gamma_0$ and $\Gamma_1$ do not form heteroclinic trajectories then the corresponding $\beta$ transformation $F_\beta$ cannot be the doubling map. Or in other words, $\beta<2$, and the proof now follows by the discussion above.
\end{proof}
\begin{remark}
    One intuitive meaning of Cor.\ref{mixingattractor} is the following - if $v\not\in{T}$ is a parameter s.t. $\psi_v:R_0\cup R_1\to R$ can be semiconjugated to the double-sided shift, then the dynamics on the attractor are much more complex than those of the corresponding $F_\beta$, i.e., the "topological lower bound".
\end{remark}

Recall that by Th.\ref{tali} for all $v\in P$ the dynamics of the first return map $\psi_v:R_0\cup R_1\to R$ on its invariant set in $R_0\cup R_1$ is semiconjugated to some subshift $\Sigma_v\subseteq\{0,1\}^{\mathbb{N}_0}$. We already know that for any periodic sequence $s\in\{0,1\}^{\mathbb{N}_0}$ we have $s\in\Sigma_v$, provided that $v$ is sufficiently close to a trefoil parameter (see Cor.\ref{pers1}). Let $Per_v\subseteq \Sigma_v$ be the collection of all periodic sequences corresponding to the periodic orbits of $\psi_v$, which then survive as we deform it to $F_\beta$. Using Th.\ref{reduct} (cf. Remark \ref{rem:one-to-one} and Cor.\ref{reduct2}), Prop.\ref{prop:itineraries_rot} and Prop.\ref{prop:itineraries} we obtain bounds on periodic dynamics of $\psi_v$, depending on the slope $\beta$ of its one-dimensional factor map.
\begin{corollary}
    \label{symbolicper}
    Let $v\in P$ be sufficiently close to $T$ s.t. the dynamics on the attractor can be reduced to a $\beta$--transformation $F_{\beta}$, and let $\beta=\beta(v)\in(1,2]$ be the slope of $F_{\beta}$. Then, we have the following:
    \begin{itemize}
        \item The lower bound: If $\beta>\epsilon_i$ for some $i\geq2$, then for $s\in\{2,\ldots,i\}$ and every finite concatenation $C$ of words $A_s=0^s1$ and $A_{s-1}=0^{s-1}1$ both $C^\infty$ and $\overline{C^\infty}$ are in $Per_v$.
        \item The upper bound: If $\beta\in(1,\sqrt{2}]$ and $s\in Per_v$, then there is a finite concatenation $C$ of the words $A_1=01$ and $\overline{A_1}=10$ s.t. $s\in\{C^\infty,0C^\infty,1C^\infty\}$. 
    \end{itemize}
\end{corollary}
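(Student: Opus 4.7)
The plan is to derive both bounds as a direct consequence of the semi-conjugacy established in Theorem \ref{reduct} (together with Cor.\ref{reduct2}), combined with the structural results on periodic itineraries for the symmetric $\beta$--transformations proved in Prop.\ref{prop:itineraries_rot} and Prop.\ref{prop:itineraries}. The unifying observation is the following: by construction of the factor map $\pi_v\colon I_v\to I_\beta$ in the proof of Th.\ref{reduct}, the partition $\{R_0\cap I_v,R_1\cap I_v\}$ maps under $\pi_v$ to the partition $\{[0,\tfrac{1}{2})\cap I_\beta,(\tfrac{1}{2},1]\cap I_\beta\}$. Since $\pi_v\circ\psi_v=F_\beta\circ\pi_v$, the itinerary of a point $x\in I_v$ with respect to $\psi_v$ coincides with the itinerary of $\pi_v(x)$ with respect to $F_\beta$. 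Combined with the second bullet of Th.\ref{reduct}, which says that preimages of periodic points of $F_\beta$ under $\pi_v$ contain periodic orbits for $\psi_v$ of the same minimal period, one obtains a clean identification: $Per_v$ is precisely the set of periodic itineraries of the factor map $F_\beta$.

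Given this identification, the lower bound is immediate. First I would fix $\beta>\varepsilon_i$, some $s\in\{2,\ldots,i\}$, and an arbitrary finite concatenation $C$ of the words $A_s=0^s1$ and $A_{s-1}=0^{s-1}1$. By Prop.\ref{prop:itineraries_rot}(2), the map $F_\beta$ has periodic points $x,x'$ with itineraries $\eta(x)=C^\infty$ and $\eta(x')=\overline{C^\infty}$. Pulling these back through $\pi_v$ via the second bullet of Th.\ref{reduct} yields periodic orbits of $\psi_v$ with the same minimal periods, whose $\psi_v$-itineraries (by the identification above) equal $C^\infty$ and $\overline{C^\infty}$ respectively. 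Hence both sequences lie in $Per_v$.

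For the upper bound I would fix $\beta\in(1,\sqrt{2}]$ and take any $s\in Per_v$. By the identification, $s$ is the itinerary of some periodic point $x$ of $F_\beta$. Prop.\ref{prop:itineraries}(2) then forces $\eta(x)\in\{C^\infty,0C^\infty,1C^\infty\}$ for some finite concatenation $C$ of $A_1=01$ and $\overline{A_1}=10$, which is exactly the claimed upper bound on $s$. The only subtlety in the whole argument — and it is mild — is making sure the symbolic identification is really a bijection on the level of periodic orbits; this follows from Remark \ref{rem:one-to-one} (which handles the intermediate rescaling $f_r\to H_{r,r}=F_\beta$) plus the Fixed Point Index argument in the proof of Th.\ref{reduct}. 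Since the map $\pi_v$ on periodic points is essentially set up so that minimal period and itinerary are preserved both forward and backward, no further work beyond citing these results is needed, and the corollary follows.
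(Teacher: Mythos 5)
Your proposal is correct and follows essentially the same route as the paper, which derives Cor.\ref{symbolicper} directly from the identification of $Per_v$ with the periodic itineraries of the factor map $F_\beta$ (via Th.\ref{reduct}, Remark \ref{rem:one-to-one} and Cor.\ref{reduct2}) combined with Prop.\ref{prop:itineraries_rot}(2) for the lower bound and Prop.\ref{prop:itineraries}(2) for the upper bound. The paper gives no further argument beyond this chain of citations, so your fleshed-out version matches it step for step.
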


Cor.\ref{mixingattractor} and Cor.\ref{symbolicper} establish that the Lorenz attractor around the set $T$ has complex topological dynamics. We now address the analogous question of its measurable dynamics. To this end, we first recall the measurable properties of the symmetric $\beta$--transformations:
\begin{theorem}
    \label{measurprop} Given a $\beta$--transformation $F_\beta(x)=\beta x+1-\frac{\beta}{2}\Mod{1}  $, we have the following:
    \begin{itemize}
        \item The topological entropy of $F_\beta$ is $\ln(\beta)$.
        \item There exists a unique $F_\beta$--invariant probability measure $\mu$ s.t. the metrical entropy w.r.t. $\mu$ is also $\ln(\beta)$.
        \item $\mu$ is absolutely continuous w.r.t. the Lebesgue measure on $[0,1]$.
        \item The density function of $\mu$ is given by the formula $K\sum_{n\geq0}\beta^{-n}(\mathbb{1}_{[0,F^n_\beta(1)]}-\mathbb{1}_{[0,F^n_\beta(0)]})$ where $K$ is a normalizing factor and $\mathbb{1}_I$ denotes the indicator map over the interval $I$.
        \item The support of $\mu$ is a finite collection of intervals $I_1,...,I_n$ s.t. $\cup_{i=1}^n I_i=[0,1]$ whenever $\beta\in[\sqrt{2},2]$.
    \end{itemize}
\end{theorem}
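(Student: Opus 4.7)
The strategy is to establish each claim by invoking the classical theory of piecewise monotone expanding interval maps (Parry, Rokhlin, Hofbauer, Keller, Rychlik), specialized to the symmetric Lorenz setting, and then exploit the strong transitivity already secured in Cor.\ref{cor:density}. Since $F_\beta$ has two monotone branches of constant slope $\beta>1$, it is a piecewise $C^1$ expanding map in the standard sense, and the symmetry $F_\beta(x)=1-F_\beta(1-x)$ proven in Lemma \ref{lem:symmetry} will be used to collapse the asymmetric Parry formula into the symmetric one claimed.

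First, I would show $h_{top}(F_\beta)=\ln\beta$. The upper bound is immediate from the Misiurewicz--Szlenk formula for piecewise monotone maps: the lap number of $F_\beta^n$ is at most $2^n$ but, more sharply, is controlled by $\beta^n$, yielding $h_{top}(F_\beta)\leq\ln\beta$. For the lower bound I would construct an $F_\beta$-invariant Borel probability measure $\mu$ of metric entropy $\ln\beta$, and then apply the variational principle. The measure itself is obtained by the classical Parry/Lasota--Yorke scheme: define the Perron--Frobenius operator $(\mathcal L g)(x)=\frac{1}{\beta}\sum_{y\in F_\beta^{-1}(x)}g(y)$ on $BV([0,1])$; since $\mathcal L$ is bounded on $BV$ and its essential spectral radius is strictly less than $1$ (Rychlik's theorem), it has a fixed density $h\in BV$, $\mu=h\,d\lambda$ is $F_\beta$-invariant and absolutely continuous, and by Rokhlin's entropy formula $h_\mu(F_\beta)=\int\ln|F_\beta'|\,d\mu=\ln\beta$. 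Combined with the Misiurewicz--Szlenk upper bound, this yields both the topological and metric entropy statements.

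To obtain the explicit density formula, I would follow Parry's original computation adapted to Lorenz maps: iterate $\mathcal L$ on the constant function $\mathbb{1}_{[0,1]}$ and observe that each iterate is a finite linear combination of indicators $\mathbb{1}_{[0,F_\beta^n(0_+)]}$ and $\mathbb{1}_{[0,F_\beta^n(1_-)]}$ with coefficients $\beta^{-n}$; passing to the Cesàro average (or using the fact that the telescoping series converges in $BV$ by the expansion rate $\beta>1$) gives the claimed formula
\[
h(x)=K\sum_{n\geq0}\beta^{-n}\bigl(\mathbb{1}_{[0,F_\beta^n(1)]}(x)-\mathbb{1}_{[0,F_\beta^n(0)]}(x)\bigr),
\]
with $K$ chosen so that $\int h\,d\lambda=1$. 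The symmetry property from Lemma \ref{lem:symmetry} ensures that the two families of indicator endpoints are precisely symmetric around $\tfrac{1}{2}$, so the formula is internally consistent with $F_\beta(x)=1-F_\beta(1-x)$.

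Uniqueness of $\mu$ and identification of its support will come from the dynamical results already established in Section~\ref{subsec:topmeasure}. For $\beta\in[\sqrt{2},2]$, Cor.\ref{cor:density} shows $F_\beta$ is strongly transitive, hence the unique ergodic absolutely continuous invariant measure provided by Rychlik's decomposition theorem must have support equal to a finite union of intervals whose union is all of $[0,1]$; strong transitivity forces the only such decomposition to be trivial. For $\beta\in(1,\sqrt{2})$, where transitivity fails, the support may be a proper finite union of intervals, but Rychlik's theorem still gives uniqueness on the (single) mixing component containing the critical point's orbit; the symmetry of $F_\beta$ then forces this component to be symmetric about $\tfrac{1}{2}$. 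I expect the main technical obstacle to be verifying absolute summability of the Parry series in $BV$-norm at the boundary cases (in particular, when $\beta$ approaches $1$), which requires controlling the total variation of the partial sums uniformly — this is where Rychlik's Lasota--Yorke inequality for $\mathcal L$ enters crucially.
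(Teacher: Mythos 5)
The paper offers no proof of this theorem at all --- it is imported from the literature with a pointer to \cite{HOF} --- and your plan reconstructs precisely the classical machinery those references rely on (Misiurewicz--Szlenk for the entropy upper bound, the Lasota--Yorke/Rychlik transfer-operator estimate on $BV$, Parry's explicit density, Rokhlin's formula, and strong transitivity from Cor.\ref{cor:density} for the support), so in substance it follows the same route the paper delegates to its citation. The one step you should make explicit is the passage from \emph{uniqueness of the absolutely continuous invariant measure} to \emph{uniqueness of the invariant measure with metric entropy $\ln\beta$}: your Rychlik/strong-transitivity argument only rules out a second acip, not a singular ergodic measure of entropy $\ln\beta$, and closing that gap requires the equality case of the Rokhlin--Ruelle inequality (for a piecewise expanding interval map an ergodic $\mu$ satisfies $h_\mu=\int\ln|F_\beta'|\,d\mu$ if and only if $\mu\ll\lambda$), which is exactly what the constant slope needs in order to convert ``maximal entropy'' into ``absolutely continuous''.
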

For the proof, see \cite{HOF} (for more details on the measurable properties of $\beta$--transformations, see the survey of Section $6.2$ in \cite{BLS} and the results of \cite{DC}). As an immediate corollary of Lemma \ref{betato2} and Th.\ref{measurprop} we obtain:
\begin{corollary}
    \label{measurableattractor} Given $v\in P$, let $F_\beta$ denote the Lorenz map corresponding to it. Then, provided $v$ is sufficiently close to the set $T$, $F_\beta$ has an absolutely continuous invariant probability measure $\mu$ w.r.t. the Lebesgue measure on $[0,1]$. The support of $\mu$ is the whole interval $[0,1]$.
\end{corollary}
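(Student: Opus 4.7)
The plan is to combine Lemma \ref{betato2} directly with Th.\ref{measurprop}, treating the statement essentially as a translation of the measurable theory of symmetric $\beta$--transformations into the language of the Lorenz attractor.

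First, I would invoke Lemma \ref{betato2}: as $v \to T$, the slope $\beta = \beta(v)$ of the associated factor map $F_\beta$ (given by Th.\ref{reduct} and Cor.\ref{reduct2}) tends to $2$. Consequently, if $v \in P$ is sufficiently close to $T$, then $\beta(v) \in [\sqrt{2}, 2]$. This is the only quantitative input needed, and it is exactly the threshold appearing in the last bullet of Th.\ref{measurprop}.

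Once we are in the regime $\beta \in [\sqrt{2},2]$, the remaining assertions follow immediately from Th.\ref{measurprop}: the third bullet provides the existence of a unique $F_\beta$--invariant probability measure $\mu$ that is absolutely continuous with respect to the Lebesgue measure on $[0,1]$, with density given by the explicit formula involving the orbits of the endpoints $0$ and $1$. The fifth bullet then guarantees that for $\beta \in [\sqrt{2},2]$ the support of $\mu$ is a finite collection of intervals $I_1,\ldots,I_n$ whose union is all of $[0,1]$, so in particular $\operatorname{supp}(\mu) = [0,1]$, which is the claim.

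There is essentially no obstacle here beyond ensuring the two hypotheses line up; the only nontrivial point is that the neighborhood of $T$ on which $\beta(v) \ge \sqrt{2}$ is nonempty, which is precisely the content of Lemma \ref{betato2}. Note in particular that the one-dimensional reduction from Th.\ref{reduct} is a semiconjugacy rather than a conjugacy, so one should resist the temptation to directly push $\mu$ back to a measure on the attractor itself in this corollary; the statement is intentionally made about the factor map $F_\beta$, and the proof is simply the concatenation of the two cited results.
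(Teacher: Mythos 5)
Your proposal is correct and matches the paper exactly: the paper presents Cor.\ref{measurableattractor} as an immediate consequence of Lemma \ref{betato2} (forcing $\beta(v)\in[\sqrt{2},2]$ near $T$) combined with Th.\ref{measurprop} (existence of the ACIP and full support of $\mu$ in that slope range). Your additional caution that the measure lives on the factor $F_\beta$ rather than being pushed back to the attractor is consistent with how the paper frames the statement.
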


\begin{figure}[h]
\centering
\begin{overpic}[width=0.4\textwidth]{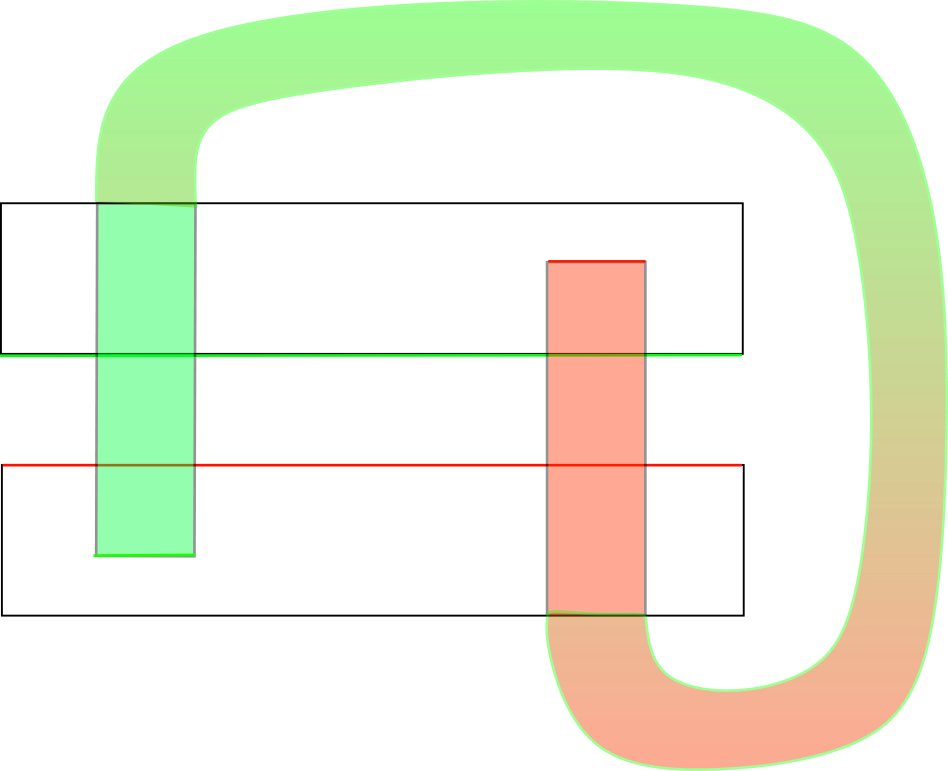}
\put(70,170){$h_v(C'D')$}
\put(-65,420){$C'$}
\put(-65,310){ $A'$}
\put(-50,600){$C$}
\put(790,600){$D$}
\put(790,150){$B$}
\put(-50,150){$A$}
\put(790,420){$D'$}
\put(530,550){$h_v(A'B')$}
\put(790,300){$B'$}
\end{overpic}
\caption{\textit{We isotope the map $h_v:ABCD\to\mathbb{R}^2$ above to a horseshoe map by stretching $h_v(A'B')$ upwards over $CD$ and by stretching $h_v(C'D')$ below $AB$. }}
\label{model1}
\end{figure}

Cor.\ref{mixingattractor} and Cor.\ref{measurableattractor} together  teach us the dynamics on the Lorenz attractor at the vicinity of trefoil parameters are essentially those of a deformed, "blown up and stretched out" one-dimensional map whose dynamics can be described relatively well, both topologically and measurably. In particular, it proves the $\beta$--transformations serve as an idealized model for the Lorenz system, and serve as "topological lower bounds" for the possible complexity of the Lorenz attractor. We now extend this idea, by proving a result which compares the bifurcations of the $\beta$--transformations with those of the Lorenz attractor: 
\begin{theorem}
\label{expl} Every periodic orbit (except for the fixed points) of the symmetric $\beta$--transformation as we vary $\beta$ away from $2$ is destroyed via collision with the critical point $\frac{1}{2}$. Similarly, if $p$ is a trefoil parameter for the Lorenz system and $p$ is perturbed to some $v\not\in T$, the periodic orbits given by Th.\ref{tali} can only be destroyed via collapsing into a pair of homoclinic trajectories to the origin.
\end{theorem}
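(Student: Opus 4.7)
The plan is to first establish the statement for the symmetric $\beta$-transformation $F_\beta$ and then transfer it to the Lorenz attractor via Th.\ref{reduct} and the geometric construction in its proof. For the one-dimensional side, I would fix a periodic point $x_0$ of minimal period $n\geq 2$ whose orbit $\{x_0,x_1,\ldots,x_{n-1}\}$ avoids the critical point $c=\frac{1}{2}$. On the continuity piece containing $x_0$, the iterate $F_\beta^n$ is affine with slope $\beta^n>1$, so $\beta^n-1\neq 0$, and the implicit function theorem applied to $F_\beta^n(x)=x$ continues $x_0$ smoothly to a periodic point of the same itinerary as $\beta$ varies; this continuation persists as long as no iterate meets $c$. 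I would then rule out collisions between distinct periodic orbits by the fact that, for expanding Lorenz maps, distinct periodic points carry distinct itineraries (see the discussion in Sect.\ref{subsec:symbolic_dynamics}, following Hubbard--Sparrow), and itineraries are locally constant off the preimages of $c$, so two orbits can coalesce only if one of them first reaches $c$. Together these observations establish the first assertion.

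For the Lorenz side, I would invoke Th.\ref{reduct} and Cor.\ref{reduct2}: every periodic orbit $\gamma$ produced by Th.\ref{tali} at a trefoil parameter $p$ projects under $\pi$ to a periodic orbit of the corresponding $F_\beta$ (with $\beta(p)=2$ by Lemma \ref{betato2}), and as $v$ is perturbed within $P$ the factor map $F_\beta$ deforms continuously. By the previous paragraph, the projected one-dimensional orbit can disappear only via an iterate hitting $c=\frac{1}{2}$. The key geometric identification to exploit is that in the construction of Th.\ref{reduct} the critical point $c=\frac{1}{2}$ on the collapsed segment $BD$ is precisely the image of the arc $W=R\cap W^s(0)$ under the collapse of $h_v$ along its horizontal stable foliation. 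Hence an iterate of the projected orbit reaching $c$ corresponds to $\gamma$ crossing $W^s(0)$ in the flow; such a trajectory is absorbed into the origin in forward time, and tracing the unstable separatrix $\Gamma_i$ back to $0$ turns it into a homoclinic loop. Finally, the symmetry $\eta^+_i=1-\eta^-_i$ established in Lemma \ref{lem:symmetry}\eqref{lem:symmetry3} forces periodic orbits of $F_\beta$ (other than the fixed points) to come in symmetric pairs around $c$; combined with the symmetry $L_v(x,y,z)=-L_v(-x,-y,z)$ of the Lorenz system, this forces a simultaneous collision of the symmetric iterate with $c$, so both separatrices $\Gamma_0,\Gamma_1$ become homoclinic at the same parameter, producing the stated pair of homoclinic trajectories.

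The main technical obstacle I foresee is tightening the identification $\pi^{-1}(c)=W$ along the chain of isotopies $\psi_v\rightsquigarrow\varphi_v\rightsquigarrow\phi_v\rightsquigarrow h_v\rightsquigarrow F_\beta$ built in the proof of Th.\ref{reduct}. Concretely, given a sequence $v_k\to v^*$ along which the projected orbit's distance to $c$ tends to zero, one must ensure that the lifted flow orbit genuinely intersects $W^s(0)$ at the limit parameter $v^*$ rather than merely accumulating on $W$, and also rule out that the flow periodic orbit could instead be destroyed through some saddle-node-type mechanism hidden by the homotopy. Both points should follow from verifying that the isotopies of Th.\ref{reduct} can be chosen to depend continuously on $v$, together with the Fixed Point Index persistence argument developed in Prop.\ref{persistence} and Cor.\ref{pers1}, which shows that periodic orbits of $\psi_v$ of a given minimal period cannot be created or destroyed in the interior of $R_0\cup R_1$ except through collision with $W$.
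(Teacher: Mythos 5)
Your proposal is correct in substance but reaches the one-dimensional half of the statement by a genuinely different route than the paper. For the claim about $F_\beta$, you argue directly on the interval: since $F_\beta^n$ is affine with slope $\beta^n>1$ on each continuity piece, the implicit function theorem continues any periodic point avoiding $c$ as $\beta$ varies, and the Hubbard--Sparrow rigidity of itineraries rules out coalescence of distinct orbits away from the preimages of $c$. The paper instead never argues on the interval at all: it builds a $C^1$-isotopy $h_t$ of the two-dimensional rectangle maps $h_v$ to a full horseshoe, encloses each periodic orbit in a disc $V$ bounded by preimages of the middle rectangle $R_2$, shows the Fixed Point Index on $V$ is $\pm1$ and can only change after collision with $\partial V$ (hence with $\partial R_2$), and then \emph{projects} this two-dimensional statement down through $f_r$ to $F_\beta$. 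Your interval argument is more elementary and self-contained for that half; the paper's detour through two dimensions has the advantage that the very same index argument then lifts verbatim to the first-return maps $\psi_v$, with $\partial V$ becoming arcs of $\cup_{n\geq1}\psi_v^{-n}(W)$, which is exactly the mechanism you flag as your "main technical obstacle" and propose to close via Prop.\ref{persistence} and Cor.\ref{pers1} --- so on the flow side the two arguments converge. Your identification of $\pi^{-1}(c)$ with $W$ and the use of the $\mathbb{Z}_2$-symmetry (Lemma \ref{lem:symmetry} on the interval, $L_v(x,y,z)=-L_v(-x,-y,z)$ for the flow) to force the simultaneous formation of a \emph{pair} of homoclinic loops matches the paper's closing step. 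The one caveat is that you present the index-persistence step as something that "should follow"; in the paper this is not an afterthought but the engine of the whole proof, and making it explicit (in particular, that the index remains $-1$ for as long as the orbit stays off $\cup_{n>0}\psi_v^{-n}(W)$, so no saddle-node-type destruction can occur in the interior) is what turns your outline into a complete argument.
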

\begin{proof}
We first consider the maps $h_v$ introduced in the proof of Th.\ref{reduct} (see the illustration in Fig.\ref{deformation3}).  We begin by extending $h_v$ to a diffeomorphism of a rectangle $h_v:ABCD\to\mathbb{R}^2$, as depicted in Fig.\ref{model1}. Now, let us define a $C^1$--isotopy $h_t:ABCD\to\mathbb{R}^2$, $t\in[0,1]$ by pulling the images of the $A'B'$ and $C'D'$ arcs upwards s.t. $h_0=h_v$ and $h_1$ is a Horseshoe map, where $A'B'$ and $C'D'$ are as illustrated in Fig.\ref{model1}.

Now, let $x\in ABCD$ be a periodic point of minimal period $n$, which is destroyed as $h_1$ is deformed back to $h_0=h_v$. Using similar arguments to those used to prove Prop.\ref{persistence}, it is easy to see we can enclose every periodic orbit for $h_1$ in a neighborhood $V$ defined as in Prop.\ref{persistence}, s.t. the Fixed Point Index of $h^n_1$ on $V$ is either $-1$ or $1$ and for all $1\leq j<n$ we have $h^j_1(V)\cap V=\emptyset$. As we perturb $h_1$ to $h_t$, $t\in[0,1]$, it is also easy to see that for $x$ to be destroyed or to change its period, the Fixed Point Index in $V$ must first change. For this to happen, $x$ must collide with $\partial V$ first - and since $\partial V$ is composed of the preimages of the boundary of the middle rectangle $R_2$, this is the same as saying $x$ can only be destroyed by colliding with $\partial R_2$. We now project this isotopy to the maps $f_r$, and from them to the $\beta$--transformations $F_\beta(x)=\beta x+1-\frac{\beta}{2}\Mod{1}  $. It is easy to see that varying the parameter $t$ between $(0,1)$ is akin to varying $\beta$ between $(\gamma,2]$, where $\gamma=\gamma(v)$ corresponds to the parameter $v$. By the discussion above, it proves the periodic orbits of $F_2$ can be destroyed only by colliding with the critical discontinuity point.

We now "lift" the isotopy $h_t:ABCD\to\mathbb{R}^2$ to the isotopy of the first-return maps $\psi_v:{R_0}\cup {R_1}\to R$, $v\in P$. As we do that, the set $\partial V$ is transformed into arcs and curves in the set $\cup_{n\geq1}\psi^{-n}_v(W)$, where $W$ denotes the intersection $W^s(0)\cap R$ (see the illustration in Fig.\ref{cross}). Similarly to the proof of Prop.\ref{persistence} we know that for as long as $x$ persists, the Fixed Point Index on $V$ has to be $-1$ - which, using similar arguments, proves the only way the periodic dynamics can be destroyed is by collision with the set $\cup_{n>0}\psi_v^{-n}(W)$ - or in other words, by collision with the two-dimensional invariant manifold, $W^s(0)$ (see the illustration in Remark \ref{homoc}). By the symmetry of the Lorenz system, this implies that at the collision two homoclinic trajectories are formed and the assertion follows.
\end{proof}
\begin{remark}
    Let $s\in\{0,1\}^{\mathbb{N}_0}$ be periodic of minimal period $n$ and let $\overline{s}$ be its conjugate. It is easy to see by the symmetry of the Lorenz system that the periodic orbits corresponding to $s$ and $\overline{s}$ can only collapse together to a pair of homoclinic orbits. Similarly, the symmetry of the maps $F_\beta$ also force the corresponding periodic orbits to $s$ and $\overline{s}$ in $[0,1]$ to collapse together to the critical point $\frac{1}{2}$. From the point of view of symbolic dynamics, when this happens the kneading invariant $k_{F_\beta}=(\eta_+,\eta_-)$ of the map $F_\beta$ coincide with the sequences $(\sigma^i(s))^\infty$ and $(\sigma^i(\overline{s}))^\infty$, for some $0\leq i<n$.
\end{remark}
\begin{figure}[h]
\centering
\begin{overpic}[width=0.4\textwidth]{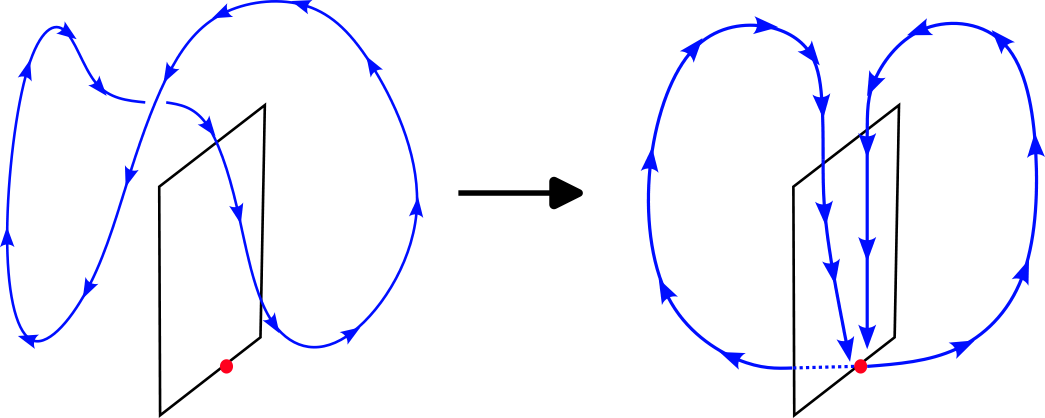}

\end{overpic}
\label{homoc}
\end{figure}
Before moving on, we remark the existence of complex homoclinic scenarios for the Lorenz system is well-known numerically for quite some time. As observed numerically in \cite{BSS}, the first $T$--point at parameter values $v_0=(\beta,\sigma,\rho)=(\frac{8}{3},10.2,30.38)$ is the accumulation point of homoclinic bifurcation sets of the origin (see Fig.8.B in \cite{BSS}). In light of this numerical evidence, Th.\ref{expl} might serve as a possible analytic explanation for such phenomena. Moreover, the similarity between these collision with the critical points and homoclinic bifurcations of the Lorenz attractor was already recognized in \cite{GS}.

Before concluding this section we remark that as proven in \cite{KY3}, in general one should expect complex dynamics to arise by period-doubling cascades. At first, this may appear to contradict Th.\ref{expl} - as the said Theorem implies the chaotic dynamics in the Lorenz attractor develop through successive homoclinic bifurcations accumulating on the parameter set $T$. We will not continue discussing this phenomenon in this section, but rather defer it to Appendix \ref{compressing}. As we will see there, one could intuitively think of the homoclinic bifurcations of the Lorenz attractor when $v\to T$, $v\in P$ as "compressed" period-doubling cascades. In detail, we will show the first-return map $\psi_v:R_0\cup R_1\to R$ can be extended to a continuous rectangle map $h_v:ABCD\to\mathbb{R}^2$, whose transition into horseshoe chaos as $v\to T$ is exactly as dictated by \cite{KY3} (in fact, $h_v$ would be essentially the same map as in the proof of Th.\ref{reduct}). For the details and precise formulation, see Appendix \ref{compressing}.

\subsection{Renormalizating the Lorenz attractor}
\label{renorsec}
Th.\ref{expl} establishes that many of the interesting properties of the Lorenz attractor in the parameter range $P$ are merely consequence of its one-dimensional reduction - or, in other words, the $\beta$--transformations. It is easy to see any $\beta$--transformation can be deformed to a hyperbolic homeomorphism which has the same symbolic dynamics - therefore, in the spirit of the Chaotic Hypothesis (see \cite{gal}) we are led to ask the following - when can we treat the Lorenz attractor as an "essentially hyperbolic flow"? Assuming we can answer this question, we will be able to import tools from the theory of Uniform Hyperbolicity to study the dynamics of the Lorenz system. By the results of \cite{Pi}, we know this is the case whenever the Lorenz system generates a heteroclinic trefoil - in other words, at such parameters the dynamics on the attractor are essentially a deformed version of the geometric Lorenz attractor. We now prove the same is true in another class of parameters $v\in P$ - but in order to do so we will need to first investigate the renormalization properties of the expanding Lorenz maps.

Although we have already defined the Lorenz map as a map on the interval $[0,1]$, this definition easily extends to any interval $[u,v]$: we say that an interval map $F\colon[u,v]\to[u,v]$ is a Lorenz map on $[u, v]$ if the composition $h_{u,v}\circ F\circ h^{-1}_{u,v}\colon[0,1]\to[0,1]$ (see Def.\eqref{eq:rescaling}) is a Lorenz map on $[0,1]$. Following \cite{Ding} and \cite{DiCui}, we present the next definition.

\begin{definition}
    \label{defn:renormalization}
Let $F\colon[0,1]\to[0,1]$ be a Lorenz map with critical point $c$. If there is a proper subinterval $(u, v) \ni c$ of $(0,1)$ and integers $l, r > 1$ such that the map $G\colon [u, v]\to [u, v]$ defined by
	\begin{equation*}
	G(x)=\begin{cases}F^l (x),\,&\text{if 
		$x\in\left[u,c\right)$}\\
	F^r (x),\,&\text{if $x\in [c,v]$}\end{cases},
	\end{equation*}
is itself a Lorenz map on $[u, v]$, then we say that $F$ is \textbf{renormalizable} or that $G$ is a \textbf{renormalization} of $F$. The interval $[u, v]$ is called the \textbf{renormalization interval}. 
\end{definition}
From the definition of a Lorenz map, it follows that the renormalization interval $[u, v]$ must be equal to $[F^r(c_+),F^l(c_-)]$. Therefore, the numbers $l$ and $r$ determine the renormalization interval $[u, v]$ and the notation $G=(F^l,F^r)$ uniquely describe the renormalization $G$. We also note that the renormalization $G=(F^l,F^r)$ is a return map of the map $F$ to a smaller interval around the critical point $c$, with return times $l$ and $r$ for the left- and right-sided neighborhoods of $c$, respectively. 

\begin{example}\label{ex:renorm}
Consider the map $F_{\beta}$ defined by $\beta=\sqrt{2}$, that is $F_{\sqrt{2}}(x)=\sqrt{2} x+1-\frac{\sqrt{2}}{2}\Mod{1}$. The graphs of map $F_{\sqrt{2}}$ and its second iterate $F_{\sqrt{2}}^2$ are presented in Figure~\ref{fig:renormalization}. Observe that $F_{\sqrt{2}}^2$, restricted to the interval $[u,v]$, is also a Lorenz map. Hence, the map $G=(F_{\sqrt{2}}^2,F_{\sqrt{2}}^2)$ is a renormalization of $F_{\sqrt{2}}$, which after rescaling to $[0,1]$ is the doubling map $F_{2}(x)=2 x\Mod{1}$.
\begin{figure}[h]
\centering
\begin{overpic}[width=0.8\textwidth]{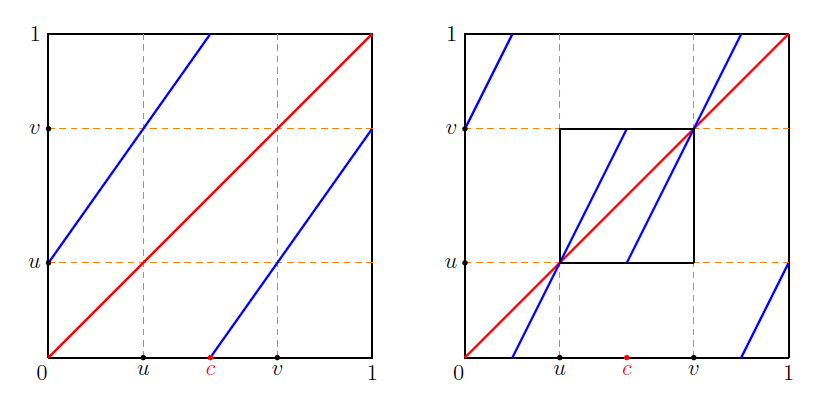}
\end{overpic}
\caption{\textit{The graphs of map $F_{\sqrt{2}}$ (on the left) and its second iterate $F_{\sqrt{2}}^2$} (on the right).} 
\label{fig:renormalization}
\end{figure}
\end{example}

Renormalizations in Lorenz maps were investigated from a symbolic perspective in \cite{GS} and \cite{GlenHall}, where they were used, among others, to describe the decomposition of a nonwandering set and study the topological entropy of these maps. Note that by the definition, a renormalization $G$ after appropriate rescaling of the domain is again a Lorenz map on the interval $[0,1]$, which may or may not be renormalizable. In the first case, we can repeat the renormalization process - in this sense, the map $F$ may be renormalizable multiple times. This process can be seen as a kind of "zooming" both in the dynamics of a given map and in the parameter space. While there exist Lorenz maps for which the renormalization process does not end (i.e., infinitely renormalizable maps; see \cite{Winckler}), in the case of $\beta$-transformations, after finitely many steps we always get a map which is no longer renormalizable. This last claim is an immediate consequence of the expanding condition: if $F$ is a Lorenz map with $\gamma:=\inf_{x\not\in E_0} F'(x)>1$ and $G$ is a renormalization of $F$, then $\inf_{x\not\in E_1} G'(x)\geq\gamma^n$ for some $n>1$, where $E_0$, $E_1$ are finite sets. However, the slope of a Lorenz map cannot exceed $2$ for all points outside a finite set.

We will use renormalizations to describe the periodic structure in Lorenz maps. By Theorem~3.5(1) in \cite{ChO}, every expanding Lorenz map $F$ with a primary $n(k)$-cycle has a renormalization $G=(F^n,F^n)$. The following lemma establishes the relation between the sets of periods of the map $F$ and its renormalization $G$.

\begin{lemma}\label{lem:renorm}
    Let $F\colon[0,1]\to[0,1]$ be an expanding Lorenz map with a primary $n(k)$-cycle. Then the set of periods $P_F$ of the map $F$ is described by the following formula
    $$
    P_F=n\cdot P_G\cup\left\lbrace n \right\rbrace=\left\lbrace n\cdot q\:|\: q\in P_G \right\rbrace\cup\left\lbrace n\right\rbrace,
    $$
    where $P_G$ is the set of periods of the renormalization $G=(F^n,F^n)$.
\end{lemma}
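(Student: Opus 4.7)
The plan is to prove the identity $P_F = n \cdot P_G \cup \{n\}$ via two inclusions. The containment $\{n\} \subseteq P_F$ is immediate from the existence of the primary $n(k)$-cycle: conditions (1) and (2) of Definition \ref{defn:nk_cycle} produce a periodic orbit of minimal period $n$, since $\gcd(n,k)=1$.

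For the inclusion $n \cdot P_G \subseteq P_F$, let $y \in [u,v]$ be periodic under $G$ with minimal period $q$. Since $G=(F^n,F^n)$ agrees with $F^n$ on both branches of $[u,v]$, the equality $G^q(y)=y$ becomes $F^{nq}(y)=y$, so the minimal $F$-period $m$ of $y$ divides $nq$. The key step is to conclude that $m=nq$. I would argue this by using that $G$ realizes the first-return map of $F$ to the renormalization interval, so for $y\in[u,v]\setminus\{c\}$ the intermediate iterates $F^j(y)$ with $1\leq j<n$ avoid $(u,v)$. Hence the $F$-orbit visits $(u,v)$ only at multiples of $n$, forcing $n\mid m$. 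Writing $m=nd$, we get $G^d(y)=F^{nd}(y)=y$, so $q\leq d$ by minimality of $q$, while $m\mid nq$ gives $d\leq q$; therefore $d=q$ and $m=nq$.

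For the reverse inclusion $P_F \subseteq n\cdot P_G\cup\{n\}$, take an $F$-periodic point $x$ of minimal period $m$. If the $F$-orbit of $x$ meets the interior $(u,v)$, then picking $y$ in that intersection makes $y$ a $G$-periodic point of some minimal period $q$, and the argument above yields $m=nq\in n\cdot P_G$. It therefore suffices to show that the only periodic orbit of $F$ disjoint from $(u,v)$ is the primary $n(k)$-cycle itself, which contributes the value $n$.

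This last assertion is the main obstacle. To establish it, I would exploit the rigid combinatorial structure forced by a primary $n(k)$-cycle. The cycle points $z_0<\cdots<z_{n-k-1}<c<z_{n-k}<\cdots<z_{n-1}$, together with the boundary conditions $z_{k-1}\leq F(0)$ and $F(1)\leq z_k$ of Definition \ref{defn:nk_cycle}, make $F$ act on the partition of $[0,1]$ determined by the $z_j$ in a rotation-like manner with rotation number $k/n$, except possibly inside the central interval containing $c$. A periodic orbit disjoint from $(u,v)$ never sees the critical discontinuity, so its itinerary with respect to this partition must be compatible with the rotation combinatorics; but a rational rotation by $k/n$ admits a unique periodic itinerary up to cyclic shift, whose period is $n$ and which is realized by the primary cycle. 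Combined with the topological expanding condition $\overline{C_F}=[0,1]$, which guarantees that preimages of $c\in(u,v)$ are dense and prevents any additional periodic orbit from hiding at the boundary of $[u,v]$, this forces the periodic orbit to coincide with the primary cycle and completes the classification.
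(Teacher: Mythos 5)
Your overall architecture coincides with the paper's: both reduce the lemma to the claim that every periodic orbit other than the primary $n(k)$-cycle must enter the renormalization interval, and then use the fact that points of $[z_{n-k-1},z_{n-k}]$ return to it only at multiples of $n$ to get the divisibility $m=nq$. Your treatment of the inclusion $n\cdot P_G\cup\{n\}\subseteq P_F$ and of the divisibility bookkeeping is correct and essentially identical to the paper's.

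The genuine gap is in your classification of periodic orbits disjoint from $(u,v)$. The claim that the rotation combinatorics admits ``a unique periodic itinerary up to cyclic shift'' is not true in the relevant sense: the partition determined by the cycle points is deterministic on every interval except the central one $[z_{n-k-1},z_{n-k}]$, and an orbit avoiding $(u,v)$ may still enter that central interval through either of the two gaps $[z_{n-k-1},u)$ or $(v,z_{n-k}]$, landing on either side of $c$ and then being sent to $[z_{n-1},1]$ or $[0,z_0]$ respectively. Each return to the central interval therefore carries an independent binary choice, so the combinatorics alone is consistent with periodic orbits of period $mn$ for every $m$ that never touch $(u,v)$ — exactly the orbits you need to exclude. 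Ruling them out requires the expanding condition in a substantive way, and your invocation of $\overline{C_F}=[0,1]$ is aimed at the wrong target (orbits ``hiding at the boundary of $[u,v]$'') rather than at these gap orbits. The paper closes this hole by showing that for $x\in(z_{n-k-1},u)$ with orbit avoiding $(u,v)$ the interval $(z_{n-k-1},x)$ is mapped into itself by $F^n$ without any iterate ever covering $c$, contradicting the density of $\bigcup_i F^{-i}(c)$, and by citing Theorem~3.5(4) of the reference [ChO] in the complementary case $z_{k-1}\neq F(0)$, $z_k\neq F(1)$. Some argument of this kind — tracking an interval adjacent to $z_{n-k-1}$ (or $z_{n-k}$) and deriving a contradiction with topological expansivity — is needed to make your final step rigorous.
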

\begin{proof} 
Let $O=\{z_j : j\in\{0,\ldots, n-1\}\}$ be a primary $n(k)$-cycle of the map $F$. Since $F$ is expanding, we have $F(0)<F(1)$. By Definition~\ref{defn:nk_cycle} we obtain 

$$
F^n([z_{n-k-1},z_{n-k}])=F^{n-2}([z_{k-1},z_{k}])=[z_{n-k-1},z_{n-k}]\quad\text{and}\quad(0,1)\subset\bigcup_{i=0}^{n-1}F^i([z_{n-k-1},z_{n-k}])
$$
(see Figure~\ref{fig:nk_cycle}). In particular, the trajectory of every point $x\in[0,1]$ intersects the interval $[z_{n-k-1},z_{n-k}]$, that is
\begin{equation}
    \label{eq:orb_nk}
    \text{Orb}_{F}(x)\cap[z_{n-k-1},z_{n-k}]\neq\emptyset.
\end{equation}
Furthermore, the renormalization $G$ is defined on the interval $[u,v]=[F^{n-1}(0),F^{n-1}(1)]$, which is contained in $[z_{n-k-1},z_{n-k}]$.
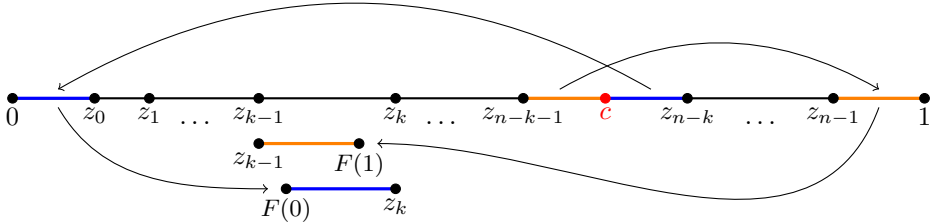
\begin{figure}[h]
		\centering
		\begin{tikzpicture}[scale=1.2]
		\draw[black, thick] (-5,0) -- (5,0);
		\draw[orange, very thick] (0.6,0) -- (1.5,0);
		\draw[blue, very thick] (1.5,0) -- (2.4,0);
		\draw[blue, very thick] (-5,0) -- (-4.1,0);
		\draw[orange, very thick] (4,0) -- (5,0);
		\draw[blue, very thick] (-2,-1) -- (-0.8,-1);
		\draw[orange, very thick] (-2.3,-0.5) -- (-1.2,-0.5);
		\filldraw [red] (1.5,0) circle (1.5pt) node[anchor=north] {$c$};
		\filldraw [black] (5,0) circle (1.5pt) node[anchor=north] {$1$};
		\filldraw [black] (-5,0) circle (1.5pt) node[anchor=north] {$0$};
		\filldraw [black] (-4.1,0) circle (1.5pt) node[anchor=north] {$z_0$};
		\filldraw [black] (-3.5,0) circle (1.5pt) node[anchor=north] {$z_1$};
		\draw[white] (-3.1,-0.15) -- (-3,-0.15) node[black,anchor=north] {$\ldots$};
		\filldraw [black] (-2.3,0) circle (1.5pt) node[anchor=north] {$z_{k-1}$};
		\filldraw [black] (-0.8,0) circle (1.5pt) node[anchor=north] {$z_k$};
		\draw[white] (-0.4,-0.15) -- (-0.3,-0.15) node[black,anchor=north] {$\ldots$};
		\filldraw [black] (0.6,0) circle (1.5pt) node[anchor=north] {$z_{n-k-1}$};
		\filldraw [black] (2.4,0) circle (1.5pt) node[anchor=north] {$z_{n-k}$};
		\draw[white] (3.1,-0.15) -- (3.2,-0.15) node[black,anchor=north] {$\ldots$};
		\filldraw [black] (4,0) circle (1.5pt) node[anchor=north] {$z_{n-1}$};
		\filldraw [black] (-2,-1) circle (1.5pt) node[anchor=north] {\small $F(0)$};
		\filldraw [black] (-0.8,-1) circle (1.5pt) node[anchor=north] {$z_k$};
		\filldraw [black] (-1.2,-0.5) circle (1.5pt) node[anchor=north] {\small $F(1)$};
		\filldraw [black] (-2.3,-0.5) circle (1.5pt) node[anchor=north] {$z_{k-1}$};
		\draw[->] (1,0.1) to [out=30,in=150,looseness=1] (4.5,0.1);
		\draw[->] (2,0.1) to [out=150,in=30,looseness=1] (-4.5,0.1);
		\draw[->] (4.5,-0.1) to [out=250,in=0,looseness=1] (-1,-0.5);
		\draw[->] (-4.5,-0.1) to [out=300,in=180,looseness=1] (-2.2,-1);
		\end{tikzpicture}
		\caption{\textit{Primary $n(k)$-cycle.}}
		\label{fig:nk_cycle}
	\end{figure}

We will show that for every $x\in[0,1]\setminus O$ we have $\text{Orb}_{F}(x)\cap[u,v]\neq\emptyset$. By the condition~\eqref{eq:orb_nk} it is sufficient to show the claim for $x\in(z_{n-k-1},u)\cup(v,z_{n-k})$. Assume that $z_{n-k-1}<x<u$ (the case $v<x<z_{n-k}$ is dealt similarly) and denote
$$
J_G:=\left\lbrace x\in [0,1]\:|\: \text{Orb}_F(x)\cap \left(u, v\right) =\emptyset\right\rbrace .
$$
Note that $z_{n-k-1}\neq u=F^{n-1}(0)$ implies $z_{k-1}\neq F(0)$. If additionally $z_k\neq F(1)$, then by Theorem~3.5(4) in \cite{ChO} we get
$$
z_{n-k-1} =\sup\left\lbrace x\in J_G \:|\: x<c\right\rbrace
$$
and consequently $\text{Orb}_F(x)\cap(u,v)\neq\emptyset$. Thus, consider the case $z_k=F(1)$ and suppose $\text{Orb}_F(x)\cap(u,v)=\emptyset$. Then $z_{n-k}=F^{n-1}(1)=v$, so $c\notin F^i((z_{n-k-1},x))$ for $i=0,1,\ldots,n-1$ and
$$
F^n((z_{n-k-1},x))=(z_{n-k-1},F^n(x))\subset(z_{n-k-1},u).
$$
Since the trajectory of $F^n(x)$ does not intersect $(u,v)$, repeating the reasoning we obtain $c\notin F^i((z_{n-k-1},x))$ for any $i\in\mathbb{N}_0$, which contradicts the topological expanding condition for $F$. Hence $\text{Orb}_{F}(x)\cap(u,v)\neq\emptyset$.

Now, let $m\in P_F$ and $\widetilde{O}$ be a periodic orbit of $F$ with period $m$. Assume that $m\neq n$, so $\widetilde{O}\neq O$. Hence $\widetilde{O}\subset[0,1]\setminus O$ and there is a point $y\in\widetilde{O}\cap[u,v]$. Observe that every point from the interval $[z_{n-k-1},z_{n-k}]$ returns to it after exactly $n$ iterations. Therefore $y\in[u,v]\subset[z_{n-k-1},z_{n-k}]$ and $F^m(y)=y$ implies that $n$ divides $m$. Let $q$ be a natural number such that $m=nq$. Then, we have
$$
G^q(y)=(F^n)^q(y)=F^m(y)=y
$$
and the point $y$ is $q$-periodic for $G$. In particular, $m\in n\cdot P_G$.

On the other hand, if $y$ is a periodic point of $G$ with period $q\in P_G$, then
$$
y=G^q(y)=F^{n q}(y)
$$
and $F^i(y)\neq y$ for $i=0,1,\ldots, n q-1$. As a consequence, the point $y$ is $nq$-periodic for $F$ and $nq\in P_F$, which completes the proof.
\end{proof}

From a symbolic perspective, for an expanding Lorenz map $F$ with a primary $n(k)$-cycle $\{z_j \}_{j=0}^{n-1}$, the first $n$ symbols of the kneading sequence (or sequences) of all points $x\in[z_{n-k-1},c)\cup(c,z_{n-k}]$ are equal to 
$$
L:=\eta(z_{n-k-1})_0\eta(z_{n-k-1})_1\ldots\eta(z_{n-k-1})_{n-1},
$$
if $x\in[z_{n-k-1},c)$, and
$$
R:=\eta(z_{n-k})_0\eta(z_{n-k})_1\ldots\eta(z_{n-k})_{n-1},
$$
if $x\in(c,z_{n-k}]$. For the critical point $c$, the kneading sequence $\eta_+=\eta_+(c)$ starts with $R$ and $\eta_-=\eta_-(c)$ with $L$. Since $F^n([z_{n-k-1},z_{n-k}])=[z_{n-k-1},z_{n-k}]$, we conclude that the kneading invariant $k_F=(\eta_+,\eta_-)$ can be expressed in the following way:
\begin{equation*}
			\begin{cases}
				\eta_+=RL^\infty\ \ \
				\text{ or }\ & 
				\eta_+=R^{n_1}L^{n_2}R^{n_3}L^{n_4}\ldots,\\
				\eta_-=LR^\infty\ \ \
				\text{ or }\ &
				\eta_-=L^{m_1}R^{m_2}L^{m_3}R^{m_4}\ldots,
			\end{cases}
\end{equation*}
for some sequences $\{n_i\}_{i=1}^\infty$, $\{m_i\}_{i=1}^\infty\subset\mathbb{N}$. The kneading invariants of the above form are called \textbf{renormalizable} (see \cite{Glen,GS}).

With the above notation, we obtain the following generalization of Prop.\ref{prop:itineraries} as a consequence of \cite[Theorem~5.1]{ChO} and the proof of Lemma \ref{lem:renorm}:

\begin{proposition}\label{prop:itineraries_general}
    Let $F\colon[0,1]\to[0,1]$ be an expanding Lorenz map with a primary $n(k)$-cycle. Then:
            \begin{enumerate}
                \item the rotation interval of $F$ degenerates to singleton $\{\frac{k}{n}\}$;
                \item for every periodic point $x$ of $F$ there is a number $i\in\{0,1,\dots,n-1\}$ and a finite concatenation $C$ of the words $L$ and $R$ such that $\sigma^i(\eta(x))=C^\infty$.
            \end{enumerate}
        \end{proposition}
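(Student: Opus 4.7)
The plan is to mirror the proof of Prop.\ref{prop:itineraries} (which handles the $n=2$, $k=1$ case), replacing the specific $2(1)$-cycle with a general primary $n(k)$-cycle $O=\{z_0,\dots,z_{n-1}\}$. Claim (1) will be obtained by directly invoking Theorem~5.1 of \cite{ChO}, which asserts that any expanding Lorenz map carrying a primary $n(k)$-cycle has rotation interval $\{k/n\}$; this closes (1) with no further work, so the entire effort is concentrated on (2).

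For (2), I would split on whether the periodic point $x$ lies in $O$. If $x=z_j\in O$, then since $F(z_j)=z_{j+k\bmod n}$ and $\gcd(n,k)=1$, there is a unique $i\in\{0,\dots,n-1\}$ with $F^i(z_j)=z_{n-k-1}$; the relation $F^n(z_{n-k-1})=z_{n-k-1}$ together with the definition of $L$ as the initial $n$-block of $\eta(z_{n-k-1})$ gives $\eta(z_{n-k-1})=L^\infty$, hence $\sigma^i(\eta(z_j))=L^\infty$, of the required form with $C=L$. If instead $x\notin O$, the proof of Lemma~\ref{lem:renorm} already shows that the orbit of $x$ intersects the renormalization interval $[u,v]=[F^{n-1}(0),F^{n-1}(1)]\subset[z_{n-k-1},z_{n-k}]$, and that for the minimal $i\in\{0,\dots,n-1\}$ with $y:=F^i(x)\in[u,v]$, the period $m$ of $x$ is a multiple $nq$ of $n$ and $y$ is $q$-periodic for the renormalization $G=(F^n,F^n)$.

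The key observation driving the block-decomposition of $\eta(y)$ is that the first $n$ symbols of the itinerary of any $z\in[z_{n-k-1},c)$ coincide with $L$, and of any $z\in(c,z_{n-k}]$ with $R$; this is where I expect the main technical care, and it reduces to a monotonicity argument: on each side of $c$ the map $F$ is order-preserving, so for $j<n$ the iterate $F^j(z)$ lies between $F^j(z_{n-k-1})$ and $F^j(c_-)$, which by Definition~\ref{defn:nk_cycle} lie on the same side of $c$. Applying this observation to the $G$-orbit $y,G(y),\dots,G^{q-1}(y)\subset[u,v]$ decomposes $\eta(y)$ into an infinite concatenation of blocks drawn from $\{L,R\}$; the $G$-periodicity of $y$ forces periodicity of this block sequence with period $q$, so setting $C$ to be the concatenation of the first $q$ blocks yields $\eta(y)=C^\infty$, and therefore $\sigma^i(\eta(x))=C^\infty$ as desired.
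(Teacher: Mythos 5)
Your proposal is correct and follows essentially the same route as the paper, which proves this proposition simply by citing \cite[Theorem~5.1]{ChO} for part (1) and combining the proof of Lemma~\ref{lem:renorm} with the block observation (stated in the discussion preceding the proposition) that the first $n$ symbols of the itinerary of any point in $[z_{n-k-1},c)$ form $L$ and of any point in $(c,z_{n-k}]$ form $R$. Your write-up is in fact somewhat more explicit than the paper's (the separate treatment of $x\in O$ and the reduction of the entry time $i$ into $\{0,\dots,n-1\}$); the only place to be careful is that the claim that $F^j(z_{n-k-1})$ and $F^j(c_-)$ lie on the same side of $c$ for all $j<n$ is not literally ``by Definition~\ref{defn:nk_cycle}'' but requires the primarity inequalities $z_{k-1}\leq F(0)$, $F(1)\leq z_k$ together with a short induction along the cycle.
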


Since any $\beta$-transformation with slope greater than $\sqrt{2}$ is not renormalizable and for $\beta\leq\sqrt{2}$ the map $F_{\beta}$ has a primary $2(1)$-cycle, we conclude that $F_{\beta}$ is renormalizable if and only if $\beta\in(1,\sqrt{2}]$. If $F_{\beta}$ is renormalizable, then it has (at least) renormalization $G_{\beta}:=(F^2_{\beta},F^2_{\beta})$. The maps $F_{2}$ and $F_{\sqrt{2}}$ from Example~\ref{ex:renorm} are the first two elements of a more general sequence of maps that will be of particular interest for our analysis. That is, the sequence $\{F_{\beta_i}\}_{i\in\mathbb{N}_0}$ defined by the parameters $\beta_i:=\sqrt[2^i]{2}$ for $i\in\mathbb{N}_0$. The corresponding points in the parameter space $\Delta$,
\begin{equation}
    \label{eq:parameter_points}
    (\beta_i,\alpha_i)=\left(\sqrt[2^i]{2},1-\frac{\beta_i}{2}\right)=\left(\sqrt[2^i]{2},1-\frac{\sqrt[2^i]{2}}{2}\right),\quad i\in\mathbb{N}_0,
\end{equation}
are marked as red dots in the Figure~\ref{fig:parameter_space}. Note that the sets
$$
\Delta_i:=\left\lbrace\left(\beta,1-\frac{\beta}{2}\right)\in\Delta\:|\:\beta\in(\beta_{i+1},\beta_i]\right\rbrace,\quad i\in\mathbb{N}_0,
$$
form a partition of the curve $\alpha=1-\frac{\beta}{2}$, $\beta\in(1,2]$.

\begin{proposition}
\label{prop:beta_seq}
    Let $\beta\in(\beta_{i+1},\beta_i]$ for some $i\in\mathbb{N}$. Then the following conditions hold.
    \begin{enumerate}
    \item\label{prop:beta_seq1} The map $F_{\beta}$ has $i$ renormalizations 
    $$
    G_{\beta,j}=(F^{2^j}_{\beta},F^{2^j}_{\beta}),\quad\text{for}\quad j=1,\ldots,i,
    $$ 
    that after rescaling to $[0,1]$ are equal to the maps $F_{\beta^{2^j}}$ defined by the parameters $\beta^{2^j}\in(\beta_{i+1-j},\beta_{i-j}]$.     
    \item\label{prop:beta_seq2} The set of periods of $F_{\beta}$ is given by
    $$
    P_{F_{\beta}}=2\cdot P_{F_{\beta^{2}}}\cup\{2\}.
    $$
    Moreover, if $i\geq2$, then
    $$
    P_{F_{\beta}}=2^{j}\cdot P_{F_{\beta^{2^j}}}\cup\left\lbrace2^{j-1},\ldots,2\right\rbrace,\quad\text{for}\quad j=2,\ldots,i.
    $$
    \end{enumerate}
\end{proposition}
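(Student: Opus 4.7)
The plan is to proceed by induction on $i$ (equivalently on $j$), using the fact that for $\beta\in(\beta_{i+1},\beta_i]$ with $i\geq 1$ we have $\beta\leq\beta_1=\sqrt{2}$, so by Prop.~\ref{prop:itineraries} the map $F_\beta$ possesses a primary $2(1)$-cycle. Theorem~3.5(1) of \cite{ChO}, already invoked above, then guarantees the existence of the first renormalization $G_{\beta,1}=(F_\beta^2,F_\beta^2)$, which is the base case for the whole induction.

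For part (1), the first step is a direct computation of the renormalization interval $[F_\beta^2(c_+),F_\beta^2(c_-)]=[F_\beta(0),F_\beta(1)]=[1-\beta/2,\beta/2]$, which is symmetric about $c=1/2$. On each half-branch the map $F_\beta^2$ is the composition of two affine pieces of slope $\beta$, so it has slope $\beta^2$. The symmetry of $F_\beta$ from Lemma~\ref{lem:symmetry} lifts to $F_\beta^2$ and hence to $G_{\beta,1}$, so after rescaling by $h_{1-\beta/2,\beta/2}$ the renormalization is a symmetric, expanding Lorenz map with critical point $1/2$ and constant slope $\beta^2$; since such a map is uniquely determined, it must equal $F_{\beta^2}$. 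To extend to $j=2,\ldots,i$, I would use the identity $\beta_k^2=\beta_{k-1}$, which implies that squaring sends $(\beta_{i+1},\beta_i]$ bijectively onto $(\beta_i,\beta_{i-1}]$. By the inductive hypothesis $F_{\beta^2}$ then admits $i-1$ renormalizations $(F_{\beta^2}^{2^{j'}},F_{\beta^2}^{2^{j'}})$ for $j'=1,\ldots,i-1$, each conjugate to $F_{(\beta^2)^{2^{j'}}}=F_{\beta^{2^{j'+1}}}$. Pulling these back through the conjugacy $G_{\beta,1}\cong F_{\beta^2}$ yields the remaining $i-1$ renormalizations of $F_\beta$ at levels $j=2,\ldots,i$. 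Finally, $\beta^{2^i}\in(\beta_1,\beta_0]=(\sqrt{2},2]$, so by Prop.~\ref{prop:mixing_trans} the map $F_{\beta^{2^i}}$ has no primary $2(1)$-cycle, and hence $F_\beta$ admits no renormalization beyond the $i$-th.

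For part (2), apply Lemma~\ref{lem:renorm} with $n=2$ to obtain $P_{F_\beta}=2\cdot P_{G_{\beta,1}}\cup\{2\}$. Since the rescaling is a topological conjugacy it preserves the set of periods, giving $P_{G_{\beta,1}}=P_{F_{\beta^2}}$ and the base formula. When $i\geq 2$, one iterates: at each stage $\beta^{2^k}\leq\beta_{i-k}\leq\sqrt{2}$ for $k<i$, so Lemma~\ref{lem:renorm} applies again to yield $P_{F_{\beta^{2^k}}}=2\cdot P_{F_{\beta^{2^{k+1}}}}\cup\{2\}$. Substituting these successively into the base formula produces, after straightforward bookkeeping of the iterated union, the general expression $P_{F_\beta}=2^j\cdot P_{F_{\beta^{2^j}}}\cup\{2^{j-1},\ldots,2\}$ for $j=2,\ldots,i$, with each new factor of $2$ multiplying the previously adjoined singleton $\{2\}$ and contributing the next power of two to the excess set.

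The main obstacle beyond the routine rescaling verification will be confirming that the first renormalization of $F_\beta$ produces precisely a symmetric $\beta$-transformation, rather than a generic Lorenz map with two possibly different slopes --- this is exactly the step where Lemma~\ref{lem:symmetry} is indispensable, since only symmetry guarantees the rescaled map lies in the one-parameter family $\{F_\gamma\}$ rather than in the larger two-parameter family $\{H_{s,r}\}$ of Remark~\ref{rem:one-to-one}. A secondary delicate point is the bookkeeping of the iterated period sets in part (2), where one must verify that the excess indices match the statement; this reduces to the observation that $1\in P_{F_\gamma}$ (i.e.\ $F_\gamma$ admits a fixed point) only for $\gamma=2$, so the intermediate powers of $2$ are not already present in $2^j\cdot P_{F_{\beta^{2^j}}}$ and the union is essentially disjoint.
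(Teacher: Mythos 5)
Your proposal is correct and follows essentially the same route as the paper's proof: induction on $i$, with the base case supplied by the primary $2(1)$-cycle and Lemma \ref{lem:renorm}, identification of the rescaled first renormalization with $F_{\beta^2}$, and iteration of the period formula. The only cosmetic difference is that the paper verifies $h_{u,v}\circ G_{\beta,1}\circ h_{u,v}^{-1}=F_{\beta^2}$ (and the composition of rescalings $h_{u_j,v_j}\circ h_{u,v}=h_{a,b}$) by explicit affine computation, whereas you deduce it from symmetry, constant slope, and uniqueness --- both are valid.
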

\begin{proof}
The proof is by induction on $i\in\mathbb{N}$. First, observe that for $i=1$ the conditions~\eqref{prop:beta_seq1} and \eqref{prop:beta_seq2} follow immediately from Lemma~\ref{lem:renorm}.

Assume that there is $i$ such that the conditions~\eqref{prop:beta_seq1} and \eqref{prop:beta_seq2} holds for every parameter from $(\beta_{i+1},\beta_i]$. Let $\beta\in(\beta_{i+2},\beta_{i+1}]$. Since $(\beta_{i+2},\beta_{i+1}]\subset(1,\sqrt{2}]$, the map $F_{\beta}$ has renormalization $G_{\beta,1}:=G_{\beta}=(F^2_{\beta},F^2_{\beta})$, that is
$$
G_{\beta}(x)=
\begin{cases}F^2 (x),\,&\text{if 
		$x\in\left[u,c\right)$}\\
	F^2 (x),\,&\text{if $x\in (c,v]$}\end{cases}
=
\begin{cases}\beta^2x+\frac{\beta-\beta^2}{2},\,&\text{if 
		$x\in\left[u,c\right)$}\\
	\beta^2x+1-\frac{\beta+\beta^2}{2},\,&\text{if $x\in (c,v]$}\end{cases},
$$
where $[u,v]=[F_{\beta}(0),F_{\beta}(1)]$ and $c=\frac{1}{2}$. Let $h_{u,v}\colon[u,v]\to[0,1]$ be the map defined by~\eqref{eq:rescaling}. Simple calculations yield that 
$$
\begin{aligned}
\left(h_{u,v}\circ G_{\beta}\circ h^{-1}_{u,v}\right)(x)&=
\begin{cases}h_{u,v}\left(\beta^2h_{u,v}^{-1}(x)+\frac{\beta-\beta^2}{2}\right),\,&\text{if 
		$x\in\left[0,c\right)$}\\
	h_{u,v}\left(\beta^2h_{u,v}^{-1}(x)+1-\frac{\beta+\beta^2}{2}\right),\,&\text{if $x\in (c,1]$}\end{cases}\\
    &=
\begin{cases}\beta^2x+1-\frac{\beta^2}{2},\,&\text{if 
		$x\in\left[0,c\right)$}\\
	\beta^2x-\frac{\beta^2}{2},\,&\text{if $x\in (c,1]$}\end{cases}\\
    &=\beta^2 x+1-\frac{\beta^2}{2}\Mod{1}=F_{\beta^2}(x).
\end{aligned}
$$
So the renormalization $G_{\beta,1}$ after rescaling to $[0,1]$ is equal to the map $F_{\beta^2}$ and $\beta^2\in(\beta_{i+1},\beta_i]$.

By the induction assumption the map $F_{\beta^2}$ has $i$ renormalizations 
$$
    G_{\beta^2,j}=(F^{2^j}_{\beta^2},F^{2^j}_{\beta^2}),\quad\text{for}\quad j=1,\ldots,i,
$$ 
that (after rescaling) are equal to the maps $F_{(\beta^2)^{2^j}}=F_{\beta^{2^{j+1}}}$, where $\beta^{2^{j+1}}\in(\beta_{i+1-j},\beta_{i-j}]$. Fix $j\in\{1,\ldots,i\}$ and denote by $[u_j,v_j]$ the renormalization interval of $G_{\beta^2,j}$. Then, for every $x\in[u_j,v_j]$ we have
$$
G_{\beta^2,j}(x)=F^{2^j}_{\beta^2}(x)=\left(h_{u,v}\circ G_{\beta}\circ h^{-1}_{u,v}\right)^{2^j}(x)=\left(h_{u,v}\circ G_{\beta}^{2^j}\circ h^{-1}_{u,v}\right)(x)=\left(h_{u,v}\circ F_{\beta}^{2^{j+1}}\circ h^{-1}_{u,v}\right)(x).
$$
Since $h_{u_j,v_j}\circ G_{\beta^2,j}\circ h^{-1}_{u_j,v_j}=F_{\beta^{2^{j+1}}}$, we obtain
$$
F_{\beta}^{2^{j+1}}(x)=\left(h_{u,v}^{-1}\circ G_{\beta^2,j}\circ h_{u,v}\right)(x)=\left[(h_{u_j,v_j}\circ h_{u,v})^{-1}\circ F_{\beta^{2^{j+1}}}\circ(h_{u_j,v_j}\circ h_{u,v})\right](x)
$$
for $x\in[a,b]:=h_{u,v}^{-1}([u_j,v_j])=[h_{u,v}^{-1}(u_j),h_{u,v}^{-1}(v_j)]$. Next, observe that $h_{u_j,v_j}\circ h_{u,v}\colon[a,b]\to[0,1]$ and
$$
\left(h_{u_j,v_j}\circ h_{u,v}\right)(x)=\frac{x-(u_j(v-u)+u)}{(v_j-u_j)(v-u)}=\frac{x-h_{u,v}^{-1}(u_j)}{h_{u,v}^{-1}(v_j)-h_{u,v}^{-1}(u_j)}=\frac{x-a}{b-a}=h_{a,b}(x).
$$
Therefore $h_{a,b}\circ F_{\beta}^{2^{j+1}}\circ h^{-1}_{a,b}=F_{\beta^{2^{j+1}}}$, so the map $F_{\beta}^{2^{j+1}}\colon[a,b]\to[a,b]$ is an expanding Lorenz map on $[a,b]$. Hence $G_{\beta,j+1}:=(F_{\beta}^{2^{j+1}},F_{\beta}^{2^{j+1}})$ is a renormalization of $F_{\beta}$ for each $j=1,\ldots,i$.

Finally, let us note that by Lemma~\ref{lem:renorm} we have
$$
P_{F_{\beta}}=2\cdot P_{F_{\beta^{2}}}\cup\{2\}.
$$
Using again the induction assumption, we obtain
$$
P_{F_{\beta}}=2\cdot\left(2\cdot P_{F_{(\beta^2)^{2}}}\cup\{2\}\right)\cup\{2\}=2^2\cdot P_{F_{\beta^{2^2}}}\cup\{2^2,2\}
$$
and
$$
P_{F_{\beta}}=2\cdot\left(2^{j}\cdot P_{F_{(\beta^2)^{2^j}}}\cup\left\lbrace2^{j-1},\ldots,2\right\rbrace\right)\cup\{2\}=2^{j+1}\cdot P_{F_{\beta^{2^{j+1}}}}\cup\left\lbrace2^{j},\ldots,2\right\rbrace,
$$
for $j=2,\ldots,i$. The proof is completed.
\end{proof}
By Prop.\ref{prop:beta_seq} the renormalization operator $R_{(2,2)}F_{\beta}=F_{\beta^2}$, acting on the family of maps $\{F_{\beta}\}_{\beta\in(1,\sqrt{2}]}$, is well defined. The action of $R_{(2,2)}$ is reflected in the parameter space $\Delta$ by the map
$$
R_{\Delta}\colon\bigcup_{i=1}^\infty\Delta_i\to\bigcup_{i=0}^\infty\Delta_i,\quad \left(\beta,1-\frac{\beta}{2}\right)\mapsto\left(\beta^2,1-\frac{\beta^2}{2}\right),
$$
that sends a set $\Delta_i$ to $\Delta_{i-1}$, for each $i\in\mathbb{N}$ (cf. Figure~\ref{fig:parameter_space}). A further partition of sets $\Delta_i$ is induced by the sequence of parameters $\{\varepsilon_i\}_{i=1}^\infty$ introduced in subsection \ref{subsec:symbolic_dynamics}. The points $(\varepsilon_i,1-\frac{\varepsilon_i}{2})$ divide the curve $\Delta_0$ into countably many parts, and similarly, their preimages under the map $R_{\Delta}$ divide the rest of the curves $\Delta_i$. Recall that for parameters from $\Delta_0$, bifurcations and dynamics of $\beta$--transformations were discussed in the previous subsections (see, in particular, Cor.\ref{cor:kneading_iff}, Prop.\ref{prop:itineraries_rot}, Cor.\ref{cor:all_periods} and Cor.\ref{cor:density}). As a consequence of Prop.\ref{prop:beta_seq}, for $(\beta,1-\frac{\beta}{2})\in\Delta_i$ the behavior of the map $F_{\beta}$ in a small neighborhood of the critical point $c=\frac{1}{2}$ is described by the map $F_{\gamma}$ with $(\gamma,1-\frac{\gamma}{2})\in\Delta_0$, where $\gamma=\beta^{2^i}$. In other words, the bifurcations and behavior of the maps defined by parameters in $\Delta_0$ can also be observed on a microscopic scale in $\Delta_i$, for $i>0$.

In the context of the above discussion, let us note that if $F$ is an expanding Lorenz map with renormalization $G=(F^l,F^r)$, then every periodic point of the map $G$ is also periodic for $F$. In particular, Corollary~\ref{cor:density} together with Proposition~\ref{prop:beta_seq} implies that for any $\beta\in(1,2]$ the map $F_{\beta}$ has infinitely many periodic orbits.

\begin{example}
    Consider the sequence of maps $\{F_{\beta_i}\}_{i\in\mathbb{N}_0}$. By Proposition~\ref{prop:beta_seq} each element $F_{\beta_i}$ in this sequence is the renormalization $(F_{\beta_{i+1}}^2,F_{\beta_{i+1}}^2)$ of the next element $F_{\beta_{i+1}}$ (after appropriate rescaling). It is known that the first element, the doubling map $F_{\beta_0}=F_2$, has a periodic orbit of every period $n\in\mathbb{N}$. Hence, we get
    $$
    P_{F_{\beta_1}}=2\cdot P_{F_{\beta_0}}\cup\left\lbrace2\right\rbrace=2\cdot\mathbb{N}
    $$
    and
     \begin{equation*}
	P_{F_{\beta_i}}=2\cdot P_{F_{\beta_{i-1}}}\cup\left\lbrace2\right\rbrace=2^2\cdot P_{F_{\beta_{i-2}}}\cup\left\lbrace2^2,2\right\rbrace=\ldots
	=2^{i}\cdot\mathbb{N}\cup\left\lbrace2^{i-1},\ldots,2\right\rbrace,
\end{equation*}
for $i>1$. In particular, the greatest period of the map $F_{\beta_i}$ in the Sharkovsky order is equal to $3\cdot2^{i}$ (cf. Proposition~\ref{prop:sharkovsky}). Thus, increasing the parameter from $\beta_i$ to $\beta_{i-1}$ causes a "jump" in this order.

To describe the renormalization phenomenon from a symbolic point of view, first recall that the kneading invariant of the doubling map $F_{\beta_0}$ is given by $k_{F_{\beta_0}}=(10^\infty,01^\infty)$. By replacing the symbols $0$ and $1$ in $k_{F_{\beta_0}}$ with words $L=01$ and $R=10$ we obtain the kneading invariant 
$$
k_{F_{\beta_1}}=(RL^\infty,LR^\infty)=(10(01)^\infty,01(10)^\infty)
$$
of the map $F_{\beta_1}$. Using the same substitution in $k_{F_{\beta_1}}$ we get
$$
k_{F_{\beta_2}}=(RL(LR)^\infty,LR(RL)^\infty)=(1001(0110)^\infty,0110(1001)^\infty).
$$
Proceeding this way, we can construct the kneading invariant of each map $F_{\beta_i}$ for $i\in\mathbb{N}$.
\end{example}

We now show that $\{F_{\beta_i}\}_{i\in\mathbb{N}}$ are the only maps of the form~\eqref{eq:family_maps} that renormalize to the doubling map $F_2$.

\begin{lemma}\label{lem:doubling_conjugate}
Let $\beta\in(1,2]$. Then the map $F_\beta$ has a renormalization $G=(F^k_\beta,F^k_\beta)$ that is conjugate to the doubling map if and only if $\beta=\beta_i=\sqrt[2^i]{2}$ for some $i\in\mathbb{N}$. Moreover, in this case for each $1\leq j\leq k$ the map $F^j_\beta$ is continuous on both $[u,\frac{1}{2})$ and $(\frac{1}{2},v]$, where $[u,v]$ denotes the renormalization interval of $G$.
\end{lemma}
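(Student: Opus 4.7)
For the $(\Leftarrow)$ direction, if $\beta=\beta_i=\sqrt[2^i]{2}$ for some $i\in\mathbb{N}$, then since $\beta_i\in(\beta_{i+1},\beta_i]$ Prop.\ref{prop:beta_seq}(\ref{prop:beta_seq1}) produces $i$ renormalizations $G_{\beta,j}=(F_\beta^{2^j},F_\beta^{2^j})$ that are (affinely) conjugate after rescaling to $F_{\beta^{2^j}}$. For $j=i$ we have $\beta_i^{2^i}=2$, so $G_{\beta,i}$ is conjugate to the doubling map $F_2$; taking $k=2^i$ closes this direction.

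For the $(\Rightarrow)$ direction, suppose $G=(F_\beta^k,F_\beta^k)$ is a renormalization on $[u,v]$ conjugate to $F_2$. I would first pin down the slope. By Def.\ref{defn:renormalization}, $G$ is a Lorenz map on $[u,v]$, so $F_\beta^k$ is continuous and strictly monotone on each of $[u,\tfrac{1}{2})$ and $(\tfrac{1}{2},v]$; as a composition of linear pieces without intermediate discontinuities, it is a single linear piece of slope $\beta^k$ on each side. After rescaling via $h_{u,v}$, the rescaled $G$ is a symmetric $\beta$-transformation of slope $\beta^k$; by the Hubbard--Sparrow theorem its kneading invariant equals that of $F_2$, namely $(10^\infty,01^\infty)$. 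This kneading invariant is realized only by the doubling map (the limiting case of Lemma \ref{lem:eps_knead}(\ref{lem:eps_knead1}) as $i\to\infty$, or by direct computation), giving $\beta^k=2$.

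Next I would argue $k$ is a power of $2$. The symmetry $F_\beta(x)=1-F_\beta(1-x)$ (Lemma \ref{lem:symmetry}(\ref{lem:symmetry1})) sends any primary $n(k')$-cycle of $F_\beta$ to a primary $n(n-k')$-cycle via $x\mapsto 1-x$; the boundary conditions in Def.\ref{defn:nk_cycle} transfer cleanly thanks to $F_\beta(0)+F_\beta(1)=1$ (Lemma \ref{lem:symmetry}(\ref{lem:symmetry2})). By Prop.\ref{prop:itineraries_general}(1) each of these cycles pins the rotation interval of $F_\beta$ to a singleton, forcing $k'/n=(n-k')/n$, hence $n=2k'$. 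Combined with $\gcd(n,k')=1$, this yields $(n,k')=(2,1)$. Thus the only primary cycle of $F_\beta$ is the $2(1)$-cycle, and by Theorem~3.5 of \cite{ChO} its basic symmetric renormalization is $(F_\beta^2,F_\beta^2)$. Any deeper renormalization $(F_\beta^k,F_\beta^k)$ must then arise by iterating this operation on the rescaled maps $F_{\beta^2},F_{\beta^4},\ldots$, as in the proof of Prop.\ref{prop:beta_seq}, producing $k=2^j$. Combined with $\beta^k=2$, one obtains $\beta=2^{1/2^j}=\beta_j$.

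For the \emph{moreover} part, I would induct on $i$. For $i=1$ a direct computation shows $F_{\sqrt{2}}([u,\tfrac{1}{2}))=[\sqrt{2}/2,1)\not\ni\tfrac{1}{2}$, so both $F_{\sqrt{2}}$ and $F_{\sqrt{2}}^{2}$ are continuous on $[u,\tfrac{1}{2})$. For $i>1$ the conjugacy $G_{\beta_i,1}\cong F_{\beta_{i-1}}$ pulls the inductive hypothesis back to continuity of the even iterates $F_{\beta_i}^{2j}$ on $[u,\tfrac{1}{2})$ for $j=1,\ldots,2^{i-1}$; the odd intermediate iterates $F_{\beta_i}^{2j-1}$ remain continuous because between two consecutive returns to the first-level renormalization interval $[u_1,v_1]$ the orbit resides in $F_{\beta_i}([u_1,\tfrac{1}{2}))\cup F_{\beta_i}((\tfrac{1}{2},v_1])$, which is disjoint from $\tfrac{1}{2}$. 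Symmetry (Lemma \ref{lem:symmetry}) then transfers continuity from $[u,\tfrac{1}{2})$ to $(\tfrac{1}{2},v]$. The principal obstacle is the step establishing $k=2^j$: ruling out ``exotic'' renormalizations $(F_\beta^k,F_\beta^k)$ with $k$ not a power of $2$ hinges crucially on the symmetry-driven collapse of the rotation number to $\tfrac{1}{2}$ and the resulting uniqueness of the $2(1)$-cycle as the sole primary cycle of $F_\beta$.
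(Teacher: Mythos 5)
Your $(\Leftarrow)$ direction and the identity $\beta^k=2$ (read off from the slope, equivalently the kneading invariant or entropy, of the rescaled renormalization) match the paper, and your symmetry argument that every primary cycle of a symmetric $\beta$--transformation must be a $2(1)$-cycle is correct. The gap is in the step that is supposed to force $k=2^j$: from ``the only primary cycle is the $2(1)$-cycle, whose associated renormalization is $(F_\beta^2,F_\beta^2)$'' you assert that any renormalization $(F_\beta^k,F_\beta^k)$ ``must then arise by iterating this operation.'' That implication is exactly what needs proof, and nothing you cite delivers it: Theorem~3.5 of \cite{ChO} produces a renormalization \emph{from} a primary cycle, but it does not say that every renormalization of the form $(F^k,F^k)$ is a composition of such basic ones, and Prop.\ref{prop:beta_seq} only builds the tower upward from $F_\beta$ to $F_{\beta^{2^j}}$; it does not decompose an arbitrary given renormalization. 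What the primary $2(1)$-cycle actually gives you, via \cite[Theorem~3.5(2)]{ChO}, is only that $k$ is even, which leaves open, e.g., $k=6$ with $\beta=\sqrt[6]{2}\in(\beta_3,\beta_2)$; your argument as written does not exclude such ``exotic'' renormalizations.

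The paper closes precisely this hole with a dedicated contradiction argument: assuming $\beta\in(\beta_{i+1},\beta_i)$ strictly, it deduces $2^i<k<2^{i+1}$ from $\beta^k=2$, notes that the left endpoint $u$ of the renormalization interval is a periodic point of $F_\beta$ (since $G(u)=u$ for the doubling map) whose minimal period must be a power of $2$ by Prop.\ref{prop:beta_seq}\eqref{prop:beta_seq2}, and then reaches a contradiction by locating $u$ relative to the renormalization interval $[\widetilde{u},\widetilde{v}]$ of $G_{\beta,i}=(F_\beta^{2^i},F_\beta^{2^i})$. If you want to keep your strategy, the missing lemma is a descent step: for $k$ even, $[u,v]\subseteq[u_1,v_1]=[F_\beta(0_+),F_\beta(1_-)]$ and $F_\beta^k=G_{\beta,1}^{k/2}$ there, so the rescaled map $F_{\beta^2}$ inherits a renormalization $(F_{\beta^2}^{k/2},F_{\beta^2}^{k/2})$ conjugate to the doubling map; iterating, the process can only stop when $k/2^j=1$, forcing $k=2^j$. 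Your inductive treatment of the ``moreover'' clause is a workable alternative to the paper's appeal to \cite[Theorem~6.1(2)]{ChO}, though the continuity of the odd intermediate iterates deserves a genuine verification (one must check $F_\beta([u_1,\frac{1}{2}))$ really misses $\frac{1}{2}$ at every level of the induction) rather than the one-clause aside you give it.
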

\begin{proof}
    Firstly, suppose that for some $i\in\mathbb{N}$ and $\beta\in(\beta_{i+1},\beta_i)$ the map $F_{\beta}$ has a renormalization $G=(F^k_\beta,F^k_\beta)$ that is conjugate to $F_2$. i.e., the doubling map. Since $F_{\beta}$ has a primary $2(1)$-cycle, it follows from \cite[Theorem~3.5(2)]{ChO} that the number $k$ must be even. Furthermore, we get $\beta=\sqrt[k]{2}\in(\beta_{i+1},\beta_i)$, because the slope of $G$ must be equal to $2$. Since $\sqrt[2^{i+1}]{2}<\beta<\sqrt[2^{i}]{2}$, we conclude that $2^i<k<2^{i+1}$. In particular, we have $i\geq2$, because the number $k$ is even. Next, consider the renormalization interval $[u,v]=[F^{k-1}_{\beta}(0_+),F^{k-1}_{\beta}(1_-)]$ of $G$. Since the map $G$ is conjugate to the doubling map, we have $F^k_{\beta}(u)=G(u)=u$, so $u$ is a periodic point for $F_{\beta}$. Denote by $s$ the minimal period of $u$ (under the map $F_{\beta}$). So $s\leq k<2^{i+1}$. Since $\beta\in(\beta_{i+1},\beta_i)$ for some $i\geq2$, by Proposition~\ref{prop:beta_seq} the set of periods of $F_{\beta}$ is given by
    $$
    P_{F_{\beta}}=
	2^{i}\cdot P_{F_{\beta^{2^i}}}\cup\left\lbrace2^{i-1},\ldots,2\right\rbrace.
    $$
    Hence $s=2^j$ for some $j\in\{1,\ldots,i-1\}$. Denote by $[\widetilde{u},\widetilde{v}]:=[F^{2^i-1}_{\beta}(0_+),F^{2^i-1}_{\beta}(1_-)]$ the renormalization interval of $G_{\beta,i}=(F^{2^i}_{\beta},F^{2^i}_{\beta})$. Since $2^i<k$, we have $F^{k-2^i}_{\beta}(\widetilde{u})=F^{k-1}_{\beta}(0_+)=u$. Observe that $\widetilde{u}<u$ implies $G_{\beta,i}(u)=F^{2^i}(u)=u$, which is impossible because $G_{\beta,i}$ is an expanding Lorenz map on $[\widetilde{u},\widetilde{v}]$. The case $\widetilde{u}=u$ is also excluded, as $G_{\beta,i}$ is not conjugate to the doubling map. So $\widetilde{u}\in(u,v)$. But then $G(\widetilde{u})=F^k_{\beta}(\widetilde{u})=F_{\beta}^{2^i}(u)=u$, which is possible only when $\widetilde{u}=u$ or $\widetilde{u}=\frac{1}{2}$. Since neither of these cases can occur, we get a contradiction.

    To prove the second implication,, assume that $\beta=\beta_i$ for some $i\in\mathbb{N}$. It follows from Proposition~\ref{prop:beta_seq} that $F_{\beta}$ has $i$ renormalizations $G_{\beta,l}=(F^{2^l}_{\beta},F^{2^l}_{\beta})$ defined on the intervals $[F^{2^l-1}_{\beta}(0_+),F^{2^l-1}_{\beta}(1_-)]$ (for $l=1,\ldots,i$). In particular, the last one $G:=G_{\beta,i}=(F^{2^i}_{\beta},F^{2^i}_{\beta})$ is conjugate to the doubling map $F_2$. Finally, let 
    $$
    [u,v]:=[F^{2^i-1}_{\beta}(0_+),F^{2^i-1}_{\beta}(1_-)]
    $$ 
    be the renormalization interval of $G$ and 
    $$
    J_G:=\left\lbrace x\in [0,1]\:|\: \text{Orb}_{F_{\beta}}(x)\cap \left(u, v\right) =\emptyset\right\rbrace .
    $$
    Note that the renormalization $G$ satisfies the assumptions of Theorem~6.1(2) in \cite{ChO} (cf. Theorem~A in \cite{Ding}). Thus, the points
    $$
    e_-:=\sup\{x\in J_G\:|\: x<c\},\quad e_-:=\inf\{x\in J_G\:|\: x>c\},
    $$
    are well defined and for $t=1,\ldots,2^i-1$ the sets $F_{\beta}^t([e_-,c))$ and $F_{\beta}^t((c,e_+])$ do not contain the critical point $c=\frac{1}{2}$. Since $e_-\leq u<c<v\leq e_+$, we conclude that $F_{\beta}^j$ is continuous on both intervals $[u,c)$, $(c,v]$, for all $j=1,\ldots,2^i$.   
\end{proof}
\begin{remark}
    Although intuitively this seems to be true in a general case, we have not found a proof that for any renormalization $G=(F^l,F^r)$ of an expanding Lorenz map $F$ the renormalization interval $[u,v]$ satisfies $[u,c)\cap\bigcup_{j=0}^{l-1}F^j(c)=\emptyset$ and $(c,v]\cap\bigcup_{j=0}^{r-1}F^j(c)=\emptyset$. We will discuss this issue in more detail in Appendix~\ref{ex:map_nonsymmetric1}.
\end{remark}

We now consider the pullback of renormalization phenomena from the Lorenz maps back to the Lorenz attractor. To this end, we first need to define a realistic notion of Renormalization in the Lorenz attractor. Based on Th.\ref{reduct} and Cor.\ref{reduct2}, we know the dynamics of the Lorenz attractor are essentially those of a deformed $\beta$--transformation (at least sufficiently close to the set $T$). Recalling each parameter $v\in P$ defines a first-return map $\psi_v:R_0\cup R_1\to R$, this motivates us to define Renormalization as follows:
\begin{definition}
\label{renormvector}    A Lorenz attractor at a parameter value $v\in P$ is said to be \textbf{renormalizable} if its corresponding $\beta$--transformation $F_\beta$ is well defined and renormalizable. In this case, the parameter $v$ is also said to be renormalizable.
\end{definition}
\begin{remark}
    This is not the only way to define renormalization on the Lorenz attractor. Another definition can be made using Template Theory (see Def.2.4.6 in \cite{KNOTBOOK}). We will consider this approach later on.
\end{remark}
In other words, our approach to Renormalization on the attractor looks only at the "dynamical core" of the attractor, i.e., the parts which can be factored to the Lorenz map. The reason we focus only on these parts is because, as Th.\ref{expl} suggests, it is the change in these parts where the "essential" changes occur as a parameter $v\in P$ tends towards the set $T$ of trefoil parameters.

Recalling the discussion above, we know a $\beta$--transformation $F_\beta$ is renormalizable if and only if $\beta\in(1,\sqrt{2}]$. Since by Lemma \ref{betato2} we know the closer a parameter $v\in P$ is to the set $T$ the closer the parameter $\beta$ is to $2$, it follows we cannot expect Lorenz attractors "too close" to the set $T$ to be renormalizable. At this point we remark that even though such heuristic may motivate one to try and define a Renormalization Operator on the Lorenz attractor, in many ways our definition above is not enough for such a construction. To illustrate, let $v_1,v_2$ be two parameters in $P$ with corresponding Lorenz maps $F_{\beta_1}$ and $F_{\beta_2}$. Assuming $F_{\beta_1}$ can be renormalized to $F_{\beta_2}$, there is no reason to assume we can find an invariant subset on the attractor for $v_1$ which can be factored to the dynamics on the attractor corresponding to $v_2$. The reason this is so is because we do not know the fine structure of the attractor for either $v_1$ or $v_2$ - which could hamper such an explicit construction. That being said, we will now prove the following result which, in part, explains how these ideas play out on the Lorenz attractor:

\begin{theorem}
    \label{renormth}
   Let $v\in P$ be a parameter for the Lorenz system, and  recall we denote its first-return map by $\psi_v:R_0\cup R_1\to R$. Let $I_v$ denote the maximal invariant set of $\psi_v$ in $R_0\cup R_1$, and let $F_\rho$ denote its corresponding $\beta$--transformation with a slope $\rho=\rho(v)$. Assume $v$ is renormalizable - then the following is true:
   \begin{itemize}
       \item There exists an invariant set $I'\subset I_v$, a $\beta$--transformation $F_\gamma$ with a slope $\gamma\in(\rho,2]$, a natural number $n>0$ and a continuous map $\pi:I'\to I_\gamma$ s.t. $\pi\circ \psi^n_v=F_\gamma\circ\pi$ - where $I_\gamma$ is the invariant set of $F_\gamma$ in $[0,1]\setminus\{\frac{1}{2}\}$.
       \item $n$ can be chosen to be a dyadic number, i.e., $n=2^j$ for some $j>0$.
   \item There exists a set of periodic orbits for $\psi_v$ in $I_v$ that includes orbits of length $2^j q$, where $q$ is some arbitrary period of a periodic orbit for $F_\gamma$ - as well as orbits of periods $1,2,2^2,...,2^{j-1}$.
   \end{itemize}
\end{theorem}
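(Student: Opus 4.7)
The plan is to compose the semiconjugacy $\pi_v\colon I_v\to I_\rho$ furnished by Theorem~\ref{reduct} (cf.~Cor.~\ref{reduct2}) with the one-dimensional renormalization machinery of Proposition~\ref{prop:beta_seq}. By Definition~\ref{renormvector}, renormalizability of $v$ means renormalizability of $F_\rho$, which by the discussion following Lemma~\ref{lem:renorm} forces $\rho\in(1,\sqrt{2}]$. Fix the unique $j\geq 1$ with $\rho\in(\beta_{j+1},\beta_j]$, where $\beta_i=\sqrt[2^i]{2}$, and set $n:=2^j$. Proposition~\ref{prop:beta_seq}(1) then produces a renormalization interval $[u,w]=[F^{n-1}_\rho(0_+),F^{n-1}_\rho(1_-)]$ such that the return map $G_{\rho,j}:=F_\rho^n\bigr|_{[u,w]}$ is conjugate, via the affine rescaling $h_{u,w}$, to the symmetric $\beta$--transformation $F_\gamma$ of slope $\gamma:=\rho^n$. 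Since $\rho>1$ and $n\geq 2$, the slope satisfies $\gamma\in(\rho,2]$, pinpointing both $\gamma$ and $n$ in the statement.

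To transport this back to the attractor, let $I'_\rho\subseteq[u,w]\setminus\{\tfrac12\}$ denote the maximal $G_{\rho,j}$--invariant set, so that by construction $h_{u,w}(I'_\rho)=I_\gamma$. Define
$$
I':=\pi_v^{-1}(I'_\rho)\subseteq I_v,\qquad \pi:=h_{u,w}\circ\pi_v\bigr|_{I'}\colon I'\to I_\gamma.
$$
The intertwining $\pi_v\circ\psi_v^n=F_\rho^n\circ\pi_v$, combined with the $G_{\rho,j}$--invariance of $I'_\rho$, yields $\psi_v^n(I')\subseteq I'$, while continuity of $\pi$ is immediate from that of $\pi_v$ and $h_{u,w}$. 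The identity
$$
\pi\circ\psi_v^n \;=\; h_{u,w}\circ F_\rho^n\circ\pi_v \;=\; h_{u,w}\circ G_{\rho,j}\circ\pi_v \;=\; F_\gamma\circ h_{u,w}\circ\pi_v \;=\; F_\gamma\circ\pi
$$
closes the first two bullets.

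For the third bullet I would invoke Proposition~\ref{prop:beta_seq}(2), which asserts $P_{F_\rho}=2^j\cdot P_{F_\gamma}\cup\{2^{j-1},\ldots,2\}$. Given $q\in P_{F_\gamma}$, pull back a periodic point $y\in I_\gamma$ of minimal period $q$ via $h_{u,w}^{-1}$ to a $G_{\rho,j}$--periodic point of minimal period $q$, which the proof of Lemma~\ref{lem:renorm} shows is $F_\rho$--periodic of minimal period $nq=2^jq$. The third bullet of Theorem~\ref{reduct} then lifts this to a $\psi_v$--periodic orbit of minimal period $2^jq$ inside $\pi_v^{-1}(y)$. The same lifting applied to the remaining minimal periods $2,2^2,\ldots,2^{j-1}\in P_{F_\rho}$ furnishes the corresponding $\psi_v$--periodic orbits, while the period--$1$ orbits correspond to the fixed points $p_0,p_1\in\partial R$ of $\psi_v$ already supplied by Theorem~\ref{tali}.

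The main technical obstacle, and the step I expect to require the most care, is verifying that $I'$ is genuinely nontrivial and that $\pi$ is surjective onto $I_\gamma$. This amounts to checking that points of $I_\rho$ lying inside the renormalization interval $[u,w]$ actually lift to honest points of the attractor --- not a priori obvious, since $\pi_v$ was produced in Theorem~\ref{reduct} by collapsing horizontal stable fibers and identifying pre-images of the discontinuity $\tfrac12$ with arcs in $\bigcup_{k\geq 0}\psi_v^{-k}(W)$. Surjectivity of $\pi_v$ supplies the lifts in the interior of $[u,w]$, but at the endpoints $u,w$ one must argue that the pre-images of $\tfrac12$ accumulating there correspond only to pre-images of $W$ on the attractor, so that $I'$ inherits precisely the structure of $I_\gamma$ and no spurious periodic data is lost in the correspondence.
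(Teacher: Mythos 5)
Your proposal is correct and follows essentially the same route as the paper: both reduce the statement to Proposition~\ref{prop:beta_seq} (existence of the renormalization $G_{\rho,j}=(F_\rho^{2^j},F_\rho^{2^j})$ rescaling to $F_{\rho^{2^j}}$ and the period formula) and then pull everything back to the attractor via Theorem~\ref{reduct} and Corollary~\ref{reduct2}. The only difference is that you spell out the construction of $I'$ and $\pi=h_{u,w}\circ\pi_v$ and the intertwining identity explicitly, where the paper simply declares the first two assertions "immediate consequences" of Theorem~\ref{reduct}.
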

\begin{proof}
    Since the map $F_{\rho}$ is renormalizable, by Prop.\ref{prop:beta_seq} it has a renormalization $G=(F_\rho^{2^j},F_\rho^{2^j})$, for some $j>0$, which after rescaling is equal to a $\beta$--transformation  $F_\gamma$ with a slope $\gamma=\rho^{2^j}$. Thus, the first two assertions are immediate consequences of Th.\ref{reduct} and Cor.\ref{reduct2}. Therefore, to conclude the proof it remains to prove the final assertion about the length of periodic orbits. That, however, is immediate - by Prop.\ref{prop:beta_seq} we know the sets of periods of maps $F_\rho$ and $F_\gamma$ are related by the formula $P_{F_{\rho}}=2\cdot P_{F_\gamma}\cup\{2\}$, if $j=1$,
    and
    $P_{F_\rho}=2^j\cdot P_{F_\gamma}\cup\{ 2^{j-1},...,2\}$, if $j>1$. By Th.\ref{reduct} we conclude all these periods carry over from $F_\rho$ back to the original attractor, and the conclusion follows.
\end{proof}
\begin{remark}
    At this point, we note that our methods do not allow for the detection of renormalizable parameters in $P$ - this is something that we  believe can only be studied numerically. 
\end{remark}

\begin{figure}[h]
\centering
\begin{overpic}[width=0.3\textwidth]{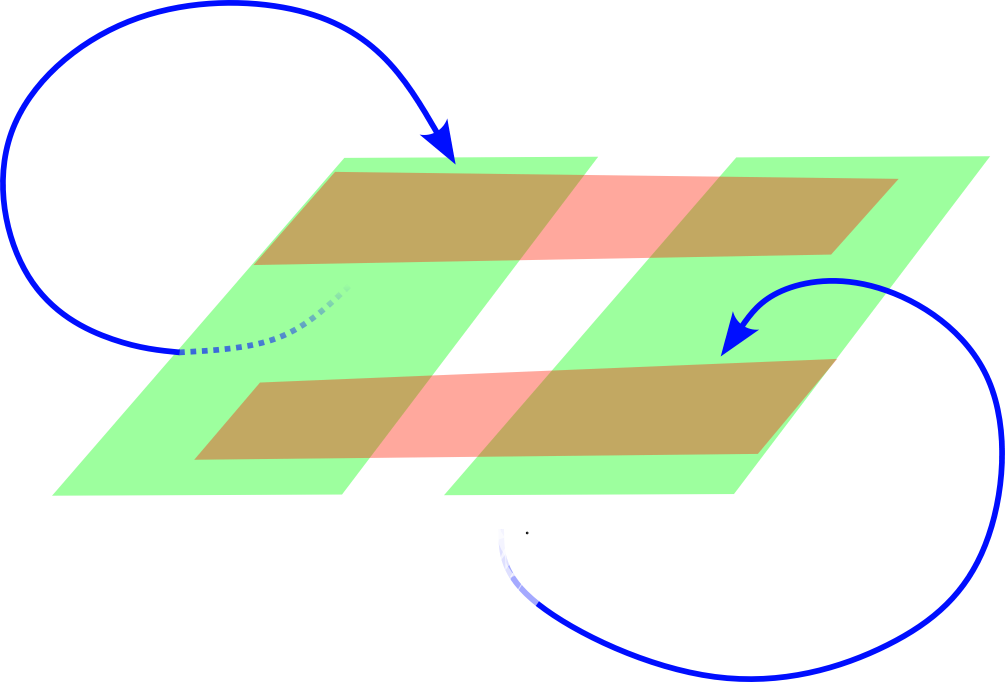}

\end{overpic}
\caption{\textit{An example of a flow corresponding to some geometric model whose kneading is that of some Lorenz map.}}
\label{geometricflow}
\end{figure}

Before moving on, we remark that in a sense, Th.\ref{renormth} is the closest we can (currently) get to defining a meaningful Renormalization Operator on the Lorenz attractor. To illustrate assume $v\in P$ is a parameter whose attractor $A_v$ corresponds to a $\beta$--transformation $F_\rho$, s.t. $F_\rho$ can be renormalized as follows - $F_\rho\to F_{\rho_1}\to F_{\rho_2}\to...\to F_{\rho_k}$. Now, let $L_j$, $j=1,...,k$ denote a geometric model flow for $F_{\rho_j}$ whose kneading is the same as $F_{\rho_j}$ (as illustrated in Fig.\ref{geometricflow}) - i.e., some flow which stretches two rectangles over one another. Then, by Th.\ref{renormth} one would expect, at least intuitively, for the attractor $A_v$ to include copies of the attractors corresponding to $L_j,j=1,...,k$ inside it. We now make this idea precise, using Template Theory - at least for some parameters $v\in P$. We begin by recalling the following definition:

\begin{definition}\label{deftemp}
    A \textbf{Template} is a compact branched surface with a boundary endowed with a smooth expansive semiflow, built locally from two types of charts - \textbf{joining} and \textbf{splitting}. Additionally, the gluing maps between charts all respect the semiflow, and act linearly on the edges (see Fig.\ref{TEMP2} for an illustration).
\end{definition}

\begin{figure}[h]
\centering
\begin{overpic}[width=0.5\textwidth]{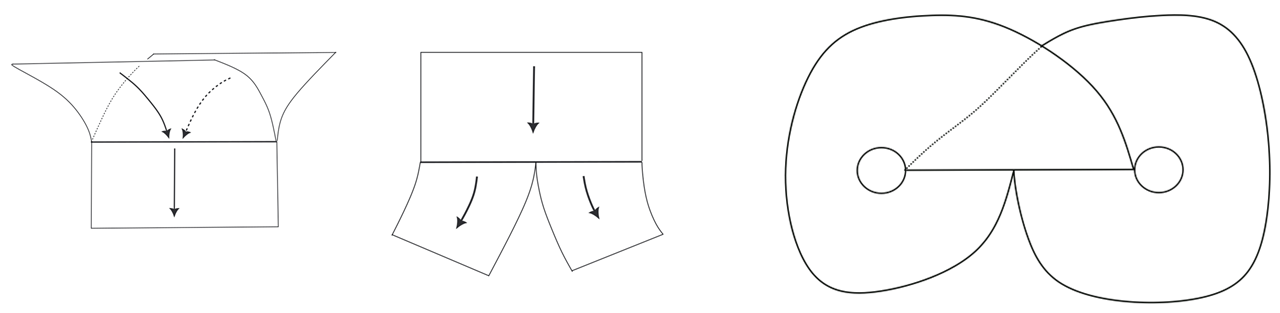}

\end{overpic}
\caption{\textit{From left to right - a joining chart, a splitting chart, and an example of a Template (the $L(0,0)$ Template).}}
\label{TEMP2}
\end{figure}

Templates are an extremely useful tool for the analysis of topological dynamics of three dimensional flows. The connection between three-dimensional flows and Templates is given by the \textbf{Birman-Williams Theorem} (see \cite{BW} or \cite{KNOTBOOK}). That Theorem states the following:
\begin{theorem}
    \label{birwil}  Given a three-manifold $M$ and a smooth flow $\phi:M\times\mathbb{R}\to M$ hyperbolic on its maximal invariant set, $I$, there exists a Template $\tau$ embedded in $I$, with a semiflow $\varphi:\tau\times\mathbb{R}^+\to \tau$. Moreover, every periodic orbit $\mathcal{T}\subseteq I$ in $I$ projects to a unique periodic orbit for $\varphi$, which encodes the knot type of $\mathcal{T}$. Conversely, every periodic orbit for $\varphi$ in $\tau$, save possibly for two, encodes the knot type of at least one periodic orbit $\mathcal{T}\subseteq I$.
\end{theorem}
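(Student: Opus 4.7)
The plan is to prove Theorem \ref{birwil} via the classical quotient-by-strong-stable-foliation construction. The hyperbolicity hypothesis on $I$ supplies a continuous strong stable foliation $\mathcal{F}^{ss}$ on a neighborhood of $I$, whose leaves are one-dimensional, transverse to $\phi$, and uniformly contracted by $\phi_t$ as $t\to\infty$. I would then introduce the equivalence relation $x\sim y$ iff $d(\phi_t(x),\phi_t(y))\to 0$ as $t\to\infty$; restricted to $I$ this is precisely the relation of lying on a common strong stable leaf. Setting $\tau := I/\sim$ with quotient map $\pi : I\to\tau$, the first thing to check is that $\tau$ is compact and Hausdorff, which follows from compactness of $I$ and the fact that stable leaves vary continuously.

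The second step is to equip $\tau$ with the structure of a branched surface carrying a semiflow. At a point $x\in I$ whose stable leaf is disjoint from every other stable leaf of $I$, collapsing a one-dimensional arc inside a three-dimensional flowbox chart yields a local $2$-manifold chart, and the flow descends to a nonsingular semiflow. At points where two distinct strong stable leaves asymptotically merge under the forward flow, one obtains exactly the joining chart from Definition \ref{deftemp}; splitting charts appear as the time-reversed local picture. Expansivity of the resulting semiflow $\varphi$ is inherited from the uniform expansion along the unstable direction of $\phi$.

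Next I would establish the correspondence of periodic orbits. For $\mathcal{T}\subset I$ periodic, $\pi(\mathcal{T})$ is automatically a periodic orbit for $\varphi$, and its knot type is unchanged because the collapse can be realized by an ambient isotopy of a tubular neighborhood of $I$ in $M$, obtained by flowing a short distance and projecting along the (contracted) stable leaves. Conversely, a periodic orbit $\gamma\subset\tau$ lifts through $\pi$ to a periodic orbit of $\phi$ in $I$; the lift is unique except at branch-line orbits of $\tau$, of which there are typically two, and these are precisely the orbits excluded by the ``save possibly for two'' clause in the statement.

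The main obstacle, and the step requiring the most care, is showing that $\pi$ preserves the \emph{entire link structure} of the periodic set simultaneously, not merely the isotopy type of each orbit in isolation. The resolution I would pursue is to build a single ambient isotopy of $M$ realizing the collapse: use the time-$T$ map of $\phi$ for large $T$ together with the local product structure supplied by hyperbolicity to define, on a tubular neighborhood of $I$, an isotopy whose only failures of injectivity occur at the finitely many pairs of orbits whose stable leaves merge under forward flow. Because this isotopy is ambient, it carries the whole collection of periodic orbits isotopically onto their $\pi$-images, preserving all linking and knotting data; the isolated injectivity failures are then exactly what encode the branch locus of $\tau$, completing the construction.
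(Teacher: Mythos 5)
This statement is the classical Birman--Williams Template Theorem; the paper does not prove it at all, but quotes it as a known result with references to \cite{BW} and \cite{KNOTBOOK}, so there is no in-paper proof to compare your argument against. Your proposal is, in outline, the standard Birman--Williams construction from those sources: quotient the hyperbolic invariant set by the forward-asymptotic (strong stable) relation, verify that the local product structure turns the quotient into a branched surface whose charts are exactly the joining and splitting charts of Def.\ref{deftemp}, let the flow descend to an expansive semiflow, and realize the collapse by an ambient isotopy supported in a tubular neighborhood of $I$ so that knot and link types of periodic orbits are preserved. Two places where your sketch is thinner than the actual argument deserve flagging. First, the link-preservation step in \cite{BW} is proved for each \emph{finite} collection of periodic orbits at a time (the isotopy is constructed after truncating the collapse at a finite stage), not by a single global ambient isotopy of $M$ onto the branched surface --- the full collapse map is not injective on $I$, so it cannot literally be the endpoint of an ambient isotopy; you need the finite-truncation device to make "preserving all linking data" precise. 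Second, your accounting of the "save possibly for two" clause is not quite right: the exceptional orbits are not a failure of uniqueness of lifts at branch lines, but rather periodic orbits of the semiflow on $\tau$ (typically boundary orbits created by the identification) that need not correspond to any orbit of $\phi$ in $I$, and dually a pair of distinct orbits of $\phi$ can be identified to a single template orbit. These are bookkeeping issues rather than conceptual gaps, and the overall route you describe is the correct and standard one.
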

In other words, a Template for a chaotic attractor can be thought of as a "Knot holder" for a flow which encodes its topological dynamics, or more precisely, its "skeleton" - where the periodic orbits play the role of the "bones" in the said skeleton. We now prove that by combining the theory of renormalization with Th.\ref{birwil} one can, in some cases, assign a Template to Lorenz attractors. Moreover, we will do so without imposing any hyperbolicity assumption on the dynamics. To begin, inspired by Section 4.3 in \cite{GS} we first introduce the following definition: 
\begin{definition}
    A renormalizable parameter $v\in P$ is said to be \textbf{fully renormalizable} if its associated $\beta$--transformation $F_\beta$ has a renormalization $G=(F_\beta^n,F_\beta^n)$ that is conjugate to the doubling map $F_2(x)=2x\Mod{1}$. 
\end{definition}

By Lemma \ref{lem:doubling_conjugate}, a parameter $v\in P$ is fully renormalizable if and only if the corresponding $\beta$--transformation has slope $\beta_i$ for some $i\geq1$.

\begin{figure}[h]
\centering
\begin{overpic}[width=0.55\textwidth]{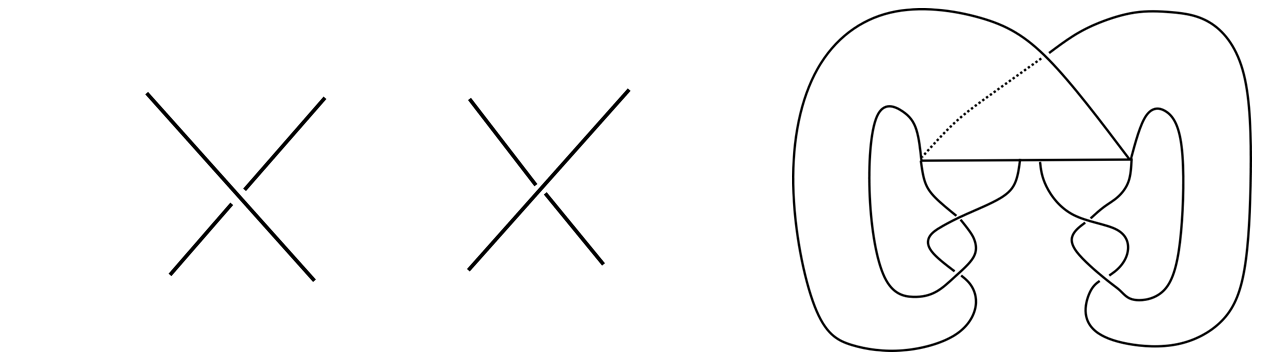}

\end{overpic}
\caption{\textit{From left to right - a positive crossing, a negative crossing, and the $L(-2,-2)$ Template.}}
\label{loret}
\end{figure}

We will also need to recall certain definitions about templates. Given a Template that splits into two strips at the branch line, we say it is an $L(m,n)$ \textbf{Template} (where $m,n\in\mathbb{Z}$, and $L$ stands for "Lorenz") if the left branch has $m$ half--twists and the right one has $n$ half--twists (where the positive and negative sign convention is as illustrated in Fig.\ref{loret}). With that idea in mind, we prove:
\begin{theorem}
    \label{templateth} Given a fully renormalizable $v\in P$ , there exists an even number $k\in\mathbb{Z}$ s.t. every knot encoded by the $L(k,k)$ Template is realized as a periodic orbit on the corresponding Lorenz attractor $A_v$. Moreover, the Template $L(k,k)$ is is \textbf{not} universal, i.e., it does not encode all possible knots and links.
\end{theorem}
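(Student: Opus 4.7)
The plan is to pull back the renormalization structure on $F_\beta$ into a concrete hyperbolic-like dynamics on a sub-rectangle of $R$, apply the Birman--Williams construction to its suspension, and then use Handel's two-dimensional isotopy results (as in the proof of Prop.\ref{persistence} and Th.\ref{reduct}) to transfer all resulting periodic orbits back to the genuine Lorenz attractor. First, since $v$ is fully renormalizable, Lemma \ref{lem:doubling_conjugate} gives $\beta=\sqrt[2^i]{2}$ and a renormalization $G=(F_\beta^{2^i},F_\beta^{2^i})$ conjugate to the doubling map on an interval $[u,v]\ni\tfrac{1}{2}$, with $F_\beta^{j}$ continuous on each side of $\tfrac{1}{2}$ inside $[u,v]$ for $1\leq j\leq 2^{i}$. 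Pulling $[u,v]$ back through the semiconjugacy $\pi$ of Th.\ref{reduct} and using the horizontal-fiber structure of the rectangle map $h_v$ built in that proof, we obtain an invariant sub-rectangle $S\subset R$ such that $\psi_v^{2^i}|_{S}$ factors onto $G$.

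Second, I would repeat the straightening argument from the proof of Th.\ref{reduct}, but now applied to $\psi_v^{2^i}|_S$ instead of $\psi_v$, to produce an isotopy from $\psi_v^{2^i}|_S$ to a genuinely hyperbolic rectangle map $\widehat h$ on $S$, which stretches and folds $S$ over itself with the combinatorics of a full two-shift (this works because the renormalization is conjugate to $F_2$, so the symbolic dynamics is exactly the full $\{0,1\}$ shift). By the Fixed Point Index argument used in Prop.\ref{persistence}, every periodic orbit of $\widehat h$ survives the isotopy back to $\psi_v^{2^i}$ with the same minimal period, and hence lifts to a periodic orbit of the original Lorenz flow $L_v$ of period $2^i\cdot n$ (where $n$ is the period under $\widehat h$).

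Third, I would suspend $\widehat h$ to a hyperbolic flow and apply the Birman--Williams theorem (Th.\ref{birwil}) to obtain a template $\tau$. Because $\widehat h$ has the two-shift as symbolic dynamics and preserves the Lorenz $\mathbb{Z}_2$ symmetry (this is why we insisted on symmetric deformations in Th.\ref{reduct} and Remark \ref{gen2}), $\tau$ must be of the form $L(k,k)$ for some integer $k$. The key geometric point is that $k$ counts the total half-twist accumulated by a flow tube going $2^i$ times around the attractor before it re-enters $S$; since each full excursion around the spiral-saddle $p_0$ (or $p_1$) contributes an even number of half-twists --- the stable manifold at $p_0,p_1$ carries complex conjugate eigenvalues, forcing the local return to be orientation-preserving by a full rotation --- and since the attractor at $v\in P$ is isotopic to the geometric Lorenz attractor ($L(0,0)$) by the continuation arguments of \cite{Pi}, the accumulated twist over $2^i$ passes is even. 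To close the loop, I would now use Handel's two-dimensional tools from \cite{Han}, exactly as alluded to in the paper's introduction, to conclude that the isotopy from $\widehat h$ to $\psi_v^{2^i}|_S$ preserves not only periods but also knot types of periodic orbits (their braid classes are rel-boundary isotopy invariants for surface maps under the hypotheses we arranged). Consequently, every knot type encoded by $L(k,k)$ is realized as a periodic orbit on $A_v$.

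Finally, non-universality of $L(k,k)$ for even $k$ follows from the classification of Lorenz-like templates (see \cite{Pi2},\cite{Deh}): templates of the form $L(m,n)$ with both entries of the same sign (or both zero) realize only prime, positive, fibered knots in the Lorenz family and their twisted counterparts, hence they miss, e.g., composite knots and negative torus knots, so cannot be universal. I expect the hardest step to be the precise computation, or at least parity control, of the twist $k$: unlike the isotopy-level arguments for persistence of periodic points, establishing the framing of the branches of $\tau$ requires tracking the oriented rotation of a two-dimensional normal bundle along a long orbit of $\psi_v$ inside a flow whose hyperbolicity we have only isotopically, not genuinely --- this is where one must carefully combine the spiral-saddle structure at $p_0,p_1$ with the symmetry of the Lorenz vector field and the fact that the ambient template of the geometric Lorenz model is $L(0,0)$.
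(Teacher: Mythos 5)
Your construction follows the paper's proof almost step for step: pulling the renormalization interval back to a sub-rectangle via Lemma \ref{lem:doubling_conjugate}, noting that $\psi_v^{2^i}$ restricted to that sub-rectangle is a topological horseshoe because the renormalization is conjugate to $F_2$, isotoping to a genuine Smale horseshoe while preserving periodic orbits and their knot types via the results of \cite{Han}, applying Birman--Williams to the hyperbolized suspension, and invoking the $\mathbb{Z}_2$ symmetry to force the template to be of the form $L(k,k)$. That part of the proposal is sound and essentially identical in architecture to the paper's argument.

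There is, however, a genuine gap in your final step. You justify non-universality of $L(k,k)$ by asserting that templates $L(m,n)$ with both entries of the same sign (or both zero) carry only prime, positive, fibered knots. This classification is correct for $m,n\geq 0$, but it is not available --- and is in fact in tension with known results --- when both entries are negative: templates with negative twisting can be universal (this is exactly the phenomenon behind Ghrist-type universality results for templates such as $L(0,-1)$), and your construction gives no control on the sign of $k$. The paper avoids this entirely by a containment argument: since every periodic orbit encoded by $\tau$ persists, without changing its knot type, as the parameter is pushed to a trefoil parameter $p\in T$ where the template is $L(0,0)$, every knot of $\tau=L(k,k)$ is a Lorenz knot; non-universality (and primality, Cor.\ref{prime}) then follows from the non-universality of $L(0,0)$ (Prop.3.2.21 in \cite{KNOTBOOK}). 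You need this containment step, or an explicit determination of the sign of $k$, for your conclusion to go through. Relatedly, your parity argument for $k$ (full rotations around the spiral saddles $p_0,p_1$ forced by complex conjugate eigenvalues) differs from the paper's, which instead derives evenness from the fact that the two strips remain unbraided (again via persistence to $L(0,0)$) and that each symmetric winding around a wing center straightens into a pair of half-twists; your eigenvalue heuristic is plausible on $P$ but is not actually established anywhere in the paper, so as written it is an unproven ingredient rather than a citation.
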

\begin{proof}
To begin, consider a fully renormalizable parameter $v\in P$. Let $F_\beta$ denote the corresponding $\beta$ transformation given by Th.\ref{reduct} and Cor.\ref{reduct2}, and let $[u,v]\subseteq[0,1]$ be the renormalization interval with a renormalization factor $n$. Recalling the construction of $F_\beta$ from the first-return map $\psi_v:R_0\cup R_1\to R$ (where $R$ is the cross-section given by Th.\ref{tali}), we know that as we homotope the Lorenz map $F_\beta:[0,1]\setminus\{\frac{1}{2}\}\to[0,1]$ back to the first return map $\psi_v:R_0\cup R_1\to R$, the interval $[u,v]$ is deformed to a sub-rectangle $R'\subseteq R$. Moreover, as the critical point $\frac{1}{2}\in(u,v)$ is deformed to the line $W$ - i.e., the final intersection curve of $W^s(0)$ and $R$ - we know $\frac{1}{2}$ is opened under the homotopy to a straight line bisecting $R'$, as illustrated in Fig.\ref{renorm}.

Let $R'_1,R_2'$ denote the components of $R'\setminus W$ which are deformed to $[u,\frac{1}{2})$ and $(\frac{1}{2},v]$, indexed s.t. $R'_i\subseteq R_i$, $i=0,1$ (see the illustration in Fig.\ref{renorm}). We now claim $\psi^n_v:R'_0\cup R'_1\to R$ is continuous, and moreover, that for all $1\leq j<n$, $\psi^j_v(R'_i)\cap W=\emptyset$, $i=0,1$. To see why, note that if this was not the case, then $\psi^j_v(R'_i)\setminus W$ would be divided into (at least) two distinct components, which are torn away from one another as they hit $W$ - which implies these components diverge away from one another, and form sets in $\mathbb{R}^3$ with a positive distance between them. By Lemma \ref{lem:doubling_conjugate} this cannot be, as it would imply there exists some $1\leq j<k$ s.t. $F^j_\beta$ is discontinuous on either $[u,\frac{1}{2})$ or $(\frac{1}{2},v]$ - and moreover, the only possibility is that $[u,v]\cap( \cup_{j=1}^{n-1}F^{-j}_\beta(\frac{1}{2}))=\emptyset$).

Therefore, since $F^n_\beta:[u,v]\to[u,v]$ acts like the doubling map, it follows the diffeomorphism $\psi^n_v:R'_0\cup R'_1\to R'$ acts on $R'_0\cup R'_1$ like a (topological) Smale Horseshoe map (see the illustration in Fig.\ref{renorm}). In other words, there exists an invariant set $I\subseteq R'_0\cup R'_1$ and a continuous, surjective map $\pi:I\to\{0,1\}^\mathbb{Z}$ s.t.:

\begin{itemize}
    \item $\pi\circ\psi^n_v=\zeta\circ\pi$ - where $\zeta:\{0,1\}^\mathbb{Z}\to\{0,1\}^\mathbb{Z}$ is the double-sided shift. 
    \item If $s\in\{0,1\}^\mathbb{Z}$ is periodic of minimal period $k$, $\pi^{-1}(s)$ is connected and includes at least one periodic orbit of minimal period $k$.
\end{itemize}

By this discussion it follows Th.\ref{templateth} will be proven if we show we can associate a Template with the dynamics of $I$. In more detail, we are going to prove there exists a Lorenz Template $\tau$ as in the statement of the Theorem, s.t. every knot type encoded by $\tau$ (save possibly for two) is realized as a periodic orbit for the flow. We will do so by continuously deforming the first-return map $\psi^n_v$ on $I$ into a hyperbolic state, from which the assertion would follow.
\begin{figure}[h]
\centering
\begin{overpic}[width=0.35\textwidth]{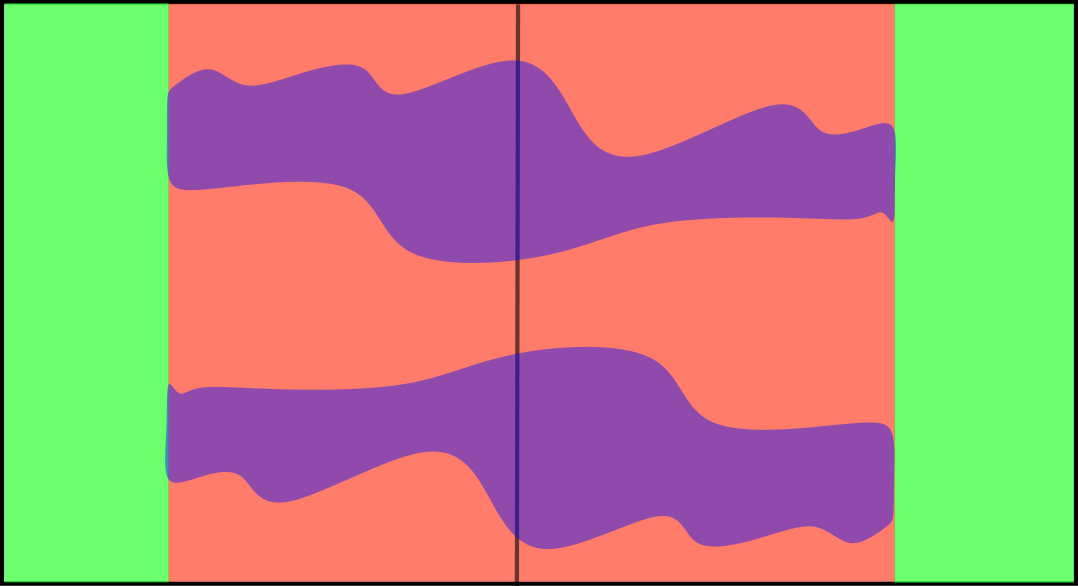}
\put(170,280){$R'_0$}
\put(-100,280){ $R_0$}
\put(1010,280){$R_1$}
\put(700,280){$R'_1$}
\put(450,570){$W$}
\end{overpic}
\caption{\textit{The blue regions denote the $n$--th iterate of the two red rectangles, $R'_0,R'_1$, inside the cross-section.}}
\label{renorm}
\end{figure}

To this end, we begin by deforming the flow as described below. First, we smoothly split the fixed point at the origin from a saddle into an attractor and two repellers by a pitchfork bifurcation, as illustrated in Fig.\ref{splitting}. This has the effect of opening $W$ into an open region, s.t. $R'_0$ and $R'_1$ become two rectangles separated by an open set, $W'$, a topological rectangle. Moreover, the first-return map $\psi^n_v:R'_0\cup R'_1\to R'$ also changes isotopically to the first-return map for this new flow $\phi^n_v:R'_0\cup R'_1\to R'$, as illustrated in Fig.\ref{splitting}. In particular, we do so such that the topological horseshoe structure persists, hence by Th.1 and Th.2 in \cite{Han} we know the components of $I$ vary smoothly and persist throughout this deformation.

\begin{figure}[h]
\centering
\begin{overpic}[width=0.6\textwidth]{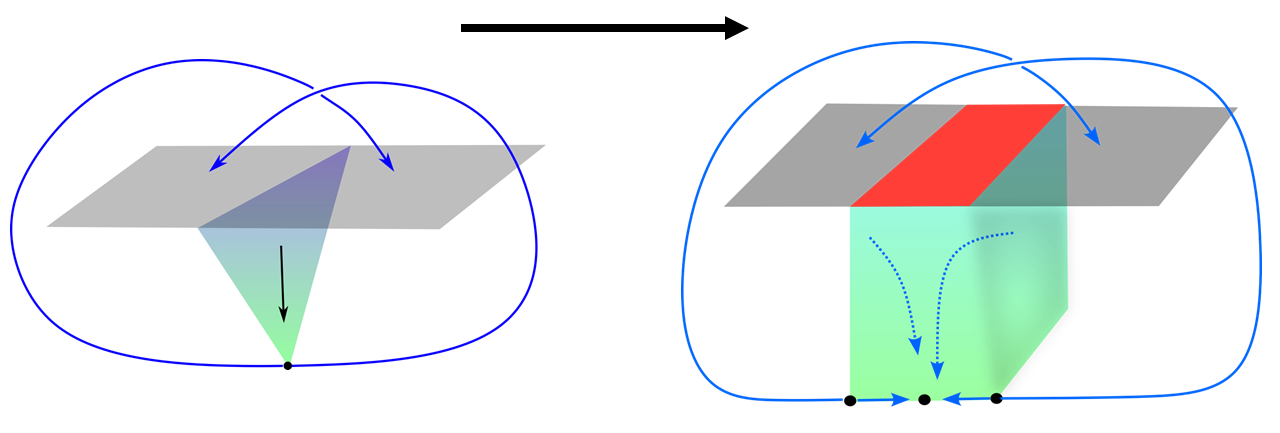}

\end{overpic}
\caption{\textit{Splitting the origin in a pitchfork bifurcation, and opening $W$ into an open basin of attraction.}}
\label{splitting}
\end{figure}

In fact, we can say more - as by Th.1 and Th.2 in \cite{Han} the periodic orbits in $I$ are uncollapsible and unremovable, we immediately conclude the following: let $s\in\{0,1\}^\mathbb{Z}$ be periodic of minimal period $k$. Then, there exists a periodic point $x\in\pi^{-1}(s)$ that lies on a periodic solution curve $\mathcal{T}_x$ that is changed isotopically (in $\mathbb{R}^3$) as we smoothly isotope $\psi^n_v$ to $\phi^n_v$ - and moreover, the number of intersection points of $\mathcal{T}_x$ with $R'$ doesn't change along the isotopy. In other words, the periodic orbit $\mathcal{T}_x$ for the vector field $L_v$ (i.e., the original Lorenz system) persists without changing its knot type as we move from $\psi^n_v$ to $\phi^n_v$.

We now do a similar trick, and "straighten" the dynamics of $\phi^n_v$ to make it into a proper Smale Horseshoe map. In order to do so, we first must push the invariant set of $\phi^n_v$ in $R'_0\cup R'_1$ away from the fixed points. To begin, let $B_0\subseteq \partial R'_0$ and $B_1\subseteq \partial R'_1$ be the boundary arcs of $\partial R'_0$, $\partial R'_1$ intersecting $W$. We smoothly deform the flow by extending them into open rectangles with interiors $b_0,b_1$, which separate $R_i\setminus b_i$ and $W$, $i=0,1$ (see the illustration in Fig.\ref{interior}). In particular, we do so s.t. this deformation of the flow $\phi^n_v:R'_0\cup R'_1\to R'$ is isotopically deformed to some other topological horseshoe $\varphi^n_v:R'_0\cup R'_1\to R'$, as illustrated in Fig.\ref{interior}. Using the same arguments and notations as above, it similarly follows that as we deform $\psi^n_v$ to $\phi^n_v$ to $\psi^n_v$ the periodic orbit $\mathcal{T}_x$ persists, without changing its knot type. And again, we can say more - by Th.1 and Th.2 in \cite{Han} we know the invariant set $I$ varies continuously, without losing any component or collapsing two components into one, as we continuously vary the flows from the Lorenz system into this new vector field, $V$. In detail, there exists an invariant set $I_V\subseteq R'_0\cup R'_1$ for $\varphi^n_v:R'_0\cup R'_1\to R'$ and a continuous, surjective map $\pi_v:I_V\to\{0,1\}^\mathbb{Z}$ s.t.:

\begin{itemize}
    \item $\pi_V\circ\varphi^n_v=\zeta\circ\pi_V$ - where $\zeta:\{0,1\}^\mathbb{Z}\to\{0,1\}^\mathbb{Z}$ is the double-sided shift.
    \item If $s\in\{0,1\}^\mathbb{Z}$ is periodic of minimal period $k$, $\pi^{-1}_V(s)$ is connected and includes at least one periodic orbit of minimal period $k$.
    \item For all $s\in\{0,1\}^\mathbb{Z}$, as we deform $V$ back to the original Lorenz system corresponding to $v$, $\pi^{-1}_V(s)$ is isotopically deformed to $\pi^{-1}(s)$ (where $s$ is as before). 
    \item Moreover, when $s$ is periodic of minimal period $n$, the periodic orbits for $\varphi^n_v$ in $\pi_V^{-1}(s)$ vary to the periodic orbits of $\psi_v^n$ in $\pi^{-1}(s)$ when $\varphi^n_v$ is isotoped back to $\psi^n_v$. In addition, at least one periodic orbit in $\pi^{-1}_V(s)$ has period $n$, and persists when $\varphi^n_v$ is isotoped back to $\psi^n_v$ without changing its period.
\end{itemize}

\begin{figure}[h]
\centering
\begin{overpic}[width=0.7\textwidth]{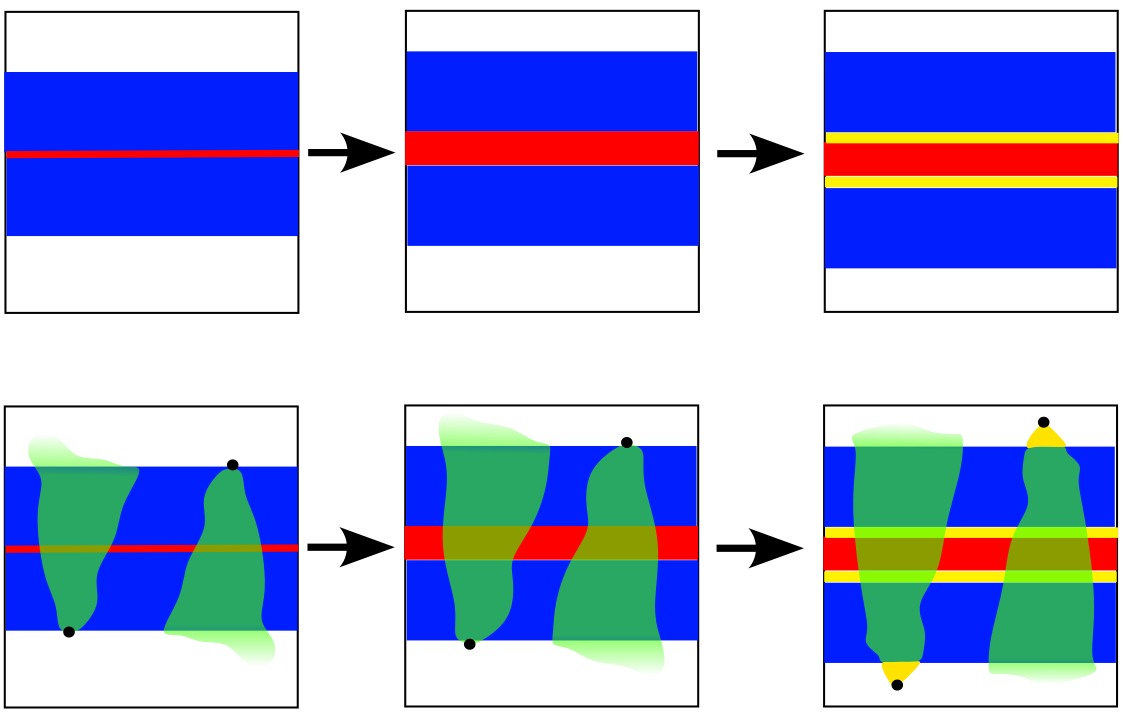}

\end{overpic}
\caption{\textit{The isotopy from $\psi^n_v$ (on the left) to $\phi^n_v$ (in the middle) to $\varphi^n_v$ (on the right). The image above describes the changes in the cross-section - the blue rectangles always denote $R'_0$ and $R'_1$, the red line and rectangles always denote $W$, while the yellow rectangles in the upper-right image denote $b_0$ and $b_1$. Below, from left to right, the green regions always denote the images of $R'_0$ and $R'_1$ under $\psi^n_v,\phi^n_v$ and $\varphi^n_v$ (respectively), while the black dots always denote the hitting point of the separatrices. The yellow regions denote the images of $b_0$ and $b_1$ under $\varphi^n_v$.}}
\label{interior}
\end{figure}

We now continue by moving flow lines and deforming $V$ continuously s.t. the set $\pi^{-1}_V(s)$ becomes the intersection of a nested sequence of (topological) rectangles - in other words, we deform the flow s.t. $\varphi^n_v:R'_0\cup R'_1\to R'$ is homeomorphic to a rectangle map (see the illustration in Fig.\ref{straightt}). As a consequence, it is easy to see that after this deformation, $\pi^{-1}_V(s)$ becomes homeomorphic to a convex set - therefore contractible. We now continue by continuously deforming the flow (if necessary) by simultaneously collapsing every component in the suspension of $I$ by the flow to a singleton. In other words, we deform the flow by isotoping $\varphi^n_v:R'_0\cup R'_1\to R'$ to a map $H:R'_0\cup R'_1\to R'$, which smoothly stretches $R'_0$ over $R'_1$ and vice versa (see the illustration in Fig.\ref{straightt}). Let $F_H$ denote the vector field generating $H$ as its first return map.

\begin{figure}[h]
\centering
\begin{overpic}[width=0.4\textwidth]{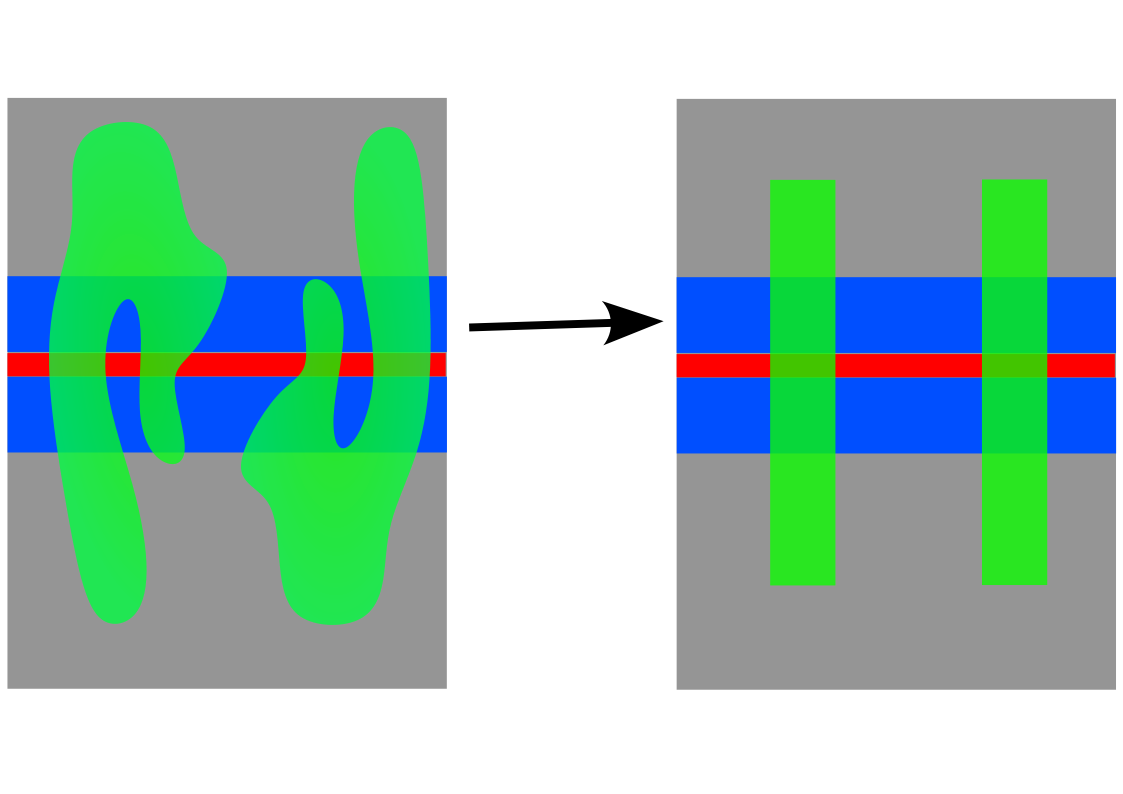}

\end{overpic}
\caption{\textit{The image describes the corresponding straightening $\phi^n_v$ on the left to $H$ on the right, i.e., how we deform the first-return map as we deform the cross-sections. The red rectangle denotes the blown-up $W$, while the blue rectangles always correspond to the regions collapsed to the renormalization interval of the Lorenz map (i.e., $R'_1$ and $R'_0$). The green images always correspond to the $n$--th iterate of each blue rectangle.}}
\label{straightt}
\end{figure}

The resulting map $H$ is easily seen to be a Smale Horseshoe map (see Fig.\ref{straightt}), hence it is hyperbolic on its invariant set. Moreover, the same arguments as before (i.e., Th.1 and Th.2 from \cite{Han}) again imply that as we isotope $H$ back to $\psi^n_v$, the first-return map for the Lorenz system, all the periodic orbits of $H$ persist without changing their minimal period or the way their corresponding solution curve intersects $R$. Put simply, we have proven that if $K$ is a knot type realized as a periodic orbit in the suspension of $H$ with respect to the vector field $F_H$, then $K$ is also realized as a periodic orbit for the suspension of $\psi^n_v$ w.r.t. the vector field $L_v$ - i.e., $K$ is generated as a periodic orbit for the Lorenz attractor at parameter value $v$.

We now recall that by the hyperbolicity of $H$ the Birman-Williams Theorem guarantees the existence of a Template $\tau$ encoding every knot type generated by suspending $H$ w.r.t. $F_H$ (save possibly for two extra knots). Therefore, by the above it follows every knot type $K$ encoded by $\tau$ (save possibly for two) is realized as a periodic orbit on the Lorenz attractor corresponding to the parameters $v$. Moreover, since by Cor.3.1.14 in \cite{KNOTBOOK} $\tau$ encodes infinitely many different knot types, the Lorenz attractor corresponding to $v$ also includes infinitely many knots. All in all, we conclude the dynamical complexity of the Lorenz attractor at parameter values $v$ is complex at least as dictated by $\tau$.

\begin{figure}[h]
\centering
\begin{overpic}[width=0.25\textwidth]{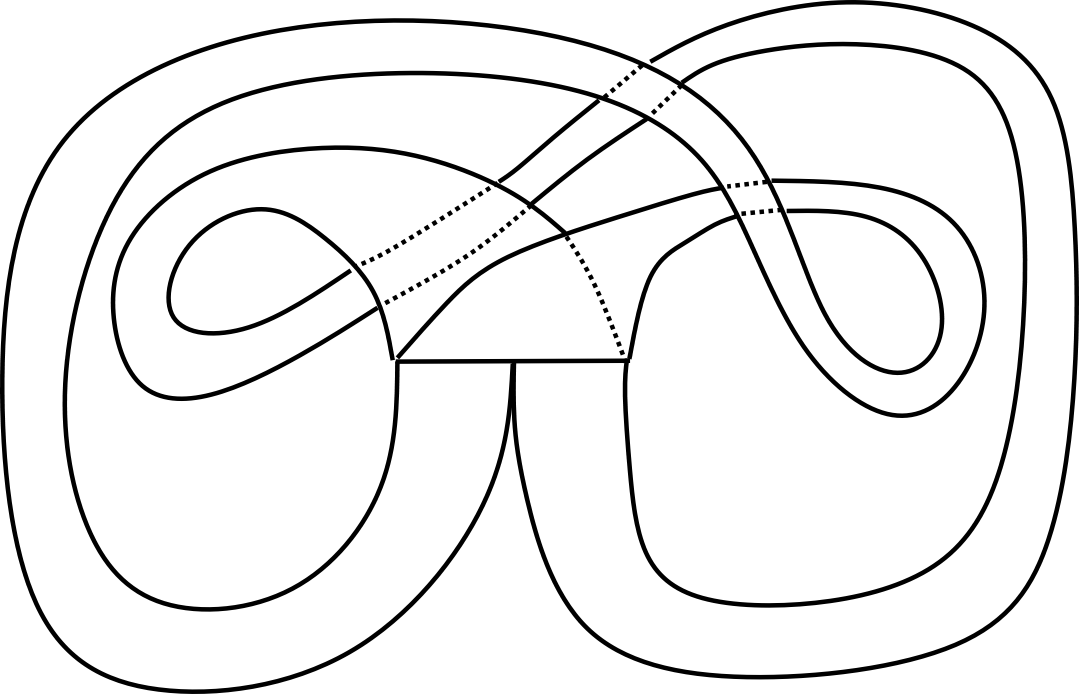}

\end{overpic}
\caption{\textit{The template $\tau$. Here we assume $n=2$, for simplicity.}}
\label{templat}
\end{figure}

Having associated a template with the dynamics of the Lorenz system at $v$, to complete the proof of Th.\ref{templateth} we need to show $\tau$ is a Lorenz $L(k,k)$ template, for some even $k$. To this, we first note the Template $\tau$ is unique - in other words, that it does not depend on our choice of deformation. This follows immediately by seeing that as we isotope $\psi^n_v$ to $H$ we do not change the way the flow suspends initial conditions in $I$ - we only change the size of the components in $I$, by collapsing the suspension of every component into a flow line. This proves that no matter how we perform the deformation above, we always get the same map $H:R'_0\cup R'_1\to R$, up to some isotopy. Consequentially, we get the same template $\tau$, or, in other words, $\tau$ is independent of the deformation.
 
In order to prove $\tau$ is a Lorenz Template, we first prove every knot type encoded by $\tau$ is also encoded by the Lorenz $L(0,0)$ template. To see why this is the case, note that given a parameter $p\in T$, as we deform the Lorenz system from the parameter $v$ to $p$ the periodic orbits encoded by $\tau$ persist, without bifurcating. This follows immediately by Th.\ref{reduct} and Prop.\ref{prop:sharkovsky}, which together imply the periodic orbits on the Lorenz persist as $v\to p$, i.e., as the dynamics on the attractor become increasingly $C^1$--close to those of trefoil parameters. Again, from the same arguments, they persist without changing their knot type. Now, recall that by Th.\ref{tali} the periodic dynamics at trefoil parameters are those of the Lorenz $L(0,0)$ Template - or, put precisely, every knot type on the template $L(0,0)$ (save possibly for two extra knots) is realized as a periodic orbit for trefoil parameters. Therefore, as the periodic orbits encoded by $\tau$ all persist as both $v\to p$ and $\beta\to2$, we conclude all the periodic orbits encoded by $\tau$ are also encoded by the $L(0,0)$ Template, as illustrated in Fig.\ref{templat}.

This teaches us that since the two strips emanating from the branch line for $L(0,0)$ are not braided, the same is true for $\tau$ (see the illustrations in Fig.\ref{templat}). In other words, the semiflow on $\tau$ which is given by the Birman-Williams Theorem suspends each strip a finite number of times around the wing centers of the butterfly attractor, until it returns to cover the branch line (see Fig.\ref{templat}). Moreover, these two strips remain unbraided as they move in space. Finally, by the symmetry properties of the Lorenz system, we know the semiflow winds both strips around the wing centers the same number of times. This implies that as we "straighten" the template $\tau$ isotopically as illustrated in Fig.\ref{straight}, every winding around the wing center is converted into two half-twists, one positive and one negative. Moreover, by the symmetry properties of the Lorenz system these twists are symmetric - i.e., a positive half twist one one strip corresponds to a negative half twist for the second, and vice versa. This implies, by definition, that $\tau$ is a Lorenz $L(k,k)$ Template, where $k\in 2\mathbb{Z}$, for some $k\ne0$.

Having proven the existence of the Template $\tau$ and that it is a Lorenz $L(k,k)$ template, to conclude the proof, it remains to show $L(k,k)$ is not universal - i.e., that it does not encode all knot types. That, however, is immediate - to see why, recall that by Prop.3.2.21 in \cite{KNOTBOOK} the Lorenz $L(0,0)$ template is not universal. As we already proved every knot in $L(k,k)$ is also encoded by $L(0,0)$ it follows $L(k,k)$ cannot be universal as well. The proof of Th.\ref{templateth} is now complete.
\end{proof}

\begin{figure}[h]
\centering
\begin{overpic}[width=0.2\textwidth]{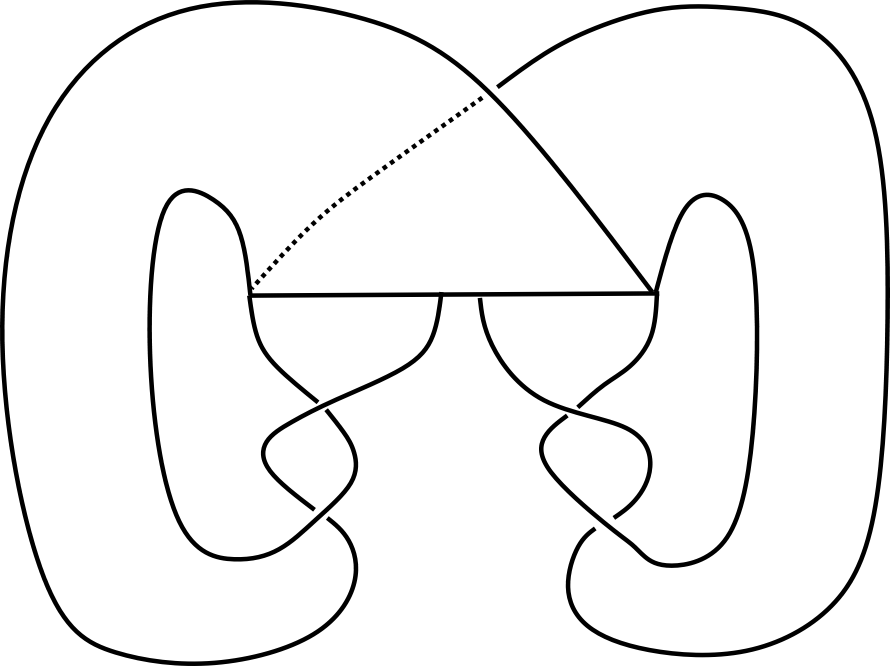}

\end{overpic}
\caption{\textit{"Straightening" the template $\tau$ in Fig.\ref{templat} by stretching the strips beginning at the branch line to get $L(k,k)$, for some $k\in\mathbb{Z}$. Here we assume $n=2$, for simplicity.}}
\label{straight}
\end{figure}
To continue, recall a \textbf{Lorenz knot} is a knot which is encoded by the $L(0,0)$ template. Therefore, recalling every Template encodes an infinite collection of knot types, as an immediate consequence of Th.\ref{templateth} and Th.2.2 in \cite{Deh} we conclude:
\begin{corollary}
\label{prime} Assume there exists a fully renormalizable parameter $v\in P$. Then, the Lorenz attractor corresponding to $v$ generates infinitely many Lorenz knot, of infinitely many different types. Moreover, all these knots are prime.
\end{corollary}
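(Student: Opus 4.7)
The plan is to assemble three ingredients, all of which have essentially been set up already, and compose them. First, Theorem \ref{templateth} provides, for any fully renormalizable parameter $v \in P$, an even integer $k$ and a Lorenz template $L(k,k)$ every knot type of which (except possibly two exceptional knots from the Birman--Williams construction) is realized as a periodic orbit of the suspension flow on the corresponding Lorenz attractor $A_v$. Second, a standard fact about Lorenz templates, namely Cor.~3.1.14 in \cite{KNOTBOOK}, guarantees that $L(k,k)$ encodes infinitely many distinct knot types; discarding at most two exceptions leaves an infinite family of knot types realized on $A_v$. This already gives the first part of the corollary, modulo identifying these knots as Lorenz knots.

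For the identification, I would not re-prove anything: it was established inside the proof of Theorem \ref{templateth} that every knot type encoded by $L(k,k)$ is also encoded by the standard Lorenz template $L(0,0)$. Indeed, this was obtained by perturbing $v$ towards a trefoil parameter $p \in T$ and invoking the persistence of periodic orbits from Th.~\ref{reduct} together with Prop.~\ref{prop:sharkovsky} to conclude that no bifurcation collapses their knot types. Consequently each of the infinitely many knot types produced above is, by definition, a Lorenz knot, which settles the first assertion of Cor.~\ref{prime}.

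For the primality assertion I would invoke Th.~2.2 of \cite{Deh}, which states that every knot encoded by the Lorenz template $L(0,0)$ is a prime knot. Applied to our infinite family of Lorenz knots, this immediately shows all of them are prime, completing the proof.

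The proof is really just a packaging exercise and has no hard step, since the two substantial results (Theorem \ref{templateth} and Th.~2.2 of \cite{Deh}) are already in place. If anything, the only point needing a brief word of care is the handling of the Birman--Williams exceptional knots: the template $L(k,k)$ encodes infinitely many knot types, so subtracting at most two still leaves infinitely many, and each surviving knot inherits primality via the inclusion of $L(k,k)$-knot types into $L(0,0)$-knot types. No new dynamical or topological input is required beyond what was developed in the preceding sections.
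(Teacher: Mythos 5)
Your proposal is correct and follows essentially the same route as the paper, which derives Cor.\ref{prime} directly from Th.\ref{templateth} (including the containment of $L(k,k)$-knots in $L(0,0)$-knots established in its proof), the fact that every Template encodes infinitely many knot types, and Th.~2.2 of \cite{Deh} for primality. The only difference is that you spell out the bookkeeping about the at-most-two Birman--Williams exceptional orbits, which the paper leaves implicit.
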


Before we conclude this section, we remark the Lorenz $L(0,0)$ Template was investigated in \cite{Hol}, where it was proven it encodes infinitely many Torus knots of different resonances. Moreover, the class of Lorenz knots is well-studied - for more details on the properties of Lorenz knots, see \cite{Deh}. 

\section{Discussion}
\label{discussion}

While answering many questions, our results also raise several others, which we survey in this section. To begin, we first recall that throughout this paper, our analysis of the Lorenz attractor and the $\beta$--transformations was mostly topological. In detail, most of our results in the previous section can be described as "pulling back" the topological dynamics of the $\beta$--transformations and showing they also exist in some form on the Lorenz attractor. As previously remarked, in addition to the topological theory there also exists a rich measurable theory for the dynamics of $\beta$--transformations (and of Lorenz maps, in general). This raises the following question: can we "pull back" these measurable results to the Lorenz attractor, and use them to study its statistical properties?

At present, we do not have a satisfying answer to this question - in fact, the best we have so far is Cor.\ref{measurableattractor}, which proves the dynamics on the Lorenz attractor are essentially those of a "deformed" measurable dynamical system. The main reason for that is because when we deform the interval map $F_\beta$ back to the first-return map for the flow, singletons in the invariant set of $F_\beta$ in $[0,1]\setminus\{\frac{1}{2}\}$ possibly expand to continua - which could obstruct the possibility of a dense orbit for the flow on the Lorenz attractor. That being said, as proven in \cite{TUC}, at parameter values  $(\sigma,\rho,\beta)=(10,28,\frac{8}{3})$ as the flow is essentially the same as the Geometric Model this cannot occur, i.e., the dynamics on the attractor at this parameter value are "well mixed". Therefore, inspired by both \cite{TUC} and by \cite{WY}, we conjecture the following:
\begin{conjecture}
\label{hyperbolicityconj}    Given any parameter $v\in P$, the first-return map of the attractor $\psi_v:R_0\cup R_1\to R$ is conjugate on its invariant set to the map $h_v:R_0\cup R_1\to \mathbb{R}^2$ (where $h_v$ is as in Fig.\ref{S}).
\end{conjecture}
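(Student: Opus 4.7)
The natural strategy is to sharpen the factor map $\pi:I_v\to I_\beta$ built in Th.\ref{reduct} into an honest conjugacy, using the auxiliary rectangle map $h_v:ABCD\to\mathbb{R}^2$ as the target. Recall that in the proof of Th.\ref{reduct} the map $h_v$ was obtained from $\psi_v$ by a finite sequence of isotopies: first we opened the fixed points $p_0,p_1$ into arcs, then we opened $W$ and its preimages into rectangles, and finally we stretched the resulting map so that it preserves vertical and horizontal foliations on $R_0\cup R_1$. What is needed is to show that every step in this procedure can be performed without identifying two distinct points of the invariant set, so that the composite isotopy descends on $I_v$ to a homeomorphism onto the invariant set of $h_v$.

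The plan is to proceed in three steps. First, construct an invariant stable foliation $\mathcal{F}^s$ for $\psi_v$ on a neighborhood of $I_v$ in $R_0\cup R_1$, by pulling back the two-dimensional stable foliation of $L_v$ through the cross-section $R$; one then verifies (via a cone-field argument applied to $D\psi_v$, using the hyperbolicity of the saddle at the origin and the control on transit times provided by Prop.1 in \cite{Pi}) that $\psi_v$ is uniformly hyperbolic on $I_v$ with one-dimensional stable and unstable bundles. Second, show that the quotient $I_v/\mathcal{F}^s$ coincides, up to a canonical homeomorphism, with $I_\beta\subseteq[0,1]\setminus\{\tfrac12\}$ and that this quotient intertwines $\psi_v$ with $F_\beta$. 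This essentially reverses the final collapsing step in the proof of Th.\ref{reduct}: the rectangles $R_i^k$ there are exactly the cylinders of the itinerary partition of $\psi_v$, and uniform hyperbolicity forces each nested intersection $\bigcap_k R_i^k$ to be a single stable leaf. Third, with $\mathcal{F}^s$ in hand, realize $h_v$ concretely as the map induced by $\psi_v$ on a union of two rectangles foliated by $\mathcal{F}^s$ in the horizontal direction and by its unstable counterpart $\mathcal{F}^u$ in the vertical direction; conjugacy on the invariant set then follows from standard Markov-partition arguments (e.g., Bowen's approach for uniformly hyperbolic planar maps with discontinuities).

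The main obstacle will be the first step, namely establishing uniform hyperbolicity of $\psi_v$ on $I_v$ for \emph{every} $v\in P$, not just those close to the trefoil set $T$. Away from $T$ the flight times near the origin can become arbitrarily long, and the resulting Jacobian estimates are delicate: one must control the distortion of the return map in terms of the eigenvalue ratio at $0$ and show that the cone field survives indefinite iteration. A related technical difficulty, which is genuinely two-dimensional, is that Category $C$ components of $\mathbb{W}$ in the proof of Th.\ref{reduct} are precisely those that a priori could hide identifications in $I_v$; one therefore needs to prove that no point of $I_v$ ever lies on the Jordan domain bounded by a Category $C$ arc together with a subarc of $\partial R$. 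If both of these points can be settled in the affirmative, then the isotopies of Th.\ref{reduct} restrict to homeomorphisms on $I_v$ and produce the desired conjugacy between $\psi_v|_{I_v}$ and $h_v|_{h_v^{-1}(I_v)}$. Finally, one should be able to propagate this through the renormalization hierarchy of Section \ref{renorsec} so that the conjugacy is compatible with the Template extraction of Th.\ref{templateth}, giving a coherent hyperbolic model for the attractor across all of $P$.
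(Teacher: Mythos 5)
The statement you are proving is labelled a \emph{conjecture} in the paper, and the paper offers no proof of it: the authors explicitly say that the best they can currently do is Cor.\ref{measurableattractor}, that the obstruction is precisely that fibers of the factor map $\pi$ may be continua rather than points, and that because of the probable non-hyperbolic nature of the attractor any proof will likely require a computational component. So there is no paper proof to compare against; the question is only whether your proposal closes the gap. It does not. Your three-step plan front-loads the entire difficulty into Step 1, ``establish uniform hyperbolicity of $\psi_v$ on $I_v$ via a cone-field argument for every $v\in P$,'' and this is exactly the open problem, not a lemma one can expect to extract from Prop.~1 of \cite{Pi} (which gives only existence and continuity of the return map). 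Worse, as stated it is almost certainly false in the uniform sense: points of $I_v$ accumulate on $W=W^s(0)\cap R$ and on its preimages, and near $W$ the transit time past the saddle at the origin is unbounded, so $\|D\psi_v\|$ blows up and the contraction/expansion rates are not uniformly bounded. At best one can hope for a singular or dominated splitting, and even granting an invariant stable foliation $\mathcal{F}^s$, the step that actually upgrades the semi-conjugacy of Th.\ref{reduct} to a conjugacy --- namely that each stable leaf meets $I_v$ in a single point in the unstable direction, equivalently that every fiber $\pi^{-1}(x)$ is a single leaf rather than a sub-rectangle --- is asserted (``uniform hyperbolicity forces each nested intersection to be a single stable leaf'') rather than proved, and it is precisely the point the authors identify as unreachable without further (probably numerical) input.

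Your second observation, that Category $C$ arcs are where identifications of $I_v$ could hide, is a genuinely useful reformulation of the obstruction and is consistent with how the paper's proof of Th.\ref{reduct} loses information (the passage from $\Sigma_v$ to $\Phi_v$ is only an inclusion). But you again leave it as a conditional (``if both of these points can be settled in the affirmative''). A proof proposal whose two key steps are both flagged as unresolved obstacles is a research program, not a proof, and in this case the unresolved steps coincide with the reason the statement is a conjecture in the first place.
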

\begin{figure}[h]
\centering
\begin{overpic}[width=0.4\textwidth]{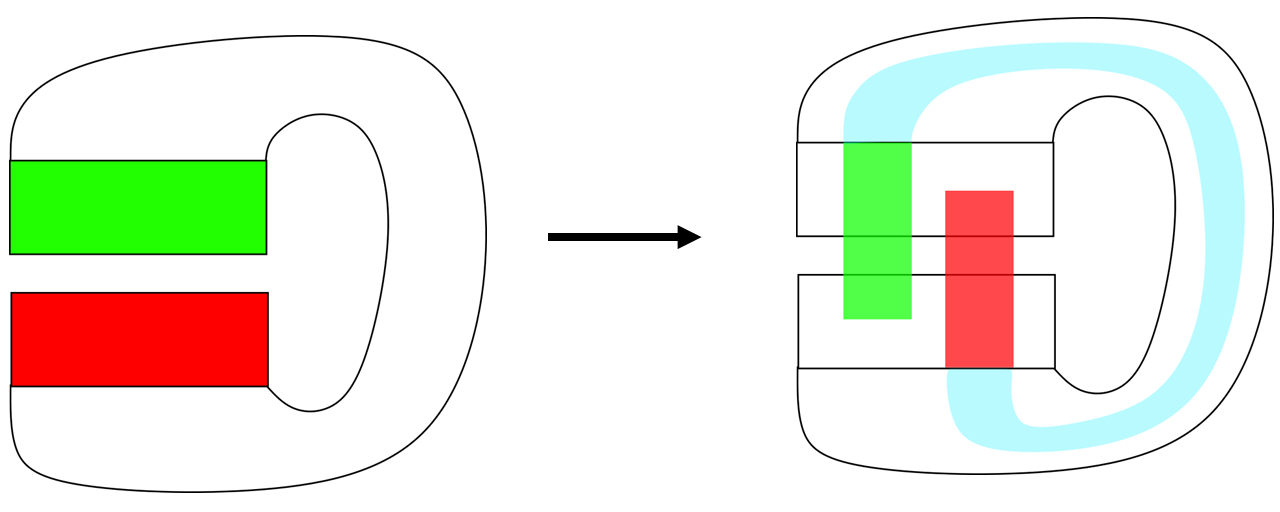}

\end{overpic}
\caption{\textit{The set $S$ on the left, and its image w.r.t. the map $h_v:S\to\mathbb{R}^2$ on the right. The red rectangle denotes $R_0$ while the green denotes $R_1$.}}
\label{S}
\end{figure}

Recalling the proof of Th.\ref{reduct}. we know the map $h_v$ is collapsed to a Lorenz-like map $f_r$ by collapsing the stable foliation to singletons. Assuming Conjecture \ref{hyperbolicityconj} can be proven, it would follow the dynamics of the Lorenz attractor for parameter values in $P$ are "essentially hyperbolic". In other words, if Conjecture \ref{hyperbolicityconj} is true, then for all $v\in P$ the dynamics of the corresponding Lorenz attractor $A_v$ have the same qualitative and statistical properties of hyperbolic dynamical systems - even in the absence of any splitting condition on the tangent bundle of $A_v$. As such, a proof of Conjecture \ref{hyperbolicityconj} would amount to a proof of the Chaotic Hypothesis for the specific case of the Lorenz attractor (see \cite{gal}). That being said, due to the probable non-hyperbolic nature of the Lorenz attractor, we believe any proof of Conjecture \ref{hyperbolicityconj} will have to include a certain computational component.

\begin{figure}[h]
\centering
\begin{overpic}[width=0.3\textwidth]{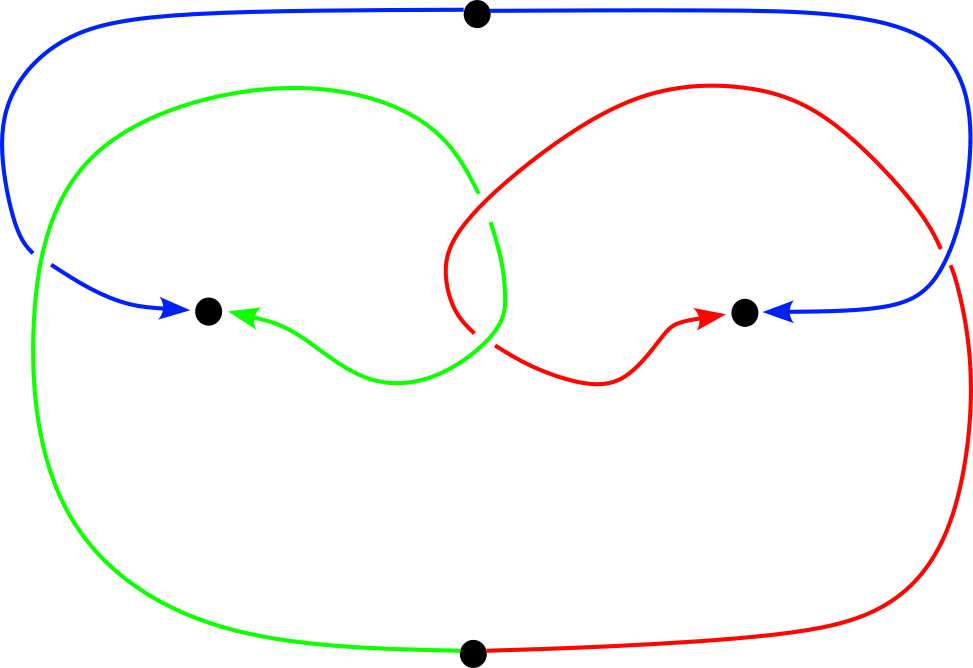}
\put(430,710){ $\infty$}
\put(450,-60){ $0$}
\put(180,410){$p_0$}
\put(720,410){$p_1$}
\end{overpic}
\caption{\textit{The configuration of the heteroclinic trefoil knot observed numerically in \cite{KOC}.}}
\label{fig8}
\end{figure}

Another interesting question that arises from our results is how much they can be generalized. To clarify, recall our results were proven for parameters in $P$ that are sufficiently close to $T$ - where $T$ denotes the collection of parameters in $P$ in which the flow generates a heteroclinic trefoil knot. However, as observed numerically, there are also parameters where the Lorenz system generates heteroclinic knots whose topology is \textbf{not} that of a trefoil knot. This leads us to ask the following: let $p$ be a parameter where the corresponding Lorenz system generates a heteroclinic knot which is \textbf{not} a trefoil knot. Then, can we find a one (or more) parameter family of piecewise continuous maps describing the dynamics of its perturbations analogously to Th.\ref{reduct}?

\begin{figure}[h]
\centering
\begin{overpic}[width=0.3\textwidth]{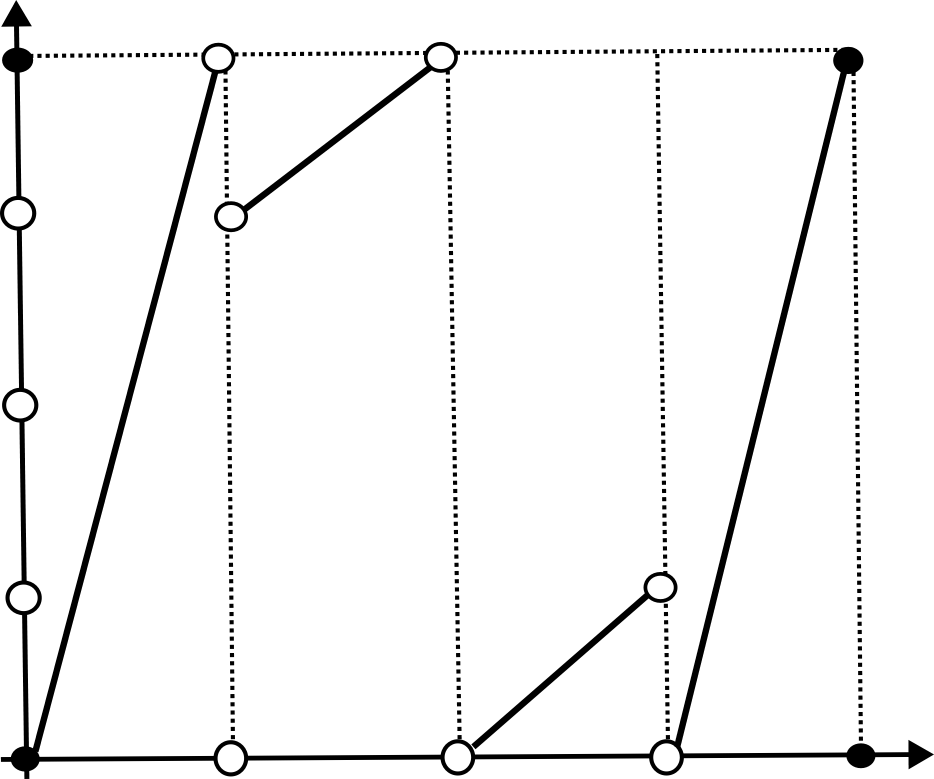}
\put(870,-50){ $1$}
\put(0,-50){ $0$}
\put(450,-50){ $c_2$}
\put(220,-50){$c_1$}
\put(720,-50){$c_3$}
\end{overpic}
\caption{\textit{A one-dimensional map derived from the geometric model for the figure $8$ knot, specifically, from the first-return map of the geometric model (compare with Fig.11 in \cite{YH}).}}
\label{alternative}
\end{figure}

To motivate why this question is interesting, recall that as observed numerically in \cite{KOC}, there are parameters in $P$ where the Lorenz system generates a heteroclinic Figure $8$ Knot, sketched as in Fig.\ref{fig8}. Moreover, recall that, as proven in \cite{YH}, the geometric model for the Lorenz attractor at such scenarios would correspond to a shift on more than $2$ symbols - hence the $\beta$--transformations are no longer a realistic (simplified) model for the perturbed dynamics of such heteroclinic scenarios. In fact, based on the first-return map of the geometric model considered in \cite{YH} (see Fig.11 in \cite{YH}), it appears a realistic model for the perturbations of such parameters would be some "perturbation" of the piecewise-continuous map in Fig.\ref{alternative}. This leads us to conjecture the following:

\begin{conjecture}
    \label{onedim1} Let $\mathcal{R}$ denote the collection of all maps $\mathcal{F}:[0,1]\to[0,1]$ that serve as factor maps for the first-return maps $\psi_v:R\setminus W\to R$, of the Lorenz attractor (not necessarily only for parameters $v\in P$). Then, every map $\mathcal{F}:[0,1]\to[0,1]$ in $\mathcal{R}$ satisfies the following conditions.
    \begin{itemize}
        \item $\mathcal{F}$ is piecewise discontinuous. Specifically, there is a partition $0=c_0<c_1<\ldots<c_n<c_{n+1}=1$ of the interval $[0,1]$ s.t. $\mathcal{F}$ is an orientation-preserving affine map on $(c_k,c_{k+1})$, for $k=0,1,\ldots,n$, and has a jump discontinuity at each point $c_i$, for $i=1,\ldots,n$.
        \item Assume $\psi_v:R\setminus W\to R$ can be semi-conjugated to $\mathcal{F}$, and let $\mathcal{I}$ denote the invariant set of $\mathcal{F}$ in $[0,1]\setminus\{c_1,...,c_n\}$. Then, there exists $I_v$, an invariant set for the first-return map $\psi_v$, and a continuous surjection $\pi:I_v\to \mathcal{I}$ s.t. $\pi\circ\psi_v=\mathcal{F}\circ\pi$. 
        \item Moreover, if $x\in \mathcal{I}$ is periodic of minimal period $n$ for $\mathcal{F}$, then $\pi^{-1}(x)$ includes a periodic orbit for $\psi_v$ of minimal period $n$.
    \end{itemize}
\end{conjecture}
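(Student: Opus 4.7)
The plan is to follow the blueprint of Theorem \ref{reduct}, generalizing the argument to allow for the multiple discontinuity curves arising from more complex heteroclinic structures. First, I would establish the cross-section setup in full generality. Given a parameter $v$ where $\psi_v$ admits a semi-conjugacy to a map $\mathcal{F}$ with $n$ discontinuities at $c_1<\ldots<c_n$, one expects the stable manifold $W^s(0)$, together with a finite collection of other "essential" stable leaves, to intersect a suitable cross-section $R$ in a union of arcs $W_1,\ldots,W_n$ that partition $R$ into $n+1$ open rectangles $R_0,\ldots,R_n$. This generalizes the setup of Theorem \ref{tali}, where the trefoil structure forced precisely one such arc. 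The arcs $W_i$ become the discontinuity curves for the first-return map $\psi_v:R\setminus(W_1\cup\ldots\cup W_n)\to R$.

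Second, I would adapt the straightening isotopy from the proof of Theorem \ref{reduct}. As there, one can find a neighborhood $N$ of $\partial R$ away from the set of fixed points where $\psi_v$ has no periodic orbits; this lets us isotopically simplify each intersection $\psi_v(R_i)\cap R_j$ by moving flow lines inside $N$ without creating or destroying periodic dynamics. Next, one would classify components of $\bigcup_{k\geq 0}\psi_v^{-k}(W_1\cup\ldots\cup W_n)$ into "essential" arcs (straight arcs connecting opposite sides of $R$) and "peripheral" arcs (Jordan-type arcs based on a single boundary side), and symmetrically collapse the peripheral ones to the boundary. The resulting straightened map $\phi_v:R\setminus(W_1\cup\ldots\cup W_n)\to R$ forms a factor of $\psi_v$, whose invariant dynamics are captured purely by the covering pattern of the $R_i$ over one another.

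Third, I would follow the final stage of the proof of Theorem \ref{reduct} almost verbatim: open the relevant boundary fixed points into arcs and open each $W_i$ into a thin rectangle, producing a continuous rectangle map $h_v:ABCD\to\mathbb{R}^2$ with $n+1$ vertical strips. A further isotopy arranges $h_v$ to contract the horizontal foliation and expand the vertical one uniformly on each strip; collapsing along the horizontal foliation and identifying the $BD$-side with $[0,1]$ yields an interval map $\mathcal{F}$ that is affine and orientation-preserving on each $(c_k,c_{k+1})$ (the latter because the Lorenz flow is orientation-preserving transverse to $R$ and the strips do not flip during collapse). The semi-conjugacy statement and the persistence of periodic orbits of minimal period $n$ follow directly from a Fixed Point Index argument identical to the one used in Proposition \ref{persistence} and the end of Theorem \ref{reduct}: each periodic orbit of $\mathcal{F}$ lifts to a disc $D\subset R$ with non-zero Fixed Point Index for $\psi_v^n$, which persists through the isotopy.

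The main obstacle will be establishing the existence of an appropriate cross-section $R$ with the required partition structure outside the parameter range $P$ of Theorem \ref{tali}. Within $P$ this follows from Proposition 1 of \cite{Pi}, but for more general heteroclinic knot configurations (e.g.\ the figure-$8$ scenario of \cite{KOC,YH}) one needs a substantive global analysis of the flow, including verifying transversality of the flow to $R$ and controlling how $W^s(0)$ together with its forward images slices $R$ into finitely many pieces. A closely related difficulty is ensuring that the final one-dimensional reduction is genuinely \emph{affine} rather than merely piecewise monotone: this essentially requires that the local stable foliation on the attractor is smooth enough to admit a holonomy coordinate on $R$ with respect to which $h_v$ has constant slope on each strip. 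Overcoming this obstacle will likely require either a hyperbolicity-type assumption on $\psi_v$ (in the spirit of Conjecture \ref{hyperbolicityconj}) or numerical input of the kind used in \cite{TUC}.
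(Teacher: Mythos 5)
The statement you are trying to prove is stated in the paper as Conjecture \ref{onedim1}; the paper offers no proof of it, and the authors explicitly say that any proof would likely have to begin by classifying which heteroclinic knots the Lorenz system can generate and would probably require a computational component. So there is no ``paper's proof'' to compare against, and the relevant question is whether your proposal actually closes the conjecture. It does not: the obstacles you candidly list at the end are precisely the reasons the statement is left as a conjecture, and your outline does not resolve them. Concretely, outside the parameter set $P$ there is no analogue of Theorem \ref{tali} guaranteeing a cross-section $R$, a continuous first-return map, or a controlled intersection pattern of $W^s(0)$ with $R$; without that, the entire straightening-and-collapsing machinery of Theorem \ref{reduct} has nothing to act on. Your first step simply posits that $W^s(0)$ together with finitely many ``essential'' leaves cuts $R$ into $n+1$ rectangles, but establishing this for, say, the figure-$8$ heteroclinic configuration is the substantive global-analysis problem, not a routine generalization.

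A second genuine gap is the claim of affineness and, more subtly, the scope of the conjecture. The conjecture quantifies over \emph{every} $\mathcal{F}\in\mathcal{R}$, i.e.\ every factor map of some $\psi_v$, whereas your construction would at best produce \emph{one} factor map per parameter with the desired piecewise-affine, orientation-preserving form. Even for that one map, the affine structure in Theorem \ref{reduct} is obtained by an explicit further isotopy of the rectangle map $h_v$ so that it acts linearly on vertical fibers before collapsing the horizontal foliation; in the general setting you would need to justify that such an isotopy exists and preserves the dynamical core, which (as you note) amounts to a regularity or hyperbolicity hypothesis on the stable foliation in the spirit of Conjecture \ref{hyperbolicityconj}. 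The Fixed Point Index argument for persistence of periodic orbits is the one part of your plan that does transfer essentially verbatim from Proposition \ref{persistence} and the end of Theorem \ref{reduct}, but it only becomes available after the cross-section, the partition, and the straightened rectangle map have been constructed. As written, your proposal is a reasonable research program for attacking the conjecture, not a proof of it.
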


We believe any proof of Conjecture \ref{onedim1} would have to begin by first classifying which heteroclinic knots can be generated by the Lorenz system. As such, much like Conjecture \ref{hyperbolicityconj}, it is likely any proof of Conjecture \ref{onedim1} will have to include a certain computational component.

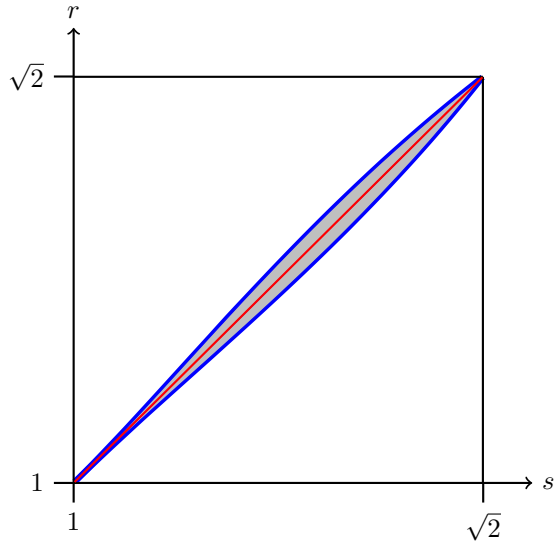
\begin{figure}[h]
\centering
\begin{tikzpicture}[scale=13]
	\draw[domain=0.001:0.414,smooth,variable=\x,blue,line width=2.2pt] plot ({\x},{1/(2*(\x+1))+1/2*((8*(\x+1)^2-9*(\x+1)+2)/((\x+1)^2*(2-(\x+1))))^(1/2)-1});
	\draw[domain=0.001:0.414,smooth,variable=\x,blue,line width=2.2pt] plot ({\x},{(2*(\x+1)^2+(\x+1)-2)/(2*(\x+1)^2)+1/2*((4*(\x+1)^4-4*(\x+1)^3+(\x+1)^2-4*(\x+1)+4)/((\x+1)^4))^(1/2)-1});
	\fill [lightgray, draw=blue, domain=0.001:0.413, variable=\x]
	(0.001, 0.001)
	-- plot ({\x}, {1/(2*(\x+1))+1/2*((8*(\x+1)^2-9*(\x+1)+2)/((\x+1)^2*(2-(\x+1))))^(1/2)-1})
	-- (0.413, 0.413)
	-- cycle;
	\fill [lightgray, draw=blue, domain=0.001:0.413, variable=\x]
	(0.001, 0.001)
	-- plot ({\x}, {(2*(\x+1)^2+(\x+1)-2)/(2*(\x+1)^2)+1/2*((4*(\x+1)^4-4*(\x+1)^3+(\x+1)^2-4*(\x+1)+4)/((\x+1)^4))^(1/2)-1})
	-- (0.413, 0.413)
	-- cycle;

	\draw[->, thick] (0,0) -- (0.464213,0) node[right]{$s$};
	\draw[->, thick] (0,0) -- (0,0.464213) node[above]{$r$};
	\draw[thick] (0,0.414513) -- (-0.02,0.414513) node[left]{$\sqrt{2}$};
	\draw[thick] (0.414513,0) -- (0.414513,-0.02) node[below]{$\sqrt{2}$};

	\draw[thick] (0,0) -- (-0.02,0) node[left]{1}; 
	\draw[thick] (0,0) -- (0,-0.02) node[below]{1}; 
	\draw[thick,domain=0:0.414213,variable=\x] plot ({\x},{0.414213});
	\draw[thick,domain=0:0.414213,variable=\x] plot ({0.414213},{\x});
	
	\draw[domain=0.001:0.414,variable=\x,red,thick] plot ({\x},{\x});
	\end{tikzpicture}	
\caption{\textit{Subset $(1,\sqrt{2}]\times(1,\sqrt{2}]$ of parameter space $(s,r)\in(1,2]\times(1,2]$. The lower and upper blue curves show the graph of $r=L(s)$ and $r=U(s)$, respectively. For parameters $(s,r)$ bounded by these curves the corresponding map $H_{s,r}$ is topologically conjugate to a $\beta$--transformation with a primary $2(1)$-cycle.}}
\label{fig:parameter_space_rs}
\end{figure}

Moving on, at this point we remark that many of our results easily generalize to $C^1$ perturbations of the heteroclinic trefoil scenario - even when they lie outside of the attractors corresponding to parameters in $P$. To illustrate, recall that while the Lorenz attractor is robust under general $C^1$ perturbations, its symmetry properties are not. Or, in other words, given a parameter $v\in P$ and its corresponding vector field $L_v$, we can $C^1$--approximate $L_v$ by non-symmetric vector fields. By Remark \ref{gen2} we know that in such cases, the one-dimensional factor map for the attractor given by Th.\ref{reduct} can still be defined, and takes the form:

 \begin{equation*}
f_{s,r}(x)=\begin{cases}
			rx, & \text{$x\in[0,\frac{1}{2})$}\\
            1-\frac{s}{2}+s(x-\frac{1}{2}), & \text{$x\in(\frac{1}{2},1]$},
		 \end{cases}
\end{equation*}
where $s,r\in(1,2]$ (see Fig.\ref{nosym}). As we discussed at the end of Sect.\ref{reductionsect} (see Remark \ref{rem:one-to-one}), the map $f_{s,r}$ can be reduced to an expanding Lorenz map $H_{s,r}\colon[0,1]\to[0,1]$ given by
\begin{equation}
\label{eq:non_symmetric}
H_{s,r}(x)=
\begin{cases}
			rx+\frac{(2-s)(r-1)}{r+s-2}, & \text{for}\ x\in[0,\frac{s-1}{r+s-2})\\
            sx-\frac{s(s-1)}{r+s-2}, & \text{for}\ x\in(\frac{s-1}{r+s-2},1]
		 \end{cases}
=
\begin{cases}
			rx+1-rc, & \text{for}\ x\in[0,c)\\
            s(x-c), & \text{for}\ x\in(c,1]
		 \end{cases},
\end{equation}
where $c=\frac{s-1}{r+s-2}$ denotes the critical point. Maps of the above form were studied by Cui and Ding in \cite{DiCui} (see also \cite{DC}). In particular, they proved such maps are topologically conjugate to $\beta$--transformations (see Main Theorem in \cite{DiCui}). Clearly, for $r=s$ we have  $H_{s,s}=F_s$, i.e., the map of the form \eqref{eq:non_symmetric} is a symmetric $\beta$--transformation. We will now show that for any $s\in(1,\sqrt{2}]$ and $r$ sufficiently close to $s$, the map $H_{s,r}$ is conjugate to a $\beta$--transformation with a primary $2(1)$-cycle. To be more precise, let us define maps
$$
L(s)=\frac{1}{2s}+\frac{1}{2}\sqrt{\frac{8s^2-9s+2}{s^2(2-s)}}\quad\text{and}\quad U(s)=\frac{2s^2+s-2}{2s^2}+\frac{1}{2}\sqrt{\frac{4s^4-4s^3+s^2-4s+4}{s^4}},
$$
for $s\in(1,\sqrt{2}]$ (see Fig.\ref{fig:parameter_space_rs}). Then we have the following:

\begin{proposition}
    \label{prop:non_symmetric}
    Let $H:=H_{s,r}$ be a map of form \eqref{eq:non_symmetric} defined by parameters
    $1<s\leq\sqrt{2}$ and $L(s)\leq r\leq U(s)$. Then the following conditions hold.
    \begin{enumerate}
        \item The map $H$ has a primary $2(1)$-cycle, so in particular it has a renormalization $G=(H^2,H^2)$.
        \item The map $H$ is topologically conjugate to the $\beta$-transformation $F_{\beta,\alpha}(x)=\beta x+\alpha\Mod{1}$ given by
        $$
        \beta=\sqrt{rs}\quad\text{and}\quad\alpha=\frac{1}{\beta+1}-\frac{\beta(\beta-1)(c-(1-rc))}{(\beta+1)(s(1-c)-(1-rc))}.
        $$
    \end{enumerate}
\end{proposition}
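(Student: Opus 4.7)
The plan is to split the argument along the two claims.

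First, I would prove existence of the primary $2(1)$-cycle by direct computation. The equations $H(z_0) = r z_0 + 1 - rc = z_1$ and $H(z_1) = s(z_1 - c) = z_0$ form a linear system whose unique solution (using $rs > 1$ throughout the parameter range) is
\[
z_0 = \frac{s\bigl(1 - c(1+r)\bigr)}{1 - sr}, \qquad z_1 = r z_0 + 1 - rc.
\]
Using $c = \frac{s-1}{r+s-2}$, a direct computation verifies $z_0 < c < z_1$, so $\{z_0, z_1\}$ is a $2(1)$-cycle. The primary conditions $z_0 \leq H(0) = 1 - rc$ and $H(1) = s(1-c) \leq z_1$ from Definition \ref{defn:nk_cycle}, after clearing denominators and eliminating $c$, each become a quadratic inequality in $r$ (with $s$ held fixed), and I expect solving them on the branch $r > 1$ to recover precisely $r \geq L(s)$ and $r \leq U(s)$. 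The renormalization $G = (H^2, H^2)$ then follows immediately from Theorem 3.5(1) of \cite{ChO}.

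For part (2), existence of a topological conjugacy $\varphi \colon [0,1] \to [0,1]$ between $H$ and some $\beta$-transformation $F_{\beta,\alpha}(x) = \beta x + \alpha \Mod{1}$ is the Main Theorem of \cite{DiCui}; only the identification of $(\beta,\alpha)$ remains. Since $\varphi$ is an orientation-preserving homeomorphism of $[0,1]$ it fixes the endpoints, so the conjugation relation forces $\varphi([H(0), H(1)]) = [\alpha, \beta + \alpha - 1]$. Composing $\varphi$ restricted to this interval with the affine rescalings from \eqref{eq:rescaling} yields a conjugacy between the rescaled renormalizations $\tilde G_H$ and $\tilde G_F$. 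My plan is then to observe that $\tilde G_H$ is a $\beta$-transformation of slope $rs$ (each branch of $H^2$ is affine with slope $rs$), and to unfold $F_{\beta,\alpha}^2$ on $[\alpha, \beta+\alpha-1]$ and rescale to show that $\tilde G_F = F_{\beta^2, \alpha_F}$ with
\[
\alpha_F = \frac{\alpha\beta(\beta+1) - 1}{\beta - 1}.
\]
Conjugate $\beta$-transformations have the same topological entropy $\log \beta$, so matching the slopes of $\tilde G_H$ and $\tilde G_F$ forces $\beta^2 = rs$, hence $\beta = \sqrt{rs}$. The Hubbard--Sparrow rigidity for $\beta$-transformations of the same slope (distinct $\alpha$ yield distinct kneading invariants and hence non-conjugate maps) then forces $\alpha_F = \alpha_H$, where $\alpha_H$ is the intercept of $\tilde G_H$ read off from $H^2(x) = rs\,x + s\bigl(1 - (r+1)c\bigr)$ on $[H(0), c)$. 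Solving this equation for $\alpha$ and simplifying with $\beta^2 = rs$ and $c = \frac{s-1}{r+s-2}$ will produce the stated closed form; the symmetric limit $r = s$, $\beta = s$, $c = \frac{1}{2}$ collapses to $\alpha = 1 - \frac{\beta}{2}$, serving as a useful sanity check.

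The most delicate step will be the algebraic verification in part (1) that the two primary inequalities reduce precisely to the curves $L(s)$ and $U(s)$: each inequality becomes a quadratic in $r$, and one must select the correct root on the branch $r > 1$ and handle the boundary cases $s = \sqrt{2}$ and $r = s$. In part (2) the identification of $\alpha$ is conceptually transparent once the two rescaled renormalizations have been matched, but reducing the equation $\alpha_F = \alpha_H$ to the precise form printed in the statement will require some nontrivial bookkeeping.
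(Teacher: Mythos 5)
Your proof of part (1) follows essentially the same route as the paper: the paper writes down the same $z_0,z_1$ (your expression $\frac{s(1-c(1+r))}{1-sr}$ equals their $\frac{s(c+rc-1)}{rs-1}$), checks the ordering $0<z_0<c<z_1<1$ via the bounds $2-\frac{1}{s}<L(s)$ and $U(s)<\frac{1}{2-s}$, and reduces the two primary conditions to exactly the quadratics in $r$ you anticipate, namely $(-s^2+2s)r^2+(s-2)r-2s+2\geq0$ for $r\geq L(s)$ and $s^2r^2+(-2s^2-s+2)r+2s-2\leq0$ for $r\leq U(s)$; just make sure you also record $0<z_0$ and $z_1<1$, not only $z_0<c<z_1$. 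For part (2) you genuinely diverge: the paper simply cites Corollary 3 of \cite{DiCui} for the values of $\beta$ and $\alpha$, whereas you re-derive them by matching the rescaled renormalizations. Your derivation is correct as far as I can check: writing $\tilde c=\frac{c-H(0)}{H(1)-H(0)}$ for the rescaled critical point (note $1-rc=H(0)$ and $s(1-c)=H(1)$, so the quotient in the stated formula is exactly $\tilde c$), one gets $\tilde G_H=F_{rs,\,1-rs\tilde c}$, your intermediate expression $\alpha_F=\frac{\alpha\beta(\beta+1)-1}{\beta-1}$ equals $1-\beta^2\tilde c_F$ with $\tilde c_F=\frac{1-\alpha(1+\beta)}{\beta(\beta-1)}$, and equating $\tilde c=\tilde c_F$ yields precisely $\alpha=\frac{1}{\beta+1}-\frac{\beta(\beta-1)\tilde c}{\beta+1}$, i.e. the printed formula. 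The trade-off is that your route needs two auxiliary facts that you assert but do not prove: that topological entropy $\log\beta$ is a conjugacy invariant in this discontinuous setting (so slopes of conjugate constant-slope renormalizations must agree, giving $\beta^2=rs$; note $rs\leq2$ is automatic since the renormalization is itself a Lorenz map), and that for fixed slope the parameter $\alpha$ of a $\beta$-transformation is determined by its conjugacy class. Both are standard and are in effect contained in the uniqueness part of the Cui--Ding classification, so your argument is really an unpacking of the citation the paper uses; what it buys is transparency (the formula for $\alpha$ is revealed as the rescaled position of the critical point in the renormalization window) and the $r=s$ sanity check, at the cost of a longer proof resting on the same external results.
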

\begin{proof}
    We will prove that the points
    $$
    z_0:=\frac{s(c+rc-1)}{rs-1}\quad\text{and}\quad z_1:=\frac{rc(s+1)-1}{rs-1}
    $$
    forms a primary $2(1)$-cycle in three steps.
    \begin{itemize}
        \item First, we show that $0<z_0<c<z_1<1$. To this end, observe that for every $1<s\leq\sqrt{2}$ the inequalities
        $
        2-\frac{1}{s}<L(s)
        $
        and
        $
        U(s)<\frac{1}{2-s}
        $
        hold, as they reduce to $16(s-1)^3>0$ and $s^2(s-2)^2(s-1)>0$, respectively. In particular, we have
        $$
        1\leq2-\frac{1}{s}<L(s)\leq r \leq U(s)<\frac{1}{2-s}\leq1+\frac{\sqrt{2}}{2}.
       $$
       Since $c=\frac{s-1}{r+s-2}$, $1<s\leq\sqrt{2}$ and $1<r\leq2$, the inequalities $2-\frac{1}{s}<r<\frac{1}{2-s}$ imply $0<z_0<c<z_1<1$.

       \item Next, we show that $O=\{z_0,z_1\}$ forms a $2(1)$-cycle. Since $z_0<c<z_1$, simple calculations give
       $$
       H(z_0)=rz_0+1-rc=\frac{rs(c+rc-1)}{rs-1}+1-rc=\frac{rc(s+1)-1}{rs-1}=z_1
       $$
       and
       $$
       H(z_1)=s(z_1-c)=s\left(\frac{rc(s+1)-1}{rs-1}-c \right)=\frac{s(c+rc-1)}{rs-1}=z_0.
       $$

       \item It remains to prove that the $2(1)$-cycle $O$ is primary, that is, the inequalities $z_0\leq H(0)$ and $z_1\geq H(1)$ hold. Fix $1<s\leq\sqrt{2}$. Observe that the condition
       $$
       z_0=\frac{s(c+rc-1)}{rs-1}\leq1-rc=H(0),
       $$
       reduces to a quadratic inequality (w.r.t. $r$)
       $$
       (-s^2+2s)r^2+(s-2)r-2s+2\geq0,
       $$
       which is satisfied for $r\geq L(s)$. Similarly,
       $$
       z_1=\frac{rc(s+1)-1}{rs-1}\geq s(1-c)=H(1)
       $$
       leads to
       $$
       s^2r^2+(-2s^2-s+2)r+2s-2\leq0,
       $$
       which is true for $1<r\leq U(s)$.
    \end{itemize}
    We showed that the orbit $O=\{z_0,z_1\}$ is a primary $2(1)$-cycle of the map $H$. The rest of the proof follows from \cite[Theorem~3.5(1)]{ChO} and \cite[Corollary 3]{DiCui}.
    \end{proof}
We now give an example how these maps can be used to study general $C^1$ perturbations of the Lorenz attractor. To this end, recall $T$ was defined in earlier sections as the closure of the collection of all parameters in $P$ where the Lorenz system generates heteroclinic trefoil knot. Now, let $O$ denote the maximal $C^1$ neighborhood of the vector fields corresponding to parameters in $T$ s.t. every vector field $V\in O$ satisfies the following:
\begin{itemize}
    \item $V$ defines a cross-section $R$ as before which varies smoothly as we vary vector fields in $O$.
    \item The stable manifold of the origin always partitions $R$ into two sub-rectangles, $R_0\cup R_1$.
    \item There exists a continuous first return map $\psi_V:R_0\cup R_1\to R$, which can be reduced to a $\beta$--transformation $F_{\beta,\alpha}$, for some $(\beta,\alpha)$ in the parameter space $\Delta$ (see the illustration of the said parameter space in Fig \ref{fig:parameter_space}). 
\end{itemize}

The notion of a renormalizable parameter $v\in P$ in Def.\ref{renormvector} easily extends to vector fields $V\in O$. This implies the following result:
\begin{proposition}
    \label{condition1} Let $V\in O$ be a renormalizable vector field (in the sense of the generalized Def.\ref{renormvector}) and $F_{\beta,\alpha}$ denote its corresponding $\beta$--transformation with the critical point $c=\frac{1-\alpha}{\beta}$. Assume the map $F_{\beta,\alpha}$ has a renormalization $G=(F^l_{\beta,\alpha},F^r_{\beta,\alpha})$, defined on the renormalization interval $[u,v]$, that is conjugate to the doubling map $F_2(x)=2x\Mod{1}$. Suppose that in addition both $[u,c)\cap(\cup_{j=1}^{l-1}F^{-j}_{\beta,\alpha}(c))=\emptyset$ and $(c,v]\cap(\cup_{j=1}^{r-1}F^{-j}_{\beta,\alpha}(c))=\emptyset$. Then, the following holds:
 
    \begin{itemize}
        \item We can associate a Lorenz template $L(n,p)$ with the flow as in Th.\ref{templateth}, where $n,p\in \mathbb{Z}$ are both even.
        \item Every knot encoded by the template $L(n,p)$ is also be encoded by the $L(0,0)$ template.

    \end{itemize}
\end{proposition}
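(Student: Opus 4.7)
The plan is to adapt the argument of Th.\ref{templateth} to the non-symmetric setting, with the technical hypothesis on preimages of the critical point playing the role that Lemma \ref{lem:doubling_conjugate} played in the symmetric case. First, I would observe that the conditions $[u,c)\cap(\bigcup_{j=1}^{l-1}F^{-j}_{\beta,\alpha}(c))=\emptyset$ and $(c,v]\cap(\bigcup_{j=1}^{r-1}F^{-j}_{\beta,\alpha}(c))=\emptyset$ imply that $F^j_{\beta,\alpha}$ is continuous on $[u,c)$ for $1 \leq j \leq l$ and on $(c,v]$ for $1 \leq j \leq r$, by the same itinerary argument used in Lemma \ref{lem:doubling_conjugate}. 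Using the non-symmetric reduction (Th.\ref{reduct} together with Remark \ref{gen2}), these intervals pull back to a pair of sub-rectangles $R'_0 \subset R_0$ and $R'_1 \subset R_1$ of the cross-section $R$ for the flow generated by $V$, on which $\psi_V^l$ and $\psi_V^r$ respectively are continuous, and on which the piecewise-defined return map (given by $\psi_V^l$ on $R'_0$ and by $\psi_V^r$ on $R'_1$) stretches each $R'_i$ across both rectangles in the manner of a topological Smale horseshoe.

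Next, I would run the deformation scheme of Th.\ref{templateth} verbatim: open the origin into an attractor by a pitchfork bifurcation so that $W$ opens into a thick rectangle separating $R'_0$ and $R'_1$, push the dynamical core away from the boundary arcs of $R'_i$, and straighten the resulting first-return map into a hyperbolic Smale horseshoe $H$. The robustness results of \cite{Han} ensure that throughout each stage of this isotopy no component of the invariant set is created, destroyed, or merged with another, so every periodic solution of the original flow generated by $V$ is smoothly deformed to a periodic orbit of the suspension of $H$ without altering its knot type or period. The Birman--Williams Theorem (Th.\ref{birwil}) then associates a unique template $\tau$ with the dynamical core of $H$, and the canonicity of the stable-foliation collapse guarantees that $\tau$ is independent of the chosen straightening.

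To identify $\tau = L(n,p)$ with $n,p \in 2\mathbb{Z}$, I would argue that $\tau$ still has a single branch line splitting into two strips, one for each $R'_i$. Without symmetry the two strips may wind about the two wing centers of the butterfly independently; let $m$ and $k$ denote the respective winding numbers. The straightening sketched in Fig.\ref{straight} converts each winding around a wing into a pair of half-twists of opposite sign along the corresponding strip, so $\tau$ carries $n = 2m$ half-twists on the first strip and $p = 2k$ on the second, establishing simultaneously that $\tau = L(n,p)$ and that both $n$ and $p$ are even. For the second assertion, I would deform $V$ continuously within the $C^1$ neighborhood $O$ toward a trefoil parameter $q_\star \in T$; by Remark \ref{gen1} and the robustness of the horseshoe structure under $C^1$ perturbation, every periodic orbit encoded by $\tau$ persists without bifurcation throughout the deformation and retains its knot type. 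At the endpoint the flow realizes a heteroclinic trefoil whose dynamical core is encoded by the $L(0,0)$ template (by Th.\ref{tali} and the analysis in the proof of Th.\ref{templateth}), so every knot encoded by $L(n,p)$ is realized as a periodic orbit of $L(0,0)$, as required.

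The main obstacle I anticipate is the first step: one must check rigorously that in the non-symmetric setting the distinct return times $l$ and $r$ can in fact be lifted from the one-dimensional factor map $F_{\beta,\alpha}$ to continuous iterates $\psi_V^l|_{R'_0}$ and $\psi_V^r|_{R'_1}$ whose combined action is a topological horseshoe, and that the straightening used in Th.\ref{templateth} can be carried out asymmetrically while still canonically collapsing the stable foliation. In the doubling case of Th.\ref{templateth} one had $l = r = 2^i$, which simplified both the interval-theoretic continuity argument and the geometric isotopy; allowing $l \neq r$ forces one to treat the two strips independently throughout, but otherwise the scheme is formally identical.
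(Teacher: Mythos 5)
Your proposal is correct and follows essentially the same route as the paper: the paper's own proof is only a brief remark that both assertions are ``a variation on the arguments used to prove Th.\ref{templateth}'', with $l,r$ replacing $n$, $L(n,p)$ replacing $L(n,n)$, and evenness of the twists forced by the flow not permuting the endpoints of the intervals. Your write-up is a faithful, more detailed execution of exactly that variation, including the correct observation that the hypothesis on preimages of $c$ substitutes for Lemma \ref{lem:doubling_conjugate} in establishing continuity of $\psi_V^l|_{R'_0}$ and $\psi_V^r|_{R'_1}$.
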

\begin{proof}
    The second assertion is immediate, and its proof is just a variation on the arguments used to prove Th.\ref{templateth} - where $l,r$ replace $n$, and $L(n,p)$ replaces $L(n,n)$. Similarly, the first assertion also follows from very similar arguments to those used to prove Th.\ref{templateth}, where the even nature of $n$ and $p$ follows from all the twists on the bands of the template being double - as forced by the motion of the flow (in other words, the endpoints of the intervals are not permuted).
\end{proof}
\begin{remark}
    Note that in the assertions above, we could replace the conditions $[u,c)\cap(\cup_{j=1}^{l-1} F^{-j}_{\beta,\alpha}(c))=\emptyset$ and $(c,v]\cap(\cup_{j=1}^{r-1}F^{-j}_{\beta,\alpha}(c))=\emptyset$ with $(u,c)\cap(\cup_{j=1}^{l-1} F^{-j}_{\beta,\alpha}(c))=\emptyset$ and $(c,v)\cap(\cup_{j=1}^{r-1}F^{-j}_{\beta,\alpha}(c))=\emptyset$. That being said, the second assumptions is non-generic, while the first one is.
\end{remark}

The assumption above on the pre-images of the critical point $c$ is essential. To elaborate, let $F:[0,1]\to[0,1]$ be a renormalizable expanding Lorenz map with the critical point $c$. Suppose that $F$ has a renormalization $G=(F^l,F^r)$ defined on the renormalization interval $[u,v]$ s.t. either $[u,c)\cap F^{-j}(c)\ne\emptyset$, $(c,v]\cap F^{-i}(c)\ne\emptyset$ (for some $1\leq j<l$, $1\leq i< r$), or both. From now on, we refer to this condition as the \textbf{cut and paste condition}, as it can be visualized as if, say, the interval $[u,c)$ (or alternatively, $(c,v]$) is cut in two after $j$ (or $i$) iterations and then glued together again by $F^l$ (or alternatively, $F^r$). In Appendix \ref{ex:map_nonsymmetric1}, we will present an example showing how such a scenario could occur, and discuss this condition in greater detail. We now consider a more general version of this condition. Assume there exists a sub-interval $(a,b)$ and some $0\leq i<j$ s.t. the following is satisfied:
    \begin{itemize}
        \item $c\in F^i(a,b)$.
        \item $F^j$ is continuous on $(a,b)$.
    \end{itemize}
We refer to this condition as the \textbf{general cut and paste condition}. Motivated by the above, we now ask the following question - are there vector fields $V\in O$ whose first-return map can be reduced to $F$ satisfying the cut and paste condition on some interval $(a,b)\subseteq[0,1]$? We believe a scenario where the action of a smooth flow "cuts and pastes" some sub-rectangle $R'\subseteq R$ can occur only when there exists a pair of homoclinic trajectories to the origin. To see why, note that in the absence of such a homoclinic pair, the Existence and Uniqueness Theorem tells us that after $R'$ is torn in two by the flow at $W$, the two halves must be kept apart by the flow. This discussion motivates us to conjecture the following:

\begin{conjecture} \label{condition2}

Let $\mathcal{L}$ denote the collection of Lorenz maps that form factor maps for vector fields $V\in O$, and let $\mathcal{CP}$ denote the collection of Lorenz maps that \textbf{do not} satisfy the general cut and paste condition. Then, the following holds:
\begin{itemize}
    \item There exists a topology s.t. the set $\mathcal{CP}$ is generic in $\mathcal{L}$ - i.e., $\mathcal{CP}$ is both open and dense w.r.t. some "meaningful" topology on $\mathcal{L}$ (see the discussion below).
    \item Th.\ref{reduct} defines a surjective map $\mathcal{F}:O\to\mathcal{L}$ matching each vector field $V$ with its corresponding Lorenz map such that $f$. 
    \item The map $\mathcal{F}$ is continuous w.r.t. the $C^1$ topology on $O$ and the topology  on $\mathcal{L}$ is the one from above.
    \item $\mathcal{F}^{-1}(\mathcal{L}\setminus\mathcal{CP})$ is precisely the collection of vector fields in $O$ generating a pair of homoclinic trajectories to the origin.
\end{itemize}
\end{conjecture}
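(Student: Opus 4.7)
The plan is to address the four bullet points in order, reducing the global claim to a concrete correspondence between the "cut and paste" pathology on the one-dimensional side and the existence of homoclinic trajectories on the three-dimensional side. To begin, we would equip $\mathcal{L}$ with the topology generated by two pieces of data attached to each Lorenz map $F \in \mathcal{L}$: the tuple of breakpoints $0 = c_0 < c_1 < \ldots < c_n < c_{n+1} = 1$ together with the slopes and intercepts on each piece, viewed as a point in a finite dimensional Euclidean space, and the itinerary (kneading data) of each of the finitely many one-sided limits at the discontinuities. Both pieces of data vary continuously as $V \in O$ is perturbed in the $C^1$--topology, which is the topology on $\mathcal{L}$ we would work with throughout.

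The map $\mathcal{F}\colon O\to\mathcal{L}$ would be defined by pointwise application of the construction in the proof of Th.\ref{reduct} (extended as in Remark \ref{gen2}), i.e., $\mathcal{F}(V)$ is the piecewise affine Lorenz map to which $\psi_V$ is reduced. Surjectivity is built into the definition of $\mathcal{L}$. Continuity is the content of the first step: since $\psi_V$ depends continuously on $V$ in the $C^1$ topology (this is standard from smooth dependence on initial data), and since the homotopies used in the proof of Th.\ref{reduct} (straightening of pre-images of $W$, collapse of Category $C$ arcs, and opening of $p_0,p_1$) depend continuously on the first-return data, both the breakpoint/slope data and the itineraries at the discontinuities of $\mathcal{F}(V)$ vary continuously with $V$. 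The routine check is that none of the deformations introduces new transversal intersections as $V$ moves, which can be arranged by choosing the deformation data (the neighborhood $N$ of Th.\ref{reduct}) in a way that varies continuously with $V$.

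The central step is the identification $\mathcal{F}^{-1}(\mathcal{L}\setminus\mathcal{CP}) = \{V\in O : \Gamma_0,\Gamma_1 \text{ are homoclinic to } 0\}$. For the forward implication, suppose $\mathcal{F}(V) = F$ satisfies the general cut and paste condition on some $(a,b) \subset [0,1]$: $c \in F^i((a,b))$ while $F^j$ is continuous on $(a,b)$, where $j > i$. Pulling $(a,b)$ back through the collapse map $\pi$ of Th.\ref{reduct} gives a sub-rectangle $R' \subseteq R_0 \cup R_1$ for which the flow of $L_V$ maps $R'$ so that some iterate $\psi_V^i(R')$ is transversally cut by $W$, yet the further iterate $\psi_V^j(R')$ is a single connected rectangle. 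By the existence and uniqueness theorem, the only way that the two halves of $\psi_V^i(R')$ separated by $W$ can be re-identified by the flow is if the two connected components of the unstable manifold $\Gamma_0,\Gamma_1$ both eventually return to $W^s(0)$, i.e., the flow must generate a pair of homoclinic trajectories to $0$. The converse direction is more concrete: if such a homoclinic pair exists, then a neighborhood of each homoclinic loop contains infinitely many pre-images of $W$ accumulating on it, and a short calculation in Shilnikov-type coordinates near $0$ produces explicit intervals $(a,b)$ realizing the cut and paste condition for $\mathcal{F}(V)$. This is the main obstacle, as making the "cut and paste" geometric picture into a rigorous pullback requires keeping track of how the isotopies in the proof of Th.\ref{reduct} interact with $W^s(0)$ near the origin, which is where the dynamics is least controlled.

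Once this correspondence is established, genericity of $\mathcal{CP}$ follows from genericity of non-homoclinic vector fields in $O$. Openness: non-homoclinic vector fields form a $C^1$-open subset of $O$, because a transverse pair $\Gamma_0, \Gamma_1$ disjoint from $W^s(0)$ remains so under small perturbation; pushing this openness through the continuous surjection $\mathcal{F}$ gives openness of $\mathcal{CP}$ in the topology defined above. Density: homoclinic connections are codimension-at-least-one phenomena (each homoclinic loop corresponds to a single equation $\Gamma_j \cap W^s(0) \neq \emptyset$, cut by the $z$-axis symmetry), so an arbitrarily small $C^1$-perturbation can break any given homoclinic connection, and hence produce, via $\mathcal{F}$, an approximating element of $\mathcal{CP}$. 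Combining openness and density yields genericity, completing all four bullets of the conjecture. The chief difficulty throughout remains the geometric bookkeeping in the third step; we expect that it can be handled by introducing auxiliary linearizing coordinates near the origin as in \cite{Pi}, at the price of a somewhat technical case analysis.
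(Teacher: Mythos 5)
This statement is an open conjecture, not a theorem: the paper offers no proof and explicitly says ``At present, we do not know how to prove Conjecture \ref{condition2}'', identifying the construction of a meaningful topology on $\mathcal{L}$ as the unresolved obstacle. Your proposal is therefore not being measured against a proof in the paper but against the conjecture itself, and as written it does not close the gaps --- it is a strategy outline in which every hard step is deferred (``a routine check'', ``a short calculation in Shilnikov-type coordinates'', ``we expect that it can be handled''). The most concrete problems are the following. First, your proposed topology packages the kneading data of the one-sided limits at the discontinuities, but this data does \emph{not} vary continuously with $V$ in the $C^1$ topology: itineraries of the separatrices jump precisely at homoclinic and heteroclinic bifurcations (this is the whole point of Th.\ref{expl} and Remark \ref{convergence}), so continuity of $\mathcal{F}$ into that topology is not a ``routine check'' but exactly the difficulty the authors could not resolve. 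Second, in the forward implication of your central step you argue that the two halves of $\psi_V^i(R')$ ``must be kept apart by the flow'' unless a homoclinic pair exists; but the cut and paste condition lives on the collapsed one-dimensional map, which is only a \emph{factor} of $\psi_V$ --- the semiconjugacy $\pi$ of Th.\ref{reduct} can identify points that the flow keeps apart, so separation upstairs does not preclude gluing downstairs. The paper itself presents this implication only as a heuristic motivating the conjecture.

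Third, your genericity argument does not follow even granting the correspondence. By Prop.\ref{prop:cutandpaste}, the general cut and paste condition is equivalent to matching, $F^n(c_-)=F^n(c_+)$, and matching is known to occur on large parameter sets in families of $\beta$--transformations (every map $F_{\varepsilon_i}$, $i\geq2$, in the paper's own Appendix has matching, and the cited works \cite{Bruin1,Bruin2} exhibit families where matching holds on open sets of full measure). Breaking a homoclinic loop of the flow by a small $C^1$ perturbation does not show that the perturbed factor map fails to have matching, so density of $\mathcal{CP}$ cannot be extracted from a codimension count of homoclinic connections alone; one would have to control the matching set of the induced one-parameter (or several-parameter) family of Lorenz maps directly. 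Until the topology on $\mathcal{L}$, the continuity of $\mathcal{F}$, and the matching/homoclinic dictionary are each established rigorously, the statement remains a conjecture.
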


At present, we do not know how to prove Conjecture \ref{condition2}. It is easy to see that for every $V\in O$ there is a corresponding map $f\in\mathcal{L}$, and that by definition this map is onto. That being said, it is far from obvious we can find a meaningful topology on $\mathcal{L}$ - i.e., some variation on the $C^1$ topology, suited for piecewise discontinuous maps that would make this correspondence continuous. 

\section*{Declarations:}
\subsection*{Acknowledgments:}
The authors would like to thank Tali Pinsky, Jorge Olivares-Vinales, Zin Arai, Valerii Sopin, Asad Ullah, Yanghong Yu and Noy Soffer-Aranov for their suggestions and helpful discussions.
\subsection*{Ethics and author contribution statement}
 Both authors made an equal contribution to this study, and both comply with the COPE regulations.
\subsection*{Competing interests statement}
On behalf of all authors, the corresponding author states that there are no conflicts of interest to disclose. Moreover, 
the authors have no financial or proprietary interests in any material discussed in this article.
\subsection*{Data availability statement}
No data was generated or used in this study.
\section{Appendix - more on the cut and paste condition}
\label{ex:map_nonsymmetric1}
In \cite{OPR}, Oprocha, Potorski, and Raith showed that there exists an expanding Lorenz map that is locally eventually onto (in short: l.e.o.), but not strongly locally eventually onto (see Def.1.1 and 1.2 in \cite{OPR}). It is easy to see that such a map must satisfy the general cut and paste condition. We also observe that this condition holds for each map $F_{\varepsilon_i}$ defined by parameters \eqref{eq:parameter_points_eps} with $i\geq2$. In fact, all these maps have a property called \textbf{matching} (see \cite{Bruin1,Bruin2}). Recall that a Lorenz map $F$ with a critical point $c$ has matching, if $F^n(c_-)=F^n(c_+)$ for some $n\in\mathbb{N}$.

\begin{proposition}\label{prop:cutandpaste}
    Let $F\colon[0,1]\to[0,1]$ be an expanding Lorenz map with a critical point $c$. The following conditions hold.
    \begin{enumerate}
        \item If $F$ is l.e.o. but not strongly l.e.o., then it satisfies the general cut and paste condition.
        \item $F$ satisfies the general cut and paste condition if and only if $F$ has matching.
    \end{enumerate}
\end{proposition}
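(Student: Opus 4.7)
The plan is to prove part (2) first, then deduce part (1) by showing that the combined hypothesis "l.e.o.\ but not strongly l.e.o." forces matching, after which part (2) supplies the general cut and paste condition.

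For the easy direction of part (2) — matching implies cut and paste — I would choose an open interval $(a,b)\ni c$ small enough that $(a,b)\cap\bigcup_{k=1}^{n-1}F^{-k}(c)=\emptyset$, which is possible since each $F^{-k}(c)$ is a finite set. Taking $i=0$ and $j=n$, the inclusion $c\in F^0(a,b)$ is trivial, and by the choice of $(a,b)$ the only possible discontinuity of $F^n$ inside $(a,b)$ is at $c$ itself. The matching identity $F^n(c_-)=F^n(c_+)$ then supplies the continuous extension across $c$, making $F^n$ continuous on $(a,b)$.

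For the reverse direction of part (2), given $(a,b)$ and $0\le i<j$ with $c\in F^i(a,b)$ and $F^j$ continuous on $(a,b)$, I would first shrink $(a,b)$ to a sub-interval on which $F^i$ is continuous while preserving the inclusion $c\in F^i(a,b)$. Indeed, the set of discontinuities of $F^i$ inside $(a,b)$ equals $(a,b)\cap\bigcup_{k=0}^{i-1}F^{-k}(c)$ and is finite, so $(a,b)$ decomposes into finitely many continuity pieces; at least one such piece $J$ must satisfy $c\in F^i(J)$, and we replace $(a,b)$ by $J$. Then $F^i$ is strictly increasing and continuous on $(a,b)$, so there is a unique $c'\in(a,b)$ with $F^i(c')=c$ and $F^i(c'_\pm)=c_\pm$. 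The one-sided limits $F^{i+1}(c'_-)=F(c_-)=1$ and $F^{i+1}(c'_+)=F(c_+)=0$ are distinct, but continuity of $F^j$ at $c'$ forces $F^{j-i-1}(1)=F^{j-i-1}(0)$, which rearranges to $F^{j-i}(c_-)=F^{j-i}(c_+)$, i.e., matching with index $n=j-i\ge1$.

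For part (1), the hypothesis provides a nonempty open $U$ with $F^n(U)\neq(0,1)$ for every $n$, while l.e.o.\ ensures that for large $n$ the missing set $E_n:=(0,1)\setminus F^n(U)$ is a nonempty finite set. Since every boundary point of a component of $F^n(U)$ arises by propagating the discontinuity of $F$ at $c$ through the iteration, one has $E_n\subseteq\bigcup_{k=0}^{n}\{F^k(c_-),F^k(c_+)\}$ up to a bounded correction from $\partial U$. Moreover $F(E_n)\subseteq E_{n+1}$ modulo these corrections, so the persistent gap $\bigcap_{n\ge n_0}E_n$ is $F$-invariant and finite; hence the forward orbits of $c_+$ and $c_-$ are eventually periodic and land in a common finite $F$-invariant set $\Omega$. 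I would then use l.e.o.\ to argue the two orbits must actually meet inside $\Omega$: if they remained in disjoint periodic cycles, open neighborhoods of one cycle would form an obstruction to the density of forward images of $U$, contradicting l.e.o. Hence $F^n(c_-)=F^n(c_+)$ for some $n$, and part (2) yields the general cut and paste condition. The principal obstacle is this last step — formalizing that two disjoint critical periodic orbits contradict l.e.o. — which requires translating the intuitive "topological barrier" into a precise statement about open sets whose iterates stay bounded away from a fixed open neighborhood.
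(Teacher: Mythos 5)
Your part (2) is correct and is essentially the paper's argument. The direction ``matching $\Rightarrow$ cut and paste'' (shrink a neighbourhood of $c$ so that $(a,b)\cap\bigcup_{k=1}^{n-1}F^{-k}(c)=\emptyset$ and take $i=0$, $j=n$) is exactly what the paper does. Your converse is a more careful version of the paper's one-line observation that an interval cut at step $i$ and reglued by $F^{j}$ forces $F^{j-i}(c_-)=F^{j-i}(c_+)$: restricting to a continuity piece of $F^i$, locating $c'$ with $F^i(c')=c$, and reading off the one-sided limits of $F^j$ at $c'$ is the right way to make that precise, and you land on the same index $n=j-i$. (Both you and the paper gloss over the value of $F^n$ \emph{at} $c$ itself under the convention $F(c)=0$; that is a shared, harmless imprecision.)

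Part (1) is where the genuine gap lies. The paper treats it as immediate from the definitions of l.e.o.\ and strongly l.e.o.\ (the failure of strong l.e.o.\ at some open $U$, combined with l.e.o., is read off directly from the branch decomposition of $F^n|_U$ as an instance of the cut and paste condition), whereas you take a detour through matching and then invoke part (2). The target of that detour is a true statement, but your argument for it does not work as written. First, the hypotheses are misquoted: ``a nonempty open $U$ with $F^n(U)\neq(0,1)$ for every $n$'' is the negation of l.e.o.\ at $U$, not of strong l.e.o.\ (whose negation concerns single continuity branches of $F^n|_U$, not the full image); consequently your set $E_n=(0,1)\setminus F^n(U)$ is \emph{empty} for some, and hence all large, $n$ precisely because $F$ is l.e.o., so it cannot be ``nonempty and finite for large $n$.'' Second, even granting a finite invariant ``persistent gap,'' the inference that the forward orbits of $c_+$ and $c_-$ are eventually periodic and land in a common invariant set is a non sequitur. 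Third, the concluding step --- that two disjoint critical cycles would contradict l.e.o.\ --- is the crux, and you acknowledge it is unproven. Part (1) should be redone by arguing directly from the continuity-branch structure of $F^n(U)$ rather than via matching.
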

\begin{proof}
    The first assertion follows immediately from the definitions of l.e.o. and strongly l.e.o. maps. To prove the second one, assume that $F$ satisfies the general cut and paste condition on an interval $(a,b)$. Then, since the interval $(a,b)$ is cut into two sub-intervals after the $i$th iteration and then glued again by $F^j$ (for some $0\leq i<j$), we must have $F^n(c_-)=F^n(c_+)$, where $n=j-i$.

    Now, assume that $F$ has matching and let $n\in\mathbb{N}$ be the minimal number satisfying $F^n(c_-)=F^n(c_+)$. Observe that we can choose a neighborhood $(a,b)$ of the critical point $c$ s.t. $(a,b)\cap\bigcup_{k=1}^{n-1}F^{-k}(c)=\emptyset$. Then the map $F^n$ is continuous on $(a,b)$, so $F$ satisfies the general cut and paste condition.
\end{proof}

Consider the map mentioned above from \cite[Example~5.1]{OPR} (see also \cite[Example~3.1]{ChO}), that is, a $\beta$-transformation $F(x):=F_{\beta,\alpha}(x)=\beta x+\alpha\Mod{1}$ defined by
$$
\beta\approx 1.2207440846\quad\text{and}\quad\alpha=1-\frac{1}{\beta}\approx 0.1808274865,
$$
(see Fig.~\ref{fig:examp_nonsymetr} and Fig.~\ref{fig:parameter_space}). Note that $c=\frac{1}{\beta^2}\approx0.67104$. As shown in \cite{OPR}, the map $F$ is l.e.o. but not strongly l.e.o. Moreover, we have $F^{12}(c_-)=F^{12}(c_+)$, so $F$ has matching. By Prop.\ref{prop:cutandpaste}, the map $F$ satisfies the general cut and paste condition.
  \begin{figure}[h]
		\centering
		\begin{tikzpicture}[scale=5]
		\draw[thick] (0,0) -- (1,0);
		\draw[thick] (0,0) -- (0,1);
		\draw (0,1) node[left]{1};
		\draw (1,0) node[below]{1};
		\draw (-0.02,0) node[below]{0}; 
		\draw[thick,domain=0:1,variable=\y] plot ({1},{\y});
		\draw[thick,domain=0:1,variable=\x] plot ({\x},{1});
	
		\draw[domain=0:1,dashed,variable=\y,orange,very thin] plot ({0.18082},{\y});
		\draw[domain=0:1,dashed,variable=\y,orange,very thin] plot ({0.40157},{\y});
		\draw[domain=0:1,dashed,variable=\y,orange,very thin] plot ({0.671044},{\y});

		\draw[domain=0:1,dashed,variable=\x,orange,very thin] plot ({\x},{0.18082});
		\draw[domain=0:1,dashed,variable=\x,orange,very thin] plot ({\x},{0.40157});
		\draw[domain=0:1,dashed,variable=\x,orange,very thin] plot ({\x},{0.671044});

		\draw[domain=0:1,smooth,variable=\x,red,very thick] plot ({\x},{\x});
	
		\draw[domain=0:0.67104,smooth,variable=\x,blue,very thick] plot ({\x},{1.22074*\x+0.18082});
	
		\draw[domain=0.67104:1,smooth,variable=\x,blue,very thick] plot ({\x},{1.22074*\x+0.18082-1});
	
		\filldraw [black] (0.18082,0) circle (0.2pt) node[anchor=north] {$F(0)$};
		\filldraw [black] (0,0.18082) circle (0.2pt) node[anchor=east] {$F(0)$};
		\filldraw [black] (0.671044,0) circle (0.2pt) node[anchor=north] {$F^3(0)$};
		\filldraw [black] (0,0.671044) circle (0.2pt) node[anchor=east] {$F^3(0)$};
		\filldraw [red] (0.67104,0) circle (0.2pt);
		\filldraw [black] (0.40157,0) circle (0.2pt) node[anchor=north] {$F^2(0)$};
		\filldraw [black] (0,0.40157) circle (0.2pt) node[anchor=east] {$F^2(0)$};
		\end{tikzpicture}	
		\caption{\textit{Graph of the $\beta$-transformation $F$ from Appendix ~\ref{ex:map_nonsymmetric1}.}}
		\label{fig:examp_nonsymetr}
	\end{figure}
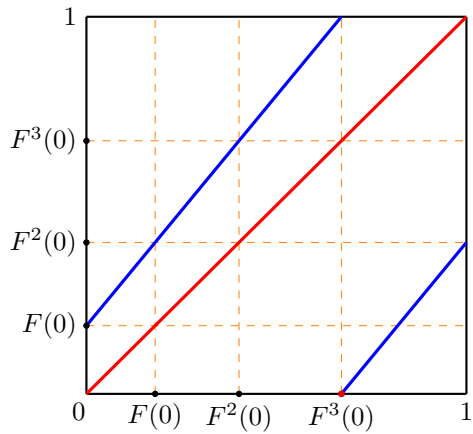

We also note that the map $F$ shows a slight discrepancy between the analytic and symbolic definitions of renormalization. To observe it, denote $[u,v]:=[F^2(0),1]$. Then
$$
	(u,c)\xrightarrow{\ F\ }(c,v)\xrightarrow{\ F\ }(0,u)\xrightarrow{\ F\ }(F(0),c)\xrightarrow{\ F\ }(u,v)
	\quad\text{and}\quad
	(c,v)\xrightarrow{\ F\ }(0,u)\xrightarrow{\ F\ }(F(0),c)\xrightarrow{\ F\ }(u,v).
	$$
In particular, the map $G\colon[u,v]\to[u,v]$ defined as
$$
G(x)=
	\begin{cases}
F^4(u_{+}), &\text{for}\ x=u\\
	F^4(x), &\text{for}\ x\in(u,c)\\
	F^3(x), &\text{for}\ x\in (c,v)\\
	F^3(v_{-}), &\text{for}\ x=v
\end{cases},
$$
is conjugate to the doubling map. However, since $F(c)=0$ implies $F^3(v_-)\neq F^3(v)=0$ (similarly, $F(c)=1$ would imply $F^4(u)=c$), the map $G$ is not a renormalization in the sense of Def.\ref{defn:renormalization}. On the other had, the symbolic definition of renormalization only looks at the one-sided limits at the critical point $c$: since the kneading invariant of $F$ has form $k_F=((1000)^\infty,(010)^\infty)$, it is renormalizable by words of lengths $4$ and $3$ (see details in \cite{ChO}). 

The occurrence of the general cut and paste condition in the case of a renormalizable (at least symbolically) map raises the following question: is there a scenario in which the interior of $[u,c)$ or $(c,v]$ covers the critical point and is torn into two nondegenerate intervals, which are then glued together? In other words, is there a renormalizable map $F$ (in the sense of Def.\ref{defn:renormalization}) with renormalization $G\colon[u,v]\to[u,v]$ for which the cut and paste condition is satisfied on the subinterval $(u,c)$ or $(u,c)$? If so, the discrepancy between the analytic and symbolic definitions of renormalization may be significant, since the renormalizable form of the kneading invariant $k_F$ could be destroyed while the Def.\ref{defn:renormalization} remains intact.

\section{Appendix - Homoclinic bifurcations as compressed Period Doubling Bifurcations}
\label{compressing}

As proven in Th.\ref{expl}, the periodic dynamics of the Lorenz flow given by Th.\ref{reduct} and Cor.\ref{reduct2} can only be destroyed by a homoclinic bifurcation. In detail, the said Theorem proves that given such a periodic orbit $\tau$ for the Lorenz system corresponding to some $v\in P$, as we vary $v$ in $P$ the orbit $\tau$ can only be destroyed by colliding with the stable manifold of the origin $0$ - thus becoming a homoclinic or a pair of homoclinic trajectories for $0$. In this section we are going to show that heuristically, such homoclinic bifurcations can be thought of as "compressed period-doubling" cascades. As will be made clear, this is an intuitive heuristic and not a formal proof - therefore, we will do so in the form of discussion, as outlined below. \\

To begin, recall the map $h_v:ABCD\to\mathbb{R}^2$ introduced in the proof of Th.\ref{reduct}. As shown in the proof of Th.\ref{reduct}, given any parameter $v\in P$ the following holds:
\begin{itemize}
    \item The rectangle $ABCD$ can be decomposed to $ABCD=R_0\cup R_2\cup R_1$, where $R_2$ is an inner rectangle separating $R_0$ and $R_1$ (see Fig.\ref{deformation3} and Fig.\ref{isot1}).
    \item The first-return map for the flow $\psi_v:R_0\cup R_1\to R$ can be homotoped to $h_v$ on $R_0$ and $R_1$ in a way s.t. as we deform $h_v$ back to $\psi_v$, all the periodic orbits for $h_v$ persist - without changing their minimal period.
    \item Let $f_v$ denote the Lorenz-like map given by Th.\ref{reduct} (i.e., the map $f_r$ in the notation of Th.\ref{reduct}). Then, we can homotope $h_v$ to $f_v$ s.t. the periodic orbits of $h_v$ are deformed continuously to the periodic orbits of $f_v$ - without changing their minimal period. Moreover, this deformation defines a bijection from the periodic orbits of $h_v$ and those of $f_v$ (see the illustration in Fig.\ref{model}).
\end{itemize}

\begin{figure}[h]
\centering
\begin{overpic}[width=0.3\textwidth]{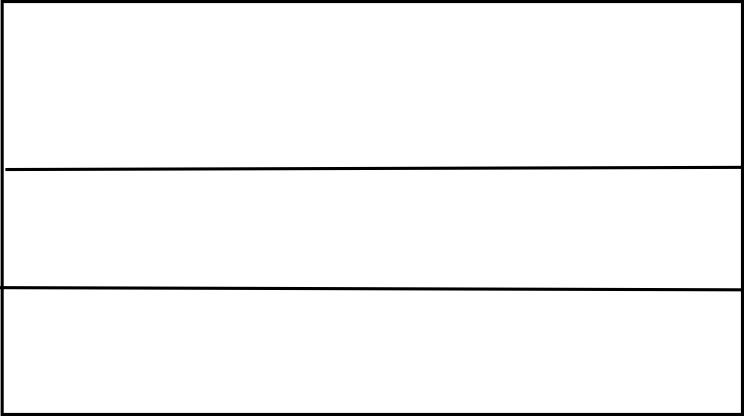}
\put(470,50){$R_0$}
\put(470,420){$R_1$}
\put(470,220){ $R_2$}
\put(-70,-30){$A$}
\put(-70,550){$C$}
\put(1020,-30){$B$}
\put(1020,550){$D$}
\end{overpic}
\caption{\textit{Partitioning $R$ to $R_0,R_1$ and $R_2$.}}
\label{isot1}
\end{figure}

Now, consider a smooth, orientation-preserving isotopy $g_t:ABCD\to\mathbb{R}^2$ as in Fig.\ref{isot2}, which bends the rectangle $R_2$ as described in Fig.\ref{isot2}. Moreover, we choose this isotopy s.t. $g_\frac{1}{2}=h_v$ and $g_1=h_p$, where $p\in T$ is some parameter. Moreover, we choose $g_0:ABCD\to\mathbb{R}^2$ as described in the uppermost left part of Fig.\ref{isot2}. By Th.\ref{reduct} and Th.\ref{tali} (and the definition of the isotopy above) we know $g_1$ into a horseshoe on $3$ symbols. It is easy to see that as we vary $v\to p$ in the parameter space we can define such an isotopy, induced by the change of the first-return maps $\psi_v$ as we vary $v$ towards $p$. Therefore, we now adopt the assumption that as we vary $v$ towards $p$, we also vary the parameter $t$ from $\frac{1}{2}$ towards $1$. 

In other words, we assume that for every $t\geq\frac{1}{2}$ the map $g_t$ is $h_{v'}$, for some $v'\in P$, and that as we vary $\psi_v$ smoothly to $\psi_p$ through the parameter space, we induce a smooth variation of $g_t$ along the isotopy from $t=\frac{1}{2}$ to $1$. At this point we remark it is precisely such assumptions which is why our arguments are more of a heuristic, and not a formal proof. Namely, it is far from immediate that we can induce this smooth transition, if only because the deformation from $\psi_{v'}$ to $g_t=h_{v'}$, $t\geq\frac{1}{2}$, given by Th.\ref{reduct} is possibly only continuous (among other reasons).

\begin{figure}[h]
\centering
\begin{overpic}[width=0.4\textwidth]{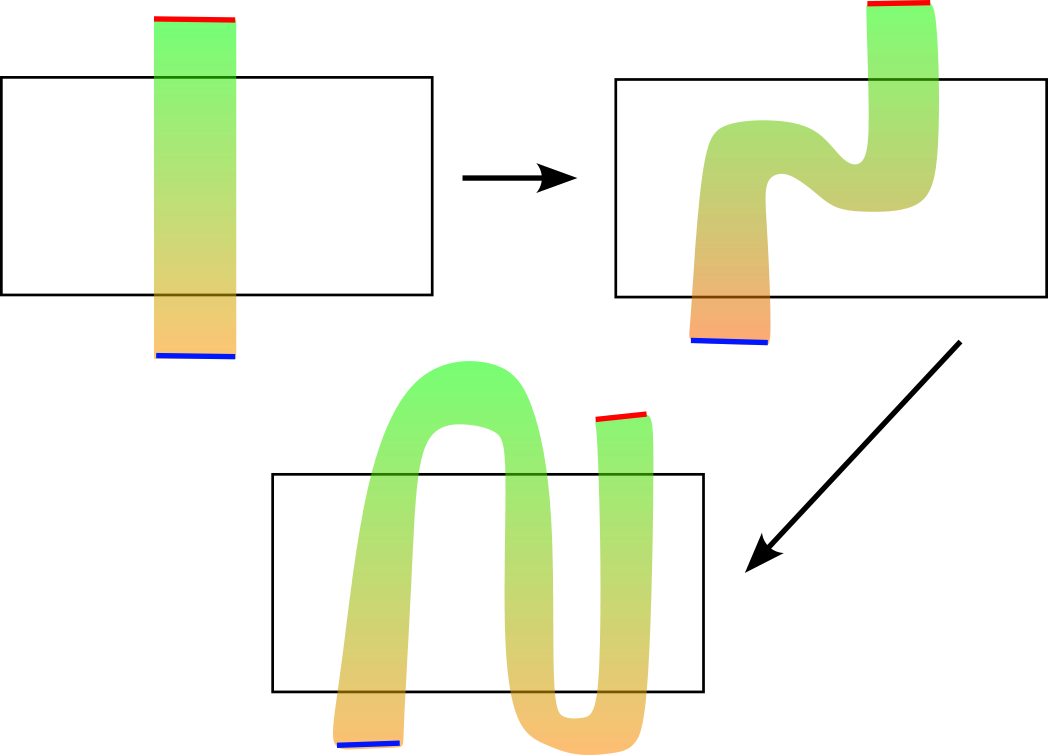}

\end{overpic}
\caption{\textit{The isotopy $g_t:ABCD\to\mathbb{R}^2$. The red and blue arcs denote the images of the sides $CD$ and $AB$, respectively. $g_0$ is illustrated in the image on the uppermost left.}}
\label{isot2}
\end{figure}

Back to our heuristic, as proven in \cite{KY3}, as we vary $t$ from $0$ to $1$ the periodic orbits are added via period-doubling cascades (see section $4$ in \cite{KY3}). To elaborate further, we first need to recall some terminology. To this end, consider periodic point $x$ of minimal period $k$ for $g_t$ (for some $t\in(0,1)$), and let $\lambda_1$ and $\lambda_2$ denote the multiplier of the differential $D_{g_t^k}(x)$. We recall $x$ is called:
\begin{itemize}
    \item \textbf{Hyperbolic} if $\lambda_1\in(1,\infty)$, $\lambda_2\in(0,1)$.
    \item \textbf{Elliptic} if $0<|\lambda_1|,|\lambda_2|<1$ or $|\lambda_1|,|\lambda_2|>1$.
    \item \textbf{Möbius} if $\lambda_1\in(-\infty,-1)$, $\lambda_2\in(-1,0)$.
\end{itemize}

Note that since our maps are orientation-preserving, either $\lambda_1,\lambda_2$ are jointly positive, or they are jointly negative. As proven in \cite{KY3}, generically, the periodic dynamics of the isotopy $g_t:ABCD\to\mathbb{R}^2$ all arise by saddle-node bifurcations, where a hyperbolic and elliptic orbits are created. Following that, the elliptic orbit undergoes a period-doubling cascade where the Möbius orbits are created as a result of the period-doubling bifurcations (see the illustration in Fig.\ref{cascade}). Moreover, once a saddle periodic orbit is created (i.e., Möbius or hyperbolic), whether at a saddle-node or period-doubling bifurcation, it persists as we vary $t$ up to $g_1$ (see the illustration in Fig.\ref{cascade}). In particular, these bifurcations all occur in the parameter range $t\in(0,1)$.

By the proof of Th.\ref{reduct} it is easy to see the periodic orbits of the first-return map $\psi_v$, $v\in P$, that survive as we continuously deform it to $h_v=g_{\frac{1}{2}}$ are precisely the periodic orbits for $h_v$ which visit only $R_0$ and $R_1$. We now recall $h_v$ stretches both $R_0,R_1$ while keeping the respective arcs $AB,CD$ in place (see the illustrations in Fig.\ref{deformation3} and Fig.\ref{isot2}). It is easy to see we can assume this stretching to be linear, which implies all the periodic orbits for $h_v$ which never visit the sub-rectangle $R_2$ are hyperbolic saddles. Moreover, it is also easy to see this property remains true as we let $t$ vary from $\frac{1}{2}$ to $1$ - or, in other words, the periodic orbits for $g_t,t\geq\frac{1}{2}$ that never visit $R_2$ are always hyperbolic. Combined with the results of \cite{KY3} surveyed above, this teaches us that the progression from order into chaos for the isotopy $g_t:ABCD\to\mathbb{R}^2$ can be described generically as follows:

\begin{itemize}
    \item Every periodic orbit is generated via either a period-doubling or a saddle node bifurcation which occurs in the set $V_t=\cup_{n\geq0}g_t^{-n}(R_2)$, for some $t\in(0,1)$.
    \item After their creation in $V_t$ (for some $t$), the hyperbolic periodic orbits migrate into $R_0\cup R_1$ as $t\to 1$, while the Möbius orbits and the attracting orbits all remain strictly inside $V_t$, for all $t$. 
\end{itemize}

\begin{figure}[h]
\centering
\begin{overpic}[width=0.4\textwidth]{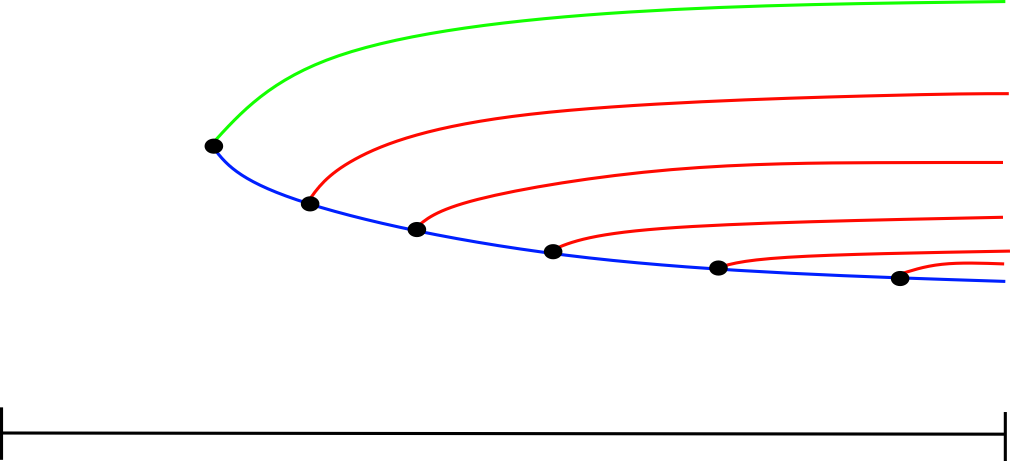}
\put(-40,10){$0$}
\put(1010,10){$1$}

\end{overpic}
\caption{\textit{The (partial) bifurcation diagram of a (generic) period doubling cascade occurring along the isotopy $g_t:ABCD\to\mathbb{R}^2$, $t\in[0,1]$. The green arc denotes the hyperbolic orbit, the red arcs the Möbius orbits, and the blue arcs the elliptic orbit which undergoes a period-doubling cascade. }}
\label{cascade}
\end{figure}

Having completed this analysis, we now return to the Lorenz system, and to the Lorenz-like maps. To this end, given a periodic orbit $\alpha$ for $g_t$, $t\geq\frac{1}{2}$, $t_\alpha$, the \textbf{entrance parameter}, would be the first parameter s.t. for all $g_t$, $t>t_\alpha$, $\alpha\subseteq R_1\cup R_2$. We now recall that as we pass back from the smooth isotopy $g_t:R_0\cup R_1\to\mathbb{R}^2$, $t\geq\frac{1}{2}$ back to the first-return maps of the flow, the rectangle $R_2$ closes to $W$ - where $W$ is the main line of intersection between $W^s(0)$ and the cross-section $R$.  By the definition of $g_t,t\geq\frac{1}{2}$, one could interpret the first-return map $\psi_d$ corresponding to $g_{t_\alpha}$, $d\in P$, as where the periodic orbit $\alpha$ first appears in the Lorenz attractor. And since by the correspondence with $g_{t_\alpha}$ it occurs via collision with $R_2$, it follows that at $d$ the corresponding Lorenz system has to undergo a homoclinic bifurcation. This shows one can think of such homoclinic bifurcations as "compressed" period-doubling bifurcations for $g_t$, $t\geq\frac{1}{2}$. Therefore, from this discussion we derive the following heuristic statement - \textbf{the homoclinic bifurcations for the Lorenz system are essentially compressed period-doubling cascades.}

\printbibliography

@article{Pi,
    author = "Pinsky, T.",
    title ="Analytical study of the Lorenz system: Existence of infinitely many periodic orbits and their topological characterization" ,
    journal ="Proceedings of the National Academy of Sciences" ,
    volume = "120",
    year ="2023"
}

@book{Dold,
  author = "Dold, A.",
  year = "1972",
  title = "Lectures on Algebraic Topology",
  publisher = "Springer"
}

@article{Lo,
        author = "Lorenz, E.N",
        title = "Deterministic Nonperiodic Flow",
          journal = "Journal of the Atmospheric Sciences",
        volume = "20",
        year = "1963",
        pages = "130-141"
}

@article{BSS,
    author = "Barrio, R., and Shilnikov, A.L., and Shilnikov, L.P.",
    title = "Kneadings, symbolic dynamics and painting Lorenz chaos" ,
    journal = "Journal of Bifurcations and Chaos",
    volume ="22 (4)",
    year = "2012"
}

@article{HM,
    author = "Holland, M., and Melbourne, I.",
    title = "Central limit theorems and invariance principles for Lorenz attractors",
    journal = "Journal of the London Mathematical Society",
    volume ="76",
    year = "2007"
}

@article{KY,
    author = "Kaplan, J.L., and Yorke, J.A.",
    title = "Preturbulence: A Regime Observed in a Fluid Flow Model of Lorenz",
    journal = "Communications in Mathematical Physics",
    volume ="67",
    year = "1979"
}

@article{YY,
    author = "Yorke, J.A., and Yorke, E.D.",
    title = "Metastable Chaos: The Transition to Sustained Chaotic Behavior in the Lorenz Model",
    journal = "Journal of Statistical Physics",
    volume ="21 (3)",
    year = "1979"
}

@article{WY,
    author = "Wang, Q., and Young, L.S.",
    title = "Strange attractors with one direction of instability",
    journal = "Communications in Mathematical Physics",
    volume ="218",
    year = "2001"
}

@online{DC,
    author = "Ding, Y., and Fan, A., and Yu, J.",
    title = "Absolutely continuous invariant measures for piecewise linear Lorenz maps",
    url  = "https://arxiv.org/abs/1001.3014",
}

@online{YH,
    author = "Hatoom, Y.",
    title = "Lorenz Equations and the Figure $8$ Knot",
    url  = "https://arxiv.org/abs/2406.05730",
}

@article{gal,
        author = "Gallavotti, G.",
        title = "Entropy, thermostats, and chaotic hypothesis",
          journal = "Chaos",
        volume = "16",
        year = "2006",
}

@article{bowen,
        author = "Shen, B.",
        title = "A Review of Lorenz’s Models from 1960 to 2008",
          journal = "International Journal of Bifurcation and Chaos",
        volume = "33 (10)",
        year = "2023",
}

@article{MK,
        author = "Malkin, M., and Safonov, K.",
        title = "Entropy charts and bifurcations for Lorenz maps with infinite derivatives",
          journal = "Chaos",
        volume = "31, 043107",
        year = "2021"
}

@article{BLS,
        author = "Bartłomiejczyk, P. and Llovera Trujillo, F., and Signerska-Rynkowska, J.",
        title = "Spike Patterns and Chaos in a Map–Based Neuron Model",
          journal = "International Journal of Applied Mathematics and Computer Science",
        volume = "33 (3)",
        year = "2023"
}

@article{HOF,
        author = "Hofbauear, F.",
        title = " The maximal measure for linear mod. one transformations",
          journal = "Journal of the London Mathematical Society",
        volume = "s2-23 (1)",
        year = "1981"
}

@article{KOC,
    author = "Creaser, J.L, and Karuskopf, B., and Osinge, H.M.",
    title = "$\alpha$-flips and T-points in the Lorenz system",
    journal = "Nonlinearity",
    volume ="28",
    year = "2015"
}

@article{KO1,
    author = "Doedel, E.J., and Karuskopf, B., and Osinge, H.M.",
    title = "Global invariant manifolds in the transition to preturbulence in the Lorenz system",
    journal = "Indagationes Mathematicae",
    volume ="22",
    year = "2011"
}

@article{GS,
    author = "Glendinning, P., and Sparrow, C.",
    title = "Prime and renormalisable kneading invariants and the dynamics of expanding Lorenz maps",
    journal = "Physica D",
    volume ="62",
    year = "1993"
}

@article{Pi2,
    author = "Pinsky, T.",
    title = "New Knots in the Lorenz Equations",
    journal = "Notices of the American Mathematical Society",
    volume ="71",
    year = "2024"
}

@article{MM,
    author = "Mishaikow, K., and Mrozek, M.",
    title = "Chaos in the Lorenz equations: a computer-assisted proof",
    journal = "Bull. Amer. Math. Soc. (N.S.)",
    volume ="32",
    year = "1995"
}

@article{BW,
        author = "Birman, J.S., and Williams, R.F.",
        title = "Knotted periodic orbits in dynamical systems II: Knot holders for fibered knots",
          journal = "Contemporary Mathematics",
        volume = "20",
        year = "1983",
        pages="1-60"
}

@article{Deh,
        author = "Dehoronoy, P.",
        title = "Les nœuds de Lorenz",
          journal = "Enseign. Math.",
        volume = "57",
        year = "2011",
        pages="211–280"
}

@article{Hol,
        author = "Holmes, P., and Williams, R.F.",
        title = "Knotted Periodic Orbits in Suspensions of Smale's Horseshoe: Torus Knots and Bifurcation Sequences",
          journal = "Archive for Rational Mech. and Anal.",
        volume = "90 (2)",
        year = "1985",
}

@article{Han,
        author = "Handel, M.",
        title = " Global shadowing of pseudo-Anosov homeomorphisms",
          journal = "Ergodic Theory and Dynamical Systems",
        volume = "5 (3)",
        year = "1985",
        pages = "373-377"
}

@article{SAB,
        author = "Afraimovich, V., and Bykov, V., and Shilnikov, L.P.",
        title = "Origin and structure of the Lorenz attractor",
          journal = "Akademiia Nauk SSSR Doklady",
        volume = "234",
        year = "1977",
        pages = "336-339"
}

@article{BS,
    author = "Bykov, V., and Shilnikov, A.L.",
    title = "On the boundaries of the Domain of Existence of the Lorenz Attractor" ,
    journal = "Selecta Mathematica Sovietica",
    volume ="11 (4)",
    year = "1992"
}

@article{TUC,
    author = "Tucker, W.",
    title = "The Lorenz attractor exists" ,
    journal = "Comptes Rendus de l'Académie des Sciences - Series I - Mathematics",
    volume ="328 (12)",
    year = "1999"
}

@article{Wil,
    author = "Williams, R.F.",
    title = "The structure of Lorenz attractors" ,
    journal = "Publications mathématiques de l’I.H.É.S.",
    volume ="50",
    year = "1979"
}

@inbook{guck,
    author = "Guckenheimer, J.",
    year = "1976", 
    chapter = "A strange, strange attractor" ,
    title = {The Hopf Bifurcation},
    journal = "The Hopf Bifurcation",
    publisher ="Marsden and McCracken eds., Appl. Math. Sci., Springer-Verlag",
    year = "1979"
}

@book{Sparbook,
  author = "Sparrow, C.",
  year = "1982",
  title = "The Lorenz Equations: Bifurcations, Chaos, and Strange Attractors",
  publisher = "Springer"
}

@article{KY3,
        author = "Alligood, K.T., and Yorke, J.A.",
        title = "Period Doublig Cascades of Attractors: A Prerequisite for Chaos",
          journal = "Communications in Mathematical Physics",
        volume = "101",
        year = "1985",
}

@book{KNOTBOOK,
  author = "Ghrist, R.W., and Holmes, P.J., and Sullivan, M.C.",
  year = "1997",
  title = "Knots and Links in Three-Dimensional Flows",
  publisher = "Springer"
}

@article{SNS,
        author = "Scully, J.J., and Neiman, A.B., and Shilnikov, A.L.",
        title = "Measuring chaos in the Lorenz and Rössler models: Fidelity tests for reservoir computing",
          journal = "Chaos",
        volume = "31",
        year = "2021",
}

@article{ChO,
        author = "Cholewa, Ł. and Oprocha, P.",
        title = "Renormalization in Lorenz maps - completely invariant sets and periodic orbits",
        journal = "Advances in Mathematics",
        volume = "456, 109890",
        year = "2024",
}

@article{OPR,
        author = "Oprocha, P. and Potorski, P. and Raith, P.",
        title = "Mixing properties in expanding Lorenz maps",
        journal = "Advances in Mathematics",
        volume = "343",
        year = "2019",
        pages = "712-755"
}

@article{HubSpar,
        author = "Hubbard, J. H. and Sparrow, C.",
        title = "The classification of topologically expansive Lorenz maps",
        journal = "Communications on Pure and Applied Mathematics",
        volume = "43",
        year = "1990",
        pages = "431-443"
}

@article{ABC,
        author = "Anušić, A. and Bruin, H. and Činč, J.",
        title = "Topological properties of Lorenz maps derived from unimodal maps",
        journal = "Journal of Difference Equations and Applications",
        volume = "26 (8)",
        year = "2020",
        pages = "1174-1191"
}

@article{Glen,
        author = "Glendinning, P.",
        title = "Topological conjugation of Lorenz maps by $\beta$-transformations",
        journal = "Mathematical Proceedings of the Cambridge Philosophical Society",
        volume = "107 (2)",
        year = "1990",
        pages = "401-413"
}

@article{Ding,
        author = "Ding, Y.",
        title = "Renormalization and $\alpha$-limit set for expanding Lorenz maps",
        journal = "Discrete and Continuous Dynamical Systems",
        volume = "29 (3)",
        year = "2011",
        pages = "979-999"
}

@book{Lectures,
  author = "Afraimovich, V. and Hsu, S.",
  year = "2002",
  title = "Lectures on Chaotic Dynamical Systems",
  publisher = "American Mathematical Society"
}

@article{BiSS,
        author = "Bing, L. and Sahlsten, T. and Samuel, T.",
        title = "Intermediate $\beta$-shifts of finite type",
        journal = "Discrete and Continuous Dynamical Systems",
        volume = "36 (1)",
        year = "2016",
        pages = "323-344"
}

@article{BiSSS,
        author = "Bing, L. and Sahlsten, T. and Samuel, T. and Steiner, W.",
        title = "Denseness of intermediate $\beta$-shifts of finite type",
        journal = "Proceedings of the American Mathematical Society",
        volume = "147 (5)",
        year = "2019",
        pages = "2045-2055"
}

@book{Kurka,
  author = "Kurka, P.",
  year = "2003",
  title = "Topological and Symbolic Dynamics",
  publisher = "Cours Sp\'ecialis\'es [Specialized Courses], 11; Soci\'et\'e Math\'ematique de France"
}

@online{DinSun,
    author = "Ding, Y. and Sun, Y.",
    title = "Complete invariants and parametrization of expansive Lorenz maps",
    url  = "https://arxiv.org/abs/2103.16979",
}

@article{BarSV,
        author = "Barnsley, M. and Steiner, W. and Vince, A.",
        title = "Critical itineraries of maps with constant slope and one discontinuity",
        journal = "Mathematical Proceedings of the Cambridge Philosophical Society",
        volume = "157 (3)",
        year = "2014",
        pages = "547-565"
}

@article{BarHV,
        author = "Barnsley, M. and Harding, B. and Vince, A.",
        title = "The entropy of a special overlapping dynamical system",
        journal = "Ergodic Theory and Dynamical Systems",
        volume = "34 (2)",
        year = "2012",
}

@article{Alseda,
        author = "Alsed\`a, Ll. and Llibre, J. and Misiurewicz, M. and Tresser, Ch.",
        title = "Periods and entropy for Lorenz-like maps",
        journal = "Annales de l'institut Fourier",
        volume = "39",
        year = "1989",
        pages = "929-952"
}

@article{MiGel,
        author = "Geller, W. and Misiurewicz, M.",
        title = "Farey-Lorenz permutations for interval maps",
        journal = "International Journal of Bifurcation and Chaos",
        volume = "28 (02) 1850021",
        year = "2018",
}

@article{BNPSR,
        author = "Bartłomiejczyk, P. and Nowak-Przygodzki, P. and Signerska-Rynkowska, J.",
        title = "Multilevel regularity of orbits of expanding Lorenz maps with application to the Courbage-Nekorkin-Vdovin model",
        journal = "Discrete and Continuous Dynamical Systems - Series B",
        volume = "30 (11)",
        year = "2025"
}

@article{Raith,
        author = "Raith, P.",
        title = "Continuity of the hausdorff dimension for piecewise monotonic maps",
        journal = "Israel Journal of Mathematics",
        volume = "80",
        year = "1992",
        pages = "97-133"
}

@book{Dev,
  author = "Devaney, R.L.",
  year = "2003",
  title = "An Introduction to Chaotic Dynamical Systems",
  publisher = "Reprint of the second (1989) ed.; Studies in Nonlinearity; Westview Press: Boulder, CO, USA"
}

@phdthesis{Palmer,
    author = "Palmer, M.R.",
    title = "On the Classification of Measure Preserving Transformations of Lebesgue Spaces",
    school = "University of Warwick",
    year = "1979"
}

@article{DiCui,
        author = "Cui, H. and Ding, Y.",
        title = "Renormalization and conjugacy of piecewise linear Lorenz maps",
        journal = "Advances in Mathematics",
        volume = "271",
        year = "2015",
        pages = "235-272"
}

@article{GlenHall,
        author = "Glendinning, P. and Hall, T.",
        title = "Zeros of the kneading invariant and topological entropy for Lorenz maps",
        journal = "Nonlinearity",
        volume = "9",
        year = "1996",
        pages = "999-1014"
}

@phdthesis{Winckler,
    author = "Winckler, B.",
    title = "Renormalization of Lorenz Maps",
    school = "KTH, Stockholm",
    year = "2011"
}

@article{Parry,
        author = "Parry, W.",
        title = "Symbolic dynamics and transformations of the unit interval",
        journal = "Transactions of the American Mathematical Society",
        volume = "122",
        year = "1966",
        pages = "368-378"
}

@book{Eco,
        author = "Tramontana, F. and Westerhoff, F.",
        year = "2013",
        title = "One-Dimensional Discontinuous Piecewise-Linear Maps and the Dynamics of Financial Markets",
        publisher = "Bischi, G., Chiarella, C., Sushko, I. (eds) Global Analysis of Dynamic Models in Economics and Finance. Springer, Berlin, Heidelberg"
}

@article{Eco2,
        author = "Tramontana, F. and Westerhoff, F. and Gardini, L.",
        title = "On the complicated price dynamics of a simple one-dimensional discontinuous financial market model with heterogeneous interacting traders",
        journal = "Journal of Economic Behavior and Organization",
        volume = "74 (3)",
        year = "2010",
        pages = "187-205"
}

@article{SIAMrev,
        author = "Granados, A. and Alsed\`a, L. and Krupa, M.",
        title = "The Period Adding and Incrementing Bifurcations: From Rotation Theory to Applications",
        journal = "SIAM Review",
        volume = "59 (2)",
        year = "2017",
        pages = "225–292"
}

@article{Bruin1,
        author = "Bruin, H. and Carminati, C. and Kalle, C.",
        title = "Matching for generalised $\beta$-transformations",
        journal = "Indagationes Mathematicae",
        volume = "28",
        year = "2017",
        pages = "55-73"
}

@article{Bruin2,
        author = "Bruin, H. and Carminati, C. and Marmi, S. and Profeti, A.",
        title = "Matching in a family of piecewise affine maps",
        journal = "Nonlinearity",
        volume = "32 172",
        year = "2019",
        pages = "172"
}

@article{Pierre,
        author = "Pierre, M.St.",
        title = "Topological and measurable dynamics of Lorenz maps",
        journal = "Dissertationes Mathematicae",
        volume = "382",
        year = "1999",
        pages = "1-134"
}

@book{ParryL,
        author = "Parry, W.",
        year = "1979",
        title = "The Lorenz attractor and a related population model",
        publisher = "Denker, M., Jacobs, K. (eds) Ergodic Theory. Lecture Notes in Mathematics, vol 729. Springer, Berlin, Heidelberg"
}

\end{document}